\numberwithin{equation}{section}
\newtheorem{Theorem}{Theorem}[section]
\newtheorem{Corollary}[Theorem]{Corollary}
\newtheorem{Lemma}[Theorem]{Lemma}
\theoremstyle{definition}
\newtheorem{Definition}[Theorem]{Definition}
\newtheorem{Remark}[Theorem]{Remark} }
\renewcommand{\@biblabel}[1]{#1.}
\newcommand{\vast}{\bBigg@{3}} 
\newcommand{\Vast}{\bBigg@{4}}
\newenvironment{Remark*}{\begin{remark*}\normalfont}{\end{remark*}}
\newtheorem*{remarks*}{Remarks}
\newenvironment{Remarks*}{\begin{remarks*}\normalfont}{\end{remarks*}}
\newtheorem*{rems}{Remarks} 
\newtheorem*{sol}{Solution}
\newtheorem*{nt}{Notes}
\renewcommand{\@biblabel}[1]{#1.}
\newcommand{\suml}{\sum\limits}
\newcommand{\sumN}{{\left| \boldsymbol{N} \right|}}
\newcommand{\sumvec}[2]{#1_1+\cdots + #1_#2} 
\newcommand\sumK{{\left| \boldsymbol{K} \right|}}
\newcommand\sumj{{\left| \boldsymbol {j} \right|}}
\newcommand\sumk{{\left| \boldsymbol{k} \right|}}
\newcommand{\B}{{ \mathbf B}}
\newcommand{\M}{{ \mathbf M}}
\newcommand{\N}{{ \boldsymbol N}}
\newcommand{\K}{{ \boldsymbol K}}
\renewcommand{\k}{{ \boldsymbol{k}}}
\renewcommand{\j}{{ \boldsymbol{j}}}
\newcommand{\kvec}{{k_1, \dots,k_n}}
\newcommand{\sumbinomial}[2]{\suml_{r=1}^{#2} \binom{#1_r}2}
\newcommand{\multsum}[3]{{\sum\limits_{\substack{{0\le #1_#3 \le #2_#3} \\
{#3 =1,2,\dots, n}}}}}
\newcommand{\qrfac}[2]{{\left({#1}; q\right)_{#2}}} 
\newcommand{\pqrfac}[3]{{\left({#1};#3\right)_{#2}}}
\newcommand{\triprod}[1]{\prod\limits_{1\le r < s \le #1}}
\newcommand{\sqprod}[1]{\prod\limits_{r, s =1}^{#1}} 
\newcommand{\smallprod}[1]{\prod\limits_{r =1}^{#1}} 
\newcommand{\xover}[1]{#1_{r}/#1_{s}}
\newcommand{\vandermonde}[3]{\triprod{#3} 
 \frac{1-q^{#2_r-#2_s} \xover {#1} }{1-\xover{#1}}
}
\newcommand{\ellipticqrfac}[2]{{\left({#1}; q, p\right)_{#2}}} 
\newcommand{\elliptictheta}[1]{\theta \left({#1} ; p\right) }
\newcommand{\ellipticvandermonde}[3]{\triprod{#3} 
\!\! \frac{\elliptictheta{q^{#2_r-#2_s} \xover {#1} }}{\elliptictheta{\xover{#1}}}
}
\begin{document}

\allowdisplaybreaks

\newcommand{\arXivNumber}{1704.00020}

\renewcommand{\thefootnote}{}

\renewcommand{\PaperNumber}{025}

\FirstPageHeading

\ShortArticleName{Elliptic Well-Poised Bailey Transforms and Lemmas on Root Systems}

\ArticleName{Elliptic Well-Poised Bailey Transforms\\ and Lemmas on Root Systems\footnote{This paper is a~contribution to the Special Issue on Elliptic Hypergeometric Functions and Their Applications. The full collection is available at \href{https://www.emis.de/journals/SIGMA/EHF2017.html}{https://www.emis.de/journals/SIGMA/EHF2017.html}}}

\Author{Gaurav BHATNAGAR and Michael J.~SCHLOSSER}

\AuthorNameForHeading{G.~Bhatnagar and M.J.~Schlosser}

\Address{Fakult\"at f\"ur Mathematik, Universit\"at Wien, Oskar-Morgenstern-Platz 1, 1090 Wien, Austria}
\Email{\href{mailto:bhatnagarg@gmail.com}{bhatnagarg@gmail.com}, \href{mailto:michael.schlosser@univie.ac.at}{michael.schlosser@univie.ac.at}}
\URLaddress{\url{http://www.gbhatnagar.com}, \url{http://www.mat.univie.ac.at/~schlosse/}}

\ArticleDates{Received September 01, 2017, in final form March 13, 2018; Published online March 22, 2018}

\Abstract{We list $A_n$, $C_n$ and $D_n$ extensions of the elliptic WP Bailey transform and lemma, given for $n=1$ by Andrews and Spiridonov. Our work requires multiple series extensions of Frenkel and Turaev's terminating, balanced and very-well-poised ${}_{10}V_9$ elliptic hypergeometric summation formula due to Rosengren, and Rosengren and Schlosser. In our study, we discover two new $A_n$ ${}_{12}V_{11}$ transformation formulas, that reduce to two new $A_n$ extensions of Bailey's $_{10}\phi_9$ transformation formulas when the nome $p$ is $0$, and two multiple series extensions of Frenkel and Turaev's sum. }

\Keywords{$A_n$ elliptic and basic hypergeometric series; elliptic and basic hypergeometric series on root systems; well-poised Bailey transform and lemma}

\Classification{33D67}

\renewcommand{\thefootnote}{\arabic{footnote}}
\setcounter{footnote}{0}

\section{Introduction}
The many different proofs of the famous Rogers--Ramanujan identities have led to a plethora of fruitful ideas in mathematics and physics. This paper contains some results ultimately following a path that began with Watson's 1929 proof of these identities. Watson~\cite{GNW1929} proved a very general transformation formula with many parameters. The Rogers--Ramanujan identities follow by taking the limit as (most of) these parameters go to infinity, and then invoking the Jacobi triple product identity. The proof of Watson's transformation was later simplified by Bailey during the course of his study of Rogers' work; and the ensuing ideas used by Slater to prove more than a hundred Rogers--Ramanujan type identities. Eventually, Bailey's approach was perfected by Andrews as a combination of three ideas. According to Andrews' formulation, the Bailey transform is a specific (invertible) lower-triangular matrix that transforms a sequence to another sequence. A Bailey pair is a pair of sequences which satisfies such a relationship, and the Bailey lemma is a method to generate a new Bailey pair (with additional parameters) from a given pair. Thus, the Bailey lemma can be used to generate new identities from known results. In particular, Watson's transformation follows by two steps of the Bailey lemma, applied to the unit Bailey pair.

In two important papers, Milne~\cite{Milne1997} and Milne and Lilly~\cite{ML1995} lifted the Bailey transform and lemma machinery to the context of multiple basic hypergeometric series associated with the root systems $A_n$ and $C_n$.

The primary purpose of this paper is to extend Milne's and Milne and Lilly's work to the setting of the well-poised (WP) Bailey transform and lemma. In the $n=1$ case this is again based on Bailey's~\cite{WNB1947} ideas; and again, these ideas have been made accessible by Andrews'~\cite{Andrews2001} exposition. In particular, the WP Bailey transform and lemma captures the generalizations of the matrix formulation of the Bailey transform due to Bressoud~\cite{DB1983} and Bailey's~\cite{WNB1929} famous $_{10}\phi_9$ transformation which further generalizes Watson's formula.

At the same time, we work in the setting of elliptic hypergeometric series associated with the root systems $A_n$, $C_n$ and $D_n$. Elliptic hypergeometric series appeared explicitly in the work of Frenkel and Turaev~\cite{FT1997} in 1997, and it was quickly realized that some of the classical methods used for studying basic hypergeometric series apply as well to this kind of series. In this context, we find the work of Warnaar~\cite{SOW2002} very useful and influential, because it introduced a notation for elliptic hypergeometric series very much like the one used for basic hypergeometric series. The elliptic series contain a parameter $p$, called the nome. When the nome $p=0$, the formulas reduce to formulas for basic hypergeometric series. A key ingredient in Warnaar's paper~\cite{SOW2002} is an elliptic matrix inverse which as a special case contains an elliptic extension of the WP Bailey transform. Spiridonov~\cite{VPS2002} found an elliptic analogue of the WP Bailey lemma, and Warnaar~\cite{SOW2003} applied and further extended Andrews' ideas in the elliptic setting. A~comprehensive survey of elliptic hypergeometric functions has been given by Spiridonov~\cite{VPS2008}. Many of the central summation and transformation formulas concerning multiple elliptic hypergeometric series were given by Warnaar~\cite{SOW2002} and Rosengren~\cite{HR2004}.

In this paper, we provide several extensions of the elliptic WP Bailey transform and lemma. A feature of the theory of series associated with root systems is that often there are many extensions on root systems of the same result. Indeed, in this paper we present six extensions of the elliptic WP Bailey transform (and associated Bressoud matrices), and eight elliptic WP Bailey lemmas which can be used in multiple ways to generate different identities. For $n=1$ all our six elliptic WP Bailey transforms specialize to the same result by Warnaar, and, also, all our eight elliptic WP Bailey lemmas specialize to the same result by Spiridonov.

When $n=1$ and $p=0$, the WP Bailey transform and lemma depend in an essential way on a~summation result of Jackson~\cite{Jackson1921}, which is contained in Watson's transformation. Multivariate extensions of Jackson's sum on the root systems~$A_n$, $C_n$ and $D_n$ have been given by Milne~\cite{Milne1988}, Milne and Lilly~\cite{ML1995}, Denis and Gustafson~\cite{DG1992}, the first author~\cite{GB1999a}, and the second author~\cite{MS1997,MS2008}. As the second author showed in~\cite{MS2007b}, Milne's~\cite{Milne1988}~$A_n$ Jackson sum can be used to form a~bridge between~$A_n$ basic hypergeometric series and Macdonald polynomials~\cite[Chapter~VI]{Mac1995}.

Our work in this paper depends on the elliptic $A_n$, $C_n$ and $D_n$ generalizations of this summation due to Rosengren~\cite{HR2004} and one such result due to Rosengren and the second author~\cite{RS2017}. (Recently, Rosengren~\cite{HR2017a} gave yet another such result, which we do not include in our study.) One of the goals of our study is to recover the extensions of Bailey's transformation formula listed by Rosengren~\cite{HR2004}, which were obtained by a straightforward extension of the approach followed in \cite{BS1998} for the basic hypergeometric case. Indeed, we recover all these results. In addition, we give two new elliptic generalizations of Bailey's transformation formula.

Our results are closely related with other work in this area. One of our elliptic generalizations of Bailey's $_{10}\phi_9$ sum was found independently by Rosengren~\cite{HR-PC-2016}. It is motivated by formulas previously given by the authors~\cite{GB1995, MS2008}. The $p=0$ case of one of our matrices was considered by Milne~\cite{Milne1997} and a related Bailey lemma (again when $p=0$) was previously obtained by Zhang and Liu~\cite{ZL2016}. One of our Bailey lemmas is equivalent to a result of Zhang and Huang~\cite{ZH-preprint}. Most of the matrix inversions that appear in our work can be obtained as special cases of very general matrix inversions due to Rosengren and the second author~\cite{RS2017}.
Warnaar~\cite{SOW-notes-2016} found four of the WP Bailey lemmas (including the one due to Zhang and Huang~\cite{ZH-preprint}) a few years ago but did not publish his work.

Like Milne's~\cite{Milne1997} important paper, we hope that our work too will open the door for further work in this area. Further development of Bailey's ideas in, for example, \cite{Andrews2001,AndBer2002, Jouhet2010, MSZ2009, VPS2002, VPS2004, SSY2017, SOW2001, SOW2003} certainly suggests that there is a lot that can be done. Notably, Andrews, Schilling and Warnaar~\cite{ASW1999} established $A_2$ Bailey lemmas and
corresponding identities of Rogers--Ramanujan type. However, the type of series these authors deal with in \cite{ASW1999} are quite different from the ($n=2$ instances of the) $A_n$ series considered in this paper. Spiridonov and Warnaar~\cite{SW2006}, building upon work by Spiridonov~\cite{VPS2004}, obtained integral Bailey transforms relating integrals on root systems of type $A_n$ and $C_n$. Br\"unner and Spiridonov~\cite{BS2016} recently gave applications of the integral analogue of the $A_n$ Bailey lemma to supersymmetric linear quiver theories. Coskun~\cite{HC2008} had previously found a $BC_n$ WP Bailey transform for a slightly different kind of series than the one we consider and found multiple Rogers--Ramanujan identities. In this context, we also mention the work of Griffin, Ono and Warnaar~\cite{GOW2016}.

This paper is organized as follows. In Section~\ref{sec:notation}, we record the notation and terminology of elliptic and basic hypergeometric series. In Section~\ref{sec:WPBailey} we give a short introduction to the WP Bailey transform and lemma, in a format that suits our work.
From Section~\ref{sec:dougall1} until Section~\ref{sec:cn-case} (except Section~\ref{sec:piszero}) we systematically extend the analysis of
Section~\ref{sec:WPBailey} by considering in each section a particular multivariable extension of the elliptic Jackson summation. In Section~\ref{sec:piszero}, we record some basic hypergeometric results which follow as special cases of our work. In Section~\ref{sec:summary} we summarize our work and motivate our final set of results in Section~\ref{sec:new-dougall}, which contain an extra parameter. The extra parameter disappears when $n=1$.

\section{The notation and terminology}\label{sec:notation}
In this section, we record the notation and terminology used in this paper, following, for the most part, Gasper and Rahman \cite{GR90}. We recommend Rosengren's lectures~\cite{HR2016-lectures} for a friendly introduction to elliptic hypergeometric series.

The series considered in this paper are all of the form
\begin{gather*}
\sum\limits_{\substack{{k_r\geq 0 } \\ {r =1,2,\dots, n}}} S_\k,
\end{gather*}
where the $\k=(\kvec )$ is an $n$-tuple of non-negative integers $k_1, k_2, \dots, k_n$. The positive integer $n$ is called the {\em dimension} of the sum. We use the notation $\sumk:= \sumvec{k}{n}$ for the sum of components of the $n$-tuple; $\k+\j$ for $(k_1+j_1, \dots, k_n+j_n)$, obtained by component-wise addition; and $\k-\j$ for $(k_1-j_1, \dots, k_n-j_n)$.

The summand $S_\k$ itself is a product of various $q, p$-shifted factorials, which we define shortly.

First, for arbitrary integers $k$ and $m$, we define products as follows.
\begin{gather}\label{proddef}
\prod_{j=k}^m A_j := \begin{cases}
A_kA_{k+1}\cdots A_m & \text{if } m\geq k,\\
1 & \text{if } m= k-1,\\
\big(A_{m+1}A_{m+2}\cdots A_{k-1}\big)^{-1} &\text{if } m\leq k-2.
\end{cases}
\end{gather}
The primary reason why this definition is useful is because the relation
\begin{gather*}
\prod_{j=k}^{m+1} A_j = \prod_{j=k}^m A_j \cdot A_{m+1},
\end{gather*}
applies for all integers $k$ and $m$.

Next, we define the {\em $q$-shifted factorials}, for $k$ any integer, as
\begin{gather*}
\qrfac{a}{k} :=\prod\limits_{j=0}^{k-1} \big(1-aq^j\big),
\end{gather*}
and for $|q|<1$,
\begin{gather*}
\qrfac{a}{\infty} := \prod\limits_{j=0}^{\infty} \big(1-aq^j\big).
\end{gather*}
The parameter $q$ is called the {\em base}. With this definition, we have the {\em modified Jacobi theta function} defined as
\begin{gather*} \elliptictheta{a} := \pqrfac{a}{\infty}{p} \pqrfac{p/a}{\infty}{p},\end{gather*}
where $a\neq 0$ and $|p|<1$. We define the {\em $q, p$-shifted factorials} (or {\em theta shifted factorials}), for $k$ an integer, as
\begin{gather*}
\ellipticqrfac{a}{k} := \prod\limits_{j=0}^{k-1} \elliptictheta{aq^j}.
\end{gather*}
The parameter $p$ is called the {\em nome}. When the nome $p=0$, the modified theta function $\elliptictheta{a}$ reduces to $(1-a)$; and $\ellipticqrfac{a}{k}$ reduces to $ \pqrfac{a}{k}{q}$.

We use the short-hand notations
\begin{gather*}
\elliptictheta{a_1, a_2, \dots, a_r} := \elliptictheta{a_1} \elliptictheta{a_2}\cdots \elliptictheta{a_r},\\
\ellipticqrfac{a_1, a_2,\dots, a_r}{k} := \ellipticqrfac{a_1}{k} \ellipticqrfac{a_2}{k}\cdots \ellipticqrfac{a_r}{k},\\
\qrfac{a_1, a_2,\dots, a_r}{k} := \qrfac{a_1}{k} \qrfac{a_2}{k}\cdots \qrfac{a_r}{k}.
\end{gather*}

Observe that in view of \eqref{proddef}, we have
\begin{gather*}\ellipticqrfac{a}{0} = 1,
\end{gather*}
and, for $k$ an arbitrary integer
\begin{gather*}
\ellipticqrfac{a}{-k} =\frac{1}{\ellipticqrfac{aq^{-k}}{k}}.
\end{gather*}
Two important properties of the modified theta function are
\cite[equation~(11.2.42)]{GR90}
\begin{subequations}
\begin{gather}\label{GR11.2.42}
\elliptictheta{a} =\elliptictheta{p/a} =-a\elliptictheta{1/a},
\end{gather}
and \cite[p.~451, Example~5]{WW1996}
\begin{gather}\label{addf}
\elliptictheta{xy,x/y,uv,u/v}-\elliptictheta{xv,x/v,uy,u/y}=\frac uy\,\elliptictheta{yv,y/v,xu,x/u}.
\end{gather}
\end{subequations}
Using \eqref{GR11.2.42}, we can ``reverse the products'', and obtain the identity \cite[equation~(11.2.53)]{GR90}
\begin{gather}\label{GR11.2.53}
\ellipticqrfac{a}{-k} = \frac{\left( -q/a \right)^k}{\ellipticqrfac{q/a}{k}} q^{\binom k2}.
\end{gather}
We can combine the two identities above to obtain
\begin{gather}\label{GR11.2.50}
\big({aq^{-k};q,p}\big)_{k} =\ellipticqrfac{q/a}{k} \left(-{a}/{q}\right)^kq^{-{\binom k2}}.
\end{gather}
More generally, we have \cite[equation~(11.2.49) rewritten]{GR90}
\begin{gather}\label{GR11.2.49r}
\ellipticqrfac{q^{1-n}/a}{k} = \frac{\ellipticqrfac{a}{n}}
{\ellipticqrfac{a}{n-k}} \left( - {q}/{a}\right)^kq^{{\binom k2}-nk}.
\end{gather}
Two other useful identities we use are
\cite[equation~(11.2.47)]{GR90}
\begin{gather}\label{GR11.2.47}
\ellipticqrfac{a}{n+k} =\ellipticqrfac{a}{n}\ellipticqrfac{aq^n}{k},
\end{gather}
and a special case of \cite[equation~(11.2.55)]{GR90},
\begin{gather}\label{GR11.2.55a}
\ellipticqrfac{pa}{k} = (-1)^ka^{-k}q^{-{\binom k2}}\ellipticqrfac{a}{k}.
\end{gather}

We will use the notation of {\em basic hypergeometric series} (or $_r\phi_s$ series). This series is of the form
\begin{gather*}
_{r}\phi_s \left[\begin{matrix}
a_1,a_2,\dots,a_r \\
b_1,b_2,\dots,b_s\end{matrix} ; q, z
\right] :=
\sum_{k=0}^{\infty} \frac{\qrfac{a_1,a_2,\dots, a_r}{k}}{\qrfac{q, b_1,b_2,\dots, b_s}{k}}
\big( (-1)^kq^{\binom k2}\big)^{1+s-r} z^k.
\end{gather*}
See Gasper and Rahman~\cite{GR90} for the convergence conditions for these series.

A series is terminating if it is not an infinite series. Usually, this happens due to a factor $\qrfac{q^{-N}}{k}$ in the numerator of the summand. When $k>N$, this factor is $0$, and so the series terminates. Observe that in view of~\eqref{proddef}
\begin{gather}\label{term-from-below}
\frac{1}{\qrfac{q}{k}} = 0, \qquad \text{when} \quad k<0.
\end{gather} This ensures that the series terminates naturally from below too. That is, the terms with negative index in the series are $0$.

An $_{r+1}\phi_r$ series is called {\em well-poised} if $a_1q=a_2b_1=a_3b_2 = \cdots = a_{r+1}b_r$. In addition, if $a_2=qa_1^{1/2}, a_3=-qa_1^{1/2}$, then the series is called very-well-poised. Note that in such a case, taking the special parameter $a_1=a$, the summand contains the term
\begin{gather*}
\frac{\qrfac{qa^{1/2}, -qa^{1/2} }{k}}{\qrfac{a^{1/2}, -a^{1/2}}{k}} = \frac{1-aq^{2k}}{1-a},
\end{gather*}
which we call the {\em very-well-poised} part. This suggests the compact notation
\begin{gather*}_{r+1}W_r(a; a_4, a_5, \dots, a_{r+1}; q, z)
\end{gather*} for very-well-poised series.

An $_{r+1}\phi_r$ series is called {\em balanced} if $b_1\cdots b_r=qa_1\cdots a_{r+1}$ and $z=q$.

Recall that for an ordinary (resp.\ basic) hypergeometric series $\sum c_k$ the quotient $g(k) = c_{k+1}/c_k$ is a rational function in $k$ (resp.~$q^k$). Now, a series $\sum c_k$ is called an {\it elliptic hypergeometric series} if $g(k) = c_{k+1}/c_k$ is an elliptic function of $k$ with $k$ considered as a complex variable, i.e., the function~$g(x)$ is a~doubly periodic meromorphic function of the complex variable~$x$. Without loss of generality, by the theory of theta functions, we may assume that
\begin{gather*}
g(x)=\frac{\elliptictheta{ a_1q^x,a_2q^x,\dots,a_{r+1}q^x}} {\elliptictheta{q^{1+x},b_1q^x,\dots,b_rq^x}} z,
\end{gather*}
where the {\em elliptic balancing condition},
\begin{gather*}
a_1a_2\cdots a_{r+1}=qb_1b_2\cdots b_r,
\end{gather*}
holds. If we write $q=e^{2\pi i\sigma}$, $p=e^{2\pi i\tau}$, with complex $\sigma$, $\tau$, then $g(x)$ is indeed doubly periodic in~$x$ with periods $\sigma^{-1}$ and $\tau\sigma^{-1}$.

One usually requires $a_{r+1}=q^{-N}$ ($N$ being a nonnegative integer), so that the sum of an elliptic hypergeometric series is terminating, and hence convergent.

{\em Very-well-poised elliptic hypergeometric series} are def\/ined as
\begin{gather*}
{}_{r+1}V_r(a_1;a_6,\dots,a_{r+1};q,p):=
\sum_{k=0}^{\infty}\frac{\elliptictheta{a_1q^{2k}}}{\elliptictheta{a_1}}
\frac{(a_1,a_6,\dots,a_{r+1};q,p)_k}
{(q,a_1q/a_6,\dots,a_1q/a_{r+1};q,p)_k}q^k,
\end{gather*}
where
\begin{gather*}
q^2a_6^2a_7^2\cdots a_{r+1}^2=(a_1q)^{r-5}.
\end{gather*}
Note that in the elliptic case the number of pairs of numerator and denominator parameters involved in the construction of the {\em very-well-poised term} $\theta(a_1q^{2k};p)/\theta(a_1;p)$ is {\em four} (whereas in the basic case this number is {\em two}, in the ordinary case only {\em one}).
See Gasper and Rahman~\cite[Chapter~11]{GR90} for details. The notions of balancing, well-poisedness and very-well-poisedness were explained from the point of view of elliptic functions for the first time in Spiridonov's paper~\cite{VPS2002b}. This justifies the notations ${}_{10}V_9$ and ${}_{12}V_{11}$ (corresponding to ${}_8\phi_7$ and ${}_{10}\phi_9$ in the $p=0$ case) for the series below.

Frenkel and Turaev~\cite{FT1997} discovered the following $_{10}V_9$ summation formula (as a result of a~more general ${}_{12}V_{11}$ transformation), see \cite[equation~(11.4.1)]{GR90}:
\begin{gather}\label{10V9}
{}_{10}V_9\big(a;b,c,d,e,q^{-N};q,p\big) =\frac {(aq,aq/bc,aq/bd,aq/cd;q,p)_N} {(aq/b,aq/c,aq/d,aq/bcd;q,p)_N},
\end{gather}
where $a^2q^{N+1}=bcde$.

It should be clear that for $p=0$ elliptic hypergeometric series reduce to basic hypergeometric series. When the reference point is a basic hypergeometric result, we refer to the result by adding the word ``elliptic'' to it. For example, we refer to Frenkel and Turaev's~\cite{FT1997} elliptic extension of Bailey's $_{10}\phi_9$ transformation formula \cite[equation~(11.5.1)]{GR90} as the elliptic Bailey $_{10}\phi_9$ transformation. This formula transforms a $_{12}V_{11}$ series which is terminating, balanced and very-well-poised to a multiple of another series of the same kind. Similarly, we refer to elliptic extensions of Bressoud's matrix.

Next we note some identities useful in our calculations with multiple series.

The summand of the series we consider typically has a factor of the form
\begin{gather*}\ellipticvandermonde{x}{k}{n}.
\end{gather*}
This is referred to as the {\em elliptic Vandermonde product} of type $A$. When $p=0$, it reduces to
\begin{gather*}\vandermonde{x}{k}{n}.
\end{gather*}

Next we note two useful lemmas which in the $p=0$ case were given by Milne~\cite{Milne1997}. The first lemma shows an alternate way of writing the elliptic Vandermonde product.
\begin{Lemma}\label{th:e-magiclemma1} We have
\begin{gather*}
\sqprod n \ellipticqrfac{q\xover x }{k_r-k_s} = \ellipticvandermonde{x}{k}{n}\\
\hphantom{\sqprod n \ellipticqrfac{q\xover x }{k_r-k_s} =}{} \times
(-1)^{(n-1)\sumk}
q^{\sum\limits_{r=1}^n (r-1)k_r+ (n-1)\sumbinomial{k}{n} -\sum\limits_{r<s}k_rk_s } \smallprod{n} x_r^{nk_r-\sumk} .
\end{gather*}
\end{Lemma}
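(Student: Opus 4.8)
The plan is to collapse the double product on the left into a product over the pairs $r<s$, evaluate each pair by the reflection formula \eqref{GR11.2.53} for theta shifted factorials of negative index, and then match the resulting scalar against the stated prefactor by elementary index-counting. First I would note that the diagonal terms $r=s$ equal $\ellipticqrfac{q}{0}=1$, so the left-hand side is $\prodl_{r\ne s}\ellipticqrfac{q\xover x}{k_r-k_s}$. Pairing the factor indexed by $(r,s)$ with the one indexed by $(s,r)$ and abbreviating $a:=\xover x$, $d:=k_r-k_s$ for each $r<s$, this becomes $\triprod n \ellipticqrfac{qa}{d}\,\ellipticqrfac{q/a}{-d}$.

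The key step is to evaluate a single pair. Applying \eqref{GR11.2.53} with its parameter $a$ replaced by $q/a$ and its index replaced by $d$ gives $\ellipticqrfac{q/a}{-d}=(-a)^d q^{\binom d2}/\ellipticqrfac{a}{d}$, whence $\ellipticqrfac{qa}{d}\,\ellipticqrfac{q/a}{-d}=(-a)^d q^{\binom d2}\,\ellipticqrfac{qa}{d}/\ellipticqrfac{a}{d}$. By the shift relation following \eqref{proddef} (applied to $A_j=\elliptictheta{aq^j}$, and valid for every integer $d$) the quotient telescopes to $\ellipticqrfac{qa}{d}/\ellipticqrfac{a}{d}=\elliptictheta{aq^d}/\elliptictheta{a}$. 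Restoring $a=\xover x$ and $d=k_r-k_s$, each pair contributes exactly the elliptic Vandermonde factor $\elliptictheta{q^{k_r-k_s}\xover x}/\elliptictheta{\xover x}$ times the scalar $(-\xover x)^{k_r-k_s}q^{\binom{k_r-k_s}2}$.

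Taking $\triprod n$ of this, the theta quotients assemble into $\ellipticvandermonde{x}{k}{n}$, and it remains to check that the accumulated scalar equals the prefactor on the right. This splits into three elementary computations. For the monomial part, a fixed $x_t$ occurs with exponent $\suml_{s>t}(k_t-k_s)-\suml_{r<t}(k_r-k_t)=nk_t-\sumk$, giving $\smallprod n x_r^{nk_r-\sumk}$. For the sign, $\suml_{r<s}(k_r-k_s)=\suml_{r=1}^n(n-2r+1)k_r\equiv(n-1)\sumk\pmod 2$ (since $n-2r+1\equiv n+1\equiv n-1$), producing $(-1)^{(n-1)\sumk}$. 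For the power of $q$, expanding $\binom{k_r-k_s}2=\tfrac12\big((k_r-k_s)^2-(k_r-k_s)\big)$ and summing over $r<s$ yields $\tfrac{n-1}2\suml_r k_r^2-\suml_{r<s}k_rk_s-\tfrac12\suml_r(n-2r+1)k_r$; rewriting $\tfrac{n-1}2\suml_r k_r^2=(n-1)\sumbinomial{k}{n}+\tfrac{n-1}2\suml_r k_r$ and combining the two linear sums into $\suml_{r=1}^n(r-1)k_r$ reproduces precisely the exponent $\suml_{r=1}^n(r-1)k_r+(n-1)\sumbinomial{k}{n}-\suml_{r<s}k_rk_s$. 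The only mild obstacle I anticipate is this last bookkeeping --- organizing the three identities so they exactly reconstitute the stated prefactor --- but none of the steps is deep; the one genuinely structural idea is the pairing of $(r,s)$ with $(s,r)$ together with the negative-index reflection \eqref{GR11.2.53}.
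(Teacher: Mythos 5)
Your proof is correct and is essentially the argument the paper has in mind: the paper gives no details, referring instead to Milne's proof of the $p=0$ case (\cite[Lemma~3.12]{Milne1997}), which proceeds exactly as you do --- pairing the $(r,s)$ and $(s,r)$ factors, applying the negative-index reversal \eqref{GR11.2.53}, telescoping $\ellipticqrfac{qa}{d}/\ellipticqrfac{a}{d}=\elliptictheta{aq^d}/\elliptictheta{a}$, and bookkeeping the exponents. All three of your scalar computations (monomial, sign, and $q$-power) check out.
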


The $p=0$ case of this identity appeared as \cite[Lemma~3.12]{Milne1997}. Its proof in the elliptic case proceeds along the same lines.

Next, we have a lemma that extends the elementary identity \cite[equation~(I.12)]{GR90}
\begin{gather*}\label{GRI.12}
\frac{1}{\qrfac{q}{N-k}} = \frac{\qrfac{q^{-N}}{k} }{\qrfac{q}{N}} (-1)^k q^{Nk-\binom k2}.
\end{gather*}
\begin{Lemma}\label{th:e-magiclemma2} We have
\begin{gather*}
\sqprod n \frac{1}{\ellipticqrfac{q^{1+k_r-k_s}\xover x }{N_r-k_r}} =
\ellipticvandermonde{x}{k}{n}
\sqprod n \frac{\ellipticqrfac{q^{-N_s}\xover{x}}{k_r} }{\ellipticqrfac{q\xover{x}}{N_r} } \\
\hphantom{\sqprod n \frac{1}{\ellipticqrfac{q^{1+k_r-k_s}\xover x }{N_r-k_r}} =}{} \times (-1)^{\sumk} q^{\sumN\sumk-\binom{\sumk}2+\sum\limits_{r=1}^n (r-1)k_r}.
\end{gather*}
\end{Lemma}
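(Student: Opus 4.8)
The plan is to recognise the left-hand side as a ratio whose numerator is exactly the left-hand side of Lemma~\ref{th:e-magiclemma1}, and then to convert the surviving denominator into the target factors by a second application of the same addition formula followed by a reversal. First I would apply the addition law \eqref{GR11.2.47} with $a=q\xover x$ (splitting the index as $N_r-k_s=(k_r-k_s)+(N_r-k_r)$), which is valid for all integers by the product convention \eqref{proddef}, to each factor of the left-hand side:
\begin{gather*}
\frac{1}{\ellipticqrfac{q^{1+k_r-k_s}\xover x}{N_r-k_r}}=\frac{\ellipticqrfac{q\xover x}{k_r-k_s}}{\ellipticqrfac{q\xover x}{N_r-k_s}}.
\end{gather*}
Taking the product over $r,s$ turns the left-hand side into $\big(\sqprod n\ellipticqrfac{q\xover x}{k_r-k_s}\big)\big/\big(\sqprod n\ellipticqrfac{q\xover x}{N_r-k_s}\big)$. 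The numerator is precisely the left-hand side of Lemma~\ref{th:e-magiclemma1}, so I would substitute its evaluation, producing $\ellipticvandermonde{x}{k}{n}$, the sign $(-1)^{(n-1)\sumk}$, the power $\smallprod n x_r^{nk_r-\sumk}$, and the exponent $\sum_{r=1}^n(r-1)k_r+(n-1)\sumbinomial{k}{n}-\sum_{r<s}k_rk_s$.

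Next I would treat the denominator by applying \eqref{GR11.2.47} a second time, now peeling off $-k_s$ via $N_r=(N_r-k_s)+k_s$, to get
\begin{gather*}
\ellipticqrfac{q\xover x}{N_r-k_s}=\frac{\ellipticqrfac{q\xover x}{N_r}}{\ellipticqrfac{q^{1+N_r-k_s}\xover x}{k_s}}.
\end{gather*}
This contributes the target denominator $\sqprod n\ellipticqrfac{q\xover x}{N_r}$ together with an extra numerator product $\sqprod n\ellipticqrfac{q^{1+N_r-k_s}\xover x}{k_s}$. Relabelling $r\leftrightarrow s$ in the latter and then invoking the reversal \eqref{GR11.2.50} with $a=q^{1+N_s}x_s/x_r$ (so that $aq^{-k_r}=q^{1+N_s-k_r}x_s/x_r$ and $q/a=q^{-N_s}\xover x$) turns each factor into $\ellipticqrfac{q^{-N_s}\xover x}{k_r}$ times $\big({-}q^{N_s}x_s/x_r\big)^{k_r}q^{-\binom{k_r}2}$, which recovers exactly the target numerator $\sqprod n\ellipticqrfac{q^{-N_s}\xover x}{k_r}$.

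Finally I would collect the accumulated sign, $x$-power and $q$-power factors; this arithmetic is the only delicate point. The reversal contributes $\prod_{r,s}(x_s/x_r)^{k_r}=\smallprod n x_r^{\sumk-nk_r}$, which cancels the power $\smallprod n x_r^{nk_r-\sumk}$ coming from Lemma~\ref{th:e-magiclemma1}; the signs combine as $(-1)^{(n-1)\sumk}(-1)^{n\sumk}=(-1)^{\sumk}$; and the $q$-exponents add up to $\sumN\sumk-\binom{\sumk}2+\sum_{r=1}^n(r-1)k_r$ once one uses $\prod_{r,s}q^{N_sk_r}=q^{\sumN\sumk}$ together with the elementary identity $\sumbinomial{k}{n}+\sum_{r<s}k_rk_s=\binom{\sumk}2$ to absorb the terms $(n-1)\sumbinomial{k}{n}$, $-n\sumbinomial{k}{n}$ and $-\sum_{r<s}k_rk_s$. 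The main obstacle is thus purely this exponent bookkeeping; conceptually the proof reduces entirely to two applications of \eqref{GR11.2.47}, one reversal via \eqref{GR11.2.50}, and Lemma~\ref{th:e-magiclemma1}, with no new theta-function identity required beyond those already recorded.
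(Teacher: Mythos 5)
Your proposal is correct, and it is essentially the argument the paper has in mind: the paper gives no details beyond saying that the proof ``uses Lemma~\ref{th:e-magiclemma1} but otherwise just proceeds as in the $p=0$ case'' of Milne, and your chain of two applications of \eqref{GR11.2.47}, the substitution of Lemma~\ref{th:e-magiclemma1}, and the reversal \eqref{GR11.2.50} is precisely that standard route, with the exponent bookkeeping (via $\binom{\sumk}2=\sum_r\binom{k_r}2+\sum_{r<s}k_rk_s$) carried out correctly.
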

The special case $N_r=k_r$ appears in Rosengren~\cite[equation~(3.8)]{HR2004}. When $p=0$, this reduces to Milne~\cite[Lemma~4.3]{Milne1997}. The proof uses Lemma~\ref{th:e-magiclemma1} but otherwise just proceeds as in the $p=0$ case.

Next, as motivation, we outline the basic hypergeometric case of our work, for $n=1$.

\section[The WP Bailey transform and lemma: a very short introduction]{The WP Bailey transform and lemma:\\ a very short introduction}\label{sec:WPBailey}

Here we outline an approach to the theory of basic hypergeometric series beginning with Jackson's sum. From this result one can extract all the components of the well-poised Bailey transform and lemma. Our purpose here is to explain and motivate the steps of our approach to find analogous results on root systems. This should help the reader get a bird's eye view of our work in later sections.

The idea for the WP Bailey transform and lemma was given by Bailey~\cite{WNB1947} and explained (and then extended) by Andrews~\cite{Andrews2001}. It was further studied by various authors (see \cite{AndBer2002,Jouhet2010, MSZ2009, VPS2002, SSY2017, SOW2003, ZL2016} for a small selection of references). The matrix inverse due to Bressoud~\cite{DB1983} is an important ingredient. The ideas presented below are primarily based on Andrews~\cite{Andrews2001}, but they have been sequenced in a way to explain our work in later sections.

The essential idea is as follows.
A pair of sequences $\alpha = (\alpha_k)$ and $\beta = (\beta_k)$ is given which follows a relationship of the form
\begin{gather}\label{Bailey-pair}
\beta_N = \sum_{j=0}^N B_{Nj} \alpha_j,
\end{gather}
where $\B = (B_{kj})$ is an infinite lower-triangular matrix called the Bressoud matrix (to be defined shortly), with entries indexed by $k$ and $j$. The entries $B_{kj} = B_{kj}(a,b)$ of the matrix depend on two parameters $a$ and $b$ (in addition to the parameter $q$). This relationship is called the {\em Bailey transform}, since it transforms a sequence into another sequence. Given $\B$, the pair of sequences
$(\alpha_k, \beta_k)$ is called a \emph{WP Bailey pair}. The \emph{WP Bailey lemma} is a method to construct sequences $(\alpha^{\prime}_k, \beta^{\prime}_k)$ that also form a WP Bailey pair. The $\alpha_k$ and $\beta_k$ are also dependent on~$a$ and~$b$. $B$ is a~lower triangular matrix, so the equation \eqref{Bailey-pair} corresponds to the matrix equation $\beta = \B\alpha$.

For our purposes, it is useful to note that one can extract the Bressoud matrix and a key WP Bailey pair from Jackson's sum, the $p=0$ case of \eqref{10V9} (given in Gasper and Rahman~\cite[equation~(2.6.2)]{GR90}),
\begin{gather}\label{q-dougall}
\sum_{k=0}^N\!\frac{(1\!-\!aq^{2k}) \qrfac{ a, b, c, d, a^2q^{1+N}\!/bcd, q^{-N}}{k}}
{(1\!-\!a)\qrfac{q, aq/b, aq/c, aq/d, bcdq^{-N}\!/a, aq^{N+1}}{k}}q^k\!
=\frac{\qrfac{aq, aq/bc, aq/bd, aq/cd}{N}} {\qrfac{aq/b, aq/c, aq/d, aq/bcd}{N}}.\!\!\!\!\!\!
\end{gather}

The $b\mapsto qa^2/bcd$ case of \eqref{q-dougall} may be written in the form \eqref{Bailey-pair}, where the Bressoud matrix $\B= (B_{kj}(a,b))$ is defined as
\begin{gather}
B_{k j}(a,b) := \frac{\qrfac{b}{j+k} \qrfac{b/a}{k-j}} { \qrfac{aq}{j+k}\qrfac{q}{k-j}},\label{Bressoud}
\end{gather}
and the sequences $(a_k)$ and $(b_k)$ are defined as
\begin{subequations}\label{alphabeta}
\begin{gather}
\alpha_k(a,b) := \frac{1-aq^{2k}}{1-a}
\frac{\qrfac{a, c, d, a^2q/bcd}{k} }{\qrfac{q, aq/c, aq/d, bcd/a}{k}}
\left( \frac{b}{a}\right)^{k} ,\label{alpha} \\
\beta_k(a,b) := \frac{\qrfac{b, bc/a, bd/a, aq/cd}{k}}{\qrfac{q, aq/c, aq/d, bcd/a}{k}}.\label{beta}
\end{gather}
\end{subequations}
Note that, in view of \eqref{term-from-below}, $B_{kj}(a,b)=0$ unless $k\geq j$, so $\B$ is indeed a lower-triangular matrix.

If $\B$, $(\alpha_k)$ and $(\beta_k)$ satisfy \eqref{Bailey-pair}, we say $(\alpha_k)$ and $(\beta_k)$ form a {\em WP Bailey pair}. Observe that if we set $d=aq/c$, we obtain the {\em unit WP Bailey pair}
\begin{gather*}
\alpha_k(a,b) :=
 \frac{1-aq^{2k}}{1-a}
\frac{\qrfac{a, a/b}{k} }{\qrfac{q, bq}{k}}
\left( \frac{b}{a}\right)^{k} ,\\
\beta_k(a,b) := \delta_{k, 0} =
\begin{cases}
1 & \text{if } k=0 , \\
0 & \text{otherwise.}
\end{cases}
\end{gather*}
The fact that this is a WP Bailey pair translates into an expression of the form
 \begin{gather}\label{Bailey-inversion-1}
\sum_{j=0}^N B_{Nj}(a,b) \alpha_j(a,b) = \delta_{N, 0}.
\end{gather}
One can view this as a matrix inversion and from here obtain an explicit formula for the inverse of~$\B$. To do that, we replace $N$ by $N-K$, shift the index, and after a change of variables (see remarks below), write this sum in the form
\begin{gather*}
\sum\limits_{j=K}^N B_{Nj} B^{-1}_{jK} =\delta_{N, K}.
\end{gather*}
From here, one can read off the formula for the inverse $\B^{-1}$ of the matrix $\B$. The entries of the (uniquely determined) inverse are given by
\begin{gather}
(B(a,b))^{-1}_{k j} =\frac{1-aq^{2k}}{1-a}\frac{1-bq^{2j}}{1-b}\frac{\qrfac{a}{j+k} \qrfac{a/b}{k-j}}
{ \qrfac{bq}{j+k}\qrfac{q}{k-j}}\left( \frac{b}{a}\right)^{k-j}.\label{Bressoud-inverse}
\end{gather}

\begin{Remarks*}\quad
\begin{enumerate}\itemsep=0pt
\item The entries of the inverse of the Bressoud matrix in \eqref{Bressoud-inverse} can be computed as follows:
In~\eqref{Bailey-inversion-1}, replace $N$ by $N-K$, $a$ by $aq^{2K}$, $b$ by $bq^{2K}$, and shift the index $j\mapsto j-K$. Then by
\begin{gather*}
B_{N-K,j-K}\left(aq^{2K},bq^{2K}\right)=\frac{(aq;q)_{2K}}{(b;q)_{2K}}B_{N,j}(a,b)
\end{gather*}
it follows that
\begin{gather*}
\frac{(aq;q)_{2K}}{(b;q)_{2K}}\alpha_{j-K}\left(aq^{2K},bq^{2K}\right)
\end{gather*}
can be identified as the $(j,K)$ entry of the inverse Bressoud matrix $\B^{-1}$. The details of the analogous computation in our work, for example, in the proof of Corollary~\ref{b1-inverse} below, vary slightly from those given here, but the essential idea is the same.
\item By comparing the entries of $\B$ and $\B^{-1}$, one sees that they are almost the same. In fact, Bressoud~\cite{DB1983} expressed them more symmetrically. Let $\M(a,b)=(M_{kj}(a,b))$ with
\begin{gather*}\label{Bressoud-original}
M_{k j}(a,b) := \frac{1-aq^{2j}}{1-a} \frac{\qrfac{b}{j+k} \qrfac{b/a}{k-j}}
{ \qrfac{aq}{j+k}\qrfac{q}{k-j}} a^{k-j}.
\end{gather*}
Then Bressoud showed that $\M(b,c)\M(a,b)=\M(a,c)$. In particular, this implies $\M(b,a)$ is the inverse of $\M(a,b)$. However, in our work, we find it beneficial to follow the exposition of Andrews~\cite{Andrews2001} rather than Bressoud's symmetric formulation. Both approaches are equivalent.
\end{enumerate}
\end{Remarks*}

Given $\B^{-1}$, and the WP Bailey pair $(\alpha_k,\beta_k)$ as in \eqref{alphabeta}, one has the {\it inverse relation}
\begin{gather*}
\alpha_N = \sum_{j=0}^N (B(a,b))^{-1}_{Nj} \beta_j.
\end{gather*}
This is again equivalent to Jackson's sum~\eqref{q-dougall}.

Given a Bailey pair, the WP Bailey lemma (see Andrews~\cite[Theorem~7]{Andrews2001}) gives a method to construct a new WP Bailey pair, with two additional parameters $\rho_1$ and $\rho_2$, given by
\begin{gather*}
{\alpha}^{\prime}_N(a,b) := \frac{\qrfac{\rho_1, \rho_2 }{N}}{\qrfac{aq/\rho_1, aq/\rho_2}{N}}
 \left(\frac{aq}{\rho_1\rho_2}\right)^{N} \alpha_N(a, b\rho_1\rho_2/aq),
\\
{\beta}^{\prime}_N (a,b) :=
\frac{\qrfac{b\rho_1/a , b\rho_2/a }{N}}{\qrfac{aq/\rho_1, aq/\rho_2}{N}}
 \sum_{k=0}^{N} \bigg(
\frac{ \qrfac{\rho_1, \rho_2}{k} }{\qrfac{b\rho_1/a, b\rho_2/a}{k}}
 \frac{\qrfac{b}{k+N}\qrfac{aq/\rho_1\rho_2}{N-k} }{\qrfac{b\rho_1\rho_2/a}{k+N}\qrfac{q}{N-k} }\\
\hphantom{{\beta}^{\prime}_N (a,b) :=}{} \times
 \frac{1- b\rho_1\rho_2q^{2k}/aq} { 1- b\rho_1\rho_2/aq}
 \left(\frac{aq}{\rho_1\rho_2}\right)^{k} \beta_k(a,b\rho_1\rho_2/aq)
 \bigg).
\end{gather*}
The WP Bailey lemma is the assertion that ${\alpha}^{\prime}_k(a,b)$ and ${\beta}^{\prime}_k(a,b)$ also form a WP Bailey pair. This step too depends on Jackson's sum~\eqref{q-dougall}.

If one begins with the Bailey pair $(\alpha_k)$ and $(\beta_k)$ given by \eqref{alphabeta}, then substituting the WP pair ${\alpha}^{\prime}_k(a,b)$ and ${\beta}^{\prime}_k(a,b)$ into the definition of a WP Bailey pair gives a transformation equivalent to Bailey's~$_{10}\phi_9$ transformation~\cite[equation~(2.9.1)]{GR90}:
\begin{gather}
\sum_{k=0}^N \frac {(1-aq^{2k}) \qrfac{ a, b, c, d, e, f, \lambda aq^{1+N}/ef, q^{-N}}{k}}
{(1-a)\qrfac{q, aq/b, aq/c, aq/d, aq/e, aq/f, efq^{-N}/\lambda, aq^{N+1}}{k}}
q^k \notag\\
\qquad{} = \frac{\qrfac{aq, aq/ef, \lambda q/e,\lambda q/f}{N}} {\qrfac{aq/e, aq/f, \lambda q, \lambda q/ef}{N}}\notag\\
\qquad\quad{} \times\sum_{k=0}^N \frac {(1-\lambda q^{2k}) \qrfac{ \lambda, \lambda b/a, \lambda c/a, \lambda d/a, e, f,
\lambda aq^{1+N}/ef , q^{-N}}{k}} {(1-\lambda)\qrfac{q, aq/b, aq/c, aq/d, \lambda q/e , \lambda q/f, efq^{-N}/a, \lambda q^{N+1}}{k}}
q^k,\label{10p9}
\end{gather}
where $\lambda =qa^2/bcd$.

The main summation and transformation formulas of basic hypergeometric series now follow from \eqref{10p9}. For example, Watson's $q$-analog of Whipple's transformation formula follows by taking the limit as $d\to \infty$, and relabeling the parameters. Other key identities such as the terminating, very well-poised $_6\phi_5$ summation and the terminating, balanced $_3\phi_2$ summation are immediate consequences of Watson's transformation formula. (Note that in the case of elliptic hypergeometric series, one cannot let parameters go to $0$ or $\infty$.)

To summarize, a special case of the Jackson summation yields a Bressoud matrix as well as a~WP Bailey pair. A further special case allows us to compute the inverse of the matrix. Another application of Jackson's sum is used to find the WP Bailey lemma. And finally, an application of this yields Bailey's $_{10}\phi_9$ transformation formula.

The ideas outlined above extend immediately to elliptic hypergeometric series. This was shown by Spiridonov~\cite{VPS2002}. Here we begin with Frenkel and Turaev's~\cite{FT1997} $_{10}V_9$ summation~\cite[equation~(11.4.1)]{GR90}, equation \eqref{10V9} above, the elliptic extension of Jackson's sum. Our goal is to extend this analysis to elliptic extensions of multiple basic hypergeometric series associated with root systems. We take a step in this direction in the next section.

\section[Consequences of an $A_n$ elliptic Jackson summation of Rosengren]{Consequences of an $\boldsymbol{A_n}$ elliptic Jackson summation\\ of Rosengren}\label{sec:dougall1}

When the dimension $n=1$, the WP Bailey transform and lemma are consequences of Jackson's sum. In this section, we consider one of Rosengren's~\cite{HR2004} $A_n$ elliptic Jackson sums, and investigate whether the ideas of Section~\ref{sec:WPBailey} can be extended to this setting.

A multiple series extension of \eqref{Bailey-pair} is as follows
\begin{gather}\label{n-Bailey-pair}
\beta_\N=\multsum{j}{N}{r} B_{\N\j} \alpha_\j.
\end{gather}
Here the sequences $\alpha= (\alpha_\k)$ and $\beta= (\beta_\k)$ are indexed by $n$-tuples $\k$ with non-negative integer components. The rows and columns of the matrix $\B=\left(B_{\k\j}\right)$ are indexed by $n$-tuples~$\k$ and~$\j$ of non-negative integers. Following Milne~\cite{Milne1997}, one can consider these $n$-tuples to be ordered lexicographically. With this ordering the matrix operations can be carried out in the usual manner.
Moreover, the matrix is lower-triangular, so $B_{\N\j}=0$ if $\j>\N$. The entries \smash{$B_{\k\j}= B_{\k\j}(a,b)$}, and the sequences $\alpha_\k = \alpha_\k(a,b)$ and $\beta_\k = \beta_\k(a,b)$ depend on two parameters~$a$ and~$b$ (in addition to~$p$ and~$q$ and perhaps other parameters).

The $A_n$ elliptic Jackson summation theorem that we use is due to Rosengren~\cite[Corollary~5.3]{HR2004}, its $p=0$ case being due to Milne~\cite{Milne1988}. Rosengren's result is
\begin{gather}
\multsum{k}{N}{r} \Bigg( \ellipticvandermonde{x}{k}{n}
\sqprod n \frac{\ellipticqrfac{q^{-N_s}\xover{x}}{k_r} }{\ellipticqrfac{q\xover{x}}{k_r} }\nonumber\\
\qquad\quad{}\times
\smallprod n \frac{\elliptictheta{ ax_rq^{k_r+\sumk}}}{\elliptictheta{ax_r}}
\smallprod n \frac{\ellipticqrfac{ax_r}{\sumk}\ellipticqrfac{dx_r, a^2x_rq^{1+\sumN}/bcd}{k_r}}
{\ellipticqrfac{ax_rq^{1+N_r}}{\sumk}\ellipticqrfac{ax_rq/b, ax_rq/c }{k_r}}\nonumber\\
\qquad\quad{} \times
\frac{\ellipticqrfac{b,c}{\sumk}}{\ellipticqrfac{aq/d,bcdq^{-\sumN}/a}{\sumk}}
q^{\sum\limits_{r=1}^n r k_r}\Bigg)\nonumber\\
\qquad{} = \frac{\ellipticqrfac{aq/bd, aq/cd}{\sumN}}{\ellipticqrfac{aq/d, aq/bcd}{\sumN}}
\smallprod n \frac{\ellipticqrfac{a x_rq, a x_r q/bc}{N_r}}{\ellipticqrfac{a x_rq/b, a x_rq/c}{N_r}} .\label{e-8p7-1}
\end{gather}

To extract the $A_n$ extension of Bressoud's matrix and the definition of a WP Bailey pair, we wish to write the case $b\mapsto qa^2/bcd$ of~\eqref{e-8p7-1} in the form of~\eqref{n-Bailey-pair}. After multiplying both sides by
\begin{gather*}
\frac{\smallprod n \ellipticqrfac{bx_r}{\sumN} } {\sqprod n \ellipticqrfac{q\xover{x}}{N_r} }
\end{gather*}
and rearranging factors, we obtain
\begin{gather}
\frac {\ellipticqrfac{b/a}{\sumN }} {\sqprod n \ellipticqrfac{qx_r/x_s}{N_r}}
\smallprod n \frac{ \ellipticqrfac{bx_r}{\sumN} }{\ellipticqrfac{ax_rq}{N_r}} \nonumber\\
\qquad\quad{} \times\multsum{j}{N}{r} \Bigg( \ellipticvandermonde{x}{j}{n}
\sqprod n \frac{\ellipticqrfac{q^{-N_s}\xover{x}}{j_r} }{\ellipticqrfac{q\xover{x}}{j_r} } \nonumber\\
\qquad\quad {}\times \smallprod n \frac{\elliptictheta{ ax_rq^{j_r+\sumj}}}{\elliptictheta{ax_r}}
\frac{\ellipticqrfac{ax_r}{\sumj}\ellipticqrfac{dx_r, bx_rq^{\sumN}}{j_r}}
{\ellipticqrfac{ax_rq^{1+N_r}}{\sumj}\ellipticqrfac{bcdx_r/a, ax_rq/c }{j_r}}\nonumber\\
\qquad\quad{} \times \frac{\ellipticqrfac{qa^2/bcd, c}{\sumj}}{\ellipticqrfac{aq/d,aq^{1-\sumN}/b}{\sumj}}
q^{\sum\limits_{r=1}^n r j_r}\Bigg)\nonumber\\
\qquad {} = \frac{\ellipticqrfac{bc/a, aq/cd}{\sumN}}{\ellipticqrfac{aq/d}{\sumN}
\sqprod n \ellipticqrfac{qx_r/x_s}{N_r}}
\smallprod n \frac{\ellipticqrfac{b x_r}{\sumN} \ellipticqrfac{bd x_r/a}{N_r}}
{\ellipticqrfac{bcd x_r/a, a x_rq/c}{N_r}} .\label{e-8p7-1-A}
\end{gather}
For a moment, ignore the product in front of the sum and compare the rest with the form~\eqref{n-Bailey-pair}. It is easy to separate the terms in the summand that depend on both $\N$ and $\j$ (and so appear as a~part of~$B_{\N\j}$) and the others that depend on~$\j$ (and thus comprise~$\alpha_\j$). The product on the right-hand side depends only on~$\N$, of course.

We use the elementary identity \eqref{GR11.2.47}
to combine terms, for example:
\begin{gather*}\smallprod n \ellipticqrfac{bx_r}{\sumN} \big(bx_rq^{\sumN};q,p\big)_{j_r} = \smallprod n \ellipticqrfac{bx_r}{\sumN+j_r}.
\end{gather*}
Further, we use \eqref{GR11.2.49r} to reverse the products in $\ellipticqrfac{aq^{1-\sumN}/b}{\sumj}$. We also require Lemma~\ref{th:e-magiclemma2}.

In this manner, we obtain an equation of the form
\begin{gather*}\multsum{j}{N}{r} B^{(1)}_{\N\j}(a, b) \alpha_\j(a,b) =\beta_\N(a,b),
\end{gather*}
where the matrix $\B^{(1)}= (B^{(1)}_{\k\j}(a,b))$ is defined in \eqref{e-B1}, and the sequences $\alpha_\k(a,b)$ and $\beta_\k(a,b)$ are as defined by~\eqref{e-alpha-Bailey-PairB1.2}. These considerations motivate the following definition and Theorem~\ref{th:e-Bailey-Pair-B1.2a} below.

\begin{Definition}[an $A_n$ elliptic Bressoud matrix] Let $\B^{(1)}= \big(B^{(1)}_{\k\j}(a,b)\big)$ with entries indexed by $(\k , \j)$ be defined as
\begin{gather}
B^{(1)}_{\k \j}(a,b) := \frac{\ellipticqrfac{b/a}{\sumk-\sumj}}{ \sqprod n \ellipticqrfac{q^{1+j_r-j_s}\xover x }{k_r-j_r}}
 \smallprod n \frac{\ellipticqrfac{bx_r}{j_r+\sumk}}{\ellipticqrfac{ax_rq}{k_r+\sumj}} .\label{e-B1}
\end{gather}
\end{Definition}
We call $\B^{(1)}$ an \emph{elliptic Bressoud matrix}, because it reduces to a form equivalent to \eqref{Bressoud} when $n=1$ and $p=0$.
The label $A_n$ is placed to indicate that it is associated with $A_n$ series. An equivalent form of the $p=0$ case of this matrix appeared in Milne~\cite[Theorem~3.41]{Milne1997}.

For some applications, it helps to use Lemma~\ref{th:e-magiclemma2} to rewrite the terms of the matrix $\B^{(1)}$ as follows
\begin{gather}
B^{(1)}_{\k\j}(a,b)= \frac
{\ellipticqrfac{b/a}{\sumk }}
{\sqprod n \ellipticqrfac{qx_r/x_s}{k_r}}
\smallprod n \frac{ \ellipticqrfac{bx_r}{\sumk} }{\ellipticqrfac{ax_rq}{k_r}}\nonumber\\
\hphantom{B^{(1)}_{\k\j}(a,b)=}{}\times \ellipticvandermonde{x}{j}{n} \smallprod n \frac{ \ellipticqrfac{bx_rq^{\sumk}}{j_r}} { \ellipticqrfac{ax_rq^{1+k_r}}{\sumj}} \nonumber\\
\hphantom{B^{(1)}_{\k\j}(a,b)=}{} \times \frac
{\sqprod n \ellipticqrfac{q^{-k_s}\xover{x}}{j_r}}
{\ellipticqrfac{aq^{1-\sumk}/b}{\sumj}}
 \left(\frac{a}{b}\right)^{\sumj} q^{\sum\limits_{r=1}^n r j_r} .\label{e-B1-form2}
\end{gather}

\begin{Definition}[WP Bailey pair with respect to a Bressoud matrix] Two sequences $\alpha_\N(a,b)$ and $\beta_\N(a,b)$ are said to form a WP Bailey pair with respect to a Bressoud matrix $\B$ if
\begin{gather} \label{BaileyPair-def}
\beta_\N(a,b) = \multsum{j}{N}{r} B_{\N\j}(a,b) \alpha_{\j} (a,b) .
\end{gather}
\end{Definition}
As we shall see, there are many multivariable Bressoud matrices. That is why we find it necessary to mention the matrix $\B$ with respect to which the sequences form a WP Bailey pair.

\begin{Theorem}[an elliptic WP Bailey pair with respect to $\B^{(1)}$]\label{th:e-Bailey-Pair-B1.2a}
The following sequences
\begin{subequations}\label{e-alpha-Bailey-PairB1.2}
\begin{gather}
\alpha_\k(a,b) :=
\smallprod n \frac{\elliptictheta{ax_rq^{k_r+\sumk}}}{\elliptictheta{ax_r}}
\frac{\ellipticqrfac{ax_r}{\sumk} \ellipticqrfac{dx_r}{k_r}}
{\ellipticqrfac{ax_rq/c, bcdx_r/a }{k_r} }\notag\\
\hphantom{\alpha_\k(a,b) :=}{}
\times
\frac{\ellipticqrfac{c, a^2q/bcd}{\sumk} }{\ellipticqrfac{aq/d}{\sumk}
\sqprod n \ellipticqrfac{q\xover x}{k_r}}
 \left( \frac{b}{a}\right)^{\sumk}, \label{e-alpha-Bailey-PairB1.2a} \\ \intertext{and}
\beta_\k(a,b) :=
\frac{\ellipticqrfac{bc/a, aq/cd}{\sumk}}{\ellipticqrfac{aq/d}{\sumk}
\sqprod n\ellipticqrfac{q\xover x}{k_r}}
\smallprod n \frac{\ellipticqrfac{bx_r}{\sumk}\ellipticqrfac{bdx_r/a}{k_r}}
{\ellipticqrfac{ax_rq/c, bcdx_r/a}{k_r}} ,
\label{e-beta-Bailey-PairB1.2a}
\end{gather}
\end{subequations}
form a WP Bailey pair with respect to $\B^{(1)}$.
\end{Theorem}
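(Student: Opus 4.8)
\emph{The plan.} The pair $(\alpha_\k,\beta_\k)$ of \eqref{e-alpha-Bailey-PairB1.2a}--\eqref{e-beta-Bailey-PairB1.2a} has been reverse-engineered so that the defining relation \eqref{BaileyPair-def} is nothing but Rosengren's summation \eqref{e-8p7-1} in disguise; the task is therefore to make that disguise explicit rather than to establish a new identity. I would begin from \eqref{e-8p7-1}, specialize $b\mapsto qa^2/bcd$, and multiply both sides by $\smallprod n \ellipticqrfac{bx_r}{\sumN}\big/\sqprod n \ellipticqrfac{q\xover x}{N_r}$. Tracking each parameter through the specialization---so that $a^2x_rq^{1+\sumN}/bcd$ becomes $bx_rq^{\sumN}$, the factor $ax_rq/b$ becomes $bcdx_r/a$, $bcdq^{-\sumN}/a$ becomes $aq^{1-\sumN}/b$, and the numerator pair $b,c$ becomes $qa^2/bcd,c$---turns \eqref{e-8p7-1} into the rearranged identity \eqref{e-8p7-1-A}, which I take as the backbone of the argument.

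\emph{Identification.} The right-hand side of \eqref{e-8p7-1-A} is literally $\beta_\N(a,b)$ as in \eqref{e-beta-Bailey-PairB1.2a} (using $\ellipticqrfac{qx_r/x_s}{N_r}=\ellipticqrfac{q\xover x}{N_r}$), so the content lies in matching the left-hand side with $\multsum{j}{N}{r}B^{(1)}_{\N\j}\alpha_\j$. I would divide the summand of \eqref{e-8p7-1-A} by $\alpha_\j(a,b)$: all theta factors, the factors $\ellipticqrfac{ax_r}{\sumj}$, $\ellipticqrfac{dx_r}{j_r}$, $\ellipticqrfac{ax_rq/c,bcdx_r/a}{j_r}$, $\ellipticqrfac{c,a^2q/bcd}{\sumj}$ and $\ellipticqrfac{aq/d}{\sumj}$, together with the denominator $\sqprod n\ellipticqrfac{q\xover x}{j_r}$ common to both, cancel, and the power $(b/a)^{\sumj}$ inverts to $(a/b)^{\sumj}$. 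Multiplying what survives by the $\N$-only prefactor of \eqref{e-8p7-1-A} produces precisely the alternative form \eqref{e-B1-form2} of the Bressoud entry $B^{(1)}_{\N\j}(a,b)$. Thus \eqref{BaileyPair-def} holds with the matrix written as in \eqref{e-B1-form2}.

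\emph{The only real obstacle.} It remains to reconcile the two presentations \eqref{e-B1} and \eqref{e-B1-form2} of $B^{(1)}$, and I expect this power-and-sign bookkeeping to be the sole genuine difficulty. The route is: apply Lemma~\ref{th:e-magiclemma2} to the factor $1\big/\sqprod n \ellipticqrfac{q^{1+j_r-j_s}\xover x}{N_r-j_r}$ in \eqref{e-B1}, which yields the Vandermonde product $\ellipticvandermonde{x}{j}{n}$, the cross terms $\sqprod n \ellipticqrfac{q^{-N_s}\xover x}{j_r}$, the denominator $\sqprod n\ellipticqrfac{q\xover x}{N_r}$, and a factor $(-1)^{\sumj}q^{\sumN\sumj-\binom{\sumj}2+\sum_{r=1}^n(r-1)j_r}$; split $\ellipticqrfac{bx_r}{j_r+\sumN}$ and $\ellipticqrfac{ax_rq}{N_r+\sumj}$ by the addition rule \eqref{GR11.2.47}; and reverse $\ellipticqrfac{b/a}{\sumN-\sumj}$ via \eqref{GR11.2.49r} into $\ellipticqrfac{b/a}{\sumN}\big/\ellipticqrfac{aq^{1-\sumN}/b}{\sumj}$ times $(-1)^{\sumj}(a/b)^{\sumj}q^{\sumj+\binom{\sumj}2-\sumN\sumj}$. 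The two signs then cancel and the two $q$-exponents collapse, using $\sum_{r=1}^n(r-1)j_r+\sumj=\sum_{r=1}^n rj_r$, to the single factor $(a/b)^{\sumj}q^{\sum_{r=1}^n rj_r}$ of \eqref{e-B1-form2}. Since these are exactly the identities already invoked in passing from \eqref{e-8p7-1} to \eqref{e-8p7-1-A}, no ingredient beyond careful accounting enters, and the computation mirrors the basic hypergeometric ($p=0$) case.
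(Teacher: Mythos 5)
Your proposal is correct and is essentially the paper's own argument: the paper verifies the pair by summing $\multsum{j}{N}{r}B^{(1)}_{\N\j}\alpha_\j$ with the $b\mapsto qa^2/bcd$ case of \eqref{e-8p7-1}, using the form \eqref{e-B1-form2} of the matrix, which is exactly your route read in the opposite direction via \eqref{e-8p7-1-A}. Your bookkeeping reconciling \eqref{e-B1} with \eqref{e-B1-form2} (Lemma~\ref{th:e-magiclemma2}, \eqref{GR11.2.47}, \eqref{GR11.2.49r}, with the signs cancelling and the exponents collapsing to $q^{\sum_{r=1}^n rj_r}$) checks out and matches the identities the paper invokes without writing out.
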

\begin{proof} We have already indicated how to discover this theorem. Alternatively, we can verify the theorem as follows. With $\alpha_\j(a,b)$ as above, we compute the sum
\begin{gather}\label{BP-general}
\multsum{j}{N}{r} B_{\N\j}(a, b) \alpha_\j(a,b) ,
\end{gather}
with $\B$ replaced by $\B^{(1)}$ to calculate $\beta_\N(a,b)$. (It is helpful to take the form~\eqref{e-B1-form2} for $B^{(1)}_{\N\j}(a,b)$.) The sum can be summed using the $b\mapsto qa^2/bcd$ case of \eqref{e-8p7-1}. After canceling some factors, we immediately obtain the expression in~\eqref{e-beta-Bailey-PairB1.2a} (with~$\k$ replaced by~$\N$).
\end{proof}

As a corollary, we obtain a unit WP Bailey pair.
\begin{Corollary}\label{e-Bailey-Pair-B1.1} The two sequences
\begin{gather*}
\alpha_\k(a,b) :=
\smallprod n \frac{\elliptictheta{ax_rq^{k_r+\sumk}}}{\elliptictheta{ax_r}}
\frac{\ellipticqrfac{ax_r}{\sumk}}
{\ellipticqrfac{ bx_rq }{k_r} } \cdot
\frac{\ellipticqrfac{ a/b}{\sumk} }{ \sqprod n\ellipticqrfac{q\xover x}{k_r}} \left( \frac{b}{a}\right)^{\sumk},\\
\intertext{and}
\beta_\k(a,b) := \smallprod n \delta_{k_r, 0},
\end{gather*}
form a WP Bailey pair with respect to $\B^{(1)}$.
\end{Corollary}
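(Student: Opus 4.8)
The plan is to obtain the corollary as the specialization $d = aq/c$ of Theorem~\ref{th:e-Bailey-Pair-B1.2a}, exactly mirroring the $n=1$, $p=0$ observation recorded earlier that the unit WP Bailey pair arises by setting $d = aq/c$. The crucial structural point is that the elliptic Bressoud matrix $\B^{(1)}$ defined in \eqref{e-B1} depends only on $a$ and $b$ (besides $q$, $p$, and the $x_r$), and \emph{not} on $c$ or $d$. Hence the defining relation \eqref{BaileyPair-def} with $\B=\B^{(1)}$ is an identity of meromorphic functions in the free parameter $d$, which stays valid under the substitution $d=aq/c$ as long as the sequences remain finite there (true for generic $a$, $b$, $c$, and $x_r$). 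It therefore suffices to check that, under $d=aq/c$, the sequences $\alpha_\k$ and $\beta_\k$ of \eqref{e-alpha-Bailey-PairB1.2} collapse to the two sequences asserted in the corollary.

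For $\alpha_\k$ I would record the elementary consequences of $d=aq/c$: first $dx_r = ax_rq/c$, so $\ellipticqrfac{dx_r}{k_r}$ cancels the denominator factor $\ellipticqrfac{ax_rq/c}{k_r}$ in \eqref{e-alpha-Bailey-PairB1.2a}; second $bcd/a = bq$, so $\ellipticqrfac{bcdx_r/a}{k_r} = \ellipticqrfac{bx_rq}{k_r}$; and third $aq/d = c$ together with $a^2q/bcd = a/b$, so that $\ellipticqrfac{c}{\sumk}/\ellipticqrfac{aq/d}{\sumk} = 1$ while $\ellipticqrfac{a^2q/bcd}{\sumk} = \ellipticqrfac{a/b}{\sumk}$. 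Feeding these four simplifications into \eqref{e-alpha-Bailey-PairB1.2a} yields precisely the $\alpha_\k$ claimed in the corollary.

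For $\beta_\k$ the same substitutions give $\ellipticqrfac{bdx_r/a}{k_r} = \ellipticqrfac{bx_rq}{k_r} = \ellipticqrfac{bcdx_r/a}{k_r}$, so these numerator and denominator factors in \eqref{e-beta-Bailey-PairB1.2a} cancel, and $\ellipticqrfac{aq/d}{\sumk} = \ellipticqrfac{c}{\sumk}$. The decisive factor is $\ellipticqrfac{aq/cd}{\sumk}$: at $d=aq/c$ one has $aq/cd = 1$, so $\ellipticqrfac{aq/cd}{\sumk} = \ellipticqrfac{1}{\sumk} = \prod_{j=0}^{\sumk-1}\elliptictheta{q^j}$. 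Since $\elliptictheta{1}=0$ (the factor $\pqrfac{1}{\infty}{p}$ vanishes), this product is $0$ whenever $\sumk\ge 1$ and is the empty product $1$ when $\sumk=0$. As all remaining factors are finite, $\beta_\k = 0$ unless $\sumk=0$, i.e.\ unless every $k_r=0$, in which case $\beta_\k=1$; this is exactly $\smallprod n \delta_{k_r,0}$.

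The only step demanding genuine care is the $\beta$ computation: one must confirm that the vanishing of $\ellipticqrfac{1}{\sumk}$ is not offset by a compensating pole introduced by the specialization. Since the surviving factors $\ellipticqrfac{bc/a}{\sumk}$, $\ellipticqrfac{c}{\sumk}$, $\ellipticqrfac{bx_r}{\sumk}$, $\ellipticqrfac{ax_rq/c}{k_r}$, and $\sqprod n \ellipticqrfac{q\xover x}{k_r}$ are all finite and generically nonzero, no such cancellation occurs and the argument closes cleanly. As a check one could instead verify the corollary directly by inserting the stated $\alpha_\k$ into \eqref{BaileyPair-def} and summing by the $d=aq/c$ case of \eqref{e-8p7-1}, but the specialization route above is shorter and is the one I would write up.
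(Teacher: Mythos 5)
Your proposal is correct and is exactly the paper's proof, which consists of the single sentence ``Take $d=aq/c$ in \eqref{e-alpha-Bailey-PairB1.2} to obtain this simpler WP Bailey pair''; you have merely written out the verification, including the key point that $\ellipticqrfac{aq/cd}{\sumk}=\ellipticqrfac{1}{\sumk}$ vanishes for $\sumk\ge 1$ because $\elliptictheta{1}=0$. One small slip in your $\beta$ computation: at $d=aq/c$ one has $bd/a=bq/c$, not $bq=bcd/a$, so the claimed cancellation $\ellipticqrfac{bdx_r/a}{k_r}=\ellipticqrfac{bcdx_r/a}{k_r}$ is false in general, but this is harmless since both factors are generically finite and nonzero and the conclusion rests entirely on the vanishing of $\ellipticqrfac{1}{\sumk}$.
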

\begin{proof} Take $d=aq/c$ in \eqref{e-alpha-Bailey-PairB1.2} to obtain this simpler WP Bailey pair.
\end{proof}

Take $d=aq/c$ in \eqref{e-8p7-1-A} to obtain an equivalent form of Corollary~\ref{e-Bailey-Pair-B1.1}. This is an expression of the form
 \begin{gather}\label{n-Bailey-inversion}
 \multsum{j}{N}{r} F_{\N\j} \alpha_\j = \smallprod n \delta_{N_r, 0} .
\end{gather}
One can view this as a matrix inversion and from here obtain an explicit formula for the inverse of $\B^{(1)}$.
\begin{Corollary}[inverse of $\B^{(1)}$] \label{b1-inverse}
Let $\B^{(1)}= \big(B^{(1)}_{\k\j}(a,b)\big)$ be defined by \eqref{e-B1}. Then the entries of the inverse are given by
\begin{gather}
\big(B^{(1)}(a,b)\big)^{-1}_{\k \j} = \smallprod n \frac{\elliptictheta{ax_rq^{k_r+\sumk}}}{\elliptictheta{ax_r}}
\frac{\elliptictheta{bx_rq^{j_r+\sumj}}}{\elliptictheta{bx_r}}
\cdot
\left( \frac{b}{a}\right)^{\sumk-\sumj}\nonumber\\
\hphantom{\big(B^{(1)}(a,b)\big)^{-1}_{\k \j} =}{} \times
\frac{\ellipticqrfac{a/b}{\sumk-\sumj}}
{ \sqprod n \ellipticqrfac{q^{1+j_r-j_s}\xover x }{k_r-j_r}}
 \smallprod n \frac{\ellipticqrfac{ax_r}{j_r+\sumk}}{\ellipticqrfac{bx_rq}{k_r+\sumj}} .\label{e-B1-inverse}
\end{gather}
\end{Corollary}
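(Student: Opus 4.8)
The plan is to read off $\big(\B^{(1)}\big)^{-1}$ from the unit WP Bailey pair of Corollary~\ref{e-Bailey-Pair-B1.1}, in exact analogy with the $n=1$ computation sketched in the Remarks of Section~\ref{sec:WPBailey}. By Corollary~\ref{e-Bailey-Pair-B1.1} the unit alpha satisfies
\[
\multsum{j}{N}{r} B^{(1)}_{\N\j}(a,b)\,\alpha_\j(a,b)=\smallprod n \delta_{N_r,0},
\]
which I interpret as the assertion that the column $\big(\alpha_\j(a,b)\big)$ is the $\K=\mathbf 0$ column of the inverse. To recover every column I would promote this to the full relation $\multsum{j}{N}{r} B^{(1)}_{\N\j}(a,b)\big(B^{(1)}(a,b)\big)^{-1}_{\j\K}=\smallprod n\delta_{N_r,K_r}$ by a parameter shift; since $\B^{(1)}$ is lower-triangular with nonzero diagonal, its inverse is unique, so the resulting entries must be \eqref{e-B1-inverse}.

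The shift I would use is the $A_n$ analogue of the $n=1$ substitution $a\mapsto aq^{2K}$, $b\mapsto bq^{2K}$, namely $x_r\mapsto x_rq^{K_r}$ together with $a\mapsto aq^{\sumK}$ and $b\mapsto bq^{\sumK}$, applied to the displayed identity after replacing $\N$ by $\N-\K$ and shifting the summation index $\j\mapsto\j-\K$. Under this combined shift the quotient $b/a$, the difference $\sumk-\sumj$, the differences $k_r-j_r$, and the factors $q^{1+j_r-j_s}\xover x$ entering \eqref{e-B1} are all invariant, so the first two ingredients of \eqref{e-B1} are unchanged; the only work concerns the ``mixed'' factorials $\ellipticqrfac{bx_r}{j_r+\sumk}$ and $\ellipticqrfac{ax_rq}{k_r+\sumj}$, whose base is displaced by $q^{K_r+\sumK}$ while their index shifts down by the same amount. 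Splitting each of these by \eqref{GR11.2.47} isolates the intended factorial times a finite product depending only on $\K$, and collecting these shows the Bressoud matrix is quasi-periodic,
\[
B^{(1)}_{\N-\K,\,\j-\K}\big(aq^{\sumK},bq^{\sumK}\big)\Big|_{x_r\mapsto x_rq^{K_r}}
=\smallprod n\frac{\ellipticqrfac{ax_rq}{K_r+\sumK}}{\ellipticqrfac{bx_r}{K_r+\sumK}}\;B^{(1)}_{\N\j}(a,b),
\]
the scalar being the $A_n$ counterpart of the factor $(aq;q)_{2K}/(b;q)_{2K}$ in the $n=1$ remark.

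Substituting this into the shifted identity turns its right-hand side into $\smallprod n\delta_{N_r,K_r}$ and identifies
\[
\big(B^{(1)}(a,b)\big)^{-1}_{\j\K}=\smallprod n\frac{\ellipticqrfac{ax_rq}{K_r+\sumK}}{\ellipticqrfac{bx_r}{K_r+\sumK}}\;\alpha_{\j-\K}\big(aq^{\sumK},bq^{\sumK}\big)\Big|_{x_r\mapsto x_rq^{K_r}},
\]
with $\alpha$ the unit alpha of Corollary~\ref{e-Bailey-Pair-B1.1}. The factor $1/\ellipticqrfac{q\xover x}{j_r-K_r}$ carried by $\alpha_{\j-\K}$ makes this vanish whenever $j_r<K_r$ by \eqref{term-from-below}, so the shifted summation range is in fact the full range and the inverse is genuinely lower-triangular. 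The remaining task, and the main bookkeeping obstacle, is to expand the right-hand side into \eqref{e-B1-inverse}: using \eqref{GR11.2.47} once more, the $ax_r$-factors telescope into $\elliptictheta{ax_rq^{j_r+\sumj}}\ellipticqrfac{ax_r}{K_r+\sumj}/\elliptictheta{ax_r}$ and the $bx_r$-factors into $\elliptictheta{bx_rq^{K_r+\sumK}}/\big(\elliptictheta{bx_r}\,\ellipticqrfac{bx_rq}{j_r+\sumK}\big)$, the stray finite products cancelling the normalising scalar, while $\big(b/a\big)^{\sumj-\sumK}$, $\ellipticqrfac{a/b}{\sumj-\sumK}$ and the type-$A$ Vandermonde denominator are already in final form. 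Relabelling the row index $\j$ as $\k$ and the column index $\K$ as $\j$ then yields precisely \eqref{e-B1-inverse}.
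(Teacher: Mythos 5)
Your proposal is correct and follows essentially the same route as the paper: the paper likewise starts from the $d=aq/c$ (unit WP Bailey pair) case, replaces $\N$ by $\N-\K$, shifts the summation index, applies the substitutions $x_r\mapsto x_rq^{K_r}$, $a\mapsto aq^{\sumK}$, $b\mapsto bq^{\sumK}$, and simplifies with \eqref{GR11.2.47} to read off the entries. The only cosmetic difference is that you package the computation as an explicit quasi-periodicity identity for $\B^{(1)}$ (the $A_n$ analogue of the factor $(aq;q)_{2K}/(b;q)_{2K}$ from the $n=1$ remark), whereas the paper carries out the equivalent simplification directly inside the shifted sum.
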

\begin{Remark*}
When $p=0$, this matrix inversion is equivalent to a result of Milne~\cite[Theorem~3.41]{Milne1997}. Rosengren and the second author~\cite{RS2017} have proved more general elliptic matrix inversions, which contain the inverses of most of the matrices in this paper.
 \end{Remark*}
\begin{proof} Take $d=aq/c$ in \eqref{e-8p7-1-A} to obtain an expression of the form~\eqref{n-Bailey-inversion}. We replace $N_r$ by $N_r-K_r$, for $r=1, 2, \dots, n$ and obtain an expression of the form
 \begin{gather*}
 \multsum{j}{N_r-K}{r} F_{(\N-\K),\j} \alpha_\j=\smallprod n \delta_{N_r, K_r} .
\end{gather*}
We shift the indices to write it as
\begin{gather*}
\sum\limits_{\substack{{K_r\le j_r \le N_r} \\
{r =1,2,\dots, n}}} F_{(\N-\K),(\j-\K)} \alpha_{\j-\K} =\smallprod n \delta_{N_r, K_r} .
\end{gather*}
Next, we substitute $x_r\mapsto x_rq^{K_r}$, for $r=1, 2, \dots, n$, $a\mapsto aq^{\sumK}$, $b\mapsto bq^{\sumK}$, and simplify terms using \eqref{GR11.2.47} to obtain
\begin{gather*}
\sum\limits_{\substack{{K_r\le j_r \le N_r} \\
{r =1,2,\dots, n}}} \Bigg( \frac{\ellipticqrfac{b/a}{\sumN-\sumj}}
{ \sqprod n \ellipticqrfac{q^{1+j_r-j_s}\xover x }{N_r-j_r}}
 \smallprod n \frac{\ellipticqrfac{bx_r}{j_r+\sumN}}{\ellipticqrfac{ax_rq}{N_r+\sumj}} \\
\qquad\quad{} \times
\smallprod n \frac{\elliptictheta{ax_rq^{j_r+\sumj}}}{\elliptictheta{ax_r}}
\frac{\elliptictheta{bx_rq^{K_r+\sumK}}}{\elliptictheta{bx_r}}
\cdot \left( \frac{b}{a}\right)^{\sumj-\sumK}\\
\qquad\quad{} \times
\frac{\ellipticqrfac{a/b}{\sumj-\sumK}}
{ \sqprod n \ellipticqrfac{q^{1+K_r-K_s}\xover x }{j_r-K_r}}
 \smallprod n \frac{\ellipticqrfac{ax_r}{K_r+\sumj}}{\ellipticqrfac{bx_rq}{j_r+\sumK}} \Bigg)\\
\qquad{} = \smallprod n \delta_{N_r, K_r}.
\end{gather*}
From here it is easy to read off the entries of the inverse matrix.
\end{proof}

Consider the inverse relation
\begin{gather} \label{multivariable-inverse-relation}
\alpha_\N(a,b) = \multsum{j}{N}{r} (B(a,b))^{-1}_{\N\j} \beta_{\j} (a,b),
\end{gather}
where $\B=\B^{(1)}$, and $\alpha_\k$ and $\beta_\k$ are defined as in Theorem~\ref{th:e-Bailey-Pair-B1.2a} and $(B^{(1)}(a,b))^{-1}_{\k\j} $ is given by~\eqref{e-B1-inverse}. Next, simplify using~\eqref{GR11.2.50}, \eqref{GR11.2.47} and Lemma~\ref{th:e-magiclemma2}. If we now make the substitutions $a\mapsto qa^2/bcd$, $b\mapsto c$, $c\mapsto aq/bd$ and $d\mapsto aq/bc$, we again obtain~\eqref{e-8p7-1}. This is an interesting symmetry of Rosengren's result.

\begin{Theorem}[an elliptic $\big(\B^{(1)} \to\B^{(1)}\big)$ WP Bailey lemma] \label{th:e-WP-BaileyLemma-B1}
Suppose $\alpha_\N(a,b)$ and $\beta_\N(a,b)$ form a WP Bailey pair with respect to the matrix $\B^{(1)}$. Let ${\alpha}^{\prime}_\N(a,b)$ and ${\beta}^{\prime}_\N(a,b)$ be defined as follows
\begin{subequations}
\begin{gather}
{\alpha}^{\prime}_\N(a,b) := \frac{\ellipticqrfac{\rho_1 }{\sumN}}{\ellipticqrfac{aq/\rho_2}{\sumN}}
 \smallprod n \frac{\ellipticqrfac{\rho_2x_r}{N_r}} {\ellipticqrfac{ax_rq/\rho_1}{N_r}}
\cdot \left(\frac{aq}{\rho_1\rho_2}\right)^{\sumN} \alpha_\N(a, b\rho_1\rho_2/aq), \label{e-alphaprime-B1}\\
{\beta}^{\prime}_\N (a,b) :=
\frac{\ellipticqrfac{b\rho_1/a }{\sumN}}{\ellipticqrfac{aq/\rho_2}{\sumN}}
 \smallprod n \frac{\ellipticqrfac{b\rho_2x_r/a}{N_r}}{\ellipticqrfac{ax_rq/\rho_1}{N_r}}\notag \\
\hphantom{{\beta}^{\prime}_\N (a,b) :=}{} \times \multsum{k}{N}{r} \Bigg(
\frac{ \ellipticqrfac{\rho_1}{\sumk} }{\ellipticqrfac{b\rho_1/a}{\sumk}}
\smallprod n \frac{\ellipticqrfac{\rho_2x_r}{k_r}}
{\ellipticqrfac{b\rho_2x_r/a}{k_r}}
 \notag \\
\hphantom{{\beta}^{\prime}_\N (a,b) :=}{}
\times \smallprod n \frac{\elliptictheta{ b\rho_1\rho_2x_rq^{k_r+\sumk}/{aq}}}{\elliptictheta{ b\rho_1\rho_2x_r/{aq}}}
\frac{\ellipticqrfac{bx_r}{k_r+\sumN}}{\ellipticqrfac{b\rho_1\rho_2x_r/a}{\sumk+N_r}} \notag\\
\hphantom{{\beta}^{\prime}_\N (a,b) :=}{}
\times \frac{\ellipticqrfac{aq/\rho_1\rho_2}{\sumN-\sumk} }
{\sqprod n \ellipticqrfac{q^{1+k_r-k_s}\xover x }{N_r-k_r}}
\left(\frac{aq}{\rho_1\rho_2}\right)^{\sumk} \beta_\k(a,b\rho_1\rho_2/aq) \Bigg).\label{e-betaprime-B1}
\end{gather}
\end{subequations}
Then ${\alpha}^{\prime}_\N(a,b)$ and ${\beta}^{\prime}_\N(a,b)$ also form a WP Bailey pair with respect to $\B^{(1)}$.
\end{Theorem}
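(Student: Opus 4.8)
The plan is to verify directly that the primed sequences satisfy the defining relation \eqref{BaileyPair-def} with respect to $\B^{(1)}$, namely
\[
\beta'_\N(a,b) = \multsum{j}{N}{r} B^{(1)}_{\N\j}(a,b)\, \alpha'_\j(a,b).
\]
Write $b' := b\rho_1\rho_2/aq$. The crucial observation is that the hypothesis ``$(\alpha_\bullet,\beta_\bullet)$ is a WP Bailey pair'' holds as an identity in the parameter $b$, hence remains valid after the substitution $b\mapsto b'$; thus $\beta_\k(a,b') = \multsum{j}{k}{r} B^{(1)}_{\k\j}(a,b')\,\alpha_\j(a,b')$. Since every sequence $\alpha_\bullet(a,b')$ arises as the $\alpha$-part of some WP Bailey pair, it suffices to prove the desired relation as a formal linear identity in the independent quantities $\{\alpha_\j(a,b')\}_{\j}$, that is, coefficient by coefficient.

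First I would substitute this Bailey relation for $\beta_\k(a,b')$ into the definition \eqref{e-betaprime-B1} of $\beta'_\N$, and simultaneously insert the definition \eqref{e-alphaprime-B1} of $\alpha'_\j$ into the right-hand side above. On the left this produces a double sum over $\k$ and $\j$; interchanging the order of summation is legitimate because the triangularity of $\B^{(1)}$ (coming from \eqref{term-from-below}) forces $j_r\le k_r$ for all $r$. Equating the coefficient of each $\alpha_\j(a,b')$ then reduces the whole lemma to a single matrix identity of the schematic shape
\[
P_\N \sum_{\substack{j_r\le k_r\le N_r\\ r=1,\dots,n}} W_{\N\k}\, B^{(1)}_{\k\j}(a,b') = B^{(1)}_{\N\j}(a,b)\, V_\j,
\]
where $W_{\N\k}$ denotes the $(\N,\k)$-summand appearing in \eqref{e-betaprime-B1} (stripped of $\beta_\k$), and $P_\N$, $V_\j$ are the $\k$-free prefactors coming from \eqref{e-betaprime-B1} and \eqref{e-alphaprime-B1} respectively. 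A short inspection shows that the ``triangular part'' of $W_{\N\k}$ is exactly $B^{(1)}_{\N\k}(b',b)$, while its remaining factors supply the very-well-poised theta term $\smallprod n \elliptictheta{b'x_rq^{k_r+\sumk}}/\elliptictheta{b'x_r}$ together with the $\rho_1,\rho_2$-factors.

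The heart of the matter is the evaluation of the inner sum over $\k$. After shifting $\k\mapsto \k+\j$ to move the lower summation limit to $\mathbf 0$, and performing the accompanying substitution $x_r\mapsto x_rq^{j_r}$ with its induced shifts $a\mapsto aq^{\sumj}$, $b'\mapsto b'q^{\sumj}$ (exactly the device used in the proof of Corollary~\ref{b1-inverse}), the sum takes the standard terminating, balanced, very-well-poised form of Rosengren's $A_n$ elliptic Jackson summation \eqref{e-8p7-1}. The very-well-poised theta factor identifies the parameter ``$a$'' of \eqref{e-8p7-1} with $b'=b\rho_1\rho_2/aq$, while the factors $\ellipticqrfac{\rho_1}{\sumk}$, $\smallprod n \ellipticqrfac{\rho_2 x_r}{k_r}$ and the surviving factors of $B^{(1)}_{\k\j}(a,b')$ furnish the remaining numerator parameters; one must check that the elliptic balancing condition of \eqref{e-8p7-1} holds identically under this identification. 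Summing by \eqref{e-8p7-1} collapses the inner sum to a product, which after simplification via \eqref{GR11.2.47}, \eqref{GR11.2.49r}, \eqref{GR11.2.50} and Lemma~\ref{th:e-magiclemma2} must be shown to coincide with $B^{(1)}_{\N\j}(a,b)\,V_\j/P_\N$.

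I expect the principal obstacle to be precisely this last computational step: tracking how the very-well-poised theta term $\elliptictheta{b'x_rq^{k_r+\sumk}}$ and the numerous theta-shifted factorials transform under the combined shift $\k\mapsto\k+\j$ and reindexing $x_r\mapsto x_rq^{j_r}$, pinning down the four parameters of \eqref{e-8p7-1} so that the balancing condition is met, and then matching the resulting product against $B^{(1)}_{\N\j}(a,b)\,V_\j$. The reduction to a matrix identity, the interchange of summation, and the recognition of $B^{(1)}_{\N\k}(b',b)$ inside $W_{\N\k}$ are all conceptually clean; the real danger lies entirely in the bookkeeping of shifted factorials, heavier than, but structurally identical to, the manipulations already carried out for Corollary~\ref{b1-inverse}.
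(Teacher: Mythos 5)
Your proposal follows essentially the same route as the paper's proof: substitute the Bailey-pair relation for $\beta_\k(a,b\rho_1\rho_2/aq)$ into \eqref{e-betaprime-B1}, interchange and shift the summations, evaluate the inner sum by Rosengren's $A_n$ elliptic Jackson summation \eqref{e-8p7-1} (with the very-well-poised parameter identified as $b\rho_1\rho_2q^{\sumj}/aq$ after the reindexing $x_r\mapsto x_rq^{j_r}$, $N_r\mapsto N_r-j_r$), and match the resulting product against $B^{(1)}_{\N\j}(a,b)$ times the prefactor of \eqref{e-alphaprime-B1} using \eqref{GR11.2.47}, \eqref{GR11.2.50} and Lemma~\ref{th:e-magiclemma2}. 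Your observation that the $(\N,\k)$-dependent part of the summand in \eqref{e-betaprime-B1} is precisely $B^{(1)}_{\N\k}(b\rho_1\rho_2/aq,\,b)$ is correct and consistent with the structural remark the paper makes in Section~\ref{sec:summary}.
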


\begin{Remark*} When $p=0$, $n=1$ and $x_1=1$, then Theorem~\ref{th:e-WP-BaileyLemma-B1} reduces to Theorem~7 of Andrews~\cite{Andrews2001}. When $n=1$ and $x_1=1$, then Theorem~\ref{th:e-WP-BaileyLemma-B1} reduces to the elliptic WP Bailey lemma by Spiridonov~\cite[Theorem~4.3]{VPS2002}. When $p=0$, Theorem~\ref{th:e-WP-BaileyLemma-B1} reduces to an $A_n$ WP Bailey lemma given by Zhang and Liu~\cite{ZL2016}. A~slightly different formulation of Theorem~\ref{th:e-WP-BaileyLemma-B1} appears in unpublished notes of Warnaar~\cite{SOW-notes-2016}.
\end{Remark*}

\begin{proof} Our proof is an extension of Andrews' proof~\cite[Theorem~7]{Andrews2001}. We begin with the expression \eqref{e-betaprime-B1} for $\beta_\N^{\prime}(a,b)$. Substitute for $\beta_\k(a,b\rho_1\rho_2/aq)$ from \eqref{BaileyPair-def} written in the form:
\begin{gather}\label{BP-def2}
\beta_\k(a, b\rho_1\rho_2/aq) = \multsum{j}{k}{r} B_{\k\j}(a, b\rho_1\rho_2/aq) \alpha_\j(a,b\rho_1\rho_2/aq),
\end{gather}
with $\B$ replaced by $\B^{(1)}$, to obtain a double sum. After interchanging the sums, the inner sum is summed using \eqref{e-8p7-1} and the result can be recognized as the defining condition for a WP Bailey pair with respect to the matrix $\B^{(1)}$. The details are as follows.

We interchange the sums and shift the index using the following:
\begin{gather*}\label{interchange-shift-sums}
\multsum{k}{N}{r} \ \multsum{j}{k}{r} A_{\j,\k} =\multsum{j}{N}{r}
\sum\limits_{\substack{{0\le k_r \le N_r-j_r} \\
{r =1,2,\dots, n}}} A_{\j,\k+\j}.
\end{gather*}
We now need the following simplification which follows from Lemma~\ref{th:e-magiclemma2}, by replacing $N_r\mapsto N_r-j_r$ and $x_r\mapsto x_rq^{j_r}$, for $r=1, 2, \dots, n$,
\begin{gather*}
\sqprod n
 \frac{1}{\ellipticqrfac{q^{1+k_r-k_s+j_r-j_s}\xover x }{N_r-k_r-j_r}} =
\triprod{n}
 \frac{\elliptictheta{q^{k_r-k_s+j_r-j_s} \xover {x} }}{\elliptictheta{q^{j_r-j_s} \xover{x}}} \\
\qquad{} \times
 \sqprod n \frac{\ellipticqrfac{q^{-N_s+j_r}\xover x }{k_r}}{\ellipticqrfac{q^{1+j_r-j_s}\xover{x}}{N_r-j_r} } \cdot
(-1)^{\sumk} q^{\sumN\sumk-\sumk\sumj-\binom{\sumk}2 +\sum\limits_{r=1}^n (r-1)k_r}.
\end{gather*}
We also need the elementary identities \eqref{GR11.2.50} and \eqref{GR11.2.47}. In this manner, we obtain the following expression for $\beta_\N^{\prime}(a,b)$
\begin{gather}
\frac{\ellipticqrfac{b\rho_1/a }{\sumN}}{\ellipticqrfac{aq/\rho_2}{\sumN}}
 \smallprod n \frac{\ellipticqrfac{b\rho_2x_r/a}{N_r}}
{\ellipticqrfac{ax_rq/\rho_1}{N_r}}\nonumber\\
\qquad{} \times \multsum{j}{N}{r} \vast(
\frac{\ellipticqrfac{\rho_1}{\sumj}
\ellipticqrfac{aq/\rho_1\rho_2}{\sumN-\sumj} }{\ellipticqrfac{b\rho_1/a}{\sumj}
\sqprod n \ellipticqrfac{q^{1+j_r-j_s}\xover{x}}{N_r-j_r} }\nonumber\\
\qquad\qquad{} \times \smallprod n
\frac{\ellipticqrfac{ \rho_2x_r }{j_r}
\ellipticqrfac{bx_r}{j_r+\sumN}\qrfac{b\rho_1\rho_2x_r/aq}{j_r+\sumj}}
{\ellipticqrfac{ b\rho_2x_r/a }{j_r} \ellipticqrfac{ax_rq}{j_r+\sumj}
 \ellipticqrfac{b\rho_1\rho_2x_r/a}{\sumj+N_r}}\nonumber\\
\qquad\qquad{} \times \smallprod n \frac{\elliptictheta{b\rho_1\rho_2x_rq^{j_r+\sumj}/aq}}{\elliptictheta{ b\rho_1\rho_2x_r/aq}} \cdot
\left(\frac{aq}{\rho_1\rho_2} \right)^{\sumj} \alpha_\j (a, b\rho_1\rho_2/aq)\nonumber\\
\qquad{} \times
\sum\limits_{\substack{{0\le k_r \le N_r-j_r} \\ {r =1,2,\dots, n}}} \Bigg(
\triprod n \frac{\elliptictheta{q^{j_r-j_s+k_r-k_s}x_r/x_s}}
{\elliptictheta{q^{j_r-j_s}x_r/x_s}}
\sqprod n\frac{\ellipticqrfac{q^{-N_s+j_r}\xover x}{k_r}}{\ellipticqrfac{q^{1+j_r-j_s}\xover x}{k_r}}\nonumber\\
\qquad\qquad{} \times
\smallprod n \frac{\elliptictheta {b\rho_1\rho_2 x_rq^{j_r+\sumj+k_r+|\mathbf{k}|}/aq}}
{\elliptictheta{b\rho_1\rho_2 x_rq^{j_r+\sumj}/aq}}
\smallprod n \frac {\ellipticqrfac{b\rho_1\rho_2x_rq^{j_r+\sumj}/aq}{\sumk}}
{\ellipticqrfac{b\rho_1\rho_2x_rq^{N_r+\sumj}/a}{\sumk}}\nonumber\\
\qquad\qquad{} \times \smallprod n \frac{\ellipticqrfac{\rho_2x_rq^{j_r},
bx_rq^{j_r+\sumN} }{k_r}}
{\ellipticqrfac{ b\rho_2 x_rq^{j_r}/a, ax_rq^{1+j_r+\sumj} }{k_r}}\nonumber\\
\qquad\qquad{} \times
\frac{\ellipticqrfac{\rho_1q^{\sumj},b\rho_1\rho_2/a^2q }{\sumk}}
{\ellipticqrfac{b\rho_1q^{\sumj}/a,\rho_1\rho_2q^{\sumj -\sumN}/a }{\sumk}}
q^{\sum\limits_{r=1} ^{n}r\,k_r}\Bigg)\vast).\label{e-bailey-lemma-B1-a}
\end{gather}
The inner sum can be summed using \eqref{e-8p7-1}. Take the equivalent formulation of \eqref{e-8p7-1} obtained by replacing $c$ by $a^2q^{1+\sumN}/bcd$ and use the following substitutions: $x_r\mapsto x_rq^{j_r}$ and $N_r\mapsto N_r-j_r$ for $r=1, 2, \dots, n$,
$a\mapsto b\rho_1\rho_2q^{\sumj}/aq$, $b\mapsto \rho_1q^{\sumj}$, $c\mapsto \rho_2$, $d\mapsto bq^{\sumN}$. In this manner, we find that the inner sum in~\eqref{e-bailey-lemma-B1-a} equals
\begin{gather*}
\frac{\ellipticqrfac{b/a, \rho_2q^{-\sumN}/a}{\sumN-\sumj}}
{\ellipticqrfac{\rho_1\rho_2q^{\sumj-\sumN}/a, b\rho_1 q^{\sumj}/a}{\sumN-\sumj}}
\smallprod n \frac{\ellipticqrfac{b\rho_1\rho_2x_rq^{j_r+\sumj}/a, \rho_1 q^{-N_r}/ax_r}{N_r-j_r}}
{\ellipticqrfac{b\rho_2x_rq^{j_r}/a, q^{-N_r-\sumj}/ax_r}{N_r-j_r}}.
\end{gather*}
Now we use \eqref{GR11.2.50} and \eqref{GR11.2.47} to write the sum (and therefore, $\beta_\N^{\prime}(a,b)$) in the form
\begin{gather*}\label{BP-prime}
\multsum{j}{N}{r} B_{\N\j} \alpha^{\prime}_\j (a,b),
\end{gather*}
where $\B= \B^{(1)}$ and $\alpha^{\prime}_\j (a,b)$ is defined by \eqref{e-alphaprime-B1}. This shows that ${\alpha}^{\prime}_\N(a,b)$ and ${\beta}^{\prime}_\N(a,b)$ form a WP Bailey pair with respect to $\B^{(1)}$.
\end{proof}

\begin{Remark}\label{rem:B1-B1-10p9} An elliptic $A_n$ Bailey $_{10}\phi_9$ transformation formula follows immediately by applying the $\B^{(1)}\to\B^{(1)}$ elliptic Bailey lemma in Theorem~\ref{th:e-WP-BaileyLemma-B1} to the WP Bailey pair in Theorem~\ref{th:e-Bailey-Pair-B1.2a}. This $A_n$ elliptic Bailey transformation is due to Rosengren~\cite[Corollary~8.1]{HR2004}. When $p=0$, this reduces to an $A_n$ Bailey $_{10}\phi_9$ transformation formula found by Denis and Gustafson~\cite{DG1992} and independently, by Milne and Newcomb~\cite[Theorem~3.1]{MN1996}. When $p=0$, this was noted previously by Zhang and Liu~\cite{ZL2016}.
\end{Remark}

We used an equivalent, altered formulation of the elliptic Jackson sum in \eqref{e-8p7-1} in our proof of Theorem~\ref{th:e-WP-BaileyLemma-B1}. More precisely, we altered~\eqref{e-8p7-1} by a specific substitution of variables with the effect that the occurrence of the nonnegative integer sequence $\N$ in the respective factors got changed. By using the altered formulation of~\eqref{e-8p7-1}, we can extract another Bressoud matrix and corresponding WP Bailey pair. The matrix $\B^{(2)}= (B^{(2)}_{\k\j}(a,b))$ is defined as follows.
\begin{Definition}[an $A_n$ elliptic Bressoud matrix] We define the matrix $\B^{(2)}$ with entries indexed by $(\k , \j)$ as
\begin{gather}
B^{(2)}_{\k \j}(a,b) := \frac{\ellipticqrfac{b}{\sumk+\sumj} \smallprod n \ellipticqrfac{bq^{\sumk-k_r}/ax_r}{k_r-j_r}}
{\sqprod n \ellipticqrfac{q^{1+j_r-j_s}\xover x }{k_r-j_r} \smallprod n \ellipticqrfac{ax_rq}{k_r+\sumj}}. \label{e-B2}
\end{gather}
\end{Definition}

\begin{Theorem}[an elliptic WP Bailey pair with respect to $\B^{(2)}$]\label{th:e-Bailey-Pair-B2.2}
The two sequences
\begin{subequations}
\begin{gather}
\alpha_\k(a,b) :=
\smallprod n \frac{\elliptictheta{ax_rq^{k_r+\sumk}}}{\elliptictheta{ax_r}}
\frac{\ellipticqrfac{ax_r}{\sumk} \ellipticqrfac{cx_r, dx_r}{k_r}}
{\ellipticqrfac{bcdx_r/a }{k_r} }\notag\\
\hphantom{\alpha_\k(a,b) :=}{} \times
 \frac{\ellipticqrfac{ a^2q/bcd}{\sumk} }{\ellipticqrfac{aq/c, aq/d}{\sumk}
 \sqprod n \ellipticqrfac{q\xover x}{k_r}}
 \Big(\frac{b}{a}\Big)^{\sumk}
 q^{\sum\limits_{r<s}k_r k_s}
 \smallprod n x_r^{-k_r},
 \label{e-alpha-Bailey-PairB2.2} \\
 \intertext{and}
\beta_\k(a,b) :=
\frac{\ellipticqrfac{b, bc/a, bd/a}{\sumk}}{\ellipticqrfac{aq/c, aq/d}{\sumk}
\sqprod n \ellipticqrfac{q\xover x}{k_r}}
\smallprod n \frac{\ellipticqrfac{aq^{1+\sumk-k_r}/cdx_r}{k_r}}
{\ellipticqrfac{ bcdx_r/a}{k_r}},\label{e-beta-Bailey-PairB2.2}
\end{gather}
\end{subequations}
form a WP Bailey pair with respect to $\B^{(2)}$.
\end{Theorem}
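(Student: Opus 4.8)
The plan is to verify directly the defining relation of a WP Bailey pair, exactly as in the alternative verification offered for Theorem~\ref{th:e-Bailey-Pair-B1.2a}. By the definition in~\eqref{BaileyPair-def}, it suffices to establish
\begin{gather*}
\multsum{j}{N}{r} B^{(2)}_{\N\j}(a,b)\,\alpha_\j(a,b) = \beta_\N(a,b),
\end{gather*}
where $B^{(2)}_{\N\j}$ is given by~\eqref{e-B2} and $\alpha_\j$ by~\eqref{e-alpha-Bailey-PairB2.2}. First I would substitute both definitions into the left-hand side. The factor $\sqprod n 1/\ellipticqrfac{q^{1+j_r-j_s}\xover x }{N_r-j_r}$ coming from $B^{(2)}_{\N\j}$ is then rewritten by Lemma~\ref{th:e-magiclemma2} (with $\k$ replaced by $\j$) as the type-$A$ elliptic Vandermonde product $\ellipticvandermonde{x}{j}{n}$ times $\sqprod n \ellipticqrfac{q^{-N_s}\xover x}{j_r}$, divided by $\sqprod n \ellipticqrfac{q\xover x}{N_r}$, together with explicit powers of $q$, of $(-1)$ and of the $x_r$. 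This is precisely the shape in which the Vandermonde product and the terminating factors $\ellipticqrfac{q^{-N_s}\xover x}{j_r}$ enter the summand of Rosengren's summation~\eqref{e-8p7-1}.

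The heart of the matter is to recognise the resulting single sum over~$\j$ as an instance of~\eqref{e-8p7-1}. The relevant specialisation is the altered formulation of~\eqref{e-8p7-1} already exploited in the proof of Theorem~\ref{th:e-WP-BaileyLemma-B1}: one first replaces $c$ by $a^2q^{1+\sumN}/bcd$, which by the elliptic balancing interchanges the pair $\{c,\,a^2q^{1+\sumN}/bcd\}$ and thereby moves \emph{both} $c$ and $d$ into the factors attached to $x_r$, matching the $\ellipticqrfac{cx_r,dx_r}{j_r}$ occurring in $\alpha_\j$; one then performs the Bressoud extraction $b\mapsto qa^2/bcd$, which converts the remaining free numerator parameter $a^2q^{1+\sumN}/bcd$ into $bq^{\sumN}$. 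Merging $\ellipticqrfac{b}{\sumN}\,\ellipticqrfac{bq^{\sumN}}{\sumj}=\ellipticqrfac{b}{\sumN+\sumj}$ via~\eqref{GR11.2.47} reproduces the factor $\ellipticqrfac{b}{\sumk+\sumj}$ built into~\eqref{e-B2}, while further applications of~\eqref{GR11.2.47}, \eqref{GR11.2.49r} and~\eqref{GR11.2.50} reconcile the remaining factorials with the summand of~\eqref{e-8p7-1}. Once the summand has been matched, I would apply~\eqref{e-8p7-1} to carry out the $\j$-summation in closed form and simplify the product side, with the aim of arriving at exactly~\eqref{e-beta-Bailey-PairB2.2} (with $\k$ replaced by~$\N$).

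The main obstacle is purely the bookkeeping in the two previous steps. Because $\alpha_\k$ in~\eqref{e-alpha-Bailey-PairB2.2} carries the extra weight $q^{\sum_{r<s}k_rk_s}\smallprod n x_r^{-k_r}$, which is absent in the $\B^{(1)}$ case, one must track precisely the powers of $q$, of $(-1)$ and of the $x_r$ produced by Lemma~\ref{th:e-magiclemma2} and by the reversal identities~\eqref{GR11.2.49r} and~\eqref{GR11.2.50}, and confirm that they collect exactly into this weight and into the $q^{\sum_{r=1}^n r\,j_r}$ factor of~\eqref{e-8p7-1}. No identity beyond those already recorded is needed; the real content is to check that the two substitutions $c\mapsto a^2q^{1+\sumN}/bcd$ and $b\mapsto qa^2/bcd$, applied to~\eqref{e-8p7-1}, send its summand and its right-hand side to the combined summand $B^{(2)}_{\N\j}\alpha_\j$ and to $\beta_\N$, respectively.
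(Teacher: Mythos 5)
Your proposal is correct and follows essentially the same route as the paper: the paper also verifies the defining relation by computing $\sum_{\j} B^{(2)}_{\N\j}\alpha_\j$ and summing it with the altered form of~\eqref{e-8p7-1} obtained by replacing $c$ by $a^2q^{1+\sumN}/bcd$, specialised at $b\mapsto qa^2/bcd$. Your identification of the two substitutions, of the role of Lemma~\ref{th:e-magiclemma2}, and of the merging $\ellipticqrfac{b}{\sumN}\ellipticqrfac{bq^{\sumN}}{\sumj}=\ellipticqrfac{b}{\sumN+\sumj}$ matches the paper's (much terser) argument.
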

\begin{proof} The proof is analogous to that of Theorem~\ref{th:e-Bailey-Pair-B1.2a}. The proof requires \eqref{e-8p7-1}. With $\alpha_\j(a,b)$ as above, we compute the sum \eqref{BP-general}, with $\B=\B^{(2)}$, to calculate $\beta_\N(a,b)$. We take the altered formulation of \eqref{e-8p7-1} obtained by replacing $c$ by $a^2q^{1+\sumN}/bcd$. The sum can be summed using the $b\mapsto qa^2/bcd$ case of this altered form of \eqref{e-8p7-1}. After canceling some factors, we immediately obtain the expression \eqref{e-beta-Bailey-PairB2.2} (with $\k$ replaced by $\N$).
\end{proof}

\begin{Theorem}[an elliptic ($\B^{(1)} \to\B^{(2)}$) WP Bailey lemma] \label{th:e-WP-BaileyLemma-B1-B2}
Suppose $\alpha_\N(a,b)$ and $\beta_\N(a,b)$ form a WP Bailey pair with respect to~$\B^{(1)}$. Let ${\alpha}^{\prime}_\N(a,b)$ and ${\beta}^{\prime}_\N(a,b)$ be defined as follows
\begin{subequations}
\begin{gather}
{\alpha}^{\prime}_\N(a,b) :=
 \frac{ \smallprod n \ellipticqrfac{\rho_1x_r, \rho_2x_r}{N_r}}
{\ellipticqrfac{aq/\rho_1, aq/\rho_2}{\sumN}} \left(\frac{aq}{\rho_1\rho_2}\right)^{\sumN}
\smallprod n x_r^{-N_r}\cdot q^{\sum\limits_{r<s}N_r N_s} \alpha_\N(a, b\rho_1\rho_2/aq), \!\!\!\! \label{e-alphaprime-B2} \\
{\beta}^{\prime}_\N (a,b) :=
\frac{\ellipticqrfac{b\rho_1/a, b\rho_2/a }{\sumN}}{\ellipticqrfac{aq/\rho_1, aq/\rho_2}{\sumN}} \nonumber\\
\hphantom{{\beta}^{\prime}_\N (a,b) := }{}
 \times \multsum{k}{N}{r} \Bigg(
 \frac{ \smallprod n \ellipticqrfac{\rho_1x_r, \rho_2x_r}{k_r}}
{\ellipticqrfac{b\rho_1/a, b\rho_2/a}{\sumk}}
\frac{\ellipticqrfac{b}{\sumN+\sumk} }
{\sqprod n \ellipticqrfac{q^{1+k_r-k_s}\xover x }{N_r-k_r}}\nonumber\\
\hphantom{{\beta}^{\prime}_\N (a,b) := }{} \times
\smallprod n \frac{\elliptictheta{ b\rho_1\rho_2x_rq^{k_r+\sumk}/{aq}}}{\elliptictheta{ b\rho_1\rho_2x_r/{aq}}}
\smallprod n \frac{\ellipticqrfac{aq^{1+\sumN-N_r}/\rho_1\rho_2 x_r}{N_r-k_r}}{\ellipticqrfac{b\rho_1\rho_2x_r/a}{\sumk+N_r}} \notag\\
\hphantom{{\beta}^{\prime}_\N (a,b) := }{}
 \times \left(\frac{aq}{\rho_1\rho_2}\right)^{\sumk} \smallprod n x_r^{-k_r} \cdot q^{\sum\limits_{r<s} k_r k_s}
\beta_\k(a,b\rho_1\rho_2/aq) \Bigg).\label{e-betaprime-B2}
\end{gather}
\end{subequations}
Then ${\alpha}^{\prime}_\N(a,b)$ and ${\beta}^{\prime}_\N(a,b)$ form a WP Bailey pair with respect to $\B^{(2)}$.
\end{Theorem}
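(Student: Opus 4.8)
The plan is to follow the proof of Theorem~\ref{th:e-WP-BaileyLemma-B1} at the level of strategy, the essential difference being that the inner Bailey pair is expanded with respect to $\B^{(1)}$ (as hypothesised) whereas the sum is finally recognised as a $\B^{(2)}$-Bailey pair; this switch of target matrix is what dictates the precise parameter assignment in the elliptic Jackson sum. I would start from the definition~\eqref{e-betaprime-B2} of $\beta^{\prime}_\N(a,b)$ and substitute for $\beta_\k(a,b\rho_1\rho_2/aq)$ its $\B^{(1)}$-expansion~\eqref{BP-def2},
\[
\beta_\k(a,b\rho_1\rho_2/aq) = \multsum{j}{k}{r} B^{(1)}_{\k\j}(a,b\rho_1\rho_2/aq)\,\alpha_\j(a,b\rho_1\rho_2/aq),
\]
turning $\beta^{\prime}_\N(a,b)$ into a double sum over $\k$ and $\j$.

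Next I would interchange the order of summation and shift the inner index by means of
\[
\multsum{k}{N}{r}\ \multsum{j}{k}{r} A_{\j,\k} = \multsum{j}{N}{r} \sum\limits_{\substack{0\le k_r \le N_r-j_r \\ r=1,2,\dots,n}} A_{\j,\k+\j},
\]
exactly as in the $\B^{(1)}\to\B^{(1)}$ case. After the shift the Vandermonde-type denominator $\sqprod n \ellipticqrfac{q^{1+k_r-k_s}\xover x}{N_r-k_r}$ inherited from~\eqref{e-betaprime-B2} no longer matches the summation variable, so I would apply Lemma~\ref{th:e-magiclemma2} (with $N_r\mapsto N_r-j_r$ and $x_r\mapsto x_rq^{j_r}$) to rewrite it as an $A_n$ Vandermonde in $\k$ times a separable factor, exactly as in the displayed simplification inside the proof of Theorem~\ref{th:e-WP-BaileyLemma-B1}. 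The elementary identities~\eqref{GR11.2.50} and~\eqref{GR11.2.47} then consolidate the remaining theta-shifted factorials so that the inner $\k$-sum is presented in very-well-poised form.

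The heart of the argument is to recognise this inner sum as an instance of the elliptic Jackson summation. I would use a suitable equivalent formulation of~\eqref{e-8p7-1} (the plain form, or the altered form obtained by replacing $c$ by $a^2q^{1+\sumN}/bcd$ as in the derivation of $\B^{(2)}$ and in Theorem~\ref{th:e-Bailey-Pair-B2.2}), together with the substitutions $x_r\mapsto x_rq^{j_r}$, $N_r\mapsto N_r-j_r$ and an assignment of the parameters $a,b,c,d$ in terms of $\rho_1,\rho_2,b$ and the shifted bases $q^{\sumj}$, $q^{\sumN}$, in the spirit of the list given in the proof of Theorem~\ref{th:e-WP-BaileyLemma-B1}. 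Performing the summation closes the inner sum into a product over $r$ of ratios of theta-shifted factorials; a final pass with~\eqref{GR11.2.50} and~\eqref{GR11.2.47} redistributes the resulting $(N_r-j_r)$-length factorials, and the whole expression collapses into
\[
\multsum{j}{N}{r} B^{(2)}_{\N\j}(a,b)\,\alpha^{\prime}_\j(a,b),
\]
with $B^{(2)}_{\N\j}$ as in~\eqref{e-B2} and $\alpha^{\prime}_\j$ as in~\eqref{e-alphaprime-B2}, which is precisely the assertion that $\alpha^{\prime}_\N(a,b)$ and $\beta^{\prime}_\N(a,b)$ form a WP Bailey pair with respect to $\B^{(2)}$.

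The main obstacle I anticipate is bookkeeping rather than conceptual novelty. The factors $\smallprod n x_r^{-k_r}$ and $q^{\sum\limits_{r<s}k_rk_s}$ that distinguish the $\B^{(2)}$-type sequences from the $\B^{(1)}$-type ones must be tracked faithfully through the index shift and the Vandermonde conversion of Lemma~\ref{th:e-magiclemma2}, since these are exactly the places where the factor $\smallprod n \ellipticqrfac{bq^{\sumk-k_r}/ax_r}{k_r-j_r}$ appearing in $\B^{(2)}$ (see~\eqref{e-B2}) is produced. One must also determine the correct parameter assignment, and hence which equivalent form of~\eqref{e-8p7-1} to invoke, so that both new parameters $\rho_1$ and $\rho_2$ land in product position (unlike the $\B^{(1)}\to\B^{(1)}$ case, where $\rho_1$ occupies a total position), and then verify that the surviving $\j$-dependent prefactor separates cleanly as $B^{(2)}_{\N\j}(a,b)\,\alpha^{\prime}_\j(a,b)$ rather than as some merely equivalent rearrangement. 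These are the steps where the present computation, while strategically identical to the $\B^{(1)}\to\B^{(1)}$ proof, departs from it in detail.
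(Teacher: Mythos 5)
Your proposal matches the paper's proof: the paper likewise expands $\beta_\k(a,b\rho_1\rho_2/aq)$ via its $\B^{(1)}$-relation, interchanges and shifts the sums as in Theorem~\ref{th:e-WP-BaileyLemma-B1}, and evaluates the inner sum by the altered form of~\eqref{e-8p7-1} (the one with $c$ replaced by $a^2q^{1+\sumN}/bcd$, which you correctly single out as the likely candidate), with the substitutions $x_r\mapsto x_rq^{j_r}$, $N_r\mapsto N_r-j_r$, $a\mapsto b\rho_1\rho_2q^{\sumj}/aq$, $b\mapsto bq^{\sumj+\sumN}$, $c\mapsto\rho_1$, $d\mapsto\rho_2$. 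Your structural observation that both $\rho_1$ and $\rho_2$ must land in product position is exactly what forces this assignment, so the only thing missing is the explicit bookkeeping you already flag.
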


\begin{proof} The proof is analogous to the proof of Theorem~\ref{th:e-WP-BaileyLemma-B1}. We begin with the expres\-sion~\eqref{e-betaprime-B2} for $\beta_\N^{\prime}(a,b)$. Substitute for $\beta_\k(a,b\rho_1\rho_2/aq)$ in \eqref{BP-def2} with $\B=\B^{(1)}$. The inner sum can be summed using \eqref{e-8p7-1}. We take the altered formulation of \eqref{e-8p7-1} obtained by replacing $c$ by $a^2q^{1+\sumN}/bcd$. We use the following substitutions: $x_r\mapsto x_rq^{j_r}$ and $N_r\mapsto N_r-j_r$ for $r=1, 2, \dots, n$, $a\mapsto b\rho_1\rho_2q^{\sumj}/aq$, $b\mapsto bq^{\sumj+\sumN}$, $c\mapsto \rho_1$, $d\mapsto \rho_2$. Now after some simplification, we find that ${\alpha}^{\prime}_\N(a,b)$ and ${\beta}^{\prime}_\N(a,b)$ form a WP Bailey pair with respect to $\B^{(2)}$.
\end{proof}

An elliptic $A_n$ Bailey $_{10}\phi_9$ transformation formula due to Rosengren \cite[Corollary~8.1]{HR2004} follows immediately by applying the $\B^{(1)}\to\B^{(2)}$ elliptic Bailey lemma in Theorem~\ref{th:e-WP-BaileyLemma-B1-B2} to the WP Bailey pair in Theorem~\ref{th:e-Bailey-Pair-B1.2a}. This $A_n$ elliptic Bailey transformation formula is the same as obtained in Remark~\ref{rem:B1-B1-10p9}.

We have seen two $A_n$ Bressoud matrices which follow from the same $A_n$ elliptic Jackson sum. We also obtained a WP Bailey lemma that transforms a WP Bailey pair with respect to a~mat\-rix~$\B^{(1)}$ into a more complicated WP Bailey pair (with 2 additional parameters). However, this time the WP Bailey pair is with respect to the matrix $\B^{(2)}$. In the next section, we explore the consequences of another~$A_n$ elliptic Jackson sum.

\section[An $A_n$ elliptic Bailey transformation]{An $\boldsymbol{A_n}$ elliptic Bailey transformation}\label{sec:dougall5}

In the previous section, we examined the consequences of one of Rosengren's elliptic Jackson summation over $A_n$. In this section, we consider another $A_n$ elliptic Jackson summation. We will find that we can obtain some WP Bailey lemmas relating to the Bressoud matrices obtained in
Section~\ref{sec:dougall1}. In addition, we find another useful Bressoud matrix closely related to~$\B^{(2)}$.

The $A_n$ elliptic Jackson summation we use in this section is due to Rosengren and the second author~\cite{RS2017}. The $p=0$ case is due to the second author~\cite{MS2008}. We have
\begin{gather}
\multsum{k}{N}{r} \Bigg( \ellipticvandermonde{x}{k}{n}
\sqprod n \frac{\ellipticqrfac{q^{-N_s}\xover{x}}{k_r} }{\ellipticqrfac{q\xover{x}}{k_r} } \notag \\
\qquad\qquad{}\times \smallprod n \frac{\ellipticqrfac{d/x_r}{\sumk}
\ellipticqrfac{a^2 x_r q^{\sumN+1}/bcd}{k_r}
 \ellipticqrfac{bcd/a x_r}{\sumk-k_r}}
{\ellipticqrfac{d/x_r}{\sumk-k_r} \ellipticqrfac{ax_rq/d}{k_r}\ellipticqrfac{bcdq^{-N_r}/a x_r}{\sumk}}
\nonumber\\
\qquad\qquad{} \times \frac{\elliptictheta{aq^{2\sumk}}}{\elliptictheta{a}}
\frac{\ellipticqrfac{a, b, c}{\sumk}}{\ellipticqrfac{aq^{\sumN+1}, aq/b, aq/c}{\sumk}}
q^{\sum\limits_{r=1}^n rk_r} \Bigg)\notag \\
\qquad{} = \frac{\ellipticqrfac{aq, aq/bc}{\sumN}}{\ellipticqrfac{aq/b, aq/c}{\sumN}}
\smallprod n \frac{\ellipticqrfac{a x_rq/bd, a x_r q/cd}{N_r}}{\ellipticqrfac{a x_rq/d, a x_rq/bcd}{N_r}}. \label{e-8p7-5}
\end{gather}
Observe that \eqref{e-8p7-5} has a simpler very-well-poised part, namely
\begin{gather*}\frac{\elliptictheta{aq^{2\sumk}}}{\elliptictheta{a}}
\end{gather*}
compared to the one in \eqref{e-8p7-1}, given by
\begin{gather*}
\smallprod n \frac{\elliptictheta{ ax_rq^{k_r+\sumk}}}{\elliptictheta{ax_r}}.
\end{gather*}
We begin with a WP Bailey lemma which follows from \eqref{e-8p7-5}.

\begin{Theorem}[an elliptic $\big(\B^{(2)} \to\B^{(1)}\big)$ WP Bailey lemma] \label{th:e-WP-BaileyLemma-B2-B1}
Suppose $\alpha_\N(a,b)$ and $\beta_\N(a,b)$ form a WP Bailey pair with respect to the matrix~$\B^{(2)}$. Let ${\alpha}^{\prime}_\N(a,b)$ and ${\beta}^{\prime}_\N(a,b)$ be defined as follows
\begin{gather*}
{\alpha}^{\prime}_\N(a,b) := \frac{\ellipticqrfac{\rho_1, \rho_2}{\sumN}}
 {\smallprod n \ellipticqrfac{ax_rq/\rho_1, ax_rq/\rho_2}{N_r}}
 \left(\frac{aq}{\rho_1\rho_2}\right)^{\sumN} 
 \smallprod n x_r^{N_r}\cdot
q^{-\sum\limits_{r<s} N_r N_s} \alpha_\N(a, b\rho_1\rho_2/aq),\\
{\beta}^{\prime}_\N (a,b) := \frac{\ellipticqrfac{b\rho_1/a, b\rho_2/a }{\sumN}}
{\smallprod n \ellipticqrfac{ax_rq/\rho_1, ax_rq/\rho_2}{N_r}}\\
\hphantom{{\beta}^{\prime}_\N (a,b) :=}{}
 \times \multsum{k}{N}{r} \Bigg( \frac{ \ellipticqrfac{\rho_1, \rho_2}{\sumk}}
{\ellipticqrfac{b\rho_1/a, b\rho_2/a}{\sumk}}
 \frac{\elliptictheta{ b\rho_1\rho_2q^{2\sumk}/{aq}}}{\elliptictheta{ b\rho_1\rho_2/{aq}}}\\
\hphantom{{\beta}^{\prime}_\N (a,b) :=}{}\times
\frac{\smallprod n \ellipticqrfac{bx_r}{\sumN+k_r}
 \ellipticqrfac{ax_rq^{1+k_r-\sumk}/\rho_1\rho_2 }{N_r-k_r}}{
\ellipticqrfac{b\rho_1\rho_2/a}{\sumN+\sumk}
\sqprod n \ellipticqrfac{q^{1+k_r-k_s}\xover x }{N_r-k_r}} \notag\\
\hphantom{{\beta}^{\prime}_\N (a,b) :=}{} \times
\left(\frac{aq}{\rho_1\rho_2}\right)^{\sumk} \smallprod n x_r^{k_r}\cdot q^{-\sum\limits_{r<s} k_rk_s} \beta_\k(a,b\rho_1\rho_2/aq) \Bigg).
\end{gather*}
Then ${\alpha}^{\prime}_\N(a,b)$ and ${\beta}^{\prime}_\N(a,b)$ form a WP Bailey pair with respect to $\B^{(1)}$.
\end{Theorem}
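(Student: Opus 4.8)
The plan is to verify directly the defining relation for a WP Bailey pair with respect to $\B^{(1)}$, namely $\beta_\N'(a,b) = \multsum{j}{N}{r} B^{(1)}_{\N\j}(a,b)\,\alpha_\j'(a,b)$, by transforming the explicit formula for $\beta_\N'(a,b)$ given in the statement into exactly this form. The argument runs parallel to the proofs of Theorems~\ref{th:e-WP-BaileyLemma-B1} and~\ref{th:e-WP-BaileyLemma-B1-B2}; the one essential difference is that the inner sum will be evaluated by the Rosengren--Schlosser summation~\eqref{e-8p7-5}, whose very-well-poised factor is the simple $\elliptictheta{aq^{2\sumk}}/\elliptictheta{a}$, rather than by~\eqref{e-8p7-1}. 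This matches the fact that the very-well-poised factor appearing in $\beta_\N'(a,b)$ is itself the simple one, $\elliptictheta{b\rho_1\rho_2q^{2\sumk}/aq}/\elliptictheta{b\rho_1\rho_2/aq}$.

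First I would insert, into the formula for $\beta_\N'(a,b)$, the hypothesis that $(\alpha_\k,\beta_\k)$ is a WP Bailey pair with respect to $\B^{(2)}$, replacing $\beta_\k(a,b\rho_1\rho_2/aq)$ by $\multsum{j}{k}{r} B^{(2)}_{\k\j}(a,b\rho_1\rho_2/aq)\,\alpha_\j(a,b\rho_1\rho_2/aq)$ with $B^{(2)}$ as in~\eqref{e-B2}. This gives a double sum over $0\le j_r\le k_r\le N_r$. I would then interchange the order of summation and shift $\k\mapsto\k+\j$, exactly as in the proof of Theorem~\ref{th:e-WP-BaileyLemma-B1}, so that $\j$ becomes the outer index and the inner index $\k$ ranges over $0\le k_r\le N_r-j_r$, with every occurrence of $\sumk$ and $k_r$ in the $\B^{(2)}$-factors and in the very-well-poised part replaced by $\sumk+\sumj$ and $k_r+j_r$ respectively.

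Next I would separate the factors depending only on $\j$ (and $\N$) so as to assemble $B^{(1)}_{\N\j}(a,b)$ and $\alpha_\j'(a,b)$ in front of the inner $\k$-sum. The main bookkeeping here is to rewrite the reciprocal type-$A$ Vandermonde factor $\sqprod n 1/\ellipticqrfac{q^{1+k_r-k_s+j_r-j_s}\xover x}{N_r-k_r-j_r}$ by applying Lemma~\ref{th:e-magiclemma2} with $N_r\mapsto N_r-j_r$ and $x_r\mapsto x_rq^{j_r}$, and to split and reverse the surviving theta-shifted factorials using~\eqref{GR11.2.47} and~\eqref{GR11.2.50}. The inner sum is thereby brought into the shape of~\eqref{e-8p7-5}: matching the simple very-well-poised factors identifies the $a$-parameter of~\eqref{e-8p7-5} with $b\rho_1\rho_2q^{2\sumj}/aq$, while comparing $\ellipticqrfac{\rho_1,\rho_2}{\sumk}/\ellipticqrfac{b\rho_1/a,b\rho_2/a}{\sumk}$ with the $\ellipticqrfac{b,c}{\sumk}/\ellipticqrfac{aq/b,aq/c}{\sumk}$ of~\eqref{e-8p7-5} identifies the $b$- and $c$-parameters with $\rho_1$ and $\rho_2$ (up to powers of $q^{\sumj}$), the remaining parameter $d$ and the vector $\N$ being fixed accordingly, together with $x_r\mapsto x_rq^{j_r}$ and $N_r\mapsto N_r-j_r$. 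Applying~\eqref{e-8p7-5} then evaluates the inner sum as a closed product, and collecting the leftover $\j$-dependent factors I would recognize the result as $\multsum{j}{N}{r} B^{(1)}_{\N\j}(a,b)\,\alpha_\j'(a,b)$ upon using~\eqref{e-B1} and the definition of $\alpha_\j'(a,b)$ in the statement.

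The hardest part will be this last matching step, because in~\eqref{e-8p7-5} the parameter $d$ and the product $bcd$ enter through the reversed factors $\ellipticqrfac{d/x_r}{\sumk}/\ellipticqrfac{d/x_r}{\sumk-k_r}$, $\ellipticqrfac{bcd/ax_r}{\sumk-k_r}$ and $\ellipticqrfac{bcdq^{-N_r}/ax_r}{\sumk}$, whose $(\sumk-k_r)$-type indices and $1/x_r$ arguments appear nowhere verbatim in $\beta_\N'(a,b)$. Confirming that the $\rho_1,\rho_2$-factors, the factors inherited from $\B^{(2)}$, and the accompanying $\smallprod n x_r^{k_r}$ and $q^{-\sum_{r<s}k_rk_s}$ reorganize into precisely these reversed factors requires careful tracking of all the $\sumk$- and $k_r$-dependent exponents and of the exact powers of $q^{\sumj}$ produced by the shifts; once this identification is secured, the summation~\eqref{e-8p7-5} closes the argument just as in the earlier lemmas.
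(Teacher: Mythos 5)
Your proposal is correct and follows essentially the same route as the paper: substitute the $\B^{(2)}$-relation for $\beta_\k(a,b\rho_1\rho_2/aq)$, interchange and shift the sums, reorganize with Lemma~\ref{th:e-magiclemma2}, \eqref{GR11.2.47} and \eqref{GR11.2.50}, and evaluate the inner sum by \eqref{e-8p7-5} with $x_r\mapsto x_rq^{j_r}$, $N_r\mapsto N_r-j_r$, $a\mapsto b\rho_1\rho_2q^{2\sumj}/aq$, $b\mapsto\rho_1q^{\sumj}$, $c\mapsto\rho_2q^{\sumj}$, $d\mapsto b\rho_1\rho_2q^{\sumj}/a^2q$, which is exactly the parameter identification the paper uses. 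Your reading of which very-well-poised factor dictates the choice of summation theorem, and your flagging of the $d$-dependent reversed factors as the delicate bookkeeping step, both match the paper's (terser) proof.
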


\begin{proof} The proof is analogous to the proof of Theorem~\ref{th:e-WP-BaileyLemma-B1}. The only difference is that this time we sum
the inner sum using \eqref{e-8p7-5}, with the substitutions: $x_r\mapsto x_rq^{j_r}$ and $N_r\mapsto N_r-j_r$ for $r=1, 2, \dots, n$,
$a\mapsto b\rho_1\rho_2q^{2\sumj}/aq$, $b\mapsto \rho_1q^{\sumj}$, $c\mapsto \rho_2q^{\sumj}$, $d\mapsto b\rho_1\rho_2q^{\sumj}/a^2q $. The rest of the computations are very similar.
\end{proof}

A new elliptic $A_n$ Bailey $_{10}\phi_9$ transformation formula follows immediately by applying the $\B^{(2)} \to\B^{(1)}$ elliptic Bailey lemma in Theorem~\ref{th:e-WP-BaileyLemma-B2-B1} to the WP Bailey pair in Theorem~\ref{th:e-Bailey-Pair-B2.2}.

\begin{Theorem}[an $A_n$ elliptic Bailey $_{10}\phi_9$ transformation] \label{th:e-10p9-6}
Let $\lambda = qa^2/bcd$. Then
\begin{gather}
\multsum{k}{N}{r} \Bigg( \ellipticvandermonde{x}{k}{n}
\sqprod n \frac{\ellipticqrfac{q^{-N_s}\xover{x}}{k_r} }{\ellipticqrfac{q\xover{x}}{k_r} } \notag \\
\qquad\quad{} \times \smallprod n \frac{\elliptictheta{a x_r q^{k_r+\sumk}}}{\elliptictheta{a x_r}}
 \smallprod n \frac{\ellipticqrfac{a x_r}{\sumk}}
 {\ellipticqrfac{a x_rq^{1+N_r}}{\sumk}} \notag\\
\qquad\quad{} \times \smallprod n \frac{\ellipticqrfac{cx_r, dx_r, \lambda ax_r q^{\sumN+1}/ef}{k_r}}
 {\ellipticqrfac{a x_rq/b, a x_rq/e, ax_rq/f}{k_r}} \notag\\
\qquad\quad{} \times \frac{\ellipticqrfac{b, e, f}{\sumk}}
{\ellipticqrfac{aq/c, aq/d, efq^{-\sumN}/\lambda}{\sumk}}
q^{\sum\limits_{r=1}^n r k_r} \Bigg)\notag \\
\qquad{} = \frac{\ellipticqrfac{\lambda q/e, \lambda q/f}{\sumN}}{\ellipticqrfac{ \lambda q, \lambda q/ef}{\sumN}}
\smallprod n \frac{\ellipticqrfac{a x_rq, a x_r q/ef}{N_r}}
{\ellipticqrfac{a x_rq/e, a x_rq/f}{N_r}}\notag \\
\qquad\quad{} \times \multsum{k}{N}{r} \Bigg( \ellipticvandermonde{x}{k}{n}
\sqprod n \frac{\ellipticqrfac{q^{-N_s}\xover{x}}{k_r} }{\ellipticqrfac{q\xover{x}}{k_r} } \notag \\
\qquad\quad{} \times \frac{\elliptictheta{\lambda q^{2\sumk}}}{\elliptictheta{\lambda }}
\frac{\ellipticqrfac{\lambda ,\lambda c/a, \lambda d/a, e, f}{\sumk}}
{\ellipticqrfac{\lambda q^{\sumN+1}, aq/c, aq/d, \lambda q/e, \lambda q/f}{\sumk}}\nonumber\\
\qquad\quad{} \times \smallprod n \frac{\ellipticqrfac{\lambda b/ax_r }{\sumk}
\ellipticqrfac{\lambda a x_r q^{\sumN+1}/ef}{k_r} }
{\ellipticqrfac{efq^{-N_r}/ax_r}{\sumk} \ellipticqrfac{ax_rq/b}{k_r} } \notag \\
\qquad\quad{} \times \smallprod n \frac{\ellipticqrfac{ef/ax_r}{\sumk-k_r}}
{\ellipticqrfac{\lambda b/ax_r}{\sumk-k_r}
} \cdot q^{\sum\limits_{r=1}^n r k_r} \Bigg). \label{e-10p9-6}
\end{gather}
\end{Theorem}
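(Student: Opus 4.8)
The plan is to read the transformation directly off the WP Bailey pair relation, exactly as announced before the statement: apply the $\B^{(2)}\to\B^{(1)}$ lemma of Theorem~\ref{th:e-WP-BaileyLemma-B2-B1} to the $\B^{(2)}$ pair of Theorem~\ref{th:e-Bailey-Pair-B2.2}, obtaining a new pair $(\alpha'_\N,\beta'_\N)$ that is a WP Bailey pair with respect to $\B^{(1)}$, and then invoke the defining relation \eqref{BaileyPair-def} for that pair, namely $\beta'_\N(a,b)=\multsum{j}{N}{r}B^{(1)}_{\N\j}(a,b)\,\alpha'_\j(a,b)$. The entire content of \eqref{e-10p9-6} is this single identity after a relabelling of parameters; no summation is needed beyond the two already used to establish the input results.

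Concretely, I would substitute the pair \eqref{e-alpha-Bailey-PairB2.2}--\eqref{e-beta-Bailey-PairB2.2}, with its mass parameter shifted as the lemma prescribes, into the formulas for $\alpha'_\N$ and $\beta'_\N$ in Theorem~\ref{th:e-WP-BaileyLemma-B2-B1}. On the right of \eqref{BaileyPair-def} I would expand $B^{(1)}_{\N\j}(a,b)$ in its second form \eqref{e-B1-form2} and pair it against $\alpha'_\j$: the very-well-poised factor $\smallprod n \elliptictheta{ax_rq^{j_r+\sumj}}/\elliptictheta{ax_r}$ carried by the $\B^{(2)}$ alpha, together with the $\smallprod n \ellipticqrfac{ax_rq^{1+N_r}}{\sumj}$ denominators of \eqref{e-B1-form2}, assembles exactly the left-hand series of \eqref{e-10p9-6} (after renaming the summation index $\j\mapsto\k$). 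The left-hand side $\beta'_\N$ is itself a prefactor times a single $\k$-sum; its very-well-poised factor $\elliptictheta{\widetilde b\rho_1\rho_2q^{2\sumk}/aq}/\elliptictheta{\widetilde b\rho_1\rho_2/aq}$ turns into $\elliptictheta{\lambda q^{2\sumk}}/\elliptictheta{\lambda}$, and the remaining $\k$-sum becomes the right-hand series.

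The relabelling that makes the two sides coincide is $\rho_1=e$, $\rho_2=f$, with $a,c,d$ retained, together with the choice of input mass parameter $\widetilde b=\lambda aq/ef$; this makes the shifted value $\widetilde b\rho_1\rho_2/aq$ equal to $\lambda$, so that the shifted $\B^{(2)}$ pair is evaluated at second parameter $\lambda$, and the theorem's $b$ emerges as $a^2q/(\lambda cd)$, i.e.\ the stated $\lambda=qa^2/bcd$. With these choices $\widetilde b\rho_1/a$ and $\widetilde b\rho_2/a$ become $\lambda q/f$ and $\lambda q/e$, so the lemma's prefactor $\ellipticqrfac{\widetilde b\rho_1/a,\widetilde b\rho_2/a}{\sumN}/\smallprod n \ellipticqrfac{ax_rq/\rho_1,ax_rq/\rho_2}{N_r}$ matches the numerator $\ellipticqrfac{\lambda q/e,\lambda q/f}{\sumN}$ and denominator $\smallprod n \ellipticqrfac{ax_rq/e,ax_rq/f}{N_r}$ in \eqref{e-10p9-6}. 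A representative of the book-keeping is the split $\ellipticqrfac{\widetilde b\rho_1\rho_2/a}{\sumN+\sumk}=\ellipticqrfac{\lambda q}{\sumN}\ellipticqrfac{\lambda q^{\sumN+1}}{\sumk}$ via \eqref{GR11.2.47}, which pushes $\ellipticqrfac{\lambda q}{\sumN}$ into the prefactor and leaves the $\ellipticqrfac{\lambda q^{\sumN+1}}{\sumk}$ appearing in the right-hand series; the remaining prefactor pieces $1/\ellipticqrfac{\lambda q,\lambda q/ef}{\sumN}$ and $\smallprod n \ellipticqrfac{ax_rq,ax_rq/ef}{N_r}$, as well as the $aq^{1+\sumk-k_r}/cdx_r$- and $ef/ax_r$-type factors, are produced by the product reversals \eqref{GR11.2.49r} and \eqref{GR11.2.50} applied to the factors carrying $\sumN$- and $N_r$-dependence.

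The genuinely laborious step, and the one I expect to be the main obstacle, is reducing the cross-products $\prod_{r,s}$ to the elliptic Vandermonde of type $A$ and the $\ellipticqrfac{q^{-N_s}\xover x}{k_r}$ factors displayed in \eqref{e-10p9-6}. This is precisely what Lemma~\ref{th:e-magiclemma2} delivers: applied to $\sqprod n \frac{1}{\ellipticqrfac{q^{1+k_r-k_s}\xover x}{N_r-k_r}}$ it produces $\ellipticvandermonde{x}{k}{n}$ together with $\sqprod n \ellipticqrfac{q^{-N_s}\xover x}{k_r}/\ellipticqrfac{q\xover x}{N_r}$, at the cost of explicit powers of $-1$, $q$ and $x_r$. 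Balancing those sign and power corrections against the $q^{\sum\limits_{r=1}^n rk_r}$, $\smallprod n x_r^{\pm k_r}$ and $q^{-\sum\limits_{r<s}k_rk_s}$ factors carried along by the lemma and by the $\B^{(2)}$ pair is entirely routine, but it is where all the risk of error lies, and it is the only part of the argument that is not immediate from the machinery already in place.
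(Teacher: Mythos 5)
Your proposal is correct and follows essentially the same route as the paper's own proof: apply the $\B^{(2)}\to\B^{(1)}$ lemma of Theorem~\ref{th:e-WP-BaileyLemma-B2-B1} to the pair of Theorem~\ref{th:e-Bailey-Pair-B2.2}, write out \eqref{BaileyPair-def} with $B^{(1)}_{\N\j}$ in the form \eqref{e-B1-form2}, simplify via Lemma~\ref{th:e-magiclemma2} and the elementary $q,p$-shifted-factorial identities, and then set $\rho_1=e$, $\rho_2=f$ with the mass parameter chosen so that $b\rho_1\rho_2/aq=\lambda$. The only cosmetic difference is that the paper reaches your value $\lambda aq/ef$ of the lemma's parameter in two steps ($b\mapsto bq^{-\sumN}$ followed by $b\mapsto\lambda aq^{1+\sumN}/ef$) and cites \eqref{GR11.2.53}, \eqref{GR11.2.47} rather than \eqref{GR11.2.49r}, \eqref{GR11.2.50} for the product reversals.
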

\begin{Remark*} When $p=0$ and $n=1$, this reduces to an equivalent form of Bailey's $_{10}\phi_9$ transformation formula~\eqref{10p9}, given in \cite[equation~(2.9.1)]{GR90}. After discovering this result, the authors were informed by Rosengren~\cite{HR-PC-2016} that he obtained the same result by following the approach used in \cite{BS1998, HR2004}. Other elliptic Bailey transformations on root systems were given previously by Rosengren~\cite{HR2004, HR2017a}.
\end{Remark*}
\begin{proof} This elliptic $A_n$ Bailey $_{10}\phi_9$ transformation formula is obtained by applying the $\B^{(2)} \to\B^{(1)}$ elliptic Bailey lemma in Theo\-rem~\ref{th:e-WP-BaileyLemma-B2-B1} to the WP Bailey pair with respect to $\B^{(2)}$ in Theo\-rem~\ref{th:e-Bailey-Pair-B2.2}. The result is a WP Bailey pair with respect to the matrix $\B^{(1)}$. Written explicitly, this is an equivalent form of \eqref{e-10p9-6}. The details are as follows.

First write the relation \eqref{BaileyPair-def} explicitly
\begin{gather*}
\beta^{\prime}_\N(a,b) = \multsum{j}{N}{r} B^{(1)}_{\N\j}(a,b) \alpha^{\prime}_{\j} (a,b),
\end{gather*}
with $B^{(1)}_{\N\j}(a,b)$ written in the form \eqref{e-B1-form2}, and $\alpha^{\prime}_k$ and $\beta^{\prime}_k$ given by Theorem~\ref{th:e-WP-BaileyLemma-B2-B1}. Here,
replace $\alpha_k(a, b\rho_1\rho_2/aq) $,
$\beta_k(a, b\rho_1\rho_2/aq)$ by the corresponding expressions from Theorem~\ref{th:e-Bailey-Pair-B2.2} (with $b$ replaced by $b\rho_1\rho_2/aq$). After some algebraic calculations involving the use of \eqref{GR11.2.53}, \eqref{GR11.2.47},
and Lemma~\ref{th:e-magiclemma2},
 we obtain a formula resembling \eqref{e-10p9-6}.
Next, set $\rho_1=e$, $\rho_2=f$ and $b\mapsto bq^{-\sumN}$. Finally, replace $b$ by $a^3q^{2+\sumN}/bcdef = \lambda a q^{1+\sumN}/ef$, where $\lambda = qa^2/bcd$, to obtain \eqref{e-10p9-6}.
\end{proof}

Using an analytic continuation argument, we can write \eqref{e-10p9-6} with sums over an $n$-simplex.

\begin{Theorem}[an $A_n$ elliptic Bailey $_{10}\phi_9$ transformation] \label{th:e-10p9-6-tetra}
Let $\lambda = qa^2/bcd$. Then
\begin{gather}
\sum\limits_{\substack{0\leq \sumk\leq N \\ k_1,k_2, \dots, k_n\geq 0} }
 \Bigg( \ellipticvandermonde{x}{k}{n} \sqprod n \frac{\ellipticqrfac{f_s\xover{x}}{k_r} }{\ellipticqrfac{q\xover{x}}{k_r} } \notag \\
\qquad\quad{} \times \smallprod n \frac{\elliptictheta{a x_r q^{k_r+\sumk}}}{\elliptictheta{a x_r}}
 \smallprod n \frac{\ellipticqrfac{a x_r}{\sumk}} {\ellipticqrfac{a x_rq/f_r}{\sumk}} \notag\\
\qquad\quad{} \times \smallprod n \frac{\ellipticqrfac{cx_r, dx_r, \lambda ax_r q^{N+1}/ef_1\cdots f_n}{k_r}}
 {\ellipticqrfac{a x_rq/b, a x_rq/e, ax_rq^{1+N}}{k_r}} \notag\\
\qquad\quad{} \times \frac{\ellipticqrfac{b, e, q^{-N}}{\sumk}}
{\ellipticqrfac{aq/c, aq/d, ef_1\cdots f_n q^{-N}/\lambda}{\sumk}}
q^{\sum\limits_{r=1}^n r k_r} \Bigg) \notag \\
\qquad{} = \frac{\ellipticqrfac{\lambda q/e, \lambda q/f_1\cdots f_n}{N}}{\ellipticqrfac{ \lambda q, \lambda q/ef_1\cdots f_n}{N}}
\smallprod n \frac{\ellipticqrfac{a x_rq, a x_r q/ef_r}{N}} {\ellipticqrfac{a x_rq/e, a x_rq/f_r}{N}}\notag \\
\qquad\quad{} \times \sum\limits_{\substack{0\leq \sumk\leq N \\ k_1,k_2, \dots, k_n\geq 0} }
\Bigg( \ellipticvandermonde{x}{k}{n} \sqprod n \frac{\ellipticqrfac{f_s\xover{x}}{k_r} }{\ellipticqrfac{q\xover{x}}{k_r} } \notag \\
\qquad\quad{} \times \frac{\elliptictheta{\lambda q^{2\sumk}}}{\elliptictheta{\lambda }}
\frac{\ellipticqrfac{\lambda ,\lambda c/a, \lambda d/a, e, q^{-N}}{\sumk}}
{\ellipticqrfac{\lambda q/f_1\cdots f_n, aq/c, aq/d, \lambda q/e, \lambda q^{1+N}}{\sumk}}\nonumber\\
\qquad\quad{} \times
\smallprod n \frac{\ellipticqrfac{\lambda b/ax_r}{\sumk} \ellipticqrfac{eq^{-N}/ax_r}{\sumk-k_r}}{\ellipticqrfac{\lambda b/ax_r}{\sumk-k_r} \ellipticqrfac{ef_rq^{-N}/ax_r}{\sumk}}\notag \\
\qquad\quad {} \times \smallprod n \frac{\ellipticqrfac{\lambda a x_r q^{N+1}/ef_1\cdots f_n}{k_r}}
{\ellipticqrfac{ax_rq/b}{k_r} }\cdot q^{\sum\limits_{r=1}^n r k_r} \Bigg). \label{e-10p9-6-tetra}
\end{gather}
\end{Theorem}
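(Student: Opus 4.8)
The plan is to deduce the $n$-simplex transformation \eqref{e-10p9-6-tetra} from the already proven box-form transformation \eqref{e-10p9-6} of Theorem~\ref{th:e-10p9-6} by analytic continuation in the upper parameters. The guiding observation is a duality hidden in \eqref{e-10p9-6} between the $n$ termination parameters $q^{-N_1},\dots,q^{-N_n}$ and the single free parameter $f$: if one promotes each $q^{-N_r}$ to a free variable $f_r$ (so that $q^{N_r}\mapsto 1/f_r$ and $q^{-\sumN}\mapsto f_1\cdots f_n$) while simultaneously demoting $f$ to a lone termination parameter $q^{-N}$, then, theta-shifted factorial by theta-shifted factorial, the summand and the right-hand side of \eqref{e-10p9-6} turn into those of \eqref{e-10p9-6-tetra}. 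First I would record this substitution dictionary explicitly and confirm that it matches every factor on both sides.

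With the dictionary in hand, I would establish \eqref{e-10p9-6-tetra} on the discrete family $f_r=q^{-N_r}$, with $N_1,\dots,N_n$ arbitrary nonnegative integers. Setting $f=q^{-N}$ in \eqref{e-10p9-6} inserts the factor $\ellipticqrfac{q^{-N}}{\sumk}$, which truncates the box sum to the region $\{\,\k:0\le k_r\le N_r,\ \sumk\le N\,\}$; dually, setting $f_r=q^{-N_r}$ in \eqref{e-10p9-6-tetra} activates the diagonal factors $\ellipticqrfac{q^{-N_r}}{k_r}$, which cut the simplex $\sumk\le N$ down to exactly the same region. Since the summation ranges coincide and the dictionary identifies the summands (and the prefactors) term by term, \eqref{e-10p9-6-tetra} with $f_r=q^{-N_r}$ is literally \eqref{e-10p9-6} with $f=q^{-N}$, which holds by Theorem~\ref{th:e-10p9-6}. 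Hence \eqref{e-10p9-6-tetra} is valid whenever each $f_r$ equals $q^{-N_r}$ for a nonnegative integer $N_r$.

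The continuation then frees the $f_r$ one variable at a time. Fixing the nonnegative integer $N$ and generic $a,b,c,d,e,x_1,\dots,x_n,p,q$, I would regard both sides of \eqref{e-10p9-6-tetra} as functions of $f_1$, keeping $f_2,\dots,f_n$ momentarily of the form $q^{-N_s}$. Because the sum runs over the bounded simplex $\sumk\le N$, each side is a finite combination of products and quotients of theta factors $\elliptictheta{c\,f_1^{\pm1}}$; after clearing the finitely many $f_1$-dependent denominators, the difference of the two sides is a holomorphic theta function of $f_1$ whose order with respect to the nome $p$ is bounded independently of the evaluation points, the relevant quasi-periodicity being $\elliptictheta{p z}=-z^{-1}\elliptictheta{z}$, equivalently \eqref{GR11.2.55a}. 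By the previous step this theta function vanishes at $f_1=q^{-N_1}$ for every nonnegative integer $N_1$, and for generic $p,q$ these points are pairwise inequivalent modulo $p^{\mathbf{Z}}$, giving infinitely many distinct zeros on the torus $\mathbf{C}^{\ast}/p^{\mathbf{Z}}$. A nonzero theta function of bounded order has only finitely many zeros there, so the difference must vanish identically in $f_1$. Iterating in $f_2,\dots,f_n$ (each time with the already-freed variables generic) frees all the parameters, and the genericity restriction on $p,q$ is removed by continuity.

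The hard part will be the continuation step, and within it the point demanding real care is that the two sides carry the same quasi-periodicity in each $f_r$, so that their difference is a section of a single line bundle on $\mathbf{C}^{\ast}/p^{\mathbf{Z}}$ with a well-defined, bounded zero count; this is precisely what licenses the ``infinitely many zeros force vanishing'' conclusion. I would verify it by tracking the $f_r\mapsto p f_r$ transformation factor of each theta-shifted factorial through \eqref{GR11.2.55a} and checking that the very-well-poised and elliptic-balancing structure of \eqref{e-10p9-6-tetra} forces the factors on the two sides to agree. The remaining bookkeeping --- a clean bound on the theta-order in each $f_r$ from $\sumk\le N$, and the distinctness of the $q^{-N_r}$ modulo $p^{\mathbf{Z}}$ for generic $q,p$ --- is routine once the quasi-periodicities have been matched.
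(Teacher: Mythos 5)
Your proposal is correct and follows essentially the same route as the paper: both specialize $f_r=q^{-N_r}$ so that the simplex identity collapses to the $f=q^{-N}$ case of Theorem~\ref{th:e-10p9-6}, and then analytically continue in $f_1,\dots,f_n$ one variable at a time using the $p$-periodicity coming from~\eqref{GR11.2.55a} together with the accumulation of the zeros $q^{-N_1},q^{-N_2},\dots$ (for $p$, $q$ with the $p^mq^n$ distinct, removed afterwards by continuity). The only cosmetic difference is that the paper verifies that the difference $F$ of the two sides is literally invariant under $f_s\mapsto pf_s$ and concludes via a limit point of zeros in a fixed annulus, whereas you clear denominators and count zeros of a theta function of bounded order --- the same verification in slightly different packaging.
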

\begin{proof} Denote the left-hand side minus the right-hand side of the transformation in \eqref{e-10p9-6-tetra} by $F(f_1,\dots,f_n)$. Now $F$ is meromorphic in each of the variables $f_1,\dots,f_n$ in the domain $0<|f_s|<\infty$, for $1\le s\le n$. Using \eqref{GR11.2.55a}, it can be easily checked that~$F$ is periodic in each~$f_s$, i.e.,
\begin{gather*}
F(f_1,\dots,pf_s,\dots,f_n)=F(f_1,\dots,f_n),
\end{gather*}
for $s=1,\dots,n$. For technical reasons we shall first assume that $p$, $q$ are chosen such that $p^mq^n$ are distinct for all integers~$m$ and~$n$.

We will first demonstrate the existence of a convergent sequence of distinct points, such that~$F$ is zero when~$f_1$ is equal to any term of that sequence. This will imply that~$F$ is zero for any $f_1$ where $F$ is defined. It follows from the $f=q^{-N}$ case of Theorem~\ref{th:e-10p9-6}
that~$F$ is zero for $(f_1,\dots,f_n)=\big(q^{-N_1},\dots,q^{-N_n}\big)$, where $N_1,\dots,N_n$ are nonnegative integers. Now for every $r=0, 1, 2, \dots$, there is an integer~$m_r$ such that if $z_r = p^{m_r}q^{-r}$, then $p\le |z_r| \le 1$. By periodicity of $F$, $F$ must be $0$ when $f_1=z_r$, for every~$r$. Since there is an infinite number of $z_r$ in the annulus $p\le |z|\le 1$, the $z_r$ must have a limit point. Thus there is a convergent infinite subsequen\-ce~$(z_{r_k})$ of distinct points, such that~$F$ is~$0$ on this subsequence, and therefore~$F$ must be zero for any~$f_1$, as required.

By iterating this argument for $f_2,\dots,f_n$, we conclude that $F$ is identically zero for $f_1,\dots,f_n$. By analytic continuation this is extended to the degenerate cases where the $p^mq^n$ are not all different (for various~$m$,~$n$), as long as those choices for~$p$,~$q$ leave~$F$ well-defined.
\end{proof}

Some consequences of Theorem~\ref{th:e-10p9-6} are noted in Section~\ref{sec:piszero}. Next we have an $A_n$ elliptic well-poised $\B^{(2)} \to\B^{(2)}$ Bailey lemma.

\begin{Theorem}[an elliptic ($\B^{(2)}\to\B^{(2)}$) WP Bailey lemma] \label{th:e-WP-BaileyLemma-B2-B2}
Suppose $\alpha_\N(a,b)$ and $\beta_\N(a,b)$ form a WP Bailey pair with respect to the matrix~$\B^{(2)}$. Let ${\alpha}^{\prime}_\N(a,b)$ and ${\beta}^{\prime}_\N(a,b)$ be defined as follows
\begin{gather*}
{\alpha}^{\prime}_\N(a,b) :=
\frac{\ellipticqrfac{\rho_1 }{\sumN}}{\ellipticqrfac{aq/\rho_2}{\sumN}}
 \smallprod n \frac{\ellipticqrfac{\rho_2x_r}{N_r}}
{\ellipticqrfac{ax_rq/\rho_1}{N_r}} \cdot
\left(\frac{aq}{\rho_1\rho_2}\right)^{\sumN} \alpha_\N(a, b\rho_1\rho_2/aq),\\
{\beta}^{\prime}_\N (a,b) := \frac{\ellipticqrfac{b\rho_2/a }{\sumN}}{\ellipticqrfac{aq/\rho_2}{\sumN}}
 \smallprod n \frac{\ellipticqrfac{b\rho_1q^{\sumN-N_r}/ax_r}{N_r}}
{\ellipticqrfac{ax_rq/\rho_1}{N_r}}\notag \\
 \hphantom{{\beta}^{\prime}_\N (a,b) :=}{} \times \multsum{k}{N}{r} \Bigg(
\frac{ \ellipticqrfac{\rho_1}{\sumk} }{\ellipticqrfac{b\rho_2/a}{\sumk}}
\smallprod n \frac{\ellipticqrfac{\rho_2x_r}{k_r}
\ellipticqrfac{b\rho_1q^{\sumN}/ax_r}{\sumk-k_r} }
{\ellipticqrfac{b\rho_1q^{\sumN-N_r}/ax_r}{\sumk}} \notag \\
\hphantom{{\beta}^{\prime}_\N (a,b) :=}{}
\times \frac{\elliptictheta{ b\rho_1\rho_2q^{2\sumk}/{aq}}}{\elliptictheta{ b\rho_1\rho_2/{aq}}}
\frac{\ellipticqrfac{b}{\sumN+\sumk}}{\ellipticqrfac{b\rho_1\rho_2/a}{\sumN+\sumk}} \notag\\
\hphantom{{\beta}^{\prime}_\N (a,b) :=}{}
\times \frac{\ellipticqrfac{aq/\rho_1\rho_2}{\sumN-\sumk} }
{\sqprod n \ellipticqrfac{q^{1+k_r-k_s}\xover x }{N_r-k_r}}
\left(\frac{aq}{\rho_1\rho_2}\right)^{\sumk} \beta_\k(a,b\rho_1\rho_2/aq)\Bigg).
\end{gather*}
Then ${\alpha}^{\prime}_\N(a,b)$ and ${\beta}^{\prime}_\N(a,b)$ also form a WP Bailey pair with respect to $\B^{(2)}$.
\end{Theorem}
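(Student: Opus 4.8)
The plan is to follow the template established in the proof of Theorem~\ref{th:e-WP-BaileyLemma-B1}, and most closely its variant in Theorem~\ref{th:e-WP-BaileyLemma-B2-B1}, since here too the given pair is a WP Bailey pair with respect to $\B^{(2)}$. First I would start from the defining expression for ${\beta}^{\prime}_\N(a,b)$ in the statement and substitute for $\beta_\k(a,b\rho_1\rho_2/aq)$ using \eqref{BP-def2} with $\B=\B^{(2)}$, namely $\beta_\k(a,b\rho_1\rho_2/aq)=\sum_\j B^{(2)}_{\k\j}(a,b\rho_1\rho_2/aq)\,\alpha_\j(a,b\rho_1\rho_2/aq)$. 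This turns ${\beta}^{\prime}_\N$ into a double sum, over $\k$ with $0\le k_r\le N_r$ and over $\j$ with $0\le j_r\le k_r$.

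Next I would interchange the order of summation and shift the inner index by $\k\mapsto\k+\j$, using the same reindexing identity as in the proof of Theorem~\ref{th:e-WP-BaileyLemma-B1}, so that the outer sum runs over $\j$ and the inner sum over $\k$ with $0\le k_r\le N_r-j_r$. The product $\prod_{r,s}1/\ellipticqrfac{q^{1+k_r-k_s}\xover x}{N_r-k_r}$ coming from the $\beta'$ kernel then acquires shifted arguments, which I would simplify by the specialization of Lemma~\ref{th:e-magiclemma2} obtained by replacing $N_r\mapsto N_r-j_r$ and $x_r\mapsto x_rq^{j_r}$, exactly as displayed in the proof of Theorem~\ref{th:e-WP-BaileyLemma-B1}. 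Along the way I would use \eqref{GR11.2.47} to merge theta-shifted factorials and \eqref{GR11.2.50} to reverse the products carrying the index $\sumN$, so that the inner sum is exhibited in standard very-well-poised form.

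The crucial step is to recognize the inner sum as an instance of the $A_n$ elliptic Jackson summation \eqref{e-8p7-5}. The very-well-poised part of the $\beta'$ kernel here is the single theta quotient $\elliptictheta{b\rho_1\rho_2q^{2\sumk}/aq}/\elliptictheta{b\rho_1\rho_2/aq}$, so after the index shift it matches the single-theta very-well-poised part $\elliptictheta{aq^{2\sumk}}/\elliptictheta{a}$ of \eqref{e-8p7-5} under $a\mapsto b\rho_1\rho_2q^{2\sumj}/aq$, exactly as in Theorem~\ref{th:e-WP-BaileyLemma-B2-B1}. Since the target matrix is now $\B^{(2)}$ rather than $\B^{(1)}$, I expect to need the appropriate altered formulation of \eqref{e-8p7-5} (obtained, as with \eqref{e-8p7-1} in the proof of Theorem~\ref{th:e-WP-BaileyLemma-B1-B2}, from the balancing symmetry of the summand), together with substitutions for the remaining parameters of the shape $b,c,d\mapsto$ suitable monomials in $\rho_1,\rho_2,b,q^{\sumj},q^{\sumN}$, fixed by matching the surviving factors. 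Summing the inner sum collapses it to a product depending only on $\j$.

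Finally I would reassemble the surviving factors: after a last round of simplification with \eqref{GR11.2.50} and \eqref{GR11.2.47}, the single sum over $\j$ should take the form $\sum_\j B^{(2)}_{\N\j}(a,b)\,\alpha^{\prime}_\j(a,b)$, with $B^{(2)}$ as in \eqref{e-B2} and $\alpha^{\prime}_\j$ the expression in the theorem, which is precisely the assertion that $({\alpha}^{\prime}_\N,{\beta}^{\prime}_\N)$ is a WP Bailey pair with respect to $\B^{(2)}$. The main obstacle is bookkeeping rather than conceptual: pinning down the exact parameter identification in \eqref{e-8p7-5} and tracking the $x_r$-dependent factors and the $q$-powers through the index shift and the Lemma~\ref{th:e-magiclemma2} rewriting. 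This is more delicate than in the $\B^{(1)}$ cases because the $\B^{(2)}$ structure carries the $x_r$-dependence on the column index $\j$, so the factors $\prod_r\ellipticqrfac{b\rho_1q^{\sumN-N_r}/ax_r}{N_r}$ and their internal $q^{\sumN-N_r}$ shifts must be matched precisely against the output of the summation.
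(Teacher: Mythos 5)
Your proposal matches the paper's proof, which likewise reduces to the template of Theorem~\ref{th:e-WP-BaileyLemma-B1} and evaluates the inner sum by \eqref{e-8p7-5} under $x_r\mapsto x_rq^{j_r}$, $N_r\mapsto N_r-j_r$, $a\mapsto b\rho_1\rho_2q^{2\sumj}/aq$, $b\mapsto \rho_1q^{\sumj}$, $c\mapsto bq^{\sumN+\sumj}$, $d\mapsto b\rho_1\rho_2q^{\sumj}/a^2q$. The only cosmetic difference is that the paper does not invoke an ``altered formulation'' of \eqref{e-8p7-5}; the reassignment you anticipate is absorbed into the choice of which parameters receive the $q^{\sumN}$-bearing monomials, exactly the $c$ and $d$ substitutions above.
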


\begin{proof}
The proof is analogous to the proof of Theorem~\ref{th:e-WP-BaileyLemma-B1}. However, this time we sum the inner sum using \eqref{e-8p7-5}. We use the following substitutions: $x_r\mapsto x_rq^{j_r}$ and $N_r\mapsto N_r-j_r$ for $r=1, 2, \dots, n$, $a\mapsto b\rho_1\rho_2q^{2\sumj}/aq$,
$b\mapsto \rho_1q^{\sumj}$, $c\mapsto bq^{\sumN+\sumj}$, $d\mapsto b\rho_1\rho_2q^{\sumj}/a^2q$. The rest of the proof is similar to the earlier one.
\end{proof}

On applying the $\B^{(2)} \to\B^{(2)}$ elliptic Bailey lemma in Theorem~\ref{th:e-WP-BaileyLemma-B2-B2} to the WP Bailey pair in Theorem~\ref{th:e-Bailey-Pair-B2.2}, we again obtain Theorem~\ref{th:e-10p9-6}.

Next, we present another Bressoud matrix related to \eqref{e-8p7-5}.

\begin{Definition}[an $A_n$ Elliptic Bressoud matrix] We define the matrix $\B^{(3)}$ with entries $B^{(3)}_{\k\j}(a,b)$ as
\begin{gather}
B^{(3)}_{\k\j}(a,b) := \frac{ \smallprod n \ellipticqrfac{bx_r}{j_r+\sumk} \ellipticqrfac{bx_r q^{j_r-\sumj}/a}{k_r-j_r} }
{ \ellipticqrfac{aq}{\sumk+\sumj} \sqprod n \ellipticqrfac{q^{1+j_r-j_s}\xover x }{k_r-j_r} }. \label{e-B3}
 \end{gather}
\end{Definition}
\begin{Remark*} When $p=0$ and further $b=0$, this reduces to a multivariable Bailey transform matrix given by Milne~\cite[Definition~8.24]{Milne1997}. This matrix and its inverse were used in \cite{GB1995} to derive a~result related to~\eqref{6p5-BS1} below.
\end{Remark*}

\begin{Theorem}[an elliptic WP Bailey pair with respect to $\B^{(3)}$]\label{th:e-Bailey-Pair-B3.2} The two sequences
\begin{gather*}
\alpha_\k(a,b) := \frac{\elliptictheta{aq^{2\sumk}}}{\elliptictheta{a}} \frac{ \ellipticqrfac{a,c, d}{\sumk} }{\ellipticqrfac{aq/c, aq/d}{\sumk}
\sqprod n \ellipticqrfac{q\xover x}{k_r}}\notag\\
\hphantom{\alpha_\k(a,b) :=}{}
\times \smallprod n \frac{\ellipticqrfac{a^2q/bcdx_r}{\sumk}}
{\ellipticqrfac{a^2q/bcdx_r}{\sumk-k_r}\ellipticqrfac{bcdx_r/a}{k_r}}\cdot
\left( \frac{b}{a}\right)^{\sumk}
q^{-\sum\limits_{r<s} k_rk_s}
\smallprod n x_r^{k_r}, \\
 \intertext{and}
\beta_\k(a,b) := \frac{ \ellipticqrfac{aq/cd}{\sumk} }{\ellipticqrfac{aq/c, aq/d}{\sumk}}
\sqprod n\frac{1}{\ellipticqrfac{q\xover x}{k_r}} \smallprod n \frac{\ellipticqrfac{bx_r}{\sumk}\ellipticqrfac{bcx_r/a, bdx_r/a}{k_r}}
{\ellipticqrfac{bcdx_r/a}{k_r}},
\end{gather*}
form a WP-Bailey pair with respect to $\B^{(3)}$.
\end{Theorem}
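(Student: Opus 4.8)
The plan is to verify the defining relation \eqref{BaileyPair-def} for the pair $(\alpha_\k,\beta_\k)$ directly, i.e.\ to show that the sum $\multsum{j}{N}{r} B^{(3)}_{\N\j}(a,b)\,\alpha_\j(a,b)$ collapses to the claimed $\beta_\N(a,b)$. Since the matrix $\B^{(3)}$ in \eqref{e-B3} and the sequence $\alpha_\k$ are both read off from the Rosengren--Schlosser summation \eqref{e-8p7-5} --- note in particular that $\alpha_\k$ carries the very-well-poised part $\elliptictheta{aq^{2\sumk}}/\elliptictheta{a}$, which is exactly the one appearing in \eqref{e-8p7-5} (and \emph{not} the factored form $\smallprod n \elliptictheta{ax_rq^{k_r+\sumk}}/\elliptictheta{ax_r}$ of \eqref{e-8p7-1}) --- the computation should proceed in complete analogy with the proof of Theorem~\ref{th:e-Bailey-Pair-B1.2a}, with \eqref{e-8p7-1} replaced by \eqref{e-8p7-5}.

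First I would substitute \eqref{e-B3} (with $\k=\N$) and the displayed $\alpha_\j$ into the sum and separate out the factors independent of the summation index $\j$; these will form the prefactor that ultimately reassembles $\beta_\N$. The factor $\smallprod n \ellipticqrfac{bx_r}{j_r+\sumN}$ splits by \eqref{GR11.2.47} into the $\j$-independent $\smallprod n \ellipticqrfac{bx_r}{\sumN}$ (one of the conspicuous factors of $\beta_\N$) times a summand piece $\smallprod n \ellipticqrfac{bx_rq^{\sumN}}{j_r}$. The denominator $\sqprod n \ellipticqrfac{q^{1+j_r-j_s}\xover x}{N_r-j_r}$ I would rewrite using Lemma~\ref{th:e-magiclemma2}, which produces the elliptic Vandermonde $\ellipticvandermonde{x}{j}{n}$, the $\j$-independent $\sqprod n 1/\ellipticqrfac{q\xover x}{N_r}$ (the other conspicuous factor of $\beta_\N$), the summand piece $\sqprod n \ellipticqrfac{q^{-N_s}\xover x}{j_r}$, and a sign/power factor. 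The remaining mixed factor $\smallprod n \ellipticqrfac{bx_rq^{j_r-\sumj}/a}{N_r-j_r}$ together with the prefactor $1/\ellipticqrfac{aq}{\sumN+\sumj}$ I would handle with the reversal identities \eqref{GR11.2.49r} and \eqref{GR11.2.50}. After this reorganisation the residual $\j$-sum becomes, under a suitable relabeling of the summation parameters $\{a,b,c,d\}$ of \eqref{e-8p7-5} (the one carrying the matrix parameter $b$ into a $qa^2/bcd$-type combination, as in the earlier Bailey-pair proofs; note that $d/x_r\mapsto a^2q/bcd\,x_r$ matches the factor $\ellipticqrfac{a^2q/bcdx_r}{\sumj}/\ellipticqrfac{a^2q/bcdx_r}{\sumj-j_r}$ of $\alpha_\j$), precisely the left-hand side of \eqref{e-8p7-5}. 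Applying that summation and simplifying the product side with \eqref{GR11.2.47} then yields $\beta_\N(a,b)$.

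The main obstacle I anticipate is bookkeeping of the prefactors rather than any conceptual difficulty. The monomial $q^{-\sum_{r<s}k_rk_s}\smallprod n x_r^{k_r}$ and the base $(b/a)^{\sumk}$ carried by $\alpha_\k$ must combine with the sign and power factors spun off by Lemma~\ref{th:e-magiclemma2} and by the reversal identities \eqref{GR11.2.49r}, \eqref{GR11.2.50} so as to reproduce exactly the weight $q^{\sum_r r j_r}$ demanded in the summand of \eqref{e-8p7-5}. Pinning down the precise parameter dictionary so that every theta-shifted factorial lines up --- in particular so that the summation-side factors $\ellipticqrfac{d/x_r}{\sumj}/\ellipticqrfac{d/x_r}{\sumj-j_r}$ and $\ellipticqrfac{bcd/ax_r}{\sumj-j_r}/\ellipticqrfac{bcdq^{-N_r}/ax_r}{\sumj}$ match the corresponding factors of $\alpha_\j$ after relabeling $b$ and $c$ as well --- is the one place requiring genuine care; once that dictionary is fixed the evaluation is immediate and the product side of \eqref{e-8p7-5} collapses to the asserted $\beta_\N(a,b)$.
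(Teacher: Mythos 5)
Your proposal matches the paper's proof: the paper likewise verifies the defining relation \eqref{BaileyPair-def} directly, in analogy with Theorem~\ref{th:e-Bailey-Pair-B1.2a}, by evaluating $\sum_{\j} B^{(3)}_{\N\j}\alpha_\j$ with the $d\mapsto qa^2/bcd$ case of \eqref{e-8p7-5} (after interchanging $b$ and $d$), which is exactly the parameter dictionary you identify via $d/x_r\mapsto a^2q/bcdx_r$. The bookkeeping you outline (Lemma~\ref{th:e-magiclemma2}, \eqref{GR11.2.47}, and the reversal identities) is the same machinery the paper invokes in the model proof.
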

\begin{proof} The proof is analogous to that of Theorem~\ref{th:e-Bailey-Pair-B1.2a}. The proof requires the $d\mapsto qa^2/bcd$ case of~\eqref{e-8p7-5}, after interchanging~$b$ and~$d$. We verify that $\alpha_\j(a,b)$ and $\beta_\N(a,b)$ satisfy the defining condition of a WP Bailey pair with respect to $\B^{(3)}$.
\end{proof}

\begin{Corollary}\label{e-Bailey-Pair-B3-1} The two sequences
\begin{gather*}
\alpha_\k(a,b) := \frac{\elliptictheta{aq^{2\sumk}}}{\elliptictheta{a}}
 \ellipticqrfac{a}{\sumk}
\sqprod n\frac{1}{\ellipticqrfac{q\xover x}{k_r}} \notag\\
\hphantom{\alpha_\k(a,b) :=}{} \times \smallprod n \frac {\ellipticqrfac{a/bx_r}{\sumk}}{\ellipticqrfac{a/bx_r} {\sumk-k_r}
\ellipticqrfac{bx_rq}{k_r}} \cdot \left( \frac{b}{a}\right)^{\sumk} q^{-\sum\limits_{r<s} k_rk_s} \smallprod n x_r^{k_r}, \\
\intertext{and}
\beta_\k(a,b) := \smallprod n \delta_{k_r, 0} ,
\end{gather*}
form a WP-Bailey pair with respect to $\B^{(3)}$.
\end{Corollary}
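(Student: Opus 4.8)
The plan is to obtain this unit pair as the specialization $d = aq/c$ of the WP Bailey pair in Theorem~\ref{th:e-Bailey-Pair-B3.2}, in complete parallel with the way Corollary~\ref{e-Bailey-Pair-B1.1} was deduced from Theorem~\ref{th:e-Bailey-Pair-B1.2a}. Since Theorem~\ref{th:e-Bailey-Pair-B3.2} already establishes that the general $\alpha_\k(a,b)$ and $\beta_\k(a,b)$ satisfy the defining relation~\eqref{BaileyPair-def} with respect to $\B^{(3)}$, and that relation is an identity in the free parameter $d$, setting $d = aq/c$ preserves it. Hence all that remains is to verify that the specialized sequences collapse to the forms stated in the corollary.

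First I would treat the $\beta$-side. Putting $d = aq/c$ turns the factor $\ellipticqrfac{aq/cd}{\sumk}$ into $\ellipticqrfac{1}{\sumk}$. Because the modified theta function vanishes at $1$, namely $\elliptictheta{1} = \pqrfac{1}{\infty}{p}\,\pqrfac{p}{\infty}{p} = 0$ (just as $(1;q)_k = 0$ for $k\ge 1$ in the $p=0$ case), we have $\ellipticqrfac{1}{\sumk} = 0$ whenever $\sumk \ge 1$ and $\ellipticqrfac{1}{0} = 1$. Therefore $\beta_\k(a,b)$ vanishes unless $\sumk = 0$, i.e., unless every $k_r = 0$, in which case every remaining factor is an empty product equal to $1$; this yields $\beta_\k(a,b) = \smallprod n \delta_{k_r,0}$, as claimed.

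Next I would simplify the $\alpha$-side. With $d = aq/c$ the quotient $\ellipticqrfac{a,c,d}{\sumk}/\ellipticqrfac{aq/c,aq/d}{\sumk}$ reduces to $\ellipticqrfac{a}{\sumk}$, since $\ellipticqrfac{d}{\sumk} = \ellipticqrfac{aq/c}{\sumk}$ and $\ellipticqrfac{aq/d}{\sumk} = \ellipticqrfac{c}{\sumk}$, so the factors $\ellipticqrfac{c}{\sumk}$ and $\ellipticqrfac{aq/c}{\sumk}$ cancel between numerator and denominator. Moreover $bcd = abq$, so $a^2q/bcd\,x_r = a/(bx_r)$ and $bcd\,x_r/a = bx_rq$, which turns $\smallprod n \ellipticqrfac{a^2q/bcdx_r}{\sumk}\big/\big(\ellipticqrfac{a^2q/bcdx_r}{\sumk-k_r}\ellipticqrfac{bcdx_r/a}{k_r}\big)$ into exactly the product $\smallprod n \ellipticqrfac{a/bx_r}{\sumk}\big/\big(\ellipticqrfac{a/bx_r}{\sumk-k_r}\ellipticqrfac{bx_rq}{k_r}\big)$ appearing in the corollary. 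The remaining factors $\elliptictheta{aq^{2\sumk}}/\elliptictheta{a}$, $\sqprod n 1/\ellipticqrfac{q\xover x}{k_r}$, $(b/a)^{\sumk}$, $q^{-\sum_{r<s}k_rk_s}$ and $\smallprod n x_r^{k_r}$ are independent of $c$ and $d$ and carry over unchanged.

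There is essentially no obstacle here; the computation is a direct parameter specialization. The one point worth flagging is the mechanism by which the $\beta$-side collapses to the Kronecker delta: it rests entirely on the vanishing $\elliptictheta{1}=0$, which is the elliptic analogue of the familiar fact that the unit Bailey pair arises from the $d=aq/c$ specialization in the $n=1$, $p=0$ setting.
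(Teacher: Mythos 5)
Your proposal is correct and follows exactly the paper's own route: the paper proves this corollary by the single step of setting $d=aq/c$ in Theorem~\ref{th:e-Bailey-Pair-B3.2}, and your detailed verification (the cancellations $\ellipticqrfac{d}{\sumk}=\ellipticqrfac{aq/c}{\sumk}$, $\ellipticqrfac{aq/d}{\sumk}=\ellipticqrfac{c}{\sumk}$, the identities $bcd=abq$, and the collapse of $\beta_\k$ via $\elliptictheta{1}=0$) is accurate.
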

\begin{proof} Take $d=aq/c$ in Theorem~\ref{th:e-Bailey-Pair-B3.2} to obtain this unit Bailey pair.
\end{proof}
As before, we can derive a formula for the inverse of $\B^{(3)}$ using this unit Bailey pair.
\begin{Corollary}[inverse of $\B^{(3)}$]\label{cor:e-B3-inverse}
Let $\B^{(3)}= \big(B^{(3)}_{\k\j}(a,b)\big)$ be defined by~\eqref{e-B3}. Then the entries of its inverse are given by
\begin{gather}
\big(B^{(3)}(a,b)\big)_{\k \j}^{-1} =
\frac{\elliptictheta{aq^{2\sumk}}}{\elliptictheta{a}}
\smallprod n \frac{\elliptictheta{bx_rq^{j_r+\sumj}}}{\elliptictheta{bx_r}}
\cdot \left( \frac{b}{a}\right)^{\sumk-\sumj}
q^{\sum\limits_{r<s}(j_rj_s- k_rk_s)}
\smallprod n x_r^{k_r-j_r}\nonumber\\
\hphantom{\big(B^{(3)}(a,b)\big)_{\k \j}^{-1} =}{}
\times \frac{\ellipticqrfac{a}{\sumk+\sumj} \smallprod n \ellipticqrfac{aq^{\sumk-k_r}/bx_r}{k_r-j_r}}
{\sqprod n \ellipticqrfac{q^{1+j_r-j_s}\xover x }{k_r-j_r} \smallprod n \ellipticqrfac{bx_rq}{k_r+\sumj}} .\label{e-B3-inverse}
\end{gather}
\end{Corollary}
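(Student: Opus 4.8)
The plan is to mirror the inversion of $\B^{(1)}$ carried out in Corollary~\ref{b1-inverse}, reading off the inverse of $\B^{(3)}$ directly from the unit WP Bailey pair of Corollary~\ref{e-Bailey-Pair-B3-1}. That pair states
\[
\multsum{j}{N}{r} B^{(3)}_{\N\j}(a,b)\,\alpha_\j(a,b) = \smallprod n \delta_{N_r,0},
\]
with $\alpha_\j$ the unit sequence of Corollary~\ref{e-Bailey-Pair-B3-1}; this has the shape~\eqref{n-Bailey-inversion} and so encodes a matrix inversion. First I would replace $N_r$ by $N_r-K_r$, making the right-hand side $\smallprod n \delta_{N_r,K_r}$, and then shift the summation index by $j_r\mapsto j_r-K_r$, turning the left-hand side into a sum over $K_r\le j_r\le N_r$ of $B^{(3)}_{(\N-\K)(\j-\K)}(a,b)\,\alpha_{\j-\K}(a,b)$.

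The decisive step is the substitution $x_r\mapsto x_rq^{K_r}$, $a\mapsto aq^{2\sumK}$, $b\mapsto bq^{\sumK}$. The choice $a\mapsto aq^{2\sumK}$ (as opposed to $a\mapsto aq^{\sumK}$ used for $\B^{(1)}$) is dictated by the \emph{simpler} very-well-poised factor $\elliptictheta{aq^{2\sumk}}/\elliptictheta{a}$ attached to $\B^{(3)}$: under the shift $aq^{2(\sumj-\sumK)}\cdot q^{2\sumK}=aq^{2\sumj}$, which is exactly the factor appearing in~\eqref{e-B3-inverse}. A factor-by-factor check using only~\eqref{GR11.2.47} then shows that each $q,p$-shifted factorial in $B^{(3)}_{(\N-\K)(\j-\K)}$ evaluated at the shifted parameters reverts \emph{exactly} to its counterpart in $B^{(3)}_{\N\j}(a,b)$, except for two $\K$-only factors: one, $\ellipticqrfac{aq}{2\sumK}$, from splitting $\ellipticqrfac{aq}{\sumk+\sumj}$, and one, $\smallprod n 1/\ellipticqrfac{bx_r}{K_r+\sumK}$, from splitting $\smallprod n \ellipticqrfac{bx_r}{j_r+\sumk}$. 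Pulling these $\K$-only factors outside the sum recasts the relation as $\multsum{j}{N}{r} B^{(3)}_{\N\j}(a,b)\cdot(\text{entry}) = \smallprod n\delta_{N_r,K_r}$, whence the $(\j,\K)$ entry of the inverse is a $\K$-dependent multiple of $\alpha_{\j-\K}$ at the shifted parameters.

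The main obstacle is the bookkeeping of the prefactors $(b/a)^{\sumk}$, $q^{-\sum_{r<s}k_rk_s}$ and $\smallprod n x_r^{k_r}$ of $\alpha_\k$. After the shift and substitution these generate several $\j$-dependent powers of $q$, and the crux is to verify that, thanks to the cross terms produced by $x_r\mapsto x_rq^{K_r}$ and the $q^{2\sumK}$ from the $a$-shift, all residual $\j$-dependence cancels and they collapse to precisely $q^{\sum_{r<s}(K_rK_s-j_rj_s)}$ together with $(b/a)^{\sumj-\sumK}\smallprod n x_r^{j_r-K_r}$. Once this is established, the leftover $\K$-only prefactors are simplified by the telescoping identity $\ellipticqrfac{aq}{m}/\ellipticqrfac{a}{m}=\elliptictheta{aq^m}/\elliptictheta{a}$ (an immediate consequence of~\eqref{GR11.2.47}) and its $bx_r$-analogue, which convert them into the explicit theta factors $\elliptictheta{aq^{2\sumj}}/\elliptictheta{a}$ and $\smallprod n \elliptictheta{bx_rq^{K_r+\sumK}}/\elliptictheta{bx_r}$ of~\eqref{e-B3-inverse}. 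Matching the remaining factorials against~\eqref{e-B3-inverse} then finishes the identification.
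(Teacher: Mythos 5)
Your proposal is correct and follows essentially the same route as the paper: writing the unit WP Bailey pair of Corollary~\ref{e-Bailey-Pair-B3-1} as a matrix inversion, shifting $\N\mapsto\N-\K$, $\j\mapsto\j-\K$, and substituting $x_r\mapsto x_rq^{K_r}$, $a\mapsto aq^{2\sumK}$, $b\mapsto bq^{\sumK}$ before reading off the $(\j,\K)$ entry of the inverse. You correctly identify the key point that the $a$-shift must be by $q^{2\sumK}$ because of the simpler very-well-poised part, and your bookkeeping of the residual $\K$-only factors fills in what the paper compresses into ``after some simplification.''
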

\begin{proof}
We first write Corollary~\ref{e-Bailey-Pair-B3-1} in the form
 \begin{gather*}\label{n-B3-inverse-1}
 \multsum{j}{N}{r} B^{(3)}_{\N\j} (a,b) \alpha_\j = \smallprod n \delta_{N_r, 0} ,
\end{gather*}
where $\alpha_\j$ is as in Corollary~\ref{e-Bailey-Pair-B3-1}, and the entries of $\B^{(3)}$ is given by \eqref{e-B3}. We now replace $\N$ by $\N-\K$, and shift the index by replacing $\j$ by $\j-\K$. The index of the sum now runs from $K_r\leq j_r \leq N_r$, for $r=1, 2, \dots, n$. Next we take $x_r\mapsto x_rq^{K_r}$, $a\mapsto aq^{2\sumK}$ and $b\mapsto bq^{\sumK}$ to obtain, after some simplification, the sum
 \begin{gather}\label{n-B3-inverse-2}
 \sum\limits_{\substack{{K_r\le j_r \le N_r} \\
{r =1,2,\dots, n}}} B^{(3)}_{\N\j} (a,b) \big(B^{(3)}(a,b)\big)^{-1}_{\j\K}(a,b) = \smallprod n \delta_{N_r, K_r},
\end{gather}
where $\big(B^{(3)}(a,b)\big)^{-1}_{\j\K}$ is given by \eqref{e-B3-inverse} with indices relabeled as $(\k, \j)\mapsto (\j, \K)$.
\end{proof}

Observe that the entries of $(\B^{(3)}(a,b))^{-1}$ consist of the entries of $\B^{(2)}(b,a)$ multiplied by some additional factors, which can be separated into factors containing either terms with index~$\j$ or with index~$\k$. This can help us find the inverse of~$\B^{(2)}$.
\begin{Corollary}[inverse of $\B^{(2)}$]\label{cor:e-B2-inverse} Let $\B^{(2)}= (B^{(2)}_{\k\j}(a,b))$ be defined by~\eqref{e-B2}. Then the entries of its inverse are given by
\begin{gather*}
\big(B^{(2)}(a,b)\big)_{\k \j}^{-1} =
\smallprod n \frac{\elliptictheta{ax_rq^{k_r+\sumk}}}{\elliptictheta{ax_r}}
\cdot \frac{\elliptictheta{bq^{2\sumj}}}{\elliptictheta{b}} \left( \frac{b}{a}\right)^{\sumk-\sumj}
q^{\sum\limits_{r<s}(k_rk_s- j_rj_s)} \smallprod n x_r^{j_r-k_r}\\
 \hphantom{\big(B^{(2)}(a,b)\big)_{\k \j}^{-1} =}{} \times
\frac{ \smallprod n
 \ellipticqrfac{ax_r}{j_r+\sumk} \ellipticqrfac{ax_r q^{j_r-\sumj}/b}{k_r-j_r} }
{ \ellipticqrfac{bq}{\sumk+\sumj} \sqprod n \ellipticqrfac{q^{1+j_r-j_s}\xover x }{k_r-j_r} }.
\end{gather*}
\end{Corollary}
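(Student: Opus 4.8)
The plan is to exploit the factorization flagged in the paragraph preceding the statement: the entries of $\big(\B^{(3)}(a,b)\big)^{-1}$ in \eqref{e-B3-inverse} are exactly the entries of $\B^{(2)}(b,a)$ — the matrix \eqref{e-B2} with $a$ and $b$ interchanged — times a prefactor that splits into a part depending only on the row index $\k$ and a part depending only on the column index $\j$. Concretely, I would set
\begin{gather*}
P_\k := \frac{\elliptictheta{aq^{2\sumk}}}{\elliptictheta{a}} \left(\frac ba\right)^{\sumk} q^{-\sum\limits_{r<s}k_rk_s} \smallprod n x_r^{k_r},\\
Q_\j := \smallprod n \frac{\elliptictheta{bx_rq^{j_r+\sumj}}}{\elliptictheta{bx_r}} \left(\frac ab\right)^{\sumj} q^{\sum\limits_{r<s}j_rj_s} \smallprod n x_r^{-j_r},
\end{gather*}
and verify directly from \eqref{e-B2} and \eqref{e-B3-inverse} that $\big(B^{(3)}(a,b)\big)^{-1}_{\k\j} = P_\k\, B^{(2)}_{\k\j}(b,a)\, Q_\j$. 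This amounts to writing $(b/a)^{\sumk-\sumj}$, $q^{\sum_{r<s}(j_rj_s-k_rk_s)}$ and $\smallprod n x_r^{k_r-j_r}$ as products of a $\k$-part and a $\j$-part, and observing that $\elliptictheta{aq^{2\sumk}}/\elliptictheta{a}$ and $\smallprod n \elliptictheta{bx_rq^{j_r+\sumj}}/\elliptictheta{bx_r}$ already depend on $\k$ (resp. $\j$) alone; the surviving core is precisely $B^{(2)}_{\k\j}(b,a)$.

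In matrix form this reads $\big(\B^{(3)}(a,b)\big)^{-1} = P\,\B^{(2)}(b,a)\,Q$, where $P$ and $Q$ are the diagonal matrices with entries $P_\k$ and $Q_\j$, both invertible since their diagonal entries are generically nonzero. Since $B^{(2)}_{\k\j}(b,a)=0$ unless $\k\ge\j$, the matrices involved are lower-triangular with nonvanishing diagonal, so rescaling by $P$ and $Q$ preserves lower-triangularity and invertibility, and the inverse of a lower-triangular matrix is unique. Inverting both sides and invoking Corollary~\ref{cor:e-B3-inverse}, which identifies $\B^{(3)}(a,b)$ as the inverse of the matrix \eqref{e-B3-inverse}, I obtain
\begin{gather*}
\big(B^{(2)}(b,a)\big)^{-1}_{\k\j} = Q_\k\, B^{(3)}_{\k\j}(a,b)\, P_\j.
\end{gather*}
This is the inverse of $\B^{(2)}(b,a)$; interchanging $a$ and $b$ throughout then gives the formula for $\big(B^{(2)}(a,b)\big)^{-1}_{\k\j}$ recorded in the corollary, since $\widetilde Q_\k\,\widetilde P_\j$ (the swapped prefactors) collapse to the displayed product $\smallprod n \elliptictheta{ax_rq^{k_r+\sumk}}/\elliptictheta{ax_r}\cdot \elliptictheta{bq^{2\sumj}}/\elliptictheta{b}\cdot(b/a)^{\sumk-\sumj}q^{\sum_{r<s}(k_rk_s-j_rj_s)}\smallprod n x_r^{j_r-k_r}$, and $B^{(3)}_{\k\j}(b,a)$ supplies the remaining quotient of theta-shifted factorials.

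I expect the only genuine work to be the bookkeeping in the first step — checking that every index-dependent factor in \eqref{e-B3-inverse} separates cleanly into a $\k$-only and a $\j$-only piece, leaving exactly $B^{(2)}_{\k\j}(b,a)$ behind, and then tracking the $a\leftrightarrow b$ swap carefully so the final prefactor matches the stated one. No summation identity is needed: once the factorization is confirmed, the result follows from the purely formal principle that conjugating a matrix by diagonal rescalings inverts to conjugating the inverse by the reciprocal rescalings, with the roles of the row and column indices exchanged.
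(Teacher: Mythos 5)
Your proposal is correct and follows essentially the same route as the paper: the key observation in both is that $\big(B^{(3)}(a,b)\big)^{-1}_{\k\j}$ equals $B^{(2)}_{\k\j}(b,a)$ times a prefactor that separates into a row-only and a column-only piece, after which the inverse of $\B^{(2)}$ is obtained by the $a\leftrightarrow b$ swap. The paper carries this out by rewriting the orthogonality sum \eqref{n-B3-inverse-2} and reading off the inverse entries, while you package the same computation as conjugation by the diagonal matrices $P$ and $Q$ and invert the resulting matrix identity — a purely cosmetic difference.
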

\begin{proof} We can write the sum \eqref{n-B3-inverse-2} in the form
 \begin{gather*}
 \sum\limits_{\substack{{K_r\le j_r \le N_r} \\
{r =1,2,\dots, n}}} \Bigg( B^{(3)}_{\N\j} (a,b) B^{(2)}_{\j\K}(b,a)
 \frac{\elliptictheta{aq^{2\sumj}}}{\elliptictheta{a}}
\smallprod n \frac{\elliptictheta{bx_rq^{K_r+\sumK}}}{\elliptictheta{bx_r}} \\
\qquad{} \times \left( \frac{b}{a}\right)^{\sumj-\sumK} q^{\sum\limits_{r<s}(K_r K_s- j_r j_s)}
\smallprod n x_r^{j_r-K_r} \Bigg)= \smallprod n \delta_{N_r, K_r}.
\end{gather*}
Interchanging $a$ and $b$, this expression can be written as
\begin{gather*}
\smallprod n \frac{\elliptictheta{ax_rq^{K_r+\sumK}}}{\elliptictheta{ax_rq^{N_r+\sumN}}}
\cdot \left( \frac{b}{a}\right)^{\sumK-\sumN} q^{\sum\limits_{r<s}(K_r K_s- N_r N_s)}
\smallprod n x_r^{N_r-K_r} \\
\qquad{} \times \sum\limits_{\substack{{K_r\le j_r \le N_r} \cr
{r =1,2,\dots, n}}} \Bigg( B^{(3)}_{\N\j} (b,a) B^{(2)}_{\j\K}(a,b)
 \frac{\elliptictheta{bq^{2\sumj}}}{\elliptictheta{b}}
\smallprod n \frac{\elliptictheta{ax_rq^{N_r+\sumN}}}{\elliptictheta{ax_r}} \\
\qquad{} \times \left( \frac{b}{a}\right)^{\sumN-\sumj} q^{\sum\limits_{r<s}(N_r N_s- j_r j_s)}
\smallprod n x_r^{j_r-N_r} \Bigg) = \smallprod n \delta_{N_r, K_r}.
\end{gather*}
Note that when $\N=\K,$ the (non-zero) factors outside the sum on the left-hand side reduce to~$1$, and thus cancel. We can now read off the entries of the inverse of $\B^{(2)}(a,b)$ from this expression.
\end{proof}

Consider the inverse relation \eqref{multivariable-inverse-relation} where $\B=\B^{(3)}$, and $\alpha_k$ and $\beta_k$ are defined as in Theorem~\ref{th:e-Bailey-Pair-B3.2} and $(B^{(3)}(a,b))^{-1}_{\k\j} $ is given by~\eqref{e-B3-inverse}. After using Lemma~\ref{th:e-magiclemma2} and canceling some products, we make the substitutions $a\mapsto qa^2/bcd$, $b\mapsto a$, $c\mapsto aq/bd$ and $d\mapsto aq/bc$. Finally, we replace~$c$ by~$a^2q^{1+\sumN}/bcd$ to again obtain \eqref{e-8p7-1}. Thus \eqref{e-8p7-1} and~\eqref{e-8p7-5} are inverse relations. Indeed, Rosengren and the second author originally obtained~\eqref{e-8p7-5} by taking the inverse relation of \eqref{e-8p7-1} using a matrix inversion equivalent to $\B^{(2)}$.

\begin{Theorem}[an elliptic ($\B^{(3)}\to\B^{(3)}$) WP Bailey lemma] \label{th:e-WP-BaileyLemma-B3-B3}
Suppose $\alpha_\N(a,b)$ and $\beta_\N(a,b)$ form a WP Bailey pair with respect to the matrix $\B^{(3)}$. Let ${\alpha}^{\prime}_\N(a,b)$ and ${\beta}^{\prime}_\N(a,b)$ be defined as follows
\begin{gather*}
{\alpha}^{\prime}_\N(a,b) := \frac{\ellipticqrfac{\rho_1, \rho_2 }{\sumN}}
{\ellipticqrfac{aq/\rho_1, aq/\rho_2}{\sumN}}
\left(\frac{aq}{\rho_1\rho_2}\right)^{\sumN} \alpha_\N(a, b\rho_1\rho_2/aq),\\ 
{\beta}^{\prime}_\N (a,b) := \frac{ \smallprod n \ellipticqrfac{b\rho_1x_r/a, b\rho_2x_r/a}{N_r} }
{ \ellipticqrfac{aq/\rho_1, aq/\rho_2}{\sumN}} \notag \\
\hphantom{{\beta}^{\prime}_\N (a,b) :=}{}
\times \multsum{k}{N}{r} \Bigg(
\frac{ \ellipticqrfac{\rho_1, \rho_2}{\sumk} }
{\smallprod n \ellipticqrfac{b\rho_1x_r/a, b\rho_2x_r/a}{k_r}}
\smallprod n \frac{\elliptictheta{ b\rho_1\rho_2x_rq^{k_r+\sumk}/{aq}}}
{\elliptictheta{ b\rho_1\rho_2x_r/{aq}}}\\
\hphantom{{\beta}^{\prime}_\N (a,b) :=}{}\times \frac{\ellipticqrfac{aq/\rho_1\rho_2}{\sumN-\sumk}}
{\sqprod n \ellipticqrfac{q^{1+k_r-k_s}\xover x }{N_r-k_r}}
 \smallprod n \frac{\ellipticqrfac{bx_r}{k_r+\sumN}}{\ellipticqrfac{b\rho_1\rho_2x_r/a}{\sumk+N_r}} \notag \\
\hphantom{{\beta}^{\prime}_\N (a,b) :=}{}\times
\left(\frac{aq}{\rho_1\rho_2}\right)^{\sumk} \beta_\k(a,b\rho_1\rho_2/aq) \Bigg). 
\end{gather*}
Then ${\alpha}^{\prime}_\N(a,b)$ and ${\beta}^{\prime}_\N(a,b)$ also form a WP Bailey pair with respect to $\B^{(3)}$.
\end{Theorem}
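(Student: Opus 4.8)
The plan is to follow the template already used in the proof of Theorem~\ref{th:e-WP-BaileyLemma-B1}, which is Andrews' WP Bailey lemma construction lifted to the $A_n$ setting. I would begin with the defining expression for ${\beta}^{\prime}_\N(a,b)$ given in the statement and, into the factor $\beta_\k(a,b\rho_1\rho_2/aq)$, substitute the WP Bailey pair relation \eqref{BP-def2} with $\B$ replaced by $\B^{(3)}$, using the matrix entries \eqref{e-B3} together with the sequence $\alpha_\j(a,b\rho_1\rho_2/aq)$ supplied by Theorem~\ref{th:e-Bailey-Pair-B3.2}. This produces a double sum over $\k$ and $\j$ with $0\le\j\le\k\le\N$.

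Next I would interchange the order of summation and shift the inner index by $\k\mapsto\k+\j$, exactly as in the proof of Theorem~\ref{th:e-WP-BaileyLemma-B1}, so that $\j$ becomes the outer index (destined to reconstitute $\alpha^{\prime}_\j$) while $\k$ then runs over $0\le k_r\le N_r-j_r$. The crucial observation is that the very-well-poised factor carried by ${\beta}^{\prime}_\N$ is of the \emph{product} form $\smallprod n \elliptictheta{b\rho_1\rho_2x_rq^{k_r+\sumk}/aq}/\elliptictheta{b\rho_1\rho_2x_r/aq}$; after the shift this splits, and the part depending on the inner index becomes $\smallprod n \elliptictheta{u_rq^{k_r+\sumk}}/\elliptictheta{u_r}$ with $u_r=b\rho_1\rho_2x_rq^{j_r+\sumj}/aq$. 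That is precisely the very-well-poised part of \eqref{e-8p7-1} under $a\mapsto b\rho_1\rho_2q^{\sumj}/aq$ and $x_r\mapsto x_rq^{j_r}$, so the inner sum must be summed by \eqref{e-8p7-1} (and \emph{not} by \eqref{e-8p7-5}, whose single very-well-poised factor drives the $\B^{(2)}$ lemmas). Concretely I expect the substitutions $x_r\mapsto x_rq^{j_r}$, $N_r\mapsto N_r-j_r$, $a\mapsto b\rho_1\rho_2q^{\sumj}/aq$, together with a symmetric assignment of $\rho_1,\rho_2$ to the symmetric parameter pair of \eqref{e-8p7-1} (reflecting the $\rho_1\leftrightarrow\rho_2$ symmetry visible in ${\beta}^{\prime}_\N$, with suitable powers of $q^{\sumj}$), most likely after first passing to the equivalent form of \eqref{e-8p7-1} in which $c$ is replaced by $a^2q^{1+\sumN}/bcd$. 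Before invoking the summation I would rewrite the shifted Vandermonde-type denominator $\sqprod n 1/\ellipticqrfac{q^{1+k_r-k_s+j_r-j_s}\xover x}{N_r-k_r-j_r}$ by means of Lemma~\ref{th:e-magiclemma2} (applied with $N_r\mapsto N_r-j_r$, $x_r\mapsto x_rq^{j_r}$), and reverse the factorials with negative-type arguments using \eqref{GR11.2.50}, \eqref{GR11.2.53} and the concatenation rule \eqref{GR11.2.47}.

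After evaluating the inner sum, the surviving single sum over $\j$ should collapse to $\multsum{j}{N}{r} B^{(3)}_{\N\j}(a,b)\,\alpha^{\prime}_\j(a,b)$, with $B^{(3)}_{\N\j}(a,b)$ as in \eqref{e-B3} and $\alpha^{\prime}_\j(a,b)$ the sequence declared in the statement. This is exactly the defining relation \eqref{BaileyPair-def} asserting that $\big({\alpha}^{\prime}_\N,{\beta}^{\prime}_\N\big)$ is a WP Bailey pair with respect to $\B^{(3)}$, which completes the verification. No analytic continuation is needed, since this is a direct identity in the parameters.

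I expect the main obstacle to be computational bookkeeping rather than any conceptual difficulty. The matrix $\B^{(3)}$ is more awkward than $\B^{(1)}$: its denominator contains the \emph{unsplit} theta factorial $\ellipticqrfac{aq}{\sumk+\sumj}$ (a single $\sumk+\sumj$-indexed factor, not a product over $r$) and its numerator the factor $\smallprod n \ellipticqrfac{bx_rq^{j_r-\sumj}/a}{k_r-j_r}$, so matching the evaluated inner sum and the leftover $\j$-factors against the product $B^{(3)}_{\N\j}(a,b)\,\alpha^{\prime}_\j(a,b)$ demands careful tracking of the $\sumj$-, $\sumk$- and $\sumN$-dependencies and repeated use of \eqref{GR11.2.47} and \eqref{GR11.2.49r}. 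Pinning down the precise parameter assignment into \eqref{e-8p7-1} that makes the inner sum evaluate cleanly is the one genuinely delicate point; once it is found, the remaining simplifications are routine and parallel those already carried out for Theorem~\ref{th:e-WP-BaileyLemma-B1}.
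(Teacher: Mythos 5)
Your proposal is correct and follows essentially the same route as the paper: the paper's proof likewise reduces to the template of Theorem~\ref{th:e-WP-BaileyLemma-B1} (substitute the pair relation with $\B^{(3)}$, interchange and shift, apply Lemma~\ref{th:e-magiclemma2}) and evaluates the inner sum by \eqref{e-8p7-1} with exactly the substitutions you identify, namely $x_r\mapsto x_rq^{j_r}$, $N_r\mapsto N_r-j_r$, $a\mapsto b\rho_1\rho_2q^{\sumj}/aq$, with $\rho_1q^{\sumj}$ and $\rho_2q^{\sumj}$ placed in the symmetric slots $b$, $c$ and $d\mapsto b\rho_1\rho_2q^{-\sumj}/a^2q$. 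The only minor deviation is that the paper applies \eqref{e-8p7-1} in its original form here rather than the $c\mapsto a^2q^{1+\sumN}/bcd$ altered form you tentatively suggest, which does not affect the argument.
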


\begin{proof}
The proof is analogous to that of Theorem~\ref{th:e-WP-BaileyLemma-B1}. Again we use \eqref{e-8p7-1}, but this time with the following substitutions: $x_r\mapsto x_rq^{j_r}$ and $N_r\mapsto N_r-j_r$ for $r=1, 2, \dots, n$, $a\mapsto b\rho_1\rho_2q^{\sumj}/aq$, $b\mapsto \rho_1q^{\sumj}$, $c\mapsto \rho_2q^{\sumj}$, $d\mapsto b\rho_1\rho_2q^{-\sumj}/a^2q$. The rest of the proof is similar.
\end{proof}

On applying the $\B^{(3)} \to\B^{(3)}$ elliptic Bailey lemma in Theorem~\ref{th:e-WP-BaileyLemma-B3-B3} to the WP Bailey pair in Theorem~\ref{th:e-Bailey-Pair-B3.2}, we again obtain Theorem~\ref{th:e-10p9-6}.

We have now seen three multivariable Bressoud matrices, and various WP Bailey lemmas connecting WP Bailey pairs with respect to them. As a result we obtained a new $_{10}\phi_9$ elliptic Bailey transformation. In the next section, we suspend our study of WP Bailey pairs and lemmas, to record special cases of this new transformation formula.

\section[Special cases: new $A_n$ Watson transformations and related identities]{Special cases: new $\boldsymbol{A_n}$ Watson transformations\\ and related identities}\label{sec:piszero}

We now consider extensions of Watson's transformation that follow from the $A_n$ elliptic Bailey $_{10}\phi_9$ transformation formula \eqref{e-10p9-6}. Previously, multiple series extensions of Watson's transformations have been obtained by, for example, Milne~\cite{Milne1988, Milne1994, Milne2000}, Milne and Lilly~\cite{ML1995}, Milne and Newcomb~\cite{MN1996}, Coskun~\cite{HC2008} and by the authors, see \cite{GB1999a} and \cite{BS1998}. Some interesting applications of one of these transformations to the theory of affine Lie algebras appear in Bartlett and Warnaar~\cite{BW2015} and Griffin, Ono and Warnaar~\cite{GOW2016}.
Below we present some new $A_n$ Watson transformation formulas and some further special cases.

Multiple series extensions of Watson's transformation formula \cite[equation~(2.5.1)]{GR90} can be obtained from the $p=0$ case of \eqref{e-10p9-6} in multiple ways. We can take the limit as $b$, $c$, or $d$ goes to infinity. Alternatively, we can consider the equivalent formulation obtained by replacing $e$ or $f$ by $\lambda a q^{1+\sumN}/ef$ and then take the limits as one of $b$, $c$, or $d$ go to infinity. We can also interchange the role of $\lambda$ and $a$ and then take limits as above. Many of these limits give rise to the same formula, depending on the symmetry of the various parameters.

\begin{Theorem}[an $A_n$ Watson transformation] \label{th:an-watson-BS-1} We have
\begin{gather*}
\multsum{k}{N}{r} \Bigg( \vandermonde{x}{k}{n}
\sqprod n \frac{\qrfac{q^{-N_s}\xover{x}}{k_r} }{\qrfac{q\xover{x}}{k_r} } \notag \\
\qquad\quad{} \times \smallprod n \frac{1-{a x_r q^{k_r+\sumk}}}{1-{a x_r}}
\frac{\qrfac{a x_r}{\sumk}\qrfac{bx_r, cx_r, ex_r}{k_r}}
 {\qrfac{a x_rq^{1+N_r}}{\sumk}\qrfac{a x_rq/d}{k_r}} \smallprod n x_r^{-k_r}\\
 \qquad\quad {} \times \frac{\qrfac{d}{\sumk}} {\qrfac{aq/b, aq/c, aq/e}{\sumk}}
\left( \frac{a^2q^{2+\sumN}}{bcde} \right)^{\sumk}
q^{\sum\limits_{r<s}k_rk_s+\sum\limits_{r=1}^n (r-1)k_r} \Bigg) \notag \\
\qquad{} = \frac{\qrfac{a q/de}{\sumN}}{\qrfac{ a q/e}{\sumN}} \smallprod n \frac{\qrfac{a x_rq}{N_r}}
{\qrfac{a x_rq/d}{N_r}} \notag \\
\qquad\quad{} \times \multsum{k}{N}{r} \Bigg( \vandermonde{x}{k}{n}
\sqprod n \frac{\qrfac{q^{-N_s}\xover{x}}{k_r} }{\qrfac{q\xover{x}}{k_r} } \notag \\
\qquad\quad{} \times \frac{\qrfac{ d}{\sumk}\smallprod n \qrfac{ex_r, a q^{1+\sumk-k_r}/bcx_r }{k_r} }
{\qrfac{ aq/b, aq/c, deq^{-\sumN}/a}{\sumk}} q^{\sum\limits_{r=1}^n rk_r} \Bigg).
\end{gather*}
\end{Theorem}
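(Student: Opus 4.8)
The plan is to derive this $A_n$ Watson transformation as the $p=0$ confluent limit of the $A_n$ elliptic Bailey ${}_{10}\phi_9$ transformation \eqref{e-10p9-6} of Theorem~\ref{th:e-10p9-6}. First I would set $p=0$ throughout \eqref{e-10p9-6}: under $p\to0$ the modified theta function $\elliptictheta{z}$ becomes $1-z$ and every $\ellipticqrfac{z}{k}$ becomes $\qrfac{z}{k}$, so \eqref{e-10p9-6} turns into a basic hypergeometric $A_n$ ${}_{10}\phi_9$ transformation in the parameters $a,b,c,d,e,f$ with $\lambda=qa^2/bcd$. Its left-hand side carries the ``$A_n$'' very-well-poised part $\smallprod n(1-ax_rq^{k_r+\sumk})/(1-ax_r)$ inherited from \eqref{e-8p7-1}, while its right-hand side carries the simpler very-well-poised factor $(1-\lambda q^{2\sumk})/(1-\lambda)$ coming from \eqref{e-8p7-5}.

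Next, mimicking the classical ($n=1$) reduction of Bailey's ${}_{10}\phi_9$ transformation to Watson's transformation, I would pass to a confluent limit by letting one of $b$, $c$, $d$ tend to $\infty$ (if necessary after first applying the equivalent reparametrization $f\mapsto\lambda aq^{1+\sumN}/ef$, which exchanges numerator and denominator factors and is what allows the final ``three numerator, one denominator'' shape on the summable series). Because $\lambda=qa^2/bcd$, sending any of $b,c,d$ to infinity forces $\lambda\to0$; the right-hand very-well-poised factor $(1-\lambda q^{2\sumk})/(1-\lambda)\to1$ then collapses and the right-hand series degenerates into the balanced, non-very-well-poised series that appears on the right of the claimed identity. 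The left-hand very-well-poised part survives unchanged, since it involves only $a$.

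The computational heart of the argument is tracking this confluence term by term. Each side is a finite sum over $0\le k_r\le N_r$, so the limit may be taken inside the sum with no convergence issue. On the factors carrying the diverging parameter I would use the elementary asymptotics $\qrfac{tz}{k}\sim(-tz)^kq^{\binom k2}$ as $t\to\infty$ and $\qrfac{z/t}{k}\to1$ (equivalently, the reversal identity \eqref{GR11.2.49r} at $p=0$). Collecting the resulting powers and applying the identity $\sum_{r=1}^n\binom{k_r}2-\binom{\sumk}2=-\sum_{r<s}k_rk_s$ to combine the product of $q^{\binom{k_r}2}$ over $r$ with the reciprocal $q^{\binom{\sumk}2}$ into $q^{\pm\sum_{r<s}k_rk_s}$ produces exactly the extra factors $\smallprod n x_r^{-k_r}$, $q^{\sum_{r<s}k_rk_s}$, and the base $(a^2q^{2+\sumN}/bcde)^{\sumk}$ that distinguish the stated summand from the original $q^{\sum_r r k_r}$ weight. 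A final relabeling of the parameters puts the identity into the displayed form.

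The main obstacle is precisely this bookkeeping. One must choose the correct equivalent reformulation and the correct parameter to send to infinity so that the confluence yields the stated formula rather than one of the several equivalent Watson variants mentioned in the text, and one must reconcile the signs together with the placement of each factor (numerator versus denominator, $k_r$-indexed versus $\sumk$-indexed) after the reversal identities have been applied. Once the parameter dictionary is fixed, every remaining step is a routine simplification using \eqref{GR11.2.47}, \eqref{GR11.2.49r} and the Vandermonde bookkeeping, exactly as in the $n=1$ reduction recorded in Gasper and Rahman.
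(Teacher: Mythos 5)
Your proposal is correct and follows essentially the same route as the paper, which obtains the theorem by setting $p=0$ in \eqref{e-10p9-6}, replacing $e$ by $\lambda aq^{1+\sumN}/ef$, letting $b\to\infty$ (so that $\lambda=qa^2/bcd\to0$ and the right-hand very-well-poised factor collapses, exactly as you describe), and relabeling $d\mapsto b$, $f\mapsto d$. The only point you leave open --- which of $b$, $c$, $d$ to send to infinity after the reparametrization --- is resolved in the paper by the specific choice just stated; otherwise your confluence bookkeeping matches the intended derivation.
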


\begin{proof} We take $p=0$ in \eqref{e-10p9-6}, and replace $e$ by $\lambda aq^{1+\sumN}/ef$. Now we take the limit $b\to \infty$ and replace $d$ by $b $ and $f$ by $d$.
\end{proof}

\begin{Remark*} By applying a standard polynomial argument to Theorem~\ref{th:an-watson-BS-1}, we can obtain an equivalent transformation formula, where both sums are summed over an $n$-simplex. (This result could alternatively be obtained from Theorem~\ref{th:e-10p9-6-tetra} by applying a similar limit and substitution as that used in the proof of Theorem~\ref{th:an-watson-BS-1}.) That is, the summation indices on both sides range in the region $0\leq\sumk\leq N$, where $N$ is a non-negative integer, and $k_r\geq 0$, for $r=1, 2, \dots, n$. This remark applies to all the results of this section. We do not write down these results explicitly. For an example of such a calculation, see the proof of \cite[Theorem~2.4]{Milne1997} or \cite[Theorem~3.7]{BS1998}.
\end{Remark*}

If we take $p=0$ in \eqref{e-10p9-6}, and take the limit $b\to \infty$, we obtain the $A_n$ Watson transformation formula~\cite[Theorem~4.3]{BS1998}. If we take $p=0$ in \eqref{e-10p9-6}, take the limit $d\to \infty$, we obtain an $A_n$ Watson transformation formula due to Milne, see \cite[Theorem~2.1]{Milne2000}. Finally, we take $p=0$ in~\eqref{e-10p9-6}, and replace $e$ by $\lambda aq^{1+\sumN}/ef$, and take the limit $d\to \infty$ to obtain another $A_n$ Watson transformation due to Milne, see \cite[Theorem~A.3]{MN1996}.

Next, we consider the formula obtained from \eqref{e-10p9-6} by first replacing $\lambda$ by $qa^2/bcd$, and then taking (simultaneously) $a\mapsto qa^2/bcd$, $b\mapsto aq/cd$, $c\mapsto aq/bd$ and $d\mapsto aq/bc$. In the resulting formula, we take $\lambda = qa^2/bcd$ and use the relations $\lambda b/a = aq/cd$, $\lambda c/a = aq/bd$ and $\lambda d/a = aq/bc$. In this manner we can write the right-hand side of the series \eqref{e-10p9-6} with special parameter $a$ and the left-hand side with special parameter $\lambda$. Now we take $p=0$ to obtain the following $A_n$ Bailey $_{10}\phi_9$ transformation formula. This is equivalent to the $p=0$ case of \eqref{e-10p9-6}, but with $\lambda$ and $a$ interchanged.
Let $\lambda = qa^2/bcd$. Then
\begin{gather}
\multsum{k}{N}{r} \Bigg( \vandermonde{x}{k}{n}
\sqprod n \frac{\qrfac{q^{-N_s}\xover{x}}{k_r} }{\qrfac{q\xover{x}}{k_r} } \notag \\
\qquad\quad{} \times \frac{{1-aq^{2\sumk}}}{{1-a}}
\frac{\qrfac{a, c, d, e, f}{\sumk}}{\qrfac{aq^{\sumN+1}, aq/c, aq/d, aq/e, aq/f}{\sumk}} \nonumber\\
\qquad\quad{} \times
\smallprod n \frac{\qrfac{b/x_r}{\sumk}\qrfac{\lambda a x_r q^{\sumN+1}/ef}{k_r}
\qrfac{ef/\lambda x_r}{\sumk-k_r}}{\qrfac{b/x_r}{\sumk-k_r}
\qrfac{ax_rq/b}{k_r}\qrfac{efq^{-N_r}/\lambda x_r}{\sumk}}
\cdot q^{\sum\limits_{r=1}^n rk_r} \Bigg)\notag \\
\qquad{} = \frac{\qrfac{aq, aq/ef}{\sumN}}{\qrfac{aq/e, aq/f}{\sumN}}
\smallprod n \frac{\qrfac{\lambda x_rq/e, \lambda x_r q/f}{N_r}}
{\qrfac{\lambda x_rq, \lambda x_rq/ef}{N_r}}\notag \\
\qquad\quad{} \times \multsum{k}{N}{r} \Bigg(\vandermonde{x}{k}{n}
\sqprod n \frac{\qrfac{q^{-N_s}\xover{x}}{k_r} }{\qrfac{q\xover{x}}{k_r} }
\smallprod n \frac{{1-\lambda x_r q^{k_r+\sumk}}}{{1-\lambda x_r}} \notag \\
\qquad\quad{} \times \smallprod n \frac{\qrfac{\lambda x_r}{\sumk}
\qrfac{\lambda cx_r/a, \lambda dx_r/a, \lambda ax_r q^{\sumN+1}/ef}{k_r}}
 {\qrfac{\lambda x_rq^{1+N_r}}{\sumk}
 \qrfac{\lambda x_rq/e, \lambda x_rq/f, ax_rq/b}{k_r}} \notag\\
\qquad\quad{} \times \frac{\qrfac{ e, f, \lambda b/a}{\sumk}} {\qrfac{aq/c, aq/d, efq^{-\sumN}/a}{\sumk}}
q^{\sum\limits_{r=1}^n rk_r} \Bigg). \label{10p9-7-form2}
\end{gather}
Three new $A_n$ Watson transformations follow from this $A_n$ Bailey transformation formula.

\begin{Theorem}[an $A_n$ Watson transformation] We have
\begin{gather*}
\multsum{k}{N}{r} \Bigg( \vandermonde{x}{k}{n}
\sqprod n \frac{\qrfac{q^{-N_s}\xover{x}}{k_r} }{\qrfac{q\xover{x}}{k_r} } \notag \\
\qquad\quad{} \times \frac{1-aq^{2\sumk}}{1-a}
 \frac{\qrfac{a, c, d, e}{\sumk}}{\qrfac{aq^{\sumN+1}, aq/c, aq/d, aq/e}{\sumk}}
 \smallprod n x_r^{k_r}\notag \\
\qquad\quad{} \times \smallprod n \frac{\qrfac{b/x_r}{\sumk}}{\qrfac{b/x_r}{\sumk-k_r} \qrfac{ax_rq/b}{k_r}}
\cdot \left(\frac{a^2q^{2+\sumN}}{bcde}\right)^{\sumk}
q^{- \sum\limits_{r<s}k_rk_s+\sum\limits_{r=1}^n (r-1)k_r }
 \Bigg)\notag \\
\qquad {}= \frac{\qrfac{aq, aq/de}{\sumN}}{\qrfac{aq/d, aq/e}{\sumN}} \multsum{k}{N}{r} \Bigg(\vandermonde{x}{k}{n}
\sqprod n \frac{\qrfac{q^{-N_s}\xover{x}}{k_r} }{\qrfac{q\xover{x}}{k_r} } \notag \\
\qquad\quad{} \times \smallprod n \frac{\qrfac{ax_rq/bc}{k_r}}
{ \qrfac{ax_rq/b}{k_r}} \cdot \frac{\qrfac{ d, e}{\sumk}}{\qrfac{aq/c, deq^{-\sumN}/a}{\sumk} }
q^{\sum\limits_{r=1}^n rk_r } \Bigg). 
\end{gather*}
\end{Theorem}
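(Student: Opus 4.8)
The plan is to obtain this $A_n$ Watson transformation as a limiting case of the $A_n$ Bailey ${}_{10}\phi_9$ transformation~\eqref{10p9-7-form2}. That formula is the $p=0$, $a\leftrightarrow\lambda$ form of~\eqref{e-10p9-6}, and its left-hand side already carries the single very-well-poised factor $(1-aq^{2\sumk})/(1-a)$ matching the left-hand side of the claim; working at $p=0$ is essential, since the derivation sends a parameter to infinity.

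First I would let $c\to\infty$ in~\eqref{10p9-7-form2}. Since $\lambda=qa^2/bcd$, this forces $\lambda\to 0$, and the decisive effect is on the right-hand side: the product very-well-poised factor $\smallprod n(1-\lambda x_rq^{k_r+\sumk})/(1-\lambda x_r)$ tends to $1$, as do $\smallprod n\qrfac{\lambda x_r}{\sumk}$, $\qrfac{\lambda b/a}{\sumk}$, $\smallprod n\qrfac{\lambda x_rq^{1+N_r}}{\sumk}$, $\smallprod n\qrfac{\lambda x_rq/e,\lambda x_rq/f}{k_r}$, and the $\lambda$-dependent factors of the prefactor. The one surviving factor stays finite, because $\lambda cx_r/a=qax_r/bd$ is independent of $c$; thus the right-hand side degenerates to a balanced (non-well-poised) multiple series, the $A_n$ analogue of a balanced ${}_4\phi_3$. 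On the left-hand side the three factors $\qrfac{c}{\sumk}$, $\smallprod n\qrfac{ef/\lambda x_r}{\sumk-k_r}$ and $\smallprod n\qrfac{efq^{-N_r}/\lambda x_r}{\sumk}$ all diverge as $c\to\infty$ (note $1/\lambda\propto c$); I would evaluate these with the asymptotics $\qrfac{z}{k}\sim(-z)^kq^{\binom k2}$ valid as $z\to\infty$, check that the powers of $c$ cancel, and collect the residual contributions into $\smallprod n x_r^{k_r}$, into $(a^2q^{2+\sumN}/bcde)^{\sumk}$, and into the exponent $-\sum_{r<s}k_rk_s+\sum_{r=1}^n(r-1)k_r$.

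Finally I would relabel by replacing $d$ by $c$ and $f$ by $d$ (leaving $a$, $b$, $e$ fixed, the name $c$ having been freed by the limit). Under this the surviving upper factors $\qrfac{a,d,e,f}{\sumk}$ become $\qrfac{a,c,d,e}{\sumk}$, the prefactor $\qrfac{aq,aq/ef}{\sumN}/\qrfac{aq/e,aq/f}{\sumN}$ becomes $\qrfac{aq,aq/de}{\sumN}/\qrfac{aq/d,aq/e}{\sumN}$, the surviving $\lambda$-factor $\smallprod n\qrfac{qax_r/bd}{k_r}$ becomes $\smallprod n\qrfac{ax_rq/bc}{k_r}$, and the remaining balanced factors turn into $\qrfac{d,e}{\sumk}/\qrfac{aq/c,deq^{-\sumN}/a}{\sumk}$, exactly as asserted; the $x_r$-dependent $b$-factors pass through the limit and relabelling unchanged. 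I expect the main obstacle to be the asymptotic bookkeeping on the left-hand side: aligning the three diverging products, their signs and their $q$-powers so that the $c$-dependence cancels and the residual $q$-exponent collapses---via $\binom{\sumk}2+\sum_r\binom{\sumk-k_r}2-n\binom{\sumk}2=-\sum_{r<s}k_rk_s$ together with the $\sumN\sumk$ contribution---to the exponent demanded by the claim.
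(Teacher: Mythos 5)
Your proposal is correct and matches the paper's own proof: the paper obtains this theorem from \eqref{10p9-7-form2} by letting $d\to\infty$ and then relabelling $f\mapsto d$, which, since $c$ and $d$ enter \eqref{10p9-7-form2} symmetrically (only through $\lambda=qa^2/bcd$ and the symmetric factors $\qrfac{c,d}{\sumk}$, $\qrfac{aq/c,aq/d}{\sumk}$, $\qrfac{\lambda c/a,\lambda d/a}{k_r}$), is the same computation as your $c\to\infty$ followed by $d\mapsto c$, $f\mapsto d$. Your asymptotic bookkeeping, including the exponent identity $\binom{\sumk}2+\sum_r\binom{\sumk-k_r}2-n\binom{\sumk}2=-\sum_{r<s}k_rk_s$ and the cancellation of the powers of the large parameter, checks out.
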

\begin{proof}In \eqref{10p9-7-form2}, we take the limit as $d\to \infty$ and replace $f$ by $d$ in the result.
\end{proof}

\begin{Theorem}[an $A_n$ Watson transformation] \label{th:an-watson-BS-3} We have
\begin{gather}
\multsum{k}{N}{r} \Bigg( \vandermonde{x}{k}{n}
\sqprod n \frac{\qrfac{q^{-N_s}\xover{x}}{k_r} }{\qrfac{q\xover{x}}{k_r} } \notag \\
\qquad\quad{} \times\smallprod n \frac{\qrfac{ex_r}{k_r} \qrfac{ax_rq^{1+\sumN}/e}{\sumk-k_r}}
{\qrfac{ax_rq^{1+\sumN-N_r}/e}{\sumk}}\cdot
q^{\sum\limits_{r<s}k_rk_s+\sum\limits_{r=1}^n (r-1)k_r } \smallprod n x_r^{-k_r}\notag \\
\qquad\quad{} \times \frac{1-aq^{2\sumk}}{1-a}
\frac{\qrfac{a, b, c, d}{\sumk}}{\qrfac{aq^{\sumN+1}, aq/b, aq/c, aq/d}{\sumk}}
\left(\frac{a^2q^{2+\sumN}}{bcde}\right)^{\sumk} \Bigg)\notag \\
\qquad {}= \frac{\qrfac{aq}{\sumN}}{\qrfac{aq/d}{\sumN}}
\smallprod n \frac{\qrfac{aq^{1+\sumN-N_r}/dex_r}{N_r}}{\qrfac{aq^{1+\sumN-N_r}/ex_r}{N_r}}\notag \\
\qquad\quad{} \times \multsum{k}{N}{r} \Bigg( \vandermonde{x}{k}{n}
\sqprod n \frac{\qrfac{q^{-N_s}\xover{x}}{k_r} }{\qrfac{q\xover{x}}{k_r} } \notag \\
\qquad\quad{} \times \smallprod n \frac{\qrfac{ex_r}{k_r}}
{ \qrfac{dex_rq^{-\sumN}/a}{k_r}} \cdot \frac{\qrfac{ d, aq/bc}{\sumk}}{\qrfac{aq/b, aq/c}{\sumk} }
q^{\sum\limits_{r=1}^n rk_r } \Bigg). \label{an-watson-BS-3}
\end{gather}
\end{Theorem}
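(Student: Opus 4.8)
The plan is to derive \eqref{an-watson-BS-3} as a confluent limit of the $A_n$ Bailey ${}_{10}\phi_9$ transformation \eqref{10p9-7-form2}, in the same spirit as the preceding $A_n$ Watson transformation but along a different degeneration of the parameters, so as to obtain a genuinely \emph{different} $A_n$ companion of the same one-dimensional Watson transformation. The left-hand side of \eqref{10p9-7-form2} already carries the single very-well-poised factor $\frac{1-aq^{2\sumk}}{1-a}$, which is exactly the very-well-poised part on the left of \eqref{an-watson-BS-3}; so that orientation is correct, and the task is to force the very-well-poised product $\smallprod n \frac{1-\lambda x_rq^{k_r+\sumk}}{1-\lambda x_r}$ on the right of \eqref{10p9-7-form2} to degenerate, so that the right-hand sum loses its very-well-poised character as it does in \eqref{an-watson-BS-3}. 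Since $\lambda=qa^2/bcd$, this degeneration is achieved by letting one of $b$, $c$, $d$ tend to infinity, which sends $\lambda\to0$ and trivializes every $\lambda$-dependent factor on the right.

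Following the recipe outlined in this section, I would first replace $e$ by $\lambda aq^{1+\sumN}/ef$; this is admissible because $\lambda$ is independent of $e$ and $f$, so the balancing is untouched, and its effect is to turn the factor $\qrfac{\lambda ax_rq^{\sumN+1}/ef}{k_r}$ into the directly coupled $\qrfac{ex_r}{k_r}$ occurring on both sides of \eqref{an-watson-BS-3}, while moving the $e,f$ dependence of the remaining $x_r$-coupled factors into the shapes that produce $\qrfac{ax_rq^{1+\sumN}/e}{\sumk-k_r}$ and $\qrfac{ax_rq^{1+\sumN-N_r}/e}{\sumk}$. I would then send the appropriate parameter to infinity, annihilate the $\lambda$-dependent factors, read off the leading asymptotics of the factors that blow up via $\qrfac{A}{k}\sim(-A)^kq^{\binom k2}$, and check that the net power of the diverging parameter is $0$ so that the limit exists. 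A final relabeling of the surviving parameters to $a,b,c,d,e$ then puts the identity into the stated shape.

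The step I expect to be the main obstacle is the bookkeeping in this limit, and in particular the reconciliation of the $x_r$-couplings: \eqref{an-watson-BS-3} is displayed entirely in terms of factors with $x_r$ in the numerator, whereas the confluence of \eqref{10p9-7-form2} naturally produces several reciprocally coupled factors (the $\qrfac{b/x_r}{\cdot}$ factors and the former $\qrfac{ef/\lambda x_r}{\cdot}$ factors). Turning these around requires the reversal identities \eqref{GR11.2.50} and \eqref{GR11.2.49r}, together with \eqref{GR11.2.47} to merge the relevant $q$-shifted factorials, and it is precisely these reversals that account for the compensating signs, powers of $x_r$, and powers of $q$; the $x_r/x_s$ part is best handled by the Vandermonde rearrangement of Lemma~\ref{th:e-magiclemma2}. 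Collecting all of these into the single left-hand prefactor $\big(a^2q^{2+\sumN}/bcde\big)^{\sumk}\,q^{\sum_{r<s}k_rk_s+\sum_{r=1}^n(r-1)k_r}\smallprod n x_r^{-k_r}$ is the one genuinely delicate point; everything else is routine cancellation. As a check on the parameter identifications, in the case $n=1$, $x_1=1$ all $x_r$-dependence trivializes, the left-hand side collapses to ${}_8W_7\big(a;b,c,d,e,q^{-N};q,a^2q^{2+N}/bcde\big)$ and the right-hand side to a balanced ${}_4\phi_3$, so that \eqref{an-watson-BS-3} recovers Watson's $q$-analogue of Whipple's transformation.
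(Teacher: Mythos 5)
Your proposal follows essentially the same route as the paper: the paper's proof of Theorem~\ref{th:an-watson-BS-3} consists precisely of replacing $e\mapsto \lambda a q^{1+\sumN}/ef$ in \eqref{10p9-7-form2}, letting $b\to\infty$, and then relabeling $f\mapsto d$ and $d\mapsto b$, which is the substitution--confluence--relabeling scheme you describe (your only looseness is not committing to $b\to\infty$ as the correct choice among $b,c,d$, and note that $\qrfac{\lambda b/a}{\sumk}=\qrfac{qa/cd}{\sumk}$ survives this limit rather than trivializing, becoming the $\qrfac{aq/bc}{\sumk}$ on the right of \eqref{an-watson-BS-3}). Your bookkeeping remarks and the $n=1$ consistency check are sound.
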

\begin{Remark*} If we take $c=1$ in \eqref{an-watson-BS-3}, the sum on the left-hand side reduces to~$1$, and we obtain an equivalent formulation of Milne's balanced $_3\phi_2$ sum~\cite[Theorem~4.1]{Milne1997}.
\end{Remark*}
\begin{proof}
In \eqref{10p9-7-form2}, we replace $e\mapsto \lambda a q^{1+\sumN}/ef$ and take $b\to \infty$, and take $f\mapsto d$ and $d\mapsto b$ in the resulting identity to obtain \eqref{an-watson-BS-3}.
\end{proof}

A summation theorem follows immediately from Theorem~\ref{th:an-watson-BS-3}.
\begin{Theorem}[an $A_n$ very-well-poised $_6\phi_5$ summation] We have
\begin{gather}
\multsum{k}{N}{r} \Bigg( \vandermonde{x}{k}{n}
\sqprod n \frac{\qrfac{q^{-N_s}\xover{x}}{k_r} }{\qrfac{q\xover{x}}{k_r} } \notag \\
\qquad\quad{} \times\smallprod n \frac{\qrfac{cx_r}{k_r} \qrfac{ax_rq^{1+\sumN}/c}{\sumk-k_r}}
{\qrfac{ax_rq^{1+\sumN-N_r}/c}{\sumk} } \smallprod n x_r^{-k_r}\notag \\
\qquad\quad{} \times \frac{1-aq^{2\sumk}}{1-a} \frac{\qrfac{a, b}{\sumk}}{\qrfac{aq^{\sumN+1}, aq/b}{\sumk}}
\left(\frac{aq^{1+\sumN}}{bc}\right)^{\sumk} q^{\sum\limits_{r<s}k_rk_s+\sum\limits_{r=1}^n (r-1)k_r } \Bigg)\notag \\
\qquad{} = \frac{\qrfac{aq}{\sumN}}{\qrfac{aq/b}{\sumN}}
\smallprod n \frac{\qrfac{aq^{1+\sumN-N_r}/bcx_r}{N_r}}{\qrfac{aq^{1+\sumN-N_r}/cx_r}{N_r}} . \label{6p5-BS1}
\end{gather}
\end{Theorem}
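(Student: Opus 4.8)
The plan is to obtain \eqref{6p5-BS1} as the single specialization of the $A_n$ Watson transformation \eqref{an-watson-BS-3} in which the right-hand multiple sum collapses to one term. Inspecting the right-hand sum of \eqref{an-watson-BS-3}, its summand carries the numerator factor $\qrfac{aq/bc}{\sumk}$. Setting $c=aq/b$ (equivalently $bc=aq$) turns this into $\qrfac{1}{\sumk}=\prod_{j=0}^{\sumk-1}(1-q^{j})$, whose $j=0$ factor is $1-q^{0}=0$. Hence $\qrfac{1}{\sumk}$ vanishes as soon as $\sumk\ge 1$, and since it depends only on the total $\sumk$ it annihilates \emph{every} term with at least one positive $k_r$. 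Only $\k=\boldsymbol 0$ survives, contributing $1$ (all products empty), so the whole right-hand sum reduces to the prefactor $\frac{\qrfac{aq}{\sumN}}{\qrfac{aq/d}{\sumN}}\smallprod n \frac{\qrfac{aq^{1+\sumN-N_r}/dex_r}{N_r}}{\qrfac{aq^{1+\sumN-N_r}/ex_r}{N_r}}$.

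Next I would check that the same substitution $c=aq/b$ simplifies the left-hand side compatibly. Since it gives $\qrfac{c}{\sumk}=\qrfac{aq/b}{\sumk}$ and $\qrfac{aq/c}{\sumk}=\qrfac{b}{\sumk}$, the very-well-poised quotient $\frac{\qrfac{a,b,c,d}{\sumk}}{\qrfac{aq^{\sumN+1},aq/b,aq/c,aq/d}{\sumk}}$ loses its $b$- and $c$-factors completely, collapsing to $\frac{\qrfac{a,d}{\sumk}}{\qrfac{aq^{\sumN+1},aq/d}{\sumk}}$. At the same time the geometric factor $\left(a^2q^{2+\sumN}/bcde\right)^{\sumk}$ becomes $\left(aq^{1+\sumN}/de\right)^{\sumk}$, while the Vandermonde product, the quotients $\qrfac{q^{-N_s}x_r/x_s}{k_r}/\qrfac{qx_r/x_s}{k_r}$, the $e$-dependent products $\smallprod n \frac{\qrfac{ex_r}{k_r}\qrfac{ax_rq^{1+\sumN}/e}{\sumk-k_r}}{\qrfac{ax_rq^{1+\sumN-N_r}/e}{\sumk}}$, and the factor $q^{\sum_{r<s}k_rk_s+\sum_r (r-1)k_r}\smallprod n x_r^{-k_r}$ remain untouched.

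Finally I would relabel $d\mapsto b$ and $e\mapsto c$ throughout. Under this renaming the surviving left-hand summand matches \eqref{6p5-BS1} term by term: $\frac{\qrfac{a,d}{\sumk}}{\qrfac{aq^{\sumN+1},aq/d}{\sumk}}\to\frac{\qrfac{a,b}{\sumk}}{\qrfac{aq^{\sumN+1},aq/b}{\sumk}}$, the geometric factor becomes $\left(aq^{1+\sumN}/bc\right)^{\sumk}$, and the $e$-products become exactly the $c$-products $\smallprod n \frac{\qrfac{cx_r}{k_r}\qrfac{ax_rq^{1+\sumN}/c}{\sumk-k_r}}{\qrfac{ax_rq^{1+\sumN-N_r}/c}{\sumk}}$; the collapsed right-hand prefactor becomes $\frac{\qrfac{aq}{\sumN}}{\qrfac{aq/b}{\sumN}}\smallprod n \frac{\qrfac{aq^{1+\sumN-N_r}/bcx_r}{N_r}}{\qrfac{aq^{1+\sumN-N_r}/cx_r}{N_r}}$, which is precisely the right-hand side of \eqref{6p5-BS1}.

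Because the whole argument is one parameter specialization followed by a relabeling, there is no genuine analytic obstacle; the only place demanding care is the bookkeeping of the geometric factor, where one must verify that $bc=aq$ turns $a^2q^{2+\sumN}/bcde$ into $aq^{1+\sumN}/de$ \emph{before} relabeling, since a slip in that exponent is the most likely way to misidentify the resulting very-well-poised $_6\phi_5$.
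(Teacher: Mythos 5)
Your proposal is correct and follows exactly the paper's own route: specialize $c=aq/b$ in \eqref{an-watson-BS-3} so that the factor $\qrfac{aq/bc}{\sumk}=\qrfac{1}{\sumk}$ kills every term of the right-hand sum except $\k=\boldsymbol 0$, then relabel $d\mapsto b$ and $e\mapsto c$. Your write-up merely makes explicit the collapse mechanism and the bookkeeping of the geometric factor that the paper leaves to the reader.
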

\begin{Remark*} When $n{=}1$, this formula reduces to the very-well-poised $_6\phi_5$ sum~\cite[equa\-tion~(2.4.2)]{GR90}. Several other extensions of this formula on root systems have appeared previously, see, for example, \cite{GB1995, GB1999a, vD1997, RG1989, Milne1997, ML1995, MS2007b}.
\end{Remark*}

\begin{proof} We take $c=aq/b$ in \eqref{an-watson-BS-3}. The sum on the right-hand side becomes $1$. In the resulting identity, we replace $d$ by $b$ and $e$ by $c$ to obtain \eqref{6p5-BS1}.
\end{proof}

The identity \eqref{6p5-BS1} is related to the $A_n$ $_6\phi_5$ summation due to the first author~\cite[Theorem~3.6]{GB1995}. It follows from this result by inverting the base or reversing the sum. It can also be obtained from the $A_n$ Jackson sum~\cite[Theorem~4.1]{MS2008}, given by the $p=0$ case of \eqref{e-8p7-5}, by replacing $c$ by $a^2q^{\sumN+1}/bcd$ and letting $d\to\infty$.

\begin{Theorem}[an $A_n$ Watson transformation] We have
\begin{gather}
\multsum{k}{N}{r} \Bigg( \vandermonde{x}{k}{n}
\sqprod n \frac{\qrfac{q^{-N_s}\xover{x}}{k_r} }{\qrfac{q\xover{x}}{k_r} } \notag \\
\qquad\quad{} \times\smallprod n \frac{\qrfac{ex_r}{k_r} \qrfac{b/x_r}{\sumk} \qrfac{ax_rq^{1+\sumN}/e}{\sumk-k_r}}
{\qrfac{ax_rq/b}{k_r}\qrfac{ax_rq^{1+\sumN-N_r}/e}{\sumk} \qrfac{b/x_r}{\sumk-k_r} }\notag \\
\qquad\quad{} \times \frac{1-aq^{2\sumk}}{1-a} \frac{\qrfac{a, c, d}{\sumk}}{\qrfac{aq^{\sumN+1}, aq/c, aq/d}{\sumk}}
 \left(\frac{a^2q^{2+\sumN}}{bcde}\right)^{\sumk} q^{\sum\limits_{r=1}^n (r-1)k_r } \Bigg)\notag \\
\qquad{} = \frac{\qrfac{aq}{\sumN}}{\qrfac{aq/d}{\sumN}}
\smallprod n \frac{\qrfac{aq^{1+\sumN-N_r}/dex_r}{N_r}}{\qrfac{aq^{1+\sumN-N_r}/ex_r}{N_r}}\notag \\
\qquad\quad{} \times \multsum{k}{N}{r} \Bigg( \vandermonde{x}{k}{n}
\sqprod n \frac{\qrfac{q^{-N_s}\xover{x}}{k_r} }{\qrfac{q\xover{x}}{k_r} } \notag \\
\qquad\quad{} \times \smallprod n \frac{\qrfac{ax_rq/bc}{k_r} \qrfac{ex_r}{k_r}}
{ \qrfac{ax_rq/b}{k_r} \qrfac{dex_rq^{-\sumN}/a}{k_r}} \cdot
\frac{\qrfac{ d}{\sumk}}{\qrfac{aq/c}{\sumk} }
q^{\sum\limits_{r=1}^n rk_r } \Bigg). \label{an-watson-BS-4}
\end{gather}
\end{Theorem}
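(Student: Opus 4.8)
The plan is to obtain \eqref{an-watson-BS-4} as a limiting case of the $p=0$ instance of the $A_n$ Bailey $_{10}\phi_9$ transformation in the form \eqref{10p9-7-form2} (the $\lambda\leftrightarrow a$ rewriting of \eqref{e-10p9-6}). This parallels the proofs of the three preceding Watson transformations; indeed \eqref{an-watson-BS-4} is precisely the refinement of Theorem~\ref{th:an-watson-BS-3} in which $b$ is kept as a genuine $A_n$-type parameter instead of being sent to infinity. First I would replace $e$ by $\lambda aq^{1+\sumN}/ef$ in \eqref{10p9-7-form2}, exactly the substitution used in the proof of Theorem~\ref{th:an-watson-BS-3} (this leaves $\lambda=qa^2/bcd$ unchanged, since $\lambda$ involves neither $e$ nor $f$). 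The decisive observation is that this substitution turns the $A_n$-type factor $\smallprod n \qrfac{\lambda a x_rq^{\sumN+1}/ef}{k_r}$ into exactly $\smallprod n \qrfac{ex_r}{k_r}$, the factor in very-well-poised position appearing on the left-hand side of \eqref{an-watson-BS-4}, while repositioning the remaining occurrences of $e$ and $f$ into the other $A_n$ and global slots.

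Next I would take the limit $d\to\infty$. Since $\lambda=qa^2/bcd$, this forces $\lambda\to 0$, so the very-well-poised series with base $\lambda x_r$ on the right-hand side of \eqref{10p9-7-form2} degenerates into a balanced (non very-well-poised) $A_n$ series; this is the standard mechanism by which a Watson transformation is extracted from a $_{10}\phi_9$-type transformation. Because $d$ enters \eqref{10p9-7-form2} only through the \emph{global} factors $\qrfac{d}{\sumk}/\qrfac{aq/d}{\sumk}$ and through $\lambda$, its asymptotics $\qrfac{d}{\sumk}\sim(-d)^{\sumk}q^{\binom{\sumk}{2}}$ contribute only global factors; in particular no $\smallprod n x_r^{-k_r}$ and no $q^{\sum\limits_{r<s}k_rk_s}$ are produced, in contrast to Theorem~\ref{th:an-watson-BS-3}, where sending the $A_n$-type parameter $b$ to infinity generates exactly such terms. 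The $A_n$-type $b$-dependence, carried by $\qrfac{b/x_r}{\sumk}/\qrfac{b/x_r}{\sumk-k_r}$ and $\qrfac{ax_rq/b}{k_r}$, is untouched throughout. A final relabeling $f\to d$ restores the parameter names in \eqref{an-watson-BS-4}.

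The main obstacle will be the asymptotic bookkeeping of the $d\to\infty$ limit. One must follow the $A_n$-type factors $\qrfac{ef/\lambda x_r}{\sumk-k_r}$ and $\qrfac{efq^{-N_r}/\lambda x_r}{\sumk}$, which diverge as $\lambda\to 0$, together with the global factors $\qrfac{d,e,f}{\sumk}$ and $\qrfac{aq/d,aq/e,aq/f}{\sumk}$, and verify that all powers of $d$ (equivalently of $\lambda$) cancel while the various $q^{\binom{\cdot}{2}}$ contributions recombine. After absorbing $q^{\sum\limits_{r=1}^n rk_r}=q^{\sumk}\,q^{\sum\limits_{r=1}^n(r-1)k_r}$, these should collapse into the single clean exponent $q^{\sum\limits_{r=1}^n(r-1)k_r}$ and the overall power $(a^2q^{2+\sumN}/bcde)^{\sumk}$ recorded in \eqref{an-watson-BS-4}. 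To reorganize the products I would lean on \eqref{GR11.2.53} and \eqref{GR11.2.47} (in their $p=0$ forms) and on Lemma~\ref{th:e-magiclemma2}, exactly as in the earlier proofs.

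Finally I would check that the collapsed right-hand side yields precisely the stated prefactor $\frac{\qrfac{aq}{\sumN}}{\qrfac{aq/d}{\sumN}}\smallprod n \frac{\qrfac{aq^{1+\sumN-N_r}/dex_r}{N_r}}{\qrfac{aq^{1+\sumN-N_r}/ex_r}{N_r}}$ and the balanced $A_n$ sum appearing on the right of \eqref{an-watson-BS-4}, tracking in particular how the surviving pieces of $\qrfac{aq/e,aq/f}{\sumN}$ and $\smallprod n \qrfac{\lambda x_rq/e,\lambda x_rq/f}{N_r}/\qrfac{\lambda x_rq,\lambda x_rq/ef}{N_r}$ reassemble after the substitution and limit. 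As noted in the Remark following Theorem~\ref{th:an-watson-BS-1}, a routine polynomial argument would then also produce the equivalent formulation with both sums ranging over the $n$-simplex $0\le\sumk\le N$.
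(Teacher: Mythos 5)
Your proposal coincides with the paper's proof: it obtains \eqref{an-watson-BS-4} from \eqref{10p9-7-form2} by the substitution $e\mapsto \lambda a q^{1+\sumN}/ef$, the limit $d\to\infty$, and the final relabeling $f\mapsto d$. The extra bookkeeping you describe (tracking the divergent $\lambda$-dependent factors and the surviving prefactors) is exactly the routine verification the paper leaves implicit.
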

\begin{proof} In \eqref{10p9-7-form2}, we replace $e\mapsto \lambda a q^{1+\sumN}/ef$ and take $d\to \infty$. In the result, we take $f\mapsto d$ to obtain~\eqref{an-watson-BS-4}.
\end{proof}

When $c=1$ in \eqref{an-watson-BS-4} the sum on the left-hand side reduces to $1$ and we obtain an equivalent formulation of Milne's balanced $_3\phi_2$ sum~\cite[Theorem~4.1]{Milne1997}.

If we set $e=a^2q^{1+\sumN}/bcd$ in \eqref{an-watson-BS-4}, we obtain the second author's $A_n$ Jackson's $_8\phi_7$ sum (the $p=0$ case of \eqref{e-8p7-5}). After replacing $e$ as specified, the sum on the right-hand side can be evaluated by setting $a\mapsto a^2q^{1+\sumN}/bcd$, $b=d$, $c\mapsto aq/b$, $x_r\mapsto x_rx_n$, for $r=1, 2, \dots, n$ in Milne's balanced $_3\phi_2$ sum~\cite[Theorem~4.1]{Milne1997}.

Before proceeding to the next section, we remark on the motivation for our search for a new Bailey $_{10}\phi_9$ transformation given in~\eqref{e-10p9-6}. Most results of this section contain a very-well-poised part
\begin{gather*}\frac{1-aq^{2\sumk}}{1-a}\end{gather*}
instead of the usual
\begin{gather*}\smallprod n\frac{1-ax_rq^{k_r+\sumk}}{1-ax_r},\end{gather*}
where the summation index is~$\k$. The first result of this type was given by the first author~\cite{GB1995}, followed by several related results by the second author~\cite{MS2008}. It was natural to search for an $A_n$ $_{10}\phi_9$ transformation involving a series with this kind of very-well-poised part that would contain all those results as special cases.

\section[Another WP Bailey pair for $\B^{(1)}$]{Another WP Bailey pair for $\boldsymbol{\B^{(1)}}$}\label{sec:dougall4}

The next elliptic Jackson summation we consider does not give rise to another Bressoud matrix. Rather surprisingly, it provides yet another WP Bailey pair with respect to the matrix $\B^{(1)}$.

The $D_n$ elliptic Jackson sum we apply in this section is due to Rosengren~\cite[Corollary~6.4]{HR2004}. Its $p=0$ case is due to the first author~\cite{GB1999a}. Rosengren's result is
\begin{gather}
\multsum{k}{N}{r} \Bigg(\ellipticvandermonde{x}{k}{n}
\sqprod n \frac{\ellipticqrfac{q^{-N_s}\xover{x}}{k_r} }{\ellipticqrfac{q\xover{x}}{k_r} }\nonumber\\
\qquad\quad{}\times
\frac{\triprod n \ellipticqrfac{ax_rx_sq/d}{k_r+k_s}}
{\sqprod n \ellipticqrfac{ax_rx_sq/d}{k_r}}
 \smallprod n
\frac{\elliptictheta{ax_rq^{k_r+\sumk}} \ellipticqrfac{ax_r,d/x_r}{\sumk}}
{\elliptictheta{ax_r}\ellipticqrfac{ax_rq^{1+N_r}}{\sumk}\ellipticqrfac{d/x_r}{\sumk-k_r}}\nonumber\\
\qquad\quad{}\times
\frac{\smallprod n{\ellipticqrfac{bx_r, cx_r, a^2x_rq^{1+\sumN}/bcd}{k_r}}
}{\ellipticqrfac{aq/b,aq/c,bcdq^{-\sumN}/a}{\sumk} }
q^{\sum\limits_{r=1}^n rk_r}\Bigg)\nonumber\\
\qquad{} =
 \frac{\triprod n \ellipticqrfac{ax_rx_sq/d}{N_r+N_s}}{\sqprod n \ellipticqrfac{ax_rx_sq/d}{N_r}} \nonumber\\
\qquad\quad{}\times \frac{\smallprod n {\ellipticqrfac{ax_rq, ax_rq/bd, ax_rq/cd, aq^{1+\sumN-N_r}/bcx_r}{N_r}}}
{\ellipticqrfac{aq/b, aq/c, aq/bcd}{\sumN}}.\label{e-8p7-4}
\end{gather}

\begin{Remark*} The $D_n$ series (with summation index $\k$) typically contain the elliptic Vandermonde product
\begin{gather*}
\ellipticvandermonde{x}{k}{n} \triprod n \frac{\elliptictheta{ ax_rx_sq^{k_r+k_s}} }{\elliptictheta{ax_rx_s}}
\end{gather*}
as a factor. (This is not followed very strictly. Sometimes the series is labelled as a $D_n$ series when this factor appears on reversing the sum or inverting the base, as in~\eqref{e-8p7-3}, below.)
\end{Remark*}

We use \eqref{e-8p7-4} to obtain a WP Bailey pair with respect to the matrix $\B^{(1)}$.
\begin{Theorem}[second elliptic WP Bailey pair with respect to $\B^{(1)}$]\label{th:e-Bailey-Pair-B1.3b} The two sequences
\begin{gather*}
\alpha_\k(a,b) :=\smallprod n \frac{\elliptictheta{ax_rq^{k_r+\sumk}}}{\elliptictheta{a}}
\frac{\triprod n \ellipticqrfac{ax_rx_sq/d}{k_r+k_s}}{\sqprod n
\ellipticqrfac{q\xover x}{k_r}\ellipticqrfac{ax_rx_sq/d}{k_r}}\notag\\
\hphantom{\alpha_\k(a,b) :=}{} \times
 \frac{\smallprod n \ellipticqrfac{cx_r, a^2x_rq/bcd}{k_r}
 \ellipticqrfac{ax_r, d/x_r }{\sumk} }
{\ellipticqrfac{aq/c, bcd/a}{\sumk} \smallprod n \ellipticqrfac{d/x_r}{\sumk-k_r} }
 \left( \frac{b}{a}\right)^{\sumk} ,\\
\intertext{and}
\beta_\k(a,b) :=\frac{\triprod n \ellipticqrfac{ax_rx_sq/d}{k_r+k_s} }
 {\sqprod n \ellipticqrfac{ax_rx_sq/d}{k_r} \ellipticqrfac{q\xover x}{k_r}
 }\\
 \hphantom{\beta_\k(a,b) :=}{} \times
\smallprod n \frac{\ellipticqrfac{bx_r,bd/ax_r}{\sumk}} {\ellipticqrfac{bd/ax_r}{\sumk-k_r}} \cdot
\frac{ \smallprod n {\ellipticqrfac{ax_rq/cd, bcx_r/a}{k_r}} }{\ellipticqrfac{aq/c, bcd/a}{\sumk}},
\end{gather*}
form a WP-Bailey pair with respect to $\B^{(1)}$.
\end{Theorem}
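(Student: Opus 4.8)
The plan is to verify the defining relation~\eqref{BaileyPair-def} directly, exactly as in the proofs of Theorems~\ref{th:e-Bailey-Pair-B1.2a}, \ref{th:e-Bailey-Pair-B2.2} and~\ref{th:e-Bailey-Pair-B3.2}: starting from the stated $\alpha_\j(a,b)$, I would compute the sum $\multsum{j}{N}{r} B^{(1)}_{\N\j}(a,b)\,\alpha_\j(a,b)$ and show that it collapses to $\beta_\N(a,b)$. The only structural difference from the earlier arguments is that the summation needed here is the $D_n$ elliptic Jackson sum~\eqref{e-8p7-4} rather than the $A_n$ sum~\eqref{e-8p7-1}. As in the previous proofs, it is convenient to take $B^{(1)}_{\N\j}(a,b)$ in the form~\eqref{e-B1-form2}, in which the factors coupling $\N$ and $\j$ are displayed explicitly.

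First I would insert~\eqref{e-B1-form2} (with $\k=\N$) and the definition of $\alpha_\j(a,b)$ into the sum, and pull the $\N$-only prefactor $\frac{\ellipticqrfac{b/a}{\sumN}}{\sqprod n \ellipticqrfac{q\xover x}{N_r}}\smallprod n \frac{\ellipticqrfac{bx_r}{\sumN}}{\ellipticqrfac{ax_rq}{N_r}}$ outside the summation. The remaining inner sum over $\j$ should be the $b\mapsto qa^2/bcd$ instance of~\eqref{e-8p7-4}: under this substitution one checks that the coupling factor $\ellipticqrfac{bx_rq^{\sumN}}{j_r}$ of~\eqref{e-B1-form2} matches $\ellipticqrfac{a^2x_rq^{1+\sumN}/bcd}{k_r}$ of~\eqref{e-8p7-4}, that $\ellipticqrfac{aq^{1-\sumN}/b}{\sumj}$ matches $\ellipticqrfac{bcdq^{-\sumN}/a}{\sumk}$, and that the $\j$-dependent factors $\ellipticqrfac{a^2x_rq/bcd}{j_r}$, $\ellipticqrfac{cx_r}{j_r}$, $\ellipticqrfac{ax_r,d/x_r}{\sumj}$ and $\frac{1}{\ellipticqrfac{aq/c,bcd/a}{\sumj}}$ reproduce the corresponding pieces of the stated $\alpha_\j(a,b)$. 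Evaluating the inner sum by~\eqref{e-8p7-4} and combining its product side with the $\N$-only prefactor should then give $\beta_\N(a,b)$; here the one nontrivial rewriting is to recognize $\smallprod n \ellipticqrfac{bdq^{\sumN-N_r}/ax_r}{N_r}$ as $\smallprod n \frac{\ellipticqrfac{bd/ax_r}{\sumN}}{\ellipticqrfac{bd/ax_r}{\sumN-N_r}}$ via~\eqref{GR11.2.47}, which produces the ``backward'' factorials in $\beta_\N(a,b)$.

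I expect the main obstacle to be the bookkeeping of the genuinely $D_n$-type factors, namely the products $\triprod n \ellipticqrfac{ax_rx_sq/d}{k_r+k_s}$ and the backward factorials $\ellipticqrfac{d/x_r}{\sumk-k_r}$ (together with their $\beta$-side counterparts $\ellipticqrfac{bd/ax_r}{\sumk-k_r}$). Since $\B^{(1)}$ is a purely $A_n$ Bressoud matrix, it supplies none of these; each such factor must originate in $\alpha_\j$ and be transported through the summation untouched, reassembling on the right-hand side of~\eqref{e-8p7-4}. This is precisely why a $D_n$ summation can yield a WP Bailey pair with respect to the $A_n$ matrix $\B^{(1)}$: the $D_n$ structure lives entirely in the pure-$\j$ factors and is inert under the $A_n$ coupling carried by $\B^{(1)}$. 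Verifying that these factors pass through cleanly, and that the indices of the form $\sumk-k_r$ are handled consistently with the product conventions in~\eqref{proddef}, is the delicate step; Lemma~\ref{th:e-magiclemma2}, which already underlies the form~\eqref{e-B1-form2}, keeps the elliptic Vandermonde and the $q^{-N_s}$ factors aligned throughout. Once the $D_n$ factors are seen to be inert, the computation reduces to the same pattern as in Theorem~\ref{th:e-Bailey-Pair-B1.2a}.
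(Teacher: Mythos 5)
Your proposal is correct and follows exactly the paper's own route: the paper likewise verifies the relation $\beta_\N=\sum_{\j}B^{(1)}_{\N\j}\alpha_\j$ by summing the inner sum with the $b\mapsto qa^2/bcd$ case of the $D_n$ elliptic Jackson summation~\eqref{e-8p7-4}, in direct analogy with the proof of Theorem~\ref{th:e-Bailey-Pair-B1.2a}. Your detailed matching of the coupling factors and the rewriting of $\ellipticqrfac{bdq^{\sumN-N_r}/ax_r}{N_r}$ via~\eqref{GR11.2.47} is exactly the bookkeeping the paper leaves implicit.
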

\begin{proof}The proof is analogous to that of Theorem~\ref{th:e-Bailey-Pair-B1.2a}, except that we use the $b\mapsto qa^2/bcd$ case of~\eqref{e-8p7-4}.
\end{proof}

Consider the inverse relation of \eqref{e-8p7-4}, in the form \eqref{multivariable-inverse-relation} where $\B=\B^{(1)}$, and $\alpha_k$ and $\beta_k$ are defined as in Theorem~\ref{th:e-Bailey-Pair-B1.3b} and $(B^{(1)}(a,b))^{-1}_{\k\j} $ is given by \eqref{e-B1-inverse}. After canceling some products, we take $a\mapsto bq^{-\sumN}$, $b\mapsto a$, $c\mapsto aq/cd$ and $d\mapsto bd/aq^{\sumN}$, to again obtain~\eqref{e-8p7-4}. Thus we do not obtain a new result by taking the inverse relation.

Since we have another WP Bailey pair with respect to the matrix $\B^{(1)}$, we can apply the $\B^{(1)} \to\B^{(1)}$ WP Bailey lemma or the $\B^{(1)} \to\B^{(2)}$ WP Bailey lemma, to obtain a WP Bailey pair with respect to the matrices $\B^{(1)}$ or $\B^{(2)}$, respectively.

An elliptic $D_n$ Bailey $_{10}\phi_9$ transformation formula due to Rosengren (which follows by reversing the sum in \cite[Corollary~8.5]{HR2004}) follows immediately by applying the $\B^{(1)}\to\B^{(1)}$ elliptic Bailey lemma in Theorem~\ref{th:e-WP-BaileyLemma-B1} to the WP Bailey pair in Theorem~\ref{th:e-Bailey-Pair-B1.3b}. When $p=0$ this reduces to the authors' formula~\cite[Theorem~3.9]{BS1998}.

The same elliptic $D_n$ Bailey $_{10}\phi_9$ transformation also follows by applying the $\B^{(1)}\to\B^{(2)}$ elliptic Bailey lemma in Theorem~\ref{th:e-WP-BaileyLemma-B1-B2} to the WP Bailey pair in Theorem~\ref{th:e-Bailey-Pair-B1.3b}.

Next, we find another Bressoud matrix and another WP Bailey lemma, which will allow us to use the WP Bailey pair of this section again.

\section[The matrix $\B^{(4)}$]{The matrix $\boldsymbol{\B^{(4)}}$}\label{sec:matrix-B4}
In this section we examine some results which are related to multiple series attached to a mix of root systems, such as $A_n$, $C_n$ and $D_n$. These results are a consequence of a~$D_n$ elliptic Jackson sum due to Rosengren~\cite[Corollary~6.3]{HR2004}. The $p=0$ case is due to the second author~\cite{MS1997}. Rosengren's result is
\begin{gather}
\multsum{k}{N}{r} \Bigg( \ellipticvandermonde{x}{k}{n}
\sqprod n \frac{\ellipticqrfac{q^{-N_s}\xover{x}}{k_r} }{\ellipticqrfac{q\xover{x}}{k_r} }\notag\\
\qquad\quad{} \times \smallprod n \frac{\elliptictheta{ax_rq^{k_r+\sumk}}}{\elliptictheta{ax_r}}
\frac{\ellipticqrfac{ax_r}{\sumk}\ellipticqrfac{bcd/ax_r}{\sumk-k_r}}
{\ellipticqrfac{ax_rq^{1+N_r}}{\sumk}\ellipticqrfac{bcdq^{-N_r}/ax_r}{\sumk}}
\notag\\
\qquad\quad{} \times \frac{\ellipticqrfac{b,c,d}{\sumk}
\sqprod n \ellipticqrfac{a^2x_rx_sq^{1+N_s}/bcd}{k_r} }
{\smallprod n \ellipticqrfac{ax_rq/b, ax_rq/c, ax_rq/d}{k_r}
\triprod n \ellipticqrfac{a^2x_rx_sq/bcd}{k_r+k_s}}
q^{\sum\limits_{r=1}^n rk_r}\Bigg)\notag\\
\qquad{} = \smallprod n \frac{\ellipticqrfac{ax_rq, ax_rq/bc, ax_rq/bd, ax_rq/cd}{N_r}}
{\ellipticqrfac{ax_rq/b, ax_rq/c, ax_rq/d, ax_rq/bcd}{N_r}}.\label{e-8p7-3}
\end{gather}
The above summation implies another Bressoud matrix and a WP Bailey pair. The matrix $\B^{(4)}= (B^{(4)}_{\k\j}(a,b))$ is defined as follows:
\begin{Definition}[a $D_n$ elliptic Bressoud matrix] We define the matrix $\B^{(4)}$ with entries indexed by $(\k , \j)$ as
\begin{gather}
B^{(4)}_{\k \j}(a,b) := \frac{\sqprod n \ellipticqrfac{bx_rx_s}{k_r+j_s} }
 {\triprod n \ellipticqrfac{bx_rx_s}{j_r+j_s} \sqprod n \ellipticqrfac{q^{1+j_r-j_s}\xover x }{k_r-j_r}} \nonumber\\
\hphantom{B^{(4)}_{\k \j}(a,b) :=}{} \times \smallprod n \frac{\ellipticqrfac{bx_r/a}{k_r-\sumj}}
{\ellipticqrfac{bx_r/a}{j_r-\sumj} \ellipticqrfac{ax_rq}{k_r+\sumj}} . \label{e-B4}
\end{gather}
\end{Definition}

\begin{Theorem}[an elliptic WP Bailey pair with respect to $\B^{(4)}$]\label{th:e-Bailey-Pair-B4.2} The two sequences
\begin{subequations}
\begin{gather}
\alpha_\k(a,b) := \smallprod n \frac{\elliptictheta{ax_rq^{k_r+\sumk}}}{\elliptictheta{ax_r}}
\frac{\ellipticqrfac{ax_r}{\sumk}} {\ellipticqrfac{ax_rq/c,ax_rq/d, bcdx_r/a }{k_r} } \notag\\
\hphantom{\alpha_\k(a,b) :=}{} \times
\frac{\ellipticqrfac{c, d, qa^2/bcd}{\sumk} }
{\sqprod n \ellipticqrfac{q\xover x}{k_r}}
 \left( \frac{b}{a}\right)^{\sumk}
q^{-\sum\limits_{r<s} k_rk_s} \smallprod n x_r^{k_r} , \label{e-Bailey-PairB4.2a} \\
 \intertext{and}
\beta_\k(a,b) := \sqprod n\frac{\ellipticqrfac{bx_rx_s}{k_s}}{\ellipticqrfac{q\xover x}{k_r}}
\smallprod n \frac{\ellipticqrfac{ax_rq/cd, bcx_r/a, bdx_r/a}{k_r}}
{\ellipticqrfac{bcdx_r/a, ax_rq/c, ax_rq/d}{k_r}},\label{e-Bailey-PairB4.2}
\end{gather}
\end{subequations}
form a WP-Bailey pair with respect to $\B^{(4)}$.
\end{Theorem}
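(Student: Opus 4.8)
The plan is to verify the defining relation \eqref{BaileyPair-def} directly, exactly as in the proof of Theorem~\ref{th:e-Bailey-Pair-B1.2a}: substitute the explicit $\alpha_\j(a,b)$ from \eqref{e-Bailey-PairB4.2a} into the sum \eqref{BP-general} with $\B=\B^{(4)}$, and reduce the right-hand side to the expression $\beta_\N(a,b)$ of \eqref{e-Bailey-PairB4.2}. The engine of the evaluation is Rosengren's $D_n$ elliptic Jackson sum \eqref{e-8p7-3}, taken in its $b\mapsto qa^2/bcd$ case, with the (summed) column index $\j$ playing the role of the summation index $\k$ in \eqref{e-8p7-3}.

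First I would bring $\B^{(4)}$ from \eqref{e-B4} into a shape compatible with the left-hand side of \eqref{e-8p7-3}. The denominator factor $\sqprod n \ellipticqrfac{q^{1+j_r-j_s}\xover x}{N_r-j_r}$ is rewritten by Lemma~\ref{th:e-magiclemma2} (with $k\mapsto j$), producing the type-$A$ elliptic Vandermonde product $\ellipticvandermonde{x}{j}{n}$, the factor $\sqprod n \ellipticqrfac{q^{-N_s}\xover x}{j_r}$, and an $\N$-only prefactor $\sqprod n (\ellipticqrfac{q\xover x}{N_r})^{-1}$ together with explicit powers of $q$ and a sign. Next I would split every factor of the form $\ellipticqrfac{\,\cdot\,}{N_r+\sumj}$ or $\ellipticqrfac{\,\cdot\,}{N_r+j_s}$ by \eqref{GR11.2.47}, e.g.\ $\ellipticqrfac{ax_rq}{N_r+\sumj}=\ellipticqrfac{ax_rq}{N_r}\ellipticqrfac{ax_rq^{1+N_r}}{\sumj}$ and $\ellipticqrfac{bx_rx_s}{N_r+j_s}=\ellipticqrfac{bx_rx_s}{N_r}\ellipticqrfac{bx_rx_sq^{N_r}}{j_s}$, peeling off $\N$-only prefactors and leaving summation factors. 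Under $b\mapsto qa^2/bcd$ one checks the term-by-term correspondences $a^2x_rx_sq/bcd\mapsto bx_rx_s$, $ax_rq/b\mapsto bcdx_r/a$ and $ax_rq/bcd\mapsto bx_r/a$, so that the surviving $\j$-dependent part of $B^{(4)}_{\N\j}\alpha_\j$ is exactly the summand of the $b\mapsto qa^2/bcd$ instance of \eqref{e-8p7-3}, while $\frac1{\sqprod n\ellipticqrfac{q\xover x}{j_r}}$ and $\ellipticqrfac{c,d,qa^2/bcd}{\sumj}$ are supplied by $\alpha_\j$ itself.

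Having matched the summand, I would apply \eqref{e-8p7-3} to collapse the sum to its product side and then reassemble the peeled-off prefactors with that product side into $\beta_\N(a,b)$: the same substitution sends the right-hand side of \eqref{e-8p7-3} to $\smallprod n \frac{\ellipticqrfac{ax_rq, bcx_r/a, bdx_r/a, ax_rq/cd}{N_r}}{\ellipticqrfac{bcdx_r/a, ax_rq/c, ax_rq/d, bx_r/a}{N_r}}$, and the stray factors $\ellipticqrfac{ax_rq}{N_r}$ and $\ellipticqrfac{bx_r/a}{N_r}$ cancel against the prefactors, leaving precisely \eqref{e-Bailey-PairB4.2} with $\k$ replaced by $\N$. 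The main obstacle is the $D_n$-specific bookkeeping: unlike the $A_n$ proofs, the pairwise products $\sqprod n\ellipticqrfac{bx_rx_s}{\,\cdot\,}$ and $\triprod n\ellipticqrfac{bx_rx_s}{j_r+j_s}$ must be tracked through the splitting and through the $r\leftrightarrow s$ symmetrization, and the factors $\ellipticqrfac{bx_r/a}{N_r-\sumj}/\ellipticqrfac{bx_r/a}{j_r-\sumj}$ must be reconciled with the $\ellipticqrfac{bcd/ax_r}{\sumk-k_r}$-type factors of \eqref{e-8p7-3} by reversing products via \eqref{GR11.2.49r}, \eqref{GR11.2.50} and \eqref{GR11.2.53}. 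Keeping the accumulated powers of $q$, the powers of $x_r$, and the signs consistent (coming from $\alpha_\j$'s explicit $(b/a)^{\sumj}q^{-\sum_{r<s}j_rj_s}\smallprod n x_r^{j_r}$, from Lemma~\ref{th:e-magiclemma2}, and from the $q^{\sum_r r j_r}$ in \eqref{e-8p7-3}) is the genuinely error-prone step, and Lemma~\ref{th:e-magiclemma1} is the natural tool for organizing the Vandermonde-related powers.
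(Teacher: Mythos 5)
Your proposal is correct and follows essentially the same route as the paper: the paper's proof likewise verifies the defining relation \eqref{BaileyPair-def} directly by summing $\sum_\j B^{(4)}_{\N\j}(a,b)\alpha_\j(a,b)$ via the $b\mapsto qa^2/bcd$ case of \eqref{e-8p7-3}, exactly as in Theorem~\ref{th:e-Bailey-Pair-B1.2a}. Your parameter correspondences (e.g.\ $a^2x_rx_sq/bcd\mapsto bx_rx_s$, $ax_rq/b\mapsto bcdx_r/a$) and the resulting product side all check out, so you have in fact supplied more bookkeeping detail than the paper's own one-line argument.
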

\begin{proof}The proof is similar to that of Theorem~\ref{th:e-Bailey-Pair-B1.2a}. We use the $b\mapsto qa^2/bcd$ case of \eqref{e-8p7-3} to verify that $\alpha_\k(a,b)$ and $\beta_\k(a,b)$ form a WP-Bailey pair with respect to $\B^{(4)}$.
\end{proof}

\begin{Theorem}[an elliptic $\big(\B^{(1)} \to\B^{(4)}\big)$ WP Bailey lemma] \label{th:e-WP-BaileyLemma-B1-B4}
Suppose $\alpha_\N(a,b)$ and $\beta_\N(a,b)$ form a WP Bailey pair with respect to the matrix $\B^{(1)}$. Let
${\alpha}^{\prime}_\N(a,b)$ and ${\beta}^{\prime}_\N(a,b)$ be defined as follows:
\begin{gather*}
{\alpha}^{\prime}_\N(a,b) := \frac{\ellipticqrfac{\rho_1, \rho_2}{\sumN}}
 {\smallprod n \ellipticqrfac{ax_rq/\rho_1, ax_rq/\rho_2}{N_r}}
 \left(\frac{aq}{\rho_1\rho_2}\right)^{\sumN}
\smallprod n x_r^{N_r}\cdot q^{-\sum\limits_{r<s} N_rN_s} \alpha_\N(a, b\rho_1\rho_2/aq),\\
{\beta}^{\prime}_\N (a,b) :=
\smallprod n \frac{\ellipticqrfac{b\rho_1x_r/a, b\rho_2x_r/a }{N_r}}
{ \ellipticqrfac{ax_rq/\rho_1, ax_rq/\rho_2}{N_r}} \multsum{k}{N}{r} \Bigg(
 \frac{ \ellipticqrfac{\rho_1, \rho_2}{\sumk}}
{\smallprod n \ellipticqrfac{b\rho_1x_r/a, b\rho_2x_r/a}{k_r}}\\
\hphantom{{\beta}^{\prime}_\N (a,b) :=}{} \times
\smallprod n \frac{\elliptictheta{ b\rho_1\rho_2x_rq^{k_r+\sumk}/{aq}}}
{\elliptictheta{ b\rho_1\rho_2x_r/{aq}}}
\frac{ \ellipticqrfac{ax_rq^{1+k_r-\sumk}/\rho_1\rho_2 }{N_r-k_r}}{
\ellipticqrfac{b\rho_1\rho_2 x_r/a}{N_r+\sumk}} \notag\\
\hphantom{{\beta}^{\prime}_\N (a,b) :=}{} \times
\frac{\sqprod n \ellipticqrfac{bx_rx_s}{N_s+k_r}}{\triprod n \ellipticqrfac{bx_rx_s}{k_r+k_s}
\sqprod n \ellipticqrfac{q^{1+k_r-k_s}\xover x }{N_r-k_r}}\\
\hphantom{{\beta}^{\prime}_\N (a,b) :=}{} \times
\left(\frac{aq}{\rho_1\rho_2}\right)^{\sumk} \smallprod n x_r^{k_r}\cdot q^{-\sum\limits_{r<s} k_rk_s} \beta_\k(a,b\rho_1\rho_2/aq) \Bigg).
\end{gather*}
Then ${\alpha}^{\prime}_\N(a,b)$ and ${\beta}^{\prime}_\N(a,b)$ form a WP Bailey pair with respect to $\B^{(4)}$, defined by \eqref{e-B4}.
\end{Theorem}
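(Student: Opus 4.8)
The plan is to mirror the proof of Theorem~\ref{th:e-WP-BaileyLemma-B1}, with the crucial change that the inner sum will be evaluated by the $D_n$ elliptic Jackson summation \eqref{e-8p7-3} (the summation attached to $\B^{(4)}$) rather than by the $A_n$ summation \eqref{e-8p7-1}. I would begin with the defining expression for $\beta_\N'(a,b)$ and substitute for $\beta_\k(a,b\rho_1\rho_2/aq)$ using the Bailey-pair relation \eqref{BP-def2} with $\B=\B^{(1)}$, taking $B^{(1)}$ in the form \eqref{e-B1-form2}. This produces a double sum in which $\k$ (from the definition of $\beta_\N'$) ranges over $0\le k_r\le N_r$ and $\j$ (from the $\B^{(1)}$-expansion) ranges over $0\le j_r\le k_r$.

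Next I would interchange the order of summation and shift the inner index exactly as in the proof of Theorem~\ref{th:e-WP-BaileyLemma-B1}, using
\[
\multsum{k}{N}{r}\ \multsum{j}{k}{r} A_{\j,\k}
=\multsum{j}{N}{r}\sum\limits_{\substack{0\le k_r\le N_r-j_r\\ r=1,2,\dots,n}} A_{\j,\k+\j},
\]
so that $\alpha_\j(a,b\rho_1\rho_2/aq)$ is freed from the inner summation over the shifted variable $\k$. To prepare the inner summand for evaluation I would apply Lemma~\ref{th:e-magiclemma2} (with $N_r\mapsto N_r-j_r$ and $x_r\mapsto x_rq^{j_r}$) to rewrite the reciprocal Vandermonde product $\sqprod n 1/\ellipticqrfac{q^{1+k_r-k_s}\xover x}{N_r-k_r}$ in terms of an elliptic Vandermonde product and ordinary theta-shifted factorials, and then use the elementary identities \eqref{GR11.2.50} and \eqref{GR11.2.47} to consolidate the $\sumj$- and $N_r$-dependent shifts.

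The heart of the argument is the evaluation of the resulting inner sum over $\k$ by \eqref{e-8p7-3}, with substitutions of the same shape as in Theorem~\ref{th:e-WP-BaileyLemma-B1-B2}, namely $x_r\mapsto x_rq^{j_r}$ and $N_r\mapsto N_r-j_r$ together with a choice of the four parameters of \eqref{e-8p7-3} in terms of $a$, $b$, $\rho_1$, $\rho_2$ and the shifts $q^{\sumj}$, $q^{\sumN}$ determined by matching the cross terms. Specifically, the very-well-poised part and the pairwise products $\sqprod n \ellipticqrfac{a^2x_rx_sq^{1+N_s}/bcd}{k_r}$ and $\triprod n \ellipticqrfac{a^2x_rx_sq/bcd}{k_r+k_s}$ of \eqref{e-8p7-3} must be forced to coincide with the $D_n$-type factors $\sqprod n \ellipticqrfac{bx_rx_s}{N_s+k_r}$ and $\triprod n \ellipticqrfac{bx_rx_s}{k_r+k_s}$ occurring in the definition of $\beta_\N'$. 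After performing the summation and reapplying \eqref{GR11.2.50} and \eqref{GR11.2.47} to restore the products to their standard form, the remaining sum over $\j$ should be recognizable as $\multsum{j}{N}{r} B^{(4)}_{\N\j}(a,b)\,\alpha_\j'(a,b)$, with $B^{(4)}$ as in \eqref{e-B4} and $\alpha_\j'$ as stated, which is precisely the assertion that $\alpha_\N'$ and $\beta_\N'$ form a WP Bailey pair with respect to $\B^{(4)}$.

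The main obstacle I anticipate is the bookkeeping of the $D_n$ pairwise factors. Unlike the $A_n$ lemmas, where only single-index products need to be tracked, here the chosen specialization of \eqref{e-8p7-3} must simultaneously transform two families of cross products $\ellipticqrfac{\,\cdot\,x_rx_s}{\,\cdot\,}$ into exactly those demanded by $\B^{(4)}$, while the extra factors $\smallprod n x_r^{k_r}$ and $q^{-\sum_{r<s}k_rk_s}$ carried along both from the $\B^{(1)}$ entries and from the reversed-product steps must recombine into the $D_n$ normalization. Confirming that the elliptic balancing condition of \eqref{e-8p7-3} holds under this specialization, and that all $x_r$- and $q$-powers cancel precisely, is the delicate step; by contrast, the single-index factors and the overall prefactors should fall out routinely, as in the proof of Theorem~\ref{th:e-WP-BaileyLemma-B1}.
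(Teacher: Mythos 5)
Your proposal matches the paper's proof: the authors likewise run the argument of Theorem~\ref{th:e-WP-BaileyLemma-B1} verbatim, substituting the $\B^{(1)}$ Bailey-pair relation into $\beta'_\N$, interchanging and shifting the sums, and evaluating the inner sum by the $D_n$ elliptic Jackson summation \eqref{e-8p7-3} with $x_r\mapsto x_rq^{j_r}$, $N_r\mapsto N_r-j_r$, $a\mapsto b\rho_1\rho_2q^{2\sumj}/aq$, $b\mapsto \rho_1q^{\sumj}$, $c\mapsto \rho_2q^{\sumj}$, $d\mapsto b\rho_1\rho_2q^{\sumj}/a^2q$ (no $q^{\sumN}$ shift is needed, contrary to your hedge). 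The parameter choice you leave to be "determined by matching the cross terms" is exactly the one the paper records, so the approach is the same.
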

\begin{Remark*} A matrix reformulation of Theorem~\ref{th:e-WP-BaileyLemma-B1-B4} appears in unpublished notes of Warnaar~\cite{SOW-notes-2016}.
\end{Remark*}

\begin{proof} The proof is similar to that of Theorem~\ref{th:e-WP-BaileyLemma-B1}. We need to use \eqref{e-8p7-3}, with the substitutions: $x_r\mapsto x_rq^{j_r}$ and $N_r\mapsto N_r-j_r$ for $r=1, 2, \dots, n$, $a\mapsto b\rho_1\rho_2q^{2\sumj}/aq$, $b\mapsto \rho_1q^{\sumj}$, $c\mapsto \rho_2q^{\sumj}$, $d\mapsto b\rho_1\rho_2q^{\sumj}/a^2q $.
\end{proof}

An elliptic $D_n$ Bailey $_{10}\phi_9$ transformation formula due to Rosengren \cite[Corollary~8.5]{HR2004} follows immediately by applying the $\B^{(1)} \to\B^{(4)}$ elliptic Bailey lemma in Theorem~\ref{th:e-WP-BaileyLemma-B1-B4} to the WP Bailey pair in Theorem~\ref{th:e-Bailey-Pair-B1.2a}. When $p=0$ this reduces to the authors' formula in \cite[Theorem~3.13]{BS1998}.

If instead we use the second WP Bailey pair in Theorem~\ref{th:e-Bailey-Pair-B1.3b}, we obtain a different elliptic $D_n$ Bailey $_{10}\phi_9$ transformation formula, again due to Rosengren~\cite[Corollary~8.4]{HR2004}. When $p=0$ this reduces to \cite[Theorem~3.1]{BS1998}.

We will compute the inverse of $\B^{(4)}$ in the next section.

\section[Consequences of a $C_n$ elliptic Jackson sum due to Rosengren]{Consequences of a $\boldsymbol{C_n}$ elliptic Jackson sum due to Rosengren}\label{sec:cn-case}

In this section we consider a $C_n$ elliptic Jackson summation theorem due to Rosengren~\cite[Theorem~7.1]{HR2004}. The $p=0$ case
was found independently by Denis and Gustafson~\cite[Theorem~4.1]{DG1992} and Milne and Lilly~\cite[Theorem~6.13]{ML1995}. We will find that the Bressoud matrix following from this result is closely related to the one in Section~\ref{sec:matrix-B4}. Again, the results are closely related to both~$D_n$ and~$A_n$ series. The summation theorem we consider is
\begin{gather}
\multsum{k}{N}{r} \Bigg(\ellipticvandermonde{x}{k}{n}
\triprod n \frac{\elliptictheta{ ax_rx_sq^{k_r+k_s}} }{\elliptictheta{ax_rx_s}}\nonumber\\
\qquad\quad{} \times \smallprod n \frac{\elliptictheta{ax_r^2q^{2k_r}}}{\elliptictheta{ax_r^2}}
 \sqprod n \frac{\ellipticqrfac{q^{-N_s}\xover{x}, ax_rx_s}{k_r} }
 {\ellipticqrfac{q\xover{x}, ax_rx_sq^{1+N_s}}{k_r} } \nonumber\\
\qquad\quad{} \times
\smallprod n \frac{\ellipticqrfac{bx_r, cx_r, dx_r, a^2x_rq^{1+\sumN}/bcd}{k_r}}
{\ellipticqrfac{ax_rq/b, ax_rq/c, ax_rq/d, bcdx_rq^{-\sumN}/a}{k_r}}
\cdot q^{\sum\limits_{r=1}^n r k_r}\Bigg)\nonumber\\
\qquad{} =
\frac{\sqprod n \ellipticqrfac{ax_rx_s}{N_r} }
{\triprod n \ellipticqrfac{ax_rx_s}{N_r+N_s} } \nonumber\\
\qquad\quad\times
\frac{\ellipticqrfac{aq/bc, aq/bd, aq/cd}{\sumN}}
{ \smallprod n \ellipticqrfac{ax_rq/b, ax_rq/c, ax_rq/d, aq^{1+\sumN-N_r}/bcdx_r}{N_r}}.\label{e-8p7-2}
\end{gather}
\begin{Remark*} The
$C_n$ series (with summation index $\k$) contain the elliptic Vandermonde product
\begin{gather*}
\ellipticvandermonde{x}{k}{n}
\triprod n \frac{\elliptictheta{ ax_rx_sq^{k_r+k_s}} }{\elliptictheta{ax_rx_s}}
 \smallprod n \frac{\elliptictheta{ax_r^2q^{2k_r}}}{\elliptictheta{ax_r^2}}
\end{gather*}
as a factor.
\end{Remark*}

We use this result to obtain an elliptic WP Bailey lemma.

\begin{Theorem}[an elliptic $\big(\B^{(4)} \to\B^{(1)}\big)$ WP Bailey lemma] \label{th:e-WP-BaileyLemma-B4-B1}
Suppose $\alpha_\N(a,b)$ and $\beta_\N(a,b)$ form a WP Bailey pair with respect to the matrix $\B^{(4)}$ defined in \eqref{e-B4}. Let
${\alpha}^{\prime}_\N(a,b)$ and ${\beta}^{\prime}_\N(a,b)$ be defined as follows
\begin{subequations}
\begin{gather}
{\alpha}^{\prime}_\N(a,b) :=
 \frac{\smallprod n \ellipticqrfac{\rho_1x_r, \rho_2x_r}{N_r}}
 {\ellipticqrfac{aq/\rho_1, aq/\rho_2}{\sumN}}
 \left(\frac{aq}{\rho_1\rho_2}\right)^{\sumN}
\smallprod n x_r^{-N_r}\cdot
 q^{\sum\limits_{r<s} N_rN_s}
 \alpha_\N(a, b\rho_1\rho_2/aq), \label{e-alphaprime-B4-B1} \\
{\beta}^{\prime}_\N (a,b) :=
\frac{\smallprod n \ellipticqrfac{b\rho_1x_r/a, b\rho_2x_r/a }{N_r}}
{ \ellipticqrfac{aq/\rho_1, aq/\rho_2}{\sumN}}\nonumber\\
\hphantom{{\beta}^{\prime}_\N (a,b) := }{}
 \times \multsum{k}{N}{r} \Bigg(
\smallprod n \frac{ \ellipticqrfac{\rho_1x_r, \rho_2x_r}{k_r}
\ellipticqrfac{aq^{1+\sumN-N_r}/\rho_1\rho_2 x_r}{N_r-k_r}}
{ \ellipticqrfac{b\rho_1x_r/a, b\rho_2x_r/a}{k_r}}\nonumber\\
\hphantom{{\beta}^{\prime}_\N (a,b) := }{} \times
\triprod n \frac{\elliptictheta{ b\rho_1\rho_2x_rx_sq^{k_r+k_s}/{aq}}}
{\elliptictheta{ b\rho_1\rho_2x_rx_s/{aq}}}
\smallprod n \frac{\elliptictheta{ b\rho_1\rho_2x_r^2q^{2k_r}/{aq}}}
{\elliptictheta{ b\rho_1\rho_2x_r^2/{aq}}} \nonumber\\
\hphantom{{\beta}^{\prime}_\N (a,b) := }{} \times
\frac{\triprod n \ellipticqrfac{b\rho_1\rho_2x_rx_s/a}{N_r+N_s}
\smallprod n \ellipticqrfac{bx_r}{\sumN+k_r}}{\sqprod n \ellipticqrfac{b\rho_1\rho_2x_rx_s/a}{N_s+k_r}
 \ellipticqrfac{q^{1+k_r-k_s}\xover x }{N_r-k_r}}\nonumber\\
\hphantom{{\beta}^{\prime}_\N (a,b) := }{} \times \left(\frac{aq}{\rho_1\rho_2}\right)^{\sumk}
\smallprod n x_r^{-k_r}\cdot q^{\sum_{r<s}k_rk_s}
\beta_\k(a,b\rho_1\rho_2/aq) \Bigg).\label{e-betaprime-B4-B1}
\end{gather}
\end{subequations}
Then ${\alpha}^{\prime}_\N(a,b)$ and ${\beta}^{\prime}_\N(a,b)$ form a WP Bailey pair with respect to $\B^{(1)}$, defined by~\eqref{e-B1}.
\end{Theorem}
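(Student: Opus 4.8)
The plan is to follow the template established in the proof of Theorem~\ref{th:e-WP-BaileyLemma-B1}, the only essential change being that the inner sum will be evaluated by the $C_n$ elliptic Jackson summation \eqref{e-8p7-2} rather than by an $A_n$ sum. I would begin with the explicit expression \eqref{e-betaprime-B4-B1} for $\beta'_\N(a,b)$ and substitute for $\beta_\k(a,b\rho_1\rho_2/aq)$ the defining relation of a WP Bailey pair with respect to $\B^{(4)}$, namely
\[
\beta_\k(a, b\rho_1\rho_2/aq) = \multsum{j}{k}{r} B^{(4)}_{\k\j}(a, b\rho_1\rho_2/aq)\, \alpha_\j(a, b\rho_1\rho_2/aq),
\]
with $B^{(4)}$ given by \eqref{e-B4}. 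This produces a double sum over $\k$ and $\j$, to which I would apply the interchange-and-shift identity from the proof of Theorem~\ref{th:e-WP-BaileyLemma-B1}, replacing $\k$ by $\k+\j$, so that the outer sum runs over $\j$ (with $0\le j_r\le N_r$) and the inner sum over $0\le k_r\le N_r-j_r$.

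Next I would separate out the factors depending on $\j$ alone; together with the prefactor of \eqref{e-betaprime-B4-B1} these should assemble into $\alpha'_\j(a,b)$ as given in \eqref{e-alphaprime-B4-B1}. The remaining inner sum must then be cast into the shape of the left-hand side of \eqref{e-8p7-2}. To accomplish this I would invoke the simplification of Lemma~\ref{th:e-magiclemma2} under $N_r\mapsto N_r-j_r$ and $x_r\mapsto x_rq^{j_r}$ (to handle the $\sqprod n 1/\ellipticqrfac{q^{1+k_r-k_s}\xover x}{N_r-k_r}$ factor) and repeatedly use the elementary identities \eqref{GR11.2.50} and \eqref{GR11.2.47} to move the surviving $q,p$-shifted factorials and theta functions into the prescribed form. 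The inner sum is then summed by \eqref{e-8p7-2} under the substitutions $x_r\mapsto x_rq^{j_r}$, $N_r\mapsto N_r-j_r$, $a\mapsto b\rho_1\rho_2 q^{2\sumj}/aq$, $b\mapsto \rho_1 q^{\sumj}$, $c\mapsto \rho_2 q^{\sumj}$, and $d\mapsto b\rho_1\rho_2 q^{\sumj}/a^2q$, paralleling the substitutions employed in Theorem~\ref{th:e-WP-BaileyLemma-B1-B4}.

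After the inner sum is evaluated, I would apply \eqref{GR11.2.50} and \eqref{GR11.2.47} once more to reorganize the $\j$-dependent product coming from the right-hand side of \eqref{e-8p7-2}, and verify that the whole expression collapses into $\multsum{j}{N}{r} B^{(1)}_{\N\j}(a,b)\, \alpha'_\j(a,b)$, with $B^{(1)}$ written in the form \eqref{e-B1-form2} and $\alpha'_\j$ as in \eqref{e-alphaprime-B4-B1}. This is exactly the assertion that $\alpha'_\N(a,b)$ and $\beta'_\N(a,b)$ form a WP Bailey pair with respect to $\B^{(1)}$.

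The step I expect to be the main obstacle is reconciling the $C_n$-type Vandermonde structure of \eqref{e-8p7-2} — the product $\triprod n \elliptictheta{ax_rx_sq^{k_r+k_s}}/\elliptictheta{ax_rx_s}$ together with $\smallprod n \elliptictheta{ax_r^2q^{2k_r}}/\elliptictheta{ax_r^2}$ and the balanced $\sqprod n \ellipticqrfac{ax_rx_s}{k_r}$ factors — with the $D_n$ data of $\B^{(4)}$ after the shift $x_r\mapsto x_rq^{j_r}$. Under this shift one has $x_rx_s\mapsto x_rx_s q^{j_r+j_s}$, so the pairwise $bx_rx_s$ products in \eqref{e-B4} and the arguments of the $C_n$ theta products must be matched term by term, splitting and recombining the shifted factorials via \eqref{GR11.2.47}. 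Once the off-diagonal ($r<s$) and diagonal ($r=s$) products are correctly aligned, the remaining manipulations are routine and follow the pattern of the earlier lemmas.
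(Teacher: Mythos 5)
Your overall architecture is exactly the paper's: substitute the $\B^{(4)}$-defining relation for $\beta_\k(a,b\rho_1\rho_2/aq)$, interchange and shift the sums, and evaluate the inner sum by the $C_n$ elliptic Jackson summation \eqref{e-8p7-2}. However, the parameter substitutions you specify for \eqref{e-8p7-2} are incorrect, and in a proof of this template the substitutions are the one piece of genuine content. You have transplanted them from Theorem~\ref{th:e-WP-BaileyLemma-B1-B4}, which evaluates its inner sum by the $D_n$ summation \eqref{e-8p7-3}. In \eqref{e-8p7-3} the parameters $b$, $c$, $d$ occur ``bare'' in $\ellipticqrfac{b,c,d}{\sumk}$ and the special parameter occurs as $ax_r$, so the extra powers $q^{\sumj}$ (and $q^{2\sumj}$ for $a$) are needed to absorb the index shift $\k\mapsto\k+\j$. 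In the $C_n$ sum \eqref{e-8p7-2} every parameter enters multiplied by $x_r$ or $x_rx_s$, so the shift $x_r\mapsto x_rq^{j_r}$ already supplies the required powers of $q$ --- a point you yourself raise when you note $x_rx_s\mapsto x_rx_sq^{j_r+j_s}$, but then do not follow through on.

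Concretely, with your $a\mapsto b\rho_1\rho_2q^{2\sumj}/aq$ the very-well-poised factor $\elliptictheta{ax_rx_sq^{k_r+k_s}}$ becomes $\elliptictheta{b\rho_1\rho_2x_rx_sq^{2\sumj+j_r+j_s+k_r+k_s}/aq}$, whereas the shifted inner sum requires $\elliptictheta{b\rho_1\rho_2x_rx_sq^{j_r+j_s+k_r+k_s}/aq}$; likewise your $b\mapsto\rho_1q^{\sumj}$ gives $bx_r\mapsto\rho_1x_rq^{j_r+\sumj}$ instead of the required $\rho_1x_rq^{j_r}$ coming from splitting $\ellipticqrfac{\rho_1x_r}{k_r+j_r}$. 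The inner sum therefore does not match the left-hand side of \eqref{e-8p7-2} and the evaluation fails. The correct substitutions, as in the paper, are $x_r\mapsto x_rq^{j_r}$, $N_r\mapsto N_r-j_r$, $a\mapsto b\rho_1\rho_2/aq$, $b\mapsto\rho_1$, $c\mapsto\rho_2$ and $d\mapsto b\rho_1\rho_2q^{-\sumj}/a^2q$; one checks for instance that then $a^2x_rq^{1+\sumN}/bcd\mapsto bx_rq^{j_r+\sumN}$, matching the factor $\ellipticqrfac{bx_rq^{j_r+\sumN}}{k_r}$ that arises from $\ellipticqrfac{bx_r}{\sumN+k_r+j_r}$. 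The remainder of your outline --- the use of Lemma~\ref{th:e-magiclemma2}, the identities \eqref{GR11.2.50} and \eqref{GR11.2.47}, and the reassembly into $\multsum{j}{N}{r}B^{(1)}_{\N\j}(a,b)\alpha'_\j(a,b)$ --- is sound.
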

 \begin{Remark*}
 A matrix reformulation of Theorem~\ref{th:e-WP-BaileyLemma-B4-B1} appears in unpublished notes of Warnaar~\cite{SOW-notes-2016}.
 \end{Remark*}

\begin{proof} The proof is analogous to that of Theorem~\ref{th:e-WP-BaileyLemma-B1}. The only difference is that we use \eqref{e-8p7-2}, with the substitutions: $x_r\mapsto x_rq^{j_r}$ and $N_r\mapsto N_r-j_r$ for $r=1, 2, \dots, n$,
$a\mapsto b\rho_1\rho_2/aq$, $b\mapsto \rho_1$,
$c\mapsto \rho_2$,
$d\mapsto b\rho_1\rho_2q^{-\sumj}/a^2q $. The remaining calculations are very similar.
\end{proof}

\begin{Remark} \label{rem:B4-B1-10p9}
An elliptic $C_n\to A_n$ Bailey $_{10}\phi_9$ transformation formula due to Rosengren~\cite[Corollary~8.3]{HR2004} follows immediately by applying the $\B^{(4)}\to\B^{(1)}$ elliptic Bailey lemma in Theorem~\ref{th:e-WP-BaileyLemma-B4-B1} to the WP Bailey pair in
Theorem~\ref{th:e-Bailey-Pair-B4.2}. When $p=0$ this reduces to \cite[Theorem~2.1]{BS1998}. Note that the $\beta^{\prime}_\N$ defined in \eqref{e-betaprime-B4-B1} has the very-well-poised part usually present in $C_n$ series. However the $\alpha_\N$ in \eqref{e-alphaprime-B4-B1} (which comes from the definition \eqref{e-Bailey-PairB4.2a} of the WP Bailey pair with respect to $\B^{(4)}$) contains the usual $A_n$ very-well-poised part.
\end{Remark}

Next we have another elliptic Bressoud matrix and a WP Bailey pair from \eqref{e-8p7-2}. The matrix $\B^{(5)}= \big(B^{(5)}_{\k\j}(a,b)\big)$ is defined as follows.
\begin{Definition}[a $C_n$ elliptic Bressoud matrix] We define the matrix $\B^{(5)}$ with entries indexed by $(\k , \j)$ as
\begin{gather}
B^{(5)}_{\k \j}(a,b) := \frac{\triprod n \ellipticqrfac{ax_rx_sq}{k_r+k_s}
 \smallprod n \ellipticqrfac{bq^{\sumk-k_r}/ax_r}{k_r-j_r} \ellipticqrfac{bx_r}{\sumk+j_r} } {\sqprod n \ellipticqrfac{q^{1+j_r-j_s}\xover x }{k_r-j_r} \ellipticqrfac{ax_rx_sq}{k_s+j_r} }.\label{e-B5}
\end{gather}
\end{Definition}

\begin{Theorem}[an elliptic WP Bailey pair with respect to $\B^{(5)}$]\label{th:e-Bailey-Pair-B5.2} The two sequences
\begin{gather*}
\alpha_\k(a,b) := \triprod n \frac{\elliptictheta{ax_rx_sq^{k_r+k_s}}}{\elliptictheta{ax_rx_s}} \smallprod n \frac{\elliptictheta{ax_r^2q^{2k_r}}}{\elliptictheta{ax_r^2}} \cdot \left( \frac{b}{a}\right)^{\sumk} q^{\sum\limits_{r<s} k_r k_s} \notag\\
\hphantom{\alpha_\k(a,b) :=}{} \times
\sqprod n\frac{\ellipticqrfac{ax_rx_s}{k_r}}{\ellipticqrfac{q\xover x}{k_r} } \smallprod n \frac{\ellipticqrfac{a^2x_rq/bcd, cx_r, dx_r}{k_r}}
{\ellipticqrfac{ax_rq/c,ax_rq/d, bcdx_r/a }{k_r} }\smallprod n x_r^{-k_r}, \\
\intertext{and}
\beta_\k(a,b) := \frac{\ellipticqrfac{bc/a, bd/a, aq/cd}{\sumk}
 \smallprod n \ellipticqrfac{bx_r}{\sumk} }
 { \sqprod n\ellipticqrfac{q\xover x}{k_r}
 \smallprod n \ellipticqrfac{bcdx_r/a, ax_rq/c, ax_rq/d}{k_r} },
\end{gather*}
form a WP-Bailey pair with respect to $\B^{(5)}$.
\end{Theorem}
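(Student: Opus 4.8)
The plan is to verify the pair directly against the defining relation \eqref{BaileyPair-def}, exactly as in the proof of Theorem~\ref{th:e-Bailey-Pair-B1.2a}: I would form the sum \eqref{BP-general} with $\B$ replaced by $\B^{(5)}$, substitute the explicit entries \eqref{e-B5} of $B^{(5)}_{\N\j}(a,b)$ and the sequence $\alpha_\j(a,b)$ from the statement, and show that the inner summand is, up to an $\N$-dependent prefactor, the summand of the $b\mapsto qa^2/bcd$ instance of the $C_n$ elliptic Jackson sum \eqref{e-8p7-2}. Evaluating that sum and multiplying by the prefactor should then reproduce $\beta_\N(a,b)$ with $\k$ replaced by $\N$.

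The routine part is the type-$A$ and ``single-variable'' bookkeeping. The type-$A$ cross term $1/\sqprod n \ellipticqrfac{q^{1+j_r-j_s}\xover x}{N_r-j_r}$ carried by $\B^{(5)}$ is converted, via Lemma~\ref{th:e-magiclemma2} with $\k\mapsto\j$, into the type-$A$ elliptic Vandermonde product, the factors $\sqprod n \ellipticqrfac{q^{-N_s}\xover x}{j_r}$, the $\N$-only factor $1/\sqprod n \ellipticqrfac{q\xover x}{N_r}$, and explicit powers of $q$, $(-1)$ and $x_r$; the powers are then reconciled with the $q^{\sum_r rj_r}$ of \eqref{e-8p7-2} and the $(b/a)^{\sumj}q^{\sum_{r<s}j_rj_s}\smallprod n x_r^{-j_r}$ of $\alpha_\j$. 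After the substitution $b\mapsto qa^2/bcd$ the $\smallprod n$-factors of $\alpha_\j$ match those of \eqref{e-8p7-2} (for instance $\ellipticqrfac{a^2x_rq/bcd,cx_r,dx_r}{j_r}$ against $\ellipticqrfac{bx_r,cx_r,dx_r}{k_r}$, and $\ellipticqrfac{bcdx_r/a}{j_r}$ against $\ellipticqrfac{ax_rq/b}{k_r}$), while the split factorial $\ellipticqrfac{bx_r}{\sumN+j_r}$ of $\B^{(5)}$ separates by \eqref{GR11.2.47} into an $\N$-only piece and the factor $\ellipticqrfac{a^2x_rq^{1+\sumN}/bcd}{j_r}$ of \eqref{e-8p7-2}. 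The remaining $N_r-j_r$ factorials coming from $\B^{(5)}$ must be reversed using \eqref{GR11.2.49r} and \eqref{GR11.2.50} to produce the $j_r$-indexed, $\N$-shifted factors of \eqref{e-8p7-2}.

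The step I expect to be the main obstacle is the bookkeeping of the genuinely $C_n$-type double products. The matrix carries $\triprod n \ellipticqrfac{ax_rx_sq}{N_r+N_s}$ in the numerator and $1/\sqprod n \ellipticqrfac{ax_rx_sq}{N_s+j_r}$ in the denominator; the latter must be split by \eqref{GR11.2.47} as $1/[\sqprod n \ellipticqrfac{ax_rx_sq}{N_s}\ellipticqrfac{ax_rx_sq^{1+N_s}}{j_r}]$, whose $j_r$-indexed piece supplies exactly the factor $\sqprod n 1/\ellipticqrfac{ax_rx_sq^{1+N_s}}{k_r}$ of \eqref{e-8p7-2} (with $k\mapsto j$), and whose $\N$-only piece must then be reconciled with the numerator $\triprod$ and with the $C_n$ factor $\sqprod n \ellipticqrfac{ax_rx_s}{N_r}/\triprod n \ellipticqrfac{ax_rx_s}{N_r+N_s}$ on the right side of \eqref{e-8p7-2}. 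Separating the diagonal $r=s$ from the off-diagonal $r<s$ in these $\sqprod$ and $\triprod$ products, and checking that all double $ax_rx_s$ factors telescope against the reversed factors so that only the single $\smallprod n$-products of $\beta_\N(a,b)$ survive, is the delicate computation; once it is confirmed, evaluation of the inner sum by \eqref{e-8p7-2} and collection of the $\N$-dependent factors completes the proof.
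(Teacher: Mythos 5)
Your proposal is correct and follows exactly the paper's own route: the paper likewise verifies the defining relation \eqref{BaileyPair-def} for $\B^{(5)}$ by summing \eqref{BP-general} with the $b\mapsto qa^2/bcd$ case of the $C_n$ elliptic Jackson sum \eqref{e-8p7-2}, in analogy with the proof of Theorem~\ref{th:e-Bailey-Pair-B1.2a}. The bookkeeping you flag (splitting $\ellipticqrfac{ax_rx_sq}{N_s+j_r}$ and $\ellipticqrfac{bx_r}{\sumN+j_r}$ via \eqref{GR11.2.47}, reversing the $N_r-j_r$ factorials, and converting the cross term with Lemma~\ref{th:e-magiclemma2}) is precisely what the paper leaves implicit.
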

\begin{proof}The proof requires the $C_n$ elliptic Jackson sum given in~\eqref{e-8p7-2}. We verify that~$\alpha_\k(a,b)$ and $\beta_\k(a,b)$ form a WP-Bailey pair with respect to~$\B^{(5)}$ using the $b\mapsto qa^2/bcd$ case of \eqref{e-8p7-2}.
\end{proof}

As a corollary, we obtain a unit WP Bailey pair.
\begin{Corollary}\label{th:e-Bailey-Pair-B5.1}
The two sequences
\begin{gather*}
\alpha_\k(a,b) := \triprod n \frac{\elliptictheta{ax_rx_sq^{k_r+k_s}}}{\elliptictheta{ax_rx_s}}
\smallprod n \frac{\elliptictheta{ax_r^2q^{2k_r}}}{\elliptictheta{ax_r^2}}
\cdot \left( \frac{b}{a}\right)^{\sumk}
 q^{\sum\limits_{r<s} k_r k_s} \\
\hphantom{\alpha_\k(a,b) :=}{} \times
\sqprod n\frac{\ellipticqrfac{ax_rx_s}{k_r}}{\ellipticqrfac{q\xover x}{k_r} } 
 \smallprod n \frac{\ellipticqrfac{ax_r/b}{k_r}}
{\ellipticqrfac{bx_rq }{k_r} } \smallprod n x_r^{-k_r},\\
 \intertext{and}
\beta_\k(a,b) := \smallprod n \delta_{k_r, 0},
\end{gather*}
form a WP-Bailey pair with respect to $\B^{(5)}$.
\end{Corollary}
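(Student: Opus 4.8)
The plan is to follow exactly the route used for the other unit WP Bailey pairs in this paper, namely Corollary~\ref{e-Bailey-Pair-B1.1} and Corollary~\ref{e-Bailey-Pair-B3-1}: I would specialize the free parameter by setting $d=aq/c$ in the WP Bailey pair of Theorem~\ref{th:e-Bailey-Pair-B5.2}. Since that pair is already established to be a WP Bailey pair with respect to $\B^{(5)}$ for generic parameters, any specialization of $d$ remains a WP Bailey pair with respect to $\B^{(5)}$ by continuity of the defining relation~\eqref{BaileyPair-def} in the parameters; the only work is to check that $d=aq/c$ reproduces the two sequences claimed in the corollary.

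First I would verify the claim for $\alpha_\k$. The only $d$-dependent part of $\alpha_\k(a,b)$ in Theorem~\ref{th:e-Bailey-Pair-B5.2} is the product $\smallprod n \ellipticqrfac{a^2x_rq/bcd, dx_r}{k_r}\big/\ellipticqrfac{ax_rq/d, bcdx_r/a}{k_r}$. Under $d=aq/c$ the four arguments become $a^2x_rq/bcd\mapsto ax_r/b$, $dx_r\mapsto ax_rq/c$, $ax_rq/d\mapsto cx_r$ and $bcdx_r/a\mapsto bx_rq$. The resulting numerator factor $\ellipticqrfac{ax_rq/c}{k_r}$ cancels against the $\ellipticqrfac{ax_rq/c}{k_r}$ already present in the denominator of the generic $\alpha_\k$, while the resulting denominator factor $\ellipticqrfac{cx_r}{k_r}$ cancels against the $\ellipticqrfac{cx_r}{k_r}$ in its numerator. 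What survives is $\smallprod n \ellipticqrfac{ax_r/b}{k_r}\big/\ellipticqrfac{bx_rq}{k_r}$, which is precisely the $\alpha_\k$ of the corollary.

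The one point needing genuine attention is $\beta_\k$, and this is the step I would treat most carefully. The numerator of $\beta_\k(a,b)$ in Theorem~\ref{th:e-Bailey-Pair-B5.2} contains the factor $\ellipticqrfac{aq/cd}{\sumk}$, which under $d=aq/c$ becomes $\ellipticqrfac{1}{\sumk}=\prod_{j=0}^{\sumk-1}\elliptictheta{q^j}$. Its $j=0$ term is $\elliptictheta{1}$, and $\elliptictheta{1}=0$ (this follows from~\eqref{GR11.2.42} with $a=1$, which gives $\elliptictheta{1}=-\elliptictheta{1}$). Hence $\ellipticqrfac{1}{\sumk}$ vanishes as soon as $\sumk\ge1$, forcing $\beta_\k=0$ unless $k_1=\cdots=k_n=0$; and when all $k_r=0$ every theta shifted factorial in sight reduces to the empty product $1$, so the value is $1$. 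Thus $\beta_\k=\smallprod n \delta_{k_r,0}$, as stated.

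There is no real obstacle beyond this: once Theorem~\ref{th:e-Bailey-Pair-B5.2} is available, the corollary is a single-parameter specialization, and the only subtle point is recognizing that the vanishing of $\elliptictheta{1}$ collapses $\beta_\k$ into a Kronecker delta. As elsewhere in the paper, this unit pair is exactly what one feeds into the matrix-inversion argument (cf.\ the proof of Corollary~\ref{b1-inverse}) to read off the inverse of $\B^{(5)}$.
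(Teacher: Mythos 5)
Your proof is correct and follows exactly the paper's route: the paper's proof is precisely ``take $d=aq/c$ in Theorem~\ref{th:e-Bailey-Pair-B5.2}'', and your verification of the resulting cancellations in $\alpha_\k$ and of the collapse of $\beta_\k$ via $\elliptictheta{1}=0$ is accurate. No gaps.
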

\begin{proof} Take $d=aq/c$ in Theorem~\ref{th:e-Bailey-Pair-B5.2}.
\end{proof}
We can find a formula for the inverse of $\B^{(5)}$ using the unit Bailey pair.

\begin{Corollary}[inverse of $\B^{(5)}$]Let $\B^{(5)}= \big(B^{(5)}_{\k\j}(a,b)\big)$ be defined by~\eqref{e-B5}. Then the entries of its inverse are given by
\begin{gather}
\big(B^{(5)}(a,b)\big)_{\k \j}^{-1} =
\triprod n
\frac{\elliptictheta{ax_rx_sq^{k_r+k_s}}}{\elliptictheta{ax_rx_s}}
\smallprod n
\frac{\elliptictheta{ax_r^2q^{2k_r}}}{\elliptictheta{ax_r^2}}
\frac{\elliptictheta{bx_rq^{j_r+\sumj}}}{\elliptictheta{bx_r}}\nonumber\\
\hphantom{\big(B^{(5)}(a,b)\big)_{\k \j}^{-1} =}{} \times
\smallprod n \frac{\ellipticqrfac{ax_rq^{j_r-\sumj}/b}{k_r-j_r}}
{\ellipticqrfac{bx_rq}{k_r+\sumj}} \cdot \left( \frac{b}{a}\right)^{\sumk-\sumj} q^{\sum\limits_{r<s} (k_r k_s-j_rj_s)}
\smallprod n x_r^{j_r-k_r}\nonumber\\
\hphantom{\big(B^{(5)}(a,b)\big)_{\k \j}^{-1} =}{} \times
 \frac{\sqprod n \ellipticqrfac{ax_rx_s}{k_s+j_r} }
 {\triprod n\ellipticqrfac{ax_rx_s}{j_r+j_s} \sqprod n \ellipticqrfac{q^{1+j_r-j_s}\xover x }{k_r-j_r}}.\label{e-B5-inverse}
\end{gather}
\end{Corollary}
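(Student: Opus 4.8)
The plan is to imitate the matrix-inversion arguments already carried out for $\B^{(1)}$ in the proof of Corollary~\ref{b1-inverse} and for $\B^{(3)}$ in the proof of Corollary~\ref{cor:e-B3-inverse}, now starting from the unit WP Bailey pair with respect to $\B^{(5)}$ supplied by Corollary~\ref{th:e-Bailey-Pair-B5.1}. First I would write that pair explicitly as
\[
\multsum{j}{N}{r} B^{(5)}_{\N\j}(a,b)\, \alpha_\j(a,b) = \smallprod n \delta_{N_r, 0},
\]
with $\alpha_\j(a,b)$ the $\alpha$-sequence of Corollary~\ref{th:e-Bailey-Pair-B5.1} and $B^{(5)}_{\N\j}$ as in \eqref{e-B5}. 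I would then replace $\N$ by $\N-\K$ and shift the summation index $\j\mapsto\j-\K$, so that the sum runs over $K_r\le j_r\le N_r$ and the right-hand side becomes $\smallprod n \delta_{N_r, K_r}$.

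The heart of the matter is the substitution $x_r\mapsto x_rq^{K_r}$ (for $r=1,\dots,n$) together with $b\mapsto bq^{\sumK}$, while $a$ is left unchanged. With this choice, repeated application of \eqref{GR11.2.47} factors the shifted entry $B^{(5)}_{\N-\K,\,\j-\K}$ as a $\K$-dependent prefactor times $B^{(5)}_{\N\j}(a,b)$ at the original parameters: each factor $\ellipticqrfac{ax_rx_sq}{k_r+k_s}$, $\ellipticqrfac{bx_r}{\sumk+j_r}$, $\ellipticqrfac{bq^{\sumk-k_r}/ax_r}{k_r-j_r}$ and $\ellipticqrfac{q^{1+j_r-j_s}\xover x}{k_r-j_r}$ returns to its $\B^{(5)}_{\N\j}$ form, the displacements of the indices being absorbed by the shifts of the $x$'s and the rescaling $b\mapsto bq^{\sumK}$. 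Collecting the prefactor with the shifted sequence $\alpha_{\j-\K}$ then turns the identity into
\[
\sum\limits_{\substack{{K_r\le j_r \le N_r} \\ {r =1,2,\dots, n}}} B^{(5)}_{\N\j}(a,b)\, \big(B^{(5)}(a,b)\big)^{-1}_{\j\K} = \smallprod n \delta_{N_r, K_r},
\]
from which $\big(B^{(5)}(a,b)\big)^{-1}_{\j\K}$ can be read off and compared with \eqref{e-B5-inverse} (after relabelling $(\k,\j)\mapsto(\j,\K)$).

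The reason $a$ need not be rescaled here --- in contrast to $a\mapsto aq^{\sumK}$ for $\B^{(1)}$ and $a\mapsto aq^{2\sumK}$ for $\B^{(3)}$ --- is the $C_n$ structure: every occurrence of $a$ is either paired with $x_rx_s$ or $x_r^2$, so that the shift of the $x$'s already provides the needed powers of $q$, or it enters only through the ratio $b/a$, whose index displacement is absorbed into the $q^{\sum_{r<s}k_rk_s}$ prefactor. Indeed, the very-well-poised part $\triprod n \elliptictheta{ax_rx_sq^{k_r+k_s}}/\elliptictheta{ax_rx_s}\cdot\smallprod n \elliptictheta{ax_r^2q^{2k_r}}/\elliptictheta{ax_r^2}$ of $\alpha_\k$ is exactly the leading factor of \eqref{e-B5-inverse}, and under $\k\mapsto\j-\K$ followed by $x_r\mapsto x_rq^{K_r}$ each numerator $ax_rx_sq^{k_r+k_s}$ returns to $ax_rx_sq^{j_r+j_s}$, as required; the factor $\elliptictheta{bx_rq^{j_r+\sumj}}/\elliptictheta{bx_r}$ and the $(b/a)$-power of \eqref{e-B5-inverse} arise by combining the $(b/a)^{\sumk}$, $\smallprod n x_r^{-k_r}$ and $q^{\sum_{r<s}k_rk_s}$ factors of $\alpha_\k$ with the $\ellipticqrfac{bx_r}{\sumk+j_r}$ pieces carried along from $\B^{(5)}$.

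The main obstacle will be the pairwise bookkeeping in the $C_n$ double products. Unlike the $A_n$ computations, here one must simultaneously track the symmetric product $\triprod n \ellipticqrfac{ax_rx_sq}{k_r+k_s}$ and the full product $\sqprod n \ellipticqrfac{ax_rx_sq}{k_s+j_r}$ through the substitution, and verify that the surplus theta-shifted factorials in the $\K$-dependent prefactor telescope against the very-well-poised thetas $\elliptictheta{ax_rx_sq^{K_r+K_s}}$ and $\elliptictheta{ax_r^2q^{2K_r}}$ produced when the $\elliptictheta{ax_rx_s}$, $\elliptictheta{ax_r^2}$ denominators of $\alpha_{\j-\K}$ are evaluated at the shifted $x$'s. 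Keeping the three families of pair-products consistent, and matching the accumulated quadratic $q$-exponents coming from $\sum_{r<s}k_rk_s$, is the delicate step; it is routine in principle and parallels the $\B^{(3)}$ calculation of Corollary~\ref{cor:e-B3-inverse}, but it is where essentially all the work lies.
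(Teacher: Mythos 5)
Your proposal is correct and follows exactly the route the paper intends: the paper's own ``proof'' merely states that the derivation is analogous to that of Corollary~\ref{cor:e-B3-inverse} and is left to the reader, and your plan carries out precisely that analogy starting from the unit WP Bailey pair of Corollary~\ref{th:e-Bailey-Pair-B5.1}. In particular, your identification of the correct substitution ($x_r\mapsto x_rq^{K_r}$, $b\mapsto bq^{\sumK}$, with $a$ left unchanged because the $C_n$-type pairings $ax_rx_s$ absorb the required shifts) is the one nontrivial adaptation from the $\B^{(1)}$ and $\B^{(3)}$ cases, and it checks out: each factor of $B^{(5)}_{\N-\K,\j-\K}$ then returns to its $B^{(5)}_{\N\j}(a,b)$ form up to a $\K$-only prefactor, as you claim.
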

\begin{proof}
The derivation is analogous to that of Corollary~\ref{cor:e-B3-inverse} and left to the reader.
\end{proof}

\begin{Corollary}[inverse of $\B^{(4)}$] Let $\B^{(4)}= \big(B^{(4)}_{\k\j}(a,b)\big)$ be defined by~\eqref{e-B4}. Then the entries of its inverse is given by
\begin{gather*}
\big(B^{(4)}(a,b)\big)_{\k \j}^{-1} =
\smallprod n \frac{\elliptictheta{ax_rq^{k_r+\sumk}}}{\elliptictheta{ax_r}} \cdot
\left( \frac{b}{a}\right)^{\sumk-\sumj}
 q^{\sum\limits_{r<s}(j_rj_s- k_r k_s)}
\smallprod n x_r^{k_r-j_r}\\
\hphantom{\big(B^{(4)}(a,b)\big)_{\k \j}^{-1}=}{} \times
\triprod n
\frac{\elliptictheta{bx_rx_sq^{j_r+j_s}}}{\elliptictheta{bx_rx_s}}
\smallprod n
\frac{\elliptictheta{bx_r^2q^{2j_r}}}{\elliptictheta{bx_r^2}} \cr
\hphantom{\big(B^{(4)}(a,b)\big)_{\k \j}^{-1}=}{} \times
\frac{\triprod n \ellipticqrfac{bx_rx_sq}{k_r+k_s} }
 {\sqprod n \ellipticqrfac{q^{1+j_r-j_s}\xover x }{k_r-j_r} \ellipticqrfac{bx_rx_sq}{k_s+j_r} } \\
\hphantom{\big(B^{(4)}(a,b)\big)_{\k \j}^{-1}=}{}
\times \smallprod n \ellipticqrfac{aq^{\sumk-k_r}/bx_r}{k_r-j_r} \ellipticqrfac{ax_r}{\sumk+j_r}.
\end{gather*}
\end{Corollary}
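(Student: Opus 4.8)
The plan is to obtain $\big(B^{(4)}\big)^{-1}$ from the already-computed inverse of $\B^{(5)}$ in~\eqref{e-B5-inverse}, following verbatim the strategy by which Corollary~\ref{cor:e-B2-inverse} (the inverse of $\B^{(2)}$) was deduced from Corollary~\ref{cor:e-B3-inverse} (the inverse of $\B^{(3)}$). The starting point is the inversion relation for $\B^{(5)}$: exactly as in the derivation of~\eqref{n-B3-inverse-2}, the unit WP Bailey pair of Corollary~\ref{th:e-Bailey-Pair-B5.1} gives $\multsum{j}{N}{r} B^{(5)}_{\N\j}(a,b)\,\alpha_\j=\smallprod n \delta_{N_r,0}$; replacing $\N$ by $\N-\K$, shifting $\j\mapsto\j-\K$, and substituting $x_r\mapsto x_rq^{K_r}$ together with the appropriate shifts of $a$ and $b$ then yields $\sum_{K_r\le j_r\le N_r}B^{(5)}_{\N\j}(a,b)\big(B^{(5)}(a,b)\big)^{-1}_{\j\K}=\smallprod n \delta_{N_r,K_r}$ (with $r=1,\dots,n$).

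The key step is the structural observation that the entries of $\big(B^{(5)}(a,b)\big)^{-1}$ in~\eqref{e-B5-inverse} coincide with the entries of $B^{(4)}_{\k\j}(b,a)$ multiplied by factors that split into a part depending on $\k$ alone and a part depending on $\j$ alone. To confirm this I would match the ``base'' of~\eqref{e-B5-inverse} against~$\B^{(4)}(b,a)$ of~\eqref{e-B4}, using~\eqref{GR11.2.47} in the form $\ellipticqrfac{ax_r/b}{k_r-\sumj}=\ellipticqrfac{ax_r/b}{j_r-\sumj}\,\ellipticqrfac{ax_rq^{j_r-\sumj}/b}{k_r-j_r}$ and the relabeling identity $\sqprod n \ellipticqrfac{ax_rx_s}{k_s+j_r}=\sqprod n \ellipticqrfac{ax_rx_s}{k_r+j_s}$. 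The remaining factors are $\triprod n \frac{\elliptictheta{ax_rx_sq^{k_r+k_s}}}{\elliptictheta{ax_rx_s}}\,\smallprod n \frac{\elliptictheta{ax_r^2q^{2k_r}}}{\elliptictheta{ax_r^2}}$ (depending on $\k$ only) and $\smallprod n \frac{\elliptictheta{bx_rq^{j_r+\sumj}}}{\elliptictheta{bx_r}}$ (depending on $\j$ only), together with the monomial $\left(b/a\right)^{\sumk-\sumj}q^{\sum\limits_{r<s}(k_rk_s-j_rj_s)}\smallprod n x_r^{j_r-k_r}$, which likewise separates.

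With this factorization $\big(B^{(5)}(a,b)\big)^{-1}_{\j\K}=B^{(4)}_{\j\K}(b,a)\,u_\j\,v_\K$ in hand, I would substitute it into the inversion relation and then interchange the roles of $a$ and $b$ throughout (legitimate, as the relation is an identity in $a$ and $b$). Pulling the purely $\N$- and $\K$-dependent pieces outside the sum recasts the identity as an external $(\N,\K)$-factor times $\sum_\j B^{(5)}_{\N\j}(b,a)\,B^{(4)}_{\j\K}(a,b)\,w_{\j,\K,\N}$, set equal to $\smallprod n \delta_{N_r,K_r}$, exactly as in the two displays of the proof of Corollary~\ref{cor:e-B2-inverse}. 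Since the external factor equals $1$ on the diagonal $\N=\K$, the surviving relation is $\sum_\j B^{(5)}_{\N\j}(b,a)\big(B^{(4)}(a,b)\big)^{-1}_{\j\K}=\smallprod n \delta_{N_r,K_r}$; reading off the $(\j,\K)$ entry and relabeling $(\j,\K)\mapsto(\k,\j)$ reproduces the claimed formula, the $B^{(5)}_{\k\j}(b,a)$ block reappearing as its last two lines.

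I expect the only genuine obstacle to be the bookkeeping attending the interchange of $a$ and $b$: one must carry every theta quotient, every power of $b/a$ and of~$q$ (including the $\binom{\cdot}{2}$-type contributions hidden inside the $q,p$-shifted factorials), and every monomial $\smallprod n x_r^{\pm}$ correctly through the swap, and verify that the external $(\N,\K)$-factor collapses to $1$ when $\N=\K$. Because $\B^{(5)}$ and $\B^{(4)}$ are both lower-triangular, the two-sided inverse is unique, so the entry extracted in this way is automatically the inverse of~$\B^{(4)}$ and no separate check of the product in the opposite order is needed.
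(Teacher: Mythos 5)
Your proposal is correct and follows exactly the route the paper itself takes: the paper's proof of this corollary consists precisely of your key observation that the entries of $\big(B^{(5)}(a,b)\big)^{-1}$ in~\eqref{e-B5-inverse} are those of $\B^{(4)}(b,a)$ multiplied by factors separating into $\j$-dependent and $\k$-dependent pieces, followed by the instruction to proceed as in the proof of Corollary~\ref{cor:e-B2-inverse}, which is what you do. One cosmetic slip: your displayed ``surviving relation'' puts $\big(B^{(4)}(a,b)\big)^{-1}$ in the $(\j,\K)$ slot next to $B^{(5)}_{\N\j}(b,a)$ (which would wrongly assert $\B^{(4)}(a,b)=\B^{(5)}(b,a)$), whereas the inverse entry is the whole bracket $B^{(5)}_{\N\j}(b,a)$ times the separated factors, sitting in the $(\N,\j)$ slot against $B^{(4)}_{\j\K}(a,b)$ --- your closing remark about the $B^{(5)}_{\k\j}(b,a)$ block reappearing in the last two lines of the stated formula shows you intend the latter.
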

\begin{proof} Observe that the entries of $\B^{(5)}(a,b)^{-1}$ consist of the entries of $\B^{(4)}(b,a)$ multiplied by some additional factors, which can be separated into factors containing either terms with index~$\j$ or with index~$\k$. This can help us find the inverse of $\B^{(4)}$ as in the proof of Corollary~\ref{cor:e-B2-inverse}. We leave the details to the reader.
\end{proof}

Consider the inverse relation \eqref{multivariable-inverse-relation} where $\B=\B^{(5)}$, and $\alpha_k$ and $\beta_k$ are defined as in Theorem~\ref{th:e-Bailey-Pair-B5.2} and $\big(B^{(5)}(a,b)\big)^{-1}_{\k\j} $ is given by~\eqref{e-B5-inverse}. After canceling some products, make the substitutions $a\mapsto qa^2/bcd$, $b\mapsto a$, $c\mapsto aq/bd$ and $d\mapsto aq/bc$ to obtain~\eqref{e-8p7-3}. Thus~\eqref{e-8p7-2} and \eqref{e-8p7-3} are inverse relations. This approach provides an alternate derivation of \eqref{e-8p7-3} beginning with \eqref{e-8p7-2}.

\begin{Theorem}[Zhang and Huang \cite{ZH-preprint}; an elliptic $\big(\B^{(5)} \to\B^{(5)}\big)$ WP Bailey lemma] \label{th:e-WP-BaileyLemma-B5-B5}
Suppose $\alpha_\N(a,b)$ and $\beta_\N(a,b)$ form a WP Bailey pair with respect to the matrix $\B^{(5)}$. Let ${\alpha}^{\prime}_\N(a,b)$ and ${\beta}^{\prime}_\N(a,b)$ be defined as follows
\begin{gather*}
{\alpha}^{\prime}_\N(a,b) :=
 \smallprod n \frac{ \ellipticqrfac{\rho_1x_r, \rho_2x_r}{N_r}}
{\ellipticqrfac{ax_rq/\rho_1, ax_rq/\rho_2}{N_r}}
\cdot \left(\frac{aq}{\rho_1\rho_2}\right)^{\sumN}
 \alpha_\N(a, b\rho_1\rho_2/aq),\\
{\beta}^{\prime}_\N (a,b) :=
\frac{\ellipticqrfac{b\rho_1/a, b\rho_2/a }{\sumN}}
{\smallprod n \ellipticqrfac{ax_rq/\rho_1, ax_rq/\rho_2}{N_r}}\\
\hphantom{{\beta}^{\prime}_\N (a,b) :=}{} \times \multsum{k}{N}{r} \Bigg( \frac{ \smallprod n \ellipticqrfac{\rho_1x_r, \rho_2x_r}{k_r}}
{\ellipticqrfac{b\rho_1/a, b\rho_2/a}{\sumk}}
\frac{\ellipticqrfac{aq/\rho_1\rho_2}{\sumN-\sumk} }
{\sqprod n \ellipticqrfac{q^{1+k_r-k_s}\xover x }{N_r-k_r}}\\
\hphantom{{\beta}^{\prime}_\N (a,b) :=}{} \times
\smallprod n \frac{\elliptictheta{ b\rho_1\rho_2x_rq^{k_r+\sumk}/{aq}}}{\elliptictheta{ b\rho_1\rho_2x_r/{aq}}}
\frac{\ellipticqrfac{bx_r}{\sumN+k_r}}{\ellipticqrfac{b\rho_1\rho_2x_r/a}{N_r+\sumk}} \notag\\
\hphantom{{\beta}^{\prime}_\N (a,b) :=}{} \times
\left(\frac{aq}{\rho_1\rho_2}\right)^{\sumk}
\beta_\k(a,b\rho_1\rho_2/aq) \Bigg).
\end{gather*}
Then ${\alpha}^{\prime}_\N(a,b)$ and ${\beta}^{\prime}_\N(a,b)$ form a WP Bailey pair with respect to $\B^{(5)}$.
\end{Theorem}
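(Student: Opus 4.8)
The plan is to follow, nearly verbatim, the strategy of the proof of Theorem~\ref{th:e-WP-BaileyLemma-B1}, the single substantive change being the elliptic Jackson summation invoked to collapse the inner sum. I would start from the defining expression for ${\beta}^{\prime}_\N(a,b)$ and replace $\beta_\k(a,b\rho_1\rho_2/aq)$ by its WP-Bailey-pair expansion with respect to $\B^{(5)}$, i.e.\ by \eqref{BP-def2} with $\B$ taken to be $\B^{(5)}$ and $b$ replaced by $b\rho_1\rho_2/aq$. This produces a double sum over $\k$ and $\j$; interchanging the order of summation and shifting the inner index exactly as in the proof of Theorem~\ref{th:e-WP-BaileyLemma-B1} turns the sum over $\k$ into the inner sum, for each fixed $\j$.

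The key structural observation is which summation evaluates that inner sum. The kernel of ${\beta}^{\prime}_\N$ supplies the $A_n$ very-well-poised factor $\smallprod n \elliptictheta{b\rho_1\rho_2x_rq^{k_r+\sumk}/aq}\big/\elliptictheta{b\rho_1\rho_2x_r/aq}$ together with the $A_n$-type Vandermonde denominator $\sqprod n \ellipticqrfac{q^{1+k_r-k_s}\xover x}{N_r-k_r}$, while the source matrix $\B^{(5)}$ in \eqref{e-B5} contributes the triple product $\triprod n \ellipticqrfac{ax_rx_sq}{k_r+k_s}$ in the numerator. An inner summand carrying an $A_n$ very-well-poised part together with a numerator triple product of this exact shape is precisely the form of Rosengren's $D_n$ elliptic Jackson sum \eqref{e-8p7-4}. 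I would therefore evaluate the inner sum by \eqref{e-8p7-4}, after the shifts $x_r\mapsto x_rq^{j_r}$ and $N_r\mapsto N_r-j_r$ for $r=1,\dots,n$ and a replacement of its parameters $a,b,c,d$ by suitable monomials in $\rho_1$, $\rho_2$, $a$, $b$ and $q^{\sumj}$, modelled on the choices $a\mapsto b\rho_1\rho_2q^{2\sumj}/aq$, $b\mapsto\rho_1q^{\sumj}$, $c\mapsto\rho_2q^{\sumj}$ made in the cognate $D_n$ argument for Theorem~\ref{th:e-WP-BaileyLemma-B1-B4}, the fourth parameter $d$ being pinned down by the requirement that the triple-product argument $ax_rx_sq/d$ of \eqref{e-8p7-4} reproduce $ax_rx_sq$ and by the elliptic balancing condition. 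The closed form returned by \eqref{e-8p7-4}, after simplification with Lemma~\ref{th:e-magiclemma2} (in the guise $N_r\mapsto N_r-j_r$, $x_r\mapsto x_rq^{j_r}$) and the elementary identities \eqref{GR11.2.50} and \eqref{GR11.2.47}, should factor as $B^{(5)}_{\N\j}(a,b)$ times the $\j$-dependence of ${\alpha}^{\prime}_\j(a,b)$, so that ${\beta}^{\prime}_\N(a,b)=\multsum{j}{N}{r}B^{(5)}_{\N\j}(a,b)\,{\alpha}^{\prime}_\j(a,b)$, which is the asserted WP Bailey pair with respect to $\B^{(5)}$.

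The main obstacle is the bookkeeping of the $D_n$-type products. One has to verify that, after the shift $x_r\mapsto x_rq^{j_r}$, the triple and square products emitted by the right-hand side of \eqref{e-8p7-4} recombine with the factor $\triprod n \ellipticqrfac{ax_rx_sq}{k_r+k_s}$ carried by $\B^{(5)}(a,b\rho_1\rho_2/aq)$ so that all genuinely $D_n$-type ($x_rx_s$) dependence cancels, leaving only the $C_n$ Vandermonde structure intrinsic to $B^{(5)}_{\N\j}(a,b)$; this cancellation is what forces the specific value of the parameter $d$ above. Fixing that substitution so that both the triple-product argument and the very-well-poised argument of \eqref{e-8p7-4} align simultaneously is the delicate point. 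Once it is correctly chosen, the remaining work is the same routine shuffling of theta shifted factorials via \eqref{GR11.2.47}, \eqref{GR11.2.50} and Lemma~\ref{th:e-magiclemma2} already carried out in the proofs of the earlier WP Bailey lemmas, and no new phenomenon arises.
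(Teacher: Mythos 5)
Your proposal matches the paper's proof: the paper likewise follows the model of Theorem~\ref{th:e-WP-BaileyLemma-B1}, expands $\beta_\k(a,b\rho_1\rho_2/aq)$ with respect to $\B^{(5)}$, and evaluates the inner sum with the $D_n$ elliptic Jackson summation \eqref{e-8p7-4} under the shifts $x_r\mapsto x_rq^{j_r}$, $N_r\mapsto N_r-j_r$. The paper's actual parameter choices are $a\mapsto b\rho_1\rho_2q^{\sumj}/aq$, $b\mapsto\rho_1$, $c\mapsto\rho_2$, $d\mapsto b\rho_1\rho_2q^{\sumj}/a^2q$ (slightly different from the template values you quote from the $\B^{(1)}\to\B^{(4)}$ case), but since you explicitly leave these to be pinned down by precisely the alignment conditions you describe (matching the very-well-poised argument and forcing $a/d$ in the triple product), your argument is consistent with the paper's.
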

\begin{Remark*} Theorem~\ref{th:e-Bailey-Pair-B5.2} is equivalent to a theorem of Zhang and Huang~\cite[Theorem~5.3]{ZH-preprint}. The matrix they consider is equivalent to \eqref{e-B5}, with slightly different notation. This result appears in unpublished notes of Warnaar~\cite{SOW-notes-2016} too.
\end{Remark*}
\begin{proof} The proof follows the model of Theorem~\ref{th:e-WP-BaileyLemma-B1}, except that we use \eqref{e-8p7-4}, with the following substitutions: $x_r\mapsto x_rq^{j_r}$ and $N_r\mapsto N_r-j_r$ for $r=1, 2, \dots, n$, $a\mapsto b\rho_1\rho_2q^{\sumj}/aq$, $b\mapsto \rho_1$, $c\mapsto \rho_2$, $d\mapsto b\rho_1\rho_2q^{\sumj}/a^2q$.
\end{proof}

An elliptic $C_n\to A_n$ Bailey $_{10}\phi_9$ transformation formula due to Rosengren~\cite[Corollary~8.3]{HR2004} follows immediately by applying the $\B^{(5)} \to\B^{(5)}$ elliptic Bailey lemma in Theorem~\ref{th:e-WP-BaileyLemma-B5-B5} to the WP Bailey pair in Theorem~\ref{th:e-Bailey-Pair-B5.2}. This $C_n\to A_n$ elliptic transformation formula is the same as obtained in Remark~\ref{rem:B4-B1-10p9}.

We have completed our study of the existing elliptic Jackson theorems on root systems that are relevant in this theory. In the next section, we summarize our results, and examine our results from another perspective to see whether we have missed anything that fits our approach.

\section{Summary of results}\label{sec:summary}
In Sections~\ref{sec:dougall1}--\ref{sec:cn-case}, we have systematically considered the consequences of five elliptic Jackson summation theorems. In this section, we provide a summary of our findings so far and examine our results. Our examination suggests one more idea to follow up before closing this study.

Here is a list of our findings.
\begin{enumerate}\itemsep=0pt
\item We have considered five elliptic Jackson summations on root systems.
\item We have defined five elliptic Bressoud matrices, denoted $\B^{(1)}$--$\B^{(5)}$, so far.
\item We found six WP Bailey pairs. There were two WP Bailey pairs with respect to $\B^{(1)}$. (We are not counting the unit WP Bailey pairs.)
\item In case there is a unit WP Bailey pair, we are able to find a formula for the inverse of the matrix.
\item Up to normalization, the inverse of $\B^{(1)}(a,b)$ is given by $\B^{(1)}(b,a)$. Similarly, the inverse of $\B^{(2)}(a,b)$ is (up to normalization) $\B^{(3)}(b,a)$. The matrices $\B^{(4)}$ and $\B^{(5)}$ are similarly related.
\item We computed the inverse relations arising out of the elliptic Jackson summations and the related Bressoud matrix inverses. We found the following relationships between the elliptic Jackson summations.
\begin{itemize}\itemsep=0pt
\item If we write the inverse relation of \eqref{e-8p7-1} with respect to $\B^{(1)}$, we obtain an equivalent form of \eqref{e-8p7-1}.
\item If we write the inverse relation of \eqref{e-8p7-5} with respect to $\B^{(3)}$, we obtain an equivalent form of \eqref{e-8p7-1}. To go in the other direction, we use $\B^{(2)}$.
\item If we write the inverse relation of \eqref{e-8p7-4} with respect to $\B^{(1)}$, we obtain an equivalent form of \eqref{e-8p7-4}.
\item If we write the inverse relation of \eqref{e-8p7-2} with respect to $\B^{(5)}$, we obtain an equivalent form of \eqref{e-8p7-3}. In the other direction, we use $\B^{(4)}$.
\end{itemize}
\item We have found eight WP Bailey lemmas (see Table~\ref{table:2} for the list).
\item We recovered the elliptic Bailey transformations in Rosengren~\cite{HR2004} by the WP Bailey lemma approach. In addition, we found one new $A_n$ elliptic Bailey transformation, so far.
\item As basic hypergeometric special cases of the new $A_n$ Bailey $_{10}\phi_9$ transformation, we found four new $A_n$ Watson transformations and one $A_n$ terminating, very-well-poised, $_6\phi_5$ summation.
\end{enumerate}

\begin{table}
\centering
\begin{tabular}{| r | c | c | c | c | c | }
\hline
Matrix: & $\B^{(1)}$ & $\B^{(2)}$ & $\B^{(3)}$ & $\B^{(4)}$ & $\B^{(5)}$ \\
\hline
$\B^{(1)}$ & Theorem~\ref{th:e-WP-BaileyLemma-B1} & Theorem~\ref{th:e-WP-BaileyLemma-B1-B2} & &
Theorem~\ref{th:e-WP-BaileyLemma-B1-B4} &
 \\
 \hline
$\B^{(2)}$ & Theorem~\ref{th:e-WP-BaileyLemma-B2-B1} & Theorem~\ref{th:e-WP-BaileyLemma-B2-B2} & & &
 \\
 \hline
$\B^{(3)}$ && & Theorem~\ref{th:e-WP-BaileyLemma-B3-B3} & &
 \\
 \hline
$\B^{(4)}$ & Theorem~\ref{th:e-WP-BaileyLemma-B4-B1} & & &
 &
 \\
\hline
$\B^{(5)}$ & & & & & Theorem~\ref{th:e-WP-BaileyLemma-B5-B5}
 \\
 \hline
\end{tabular}
\caption{The WP Bailey lemmas.}\label{table:2}
\end{table}

As we have seen, we can apply the WP Bailey lemmas to obtain elliptic Bailey transformation formulas on root systems. As in the dimension 1 case, we can iterate the WP Bailey lemmas and obtain results with more parameters. For example, one can begin with a WP Bailey pair with respect to the matrix $\B^{(1)}$ and apply a $\B^{(1)} \to\B^{(4)}$ Bailey lemma to obtain a WP Bailey pair with respect to $\B^{(4)}$. One can now use the $\B^{(4)} \to\B^{(1)}$ WP Bailey lemma to obtain a WP Bailey pair with respect to $\B^{(1)}$. In the next step, one can apply the $\B^{(1)} \to\B^{(2)}$ WP Bailey lemma.

If we look closely at the definitions of $\beta^{\prime}_\N$ in any of the WP Bailey lemmas, we can observe an interesting pattern. For example, consider the definition of $\beta^{\prime}_\N$ given in the $\B^{(4)}\to\B^{(1)}$ WP Bailey lemma in \eqref{e-betaprime-B4-B1}. Observe that this expression contains the expression $B^{(5)}_{\N\k}(a, b)$, where $a$ is replaced by $b\rho_1\rho_2/aq$. One can say something similar for all the WP Bailey lemmas presented in this paper, except for one.

The one exception is the $\B^{(2)} \to\B^{(2)}$ WP Bailey lemma given by Theorem~\ref{th:e-WP-BaileyLemma-B2-B2}. This suggests that we may have missed the Bressoud matrix (stated here for $p=0$):
\begin{gather*}
 B_{\k \j}(a,b) := \frac{\qrfac{b}{\sumk+\sumj}
 \qrfac{b/a}{\sumk-\sumj}}{\qrfac{aq}{\sumk+\sumj} \sqprod n \qrfac{q^{1+j_r-j_s}\xover x }{k_r-j_r} } .
 \end{gather*}
Indeed, we have the $A_n$ Jackson sum
\begin{gather}
\multsum{k}{N}{r} \Bigg( \vandermonde{x}{k}{n}
\sqprod n \frac{\qrfac{q^{-N_s}\xover{x}}{k_r} }{\qrfac{q\xover{x}}{k_r} } \nonumber\\
\qquad\quad{} \times \frac {(1-aq^{2\sumk}) \qrfac{a, b, c, d, a^2q^{1+\sumN}/b c d}{\sumk}}
{(1-a)\qrfac{aq^{\sumN+1}, aq/b , aq/c, aq/d, b c dq^{-\sumN}/a}{\sumk}}
 q^{\sum\limits_{r=1}^n rk_r}
 \Bigg)\nonumber\\
\qquad{} =\frac{\qrfac{aq, aq/b c, aq/b d, aq/cd}{\sumN}} {\qrfac{aq/b , aq/c, aq/d, aq/b c d}{\sumN}},\label{e-8p7-7b-conjecture}
\end{gather}
which follows from the following result due to Milne~\cite{Milne1997}, which the first author~\cite{GB1995} dubbed the ``Fundamental Theorem of ${\mathrm U}(n)$ series'', namely
\begin{gather}\label{fundamental-theorem}
\sum\limits_{\substack{\sumk = K \\
k_1,k_2, \dots, k_n\geq 0} }\!
 \vandermonde{x}{k}{n}\!
\sqprod n \frac{\qrfac{a_s\xover{x}}{k_r} }{\qrfac{q\xover{x}}{k_r} }
\cdot q^{\sum\limits_{r=1}^n (r-1)k_r}
= \frac{\qrfac{a_1\cdots a_n}{K}}{\qrfac{q}{K}} .\!\!\!
\end{gather}
Unfortunately, if we formally replace each term of~\eqref{e-8p7-7b-conjecture} by its elliptic analogue, the resulting summation is false. However, in the next section we find an elliptic Jackson summation which contains~\eqref{e-8p7-7b-conjecture} as a special case.

Before heading to the next section, we note that the observation above can be explained by the matrix approach to the WP Bailey lemma, given by Agarwal, Andrews and Bressoud~\cite{AAB1988} and Warnaar~\cite{SOW2003}. Indeed, Warnaar~\cite{SOW-notes-2016} extended this matrix formulation for his (unpublished) work on multivariable WP Bailey lemmas.

\section{Other elliptic Jackson summations, with an extra parameter}\label{sec:new-dougall}

The objective of this section is to give an elliptic extension of \eqref{e-8p7-7b-conjecture} by adding another parameter. To do that, we present a nice trick that is useful in many contexts. Essentially, this trick is an elliptic extension of one of Milne's lemmas~\cite[Lemma~7.3]{Milne1997} that he used~\cite{Milne1985} to prove one of the Macdonald identities.

We will use the following theorem of Rosengren, which can be shown to be equivalent to \eqref{e-8p7-1}, by replacing $n$ by $n+1$ relabeling parameters, and using an analytic continuation argument
\begin{gather}
\sum\limits_{\substack{\sumk = K \\
k_1,k_2, \dots, k_n\geq 0} } \Bigg( \ellipticvandermonde{x}{k}{n}
\sqprod n \frac{\ellipticqrfac{a_s\xover{x}}{k_r} }{\ellipticqrfac{q\xover{x}}{k_r} } \nonumber\\
\qquad\quad{} \times
\smallprod n \frac{\ellipticqrfac{bx_r/a_1\cdots a_n}{k_r}}{\ellipticqrfac{bx_r}{k_r} }\cdot q^{\sum\limits_{r=1}^n (r-1)k_r}\Bigg)\nonumber\\
\qquad{} = \frac{\ellipticqrfac{a_1\cdots a_n}{K}}{\ellipticqrfac{q}{K}}
\smallprod n \frac{\ellipticqrfac{bx_r/a_r}{K}}{\ellipticqrfac{bx_r}{K} } .\label{e-fundamental-theorem}
\end{gather}
This is equivalent to Rosengren's result~\cite[Theorem~5.1]{HR2004}, where we take $N=K$, $z_k\mapsto x_k$, $a_k\mapsto a_k/x_k$ (for $k=1, 2, \dots, n$) and replace $a_{n+1}$ by $b/a_1a_2\cdots a_n$. Note that when $p=0$ and $b=0$, \eqref{e-fundamental-theorem} reduces to \eqref{fundamental-theorem}.

\begin{Theorem}\label{th:e-trivial-lemma1} Given the sequence $f_k$, $k=0,1,2, \dots$, and $N\geq 0$, we have
\begin{gather}
\sum\limits_{\substack{0\leq \sumk \leq N \\
k_1,k_2, \dots, k_n\geq 0} }
 \Bigg( \ellipticvandermonde{x}{k}{n}
\sqprod n \frac{\ellipticqrfac{a_s\xover{x}}{k_r} }{\ellipticqrfac{q\xover{x}}{k_r} } \nonumber\\
\qquad\quad{} \times \smallprod n \frac{\ellipticqrfac{bx_r/a_1\cdots a_n}{k_r} \ellipticqrfac{bx_r}{\sumk}}
{\ellipticqrfac{bx_r}{k_r} \ellipticqrfac{bx_r/a_r}{\sumk}}
\cdot q^{\sum\limits_{r=1}^n (r-1)k_r} f_{\sumk} \Bigg)\nonumber\\
\qquad{} = \sum_{K=0}^N\frac{\ellipticqrfac{a_1\cdots a_n}{K}}{\ellipticqrfac{q}{K}} f_K .\label{e-trivial-lemma1}
\end{gather}
\end{Theorem}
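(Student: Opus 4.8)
The plan is to prove \eqref{e-trivial-lemma1} by partitioning the multiple sum on the left according to the value of $\sumk$ and reducing the resulting inner sum to the elliptic fundamental theorem \eqref{e-fundamental-theorem}. The essential observation is that the summand on the left splits cleanly into a part depending on the individual $k_r$ and a part depending on the $k_r$ only through $\sumk$. Concretely, the factor $\smallprod n \ellipticqrfac{bx_r}{\sumk}/\ellipticqrfac{bx_r/a_r}{\sumk}$ together with $f_{\sumk}$ depends only on $\sumk$, whereas everything else in the summand, namely the elliptic Vandermonde product, the ratio $\sqprod n \ellipticqrfac{a_s\xover x}{k_r}/\ellipticqrfac{q\xover x}{k_r}$, the factor $\smallprod n \ellipticqrfac{bx_r/a_1\cdots a_n}{k_r}/\ellipticqrfac{bx_r}{k_r}$, and $q^{\sum_{r=1}^n(r-1)k_r}$, is precisely the summand appearing on the left-hand side of \eqref{e-fundamental-theorem}.

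First I would rewrite the outer summation region by stratifying over the total index, writing $\sum_{0\le\sumk\le N}=\sum_{K=0}^N\sum_{\sumk=K}$, and then pull the $\sumk$-dependent factors $f_K\,\smallprod n \ellipticqrfac{bx_r}{K}/\ellipticqrfac{bx_r/a_r}{K}$ outside the inner sum. This step is justified without any convergence concerns because every sum in sight is finite: the outer sum runs over $0\le K\le N$, and for each such $K$ the inner sum runs over the finitely many compositions $\k$ of $K$ into $n$ nonnegative parts. For each fixed $K$, the inner sum over $\{\k:\sumk=K\}$ is then literally the left-hand side of \eqref{e-fundamental-theorem}, which evaluates to
\[
\frac{\ellipticqrfac{a_1\cdots a_n}{K}}{\ellipticqrfac{q}{K}}\smallprod n \frac{\ellipticqrfac{bx_r/a_r}{K}}{\ellipticqrfac{bx_r}{K}}.
\]

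Finally I would carry out the cancellation. Multiplying this evaluation by the pulled-out factor $f_K\,\smallprod n \ellipticqrfac{bx_r}{K}/\ellipticqrfac{bx_r/a_r}{K}$, the two products $\smallprod n \ellipticqrfac{bx_r/a_r}{K}/\ellipticqrfac{bx_r}{K}$ and its reciprocal annihilate each other, leaving exactly $f_K\,\ellipticqrfac{a_1\cdots a_n}{K}/\ellipticqrfac{q}{K}$. Summing over $K$ from $0$ to $N$ then produces the right-hand side of \eqref{e-trivial-lemma1}, completing the argument.

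There is no genuine obstacle here once one recognizes the design of the statement: the factor $\smallprod n \ellipticqrfac{bx_r}{\sumk}/\ellipticqrfac{bx_r/a_r}{\sumk}$ has been inserted into the summand precisely so as to cancel the extra product $\smallprod n \ellipticqrfac{bx_r/a_r}{K}/\ellipticqrfac{bx_r}{K}$ that appears on the right of \eqref{e-fundamental-theorem}, collapsing the inner sum to the clean quotient $\ellipticqrfac{a_1\cdots a_n}{K}/\ellipticqrfac{q}{K}$. The only point demanding the slightest care is confirming that the $\sumk$-dependent factors are constant on each stratum $\{\k:\sumk=K\}$ so that they may be extracted, which is immediate from the definitions; after that the proof is a one-line application of \eqref{e-fundamental-theorem} followed by the cancellation.
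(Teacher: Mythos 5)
Your proof is correct and is essentially the paper's own argument: the paper likewise starts from \eqref{e-fundamental-theorem}, moves the $b$-dependent products to the other side, multiplies by $f_K$, and sums over $K$ from $0$ to $N$, which is exactly your stratification by $\sumk=K$ read in the opposite direction. The cancellation of $\smallprod n \ellipticqrfac{bx_r/a_r}{K}/\ellipticqrfac{bx_r}{K}$ against its reciprocal is the whole point, and you have identified it precisely.
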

\begin{Remark*} When $p=0$ and $b=0$ in \eqref{e-trivial-lemma1}, we obtain Milne's lemma~\cite[Lemma~7.3]{Milne1997}. Observe that the right-hand side of \eqref{e-trivial-lemma1} is not dependent on $x_1, x_2, \dots, x_n$ and $b$. Further, note that the sums are indefinite.
\end{Remark*}
\begin{proof} The theorem follows by taking the products with parameter $b$ from the right-hand side of \eqref{e-fundamental-theorem} to the left, multiplying both sides by $f_K$ and then summing over $K$ from $0$ to $N$.
\end{proof}

\begin{Remark*} We can obtain a similar result from Rosengren~\cite[Theorem~3.1]{HR2017a}.
\end{Remark*}

Theorem~\ref{th:e-trivial-lemma1} allows us to choose the $f_k$ appropriately and use a dimension 1 identity to obtain its multiple series extension. In particular, we now obtain an $A_n$ elliptic Jackson sum in this manner.
\begin{Theorem}[an $A_n$ elliptic Jackson summation]\label{th:e-8p7-7a} We have
\begin{gather}
\sum\limits_{\substack{0\leq \sumk \leq N \\ k_1,k_2, \dots, k_n\geq 0} }
 \Bigg( \ellipticvandermonde{x}{k}{n}
\sqprod n \frac{\ellipticqrfac{b_s\xover{x}}{k_r} }{\ellipticqrfac{q\xover{x}}{k_r} }
\cdot \frac{\elliptictheta{aq^{2\sumk}}}
{\elliptictheta{a}}\nonumber\\
\qquad\quad {} \times \frac {\ellipticqrfac{ a, c, d, a^2q^{1+N}/b_1\cdots b_n c d, q^{-N}}{\sumk}}
{\ellipticqrfac{ aq/b_1\cdots b_n , aq/c, aq/d, b_1\cdots b_n c dq^{-N}/a, aq^{N+1}}{\sumk}}
\nonumber\\
\qquad\quad{} \times \smallprod n \frac{\ellipticqrfac{ex_r/b_1\cdots b_n}{k_r} \ellipticqrfac{ex_r}{\sumk}}
{\ellipticqrfac{ex_r}{k_r} \ellipticqrfac{ex_r/b_r}{\sumk}} \cdot q^{\sum\limits_{r=1}^n rk_r}\Bigg)\nonumber\\
 \qquad{} = \frac
{\ellipticqrfac{aq, aq/b_1\cdots b_n c, aq/b_1\cdots b_n d, aq/cd}{N}}
{\ellipticqrfac{aq/b_1\cdots b_n , aq/c, aq/d, aq/b_1\cdots b_n c d}{N}} .\label{e-8p7-7a}
\end{gather}
\end{Theorem}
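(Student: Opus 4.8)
The plan is to apply Theorem~\ref{th:e-trivial-lemma1} with a suitable choice of the free sequence $f_k$, thereby collapsing the $n$-fold sum on the left-hand side of \eqref{e-8p7-7a} into a single sum over $K=\sumk$, and then to evaluate that single sum by the Frenkel--Turaev summation \eqref{10V9}. First I would match the $x$-dependent parts of the two summands. Setting $a_s\mapsto b_s$ for $s=1,\dots,n$ and $b\mapsto e$ in \eqref{e-trivial-lemma1}, the elliptic Vandermonde product, the factor $\sqprod n \ellipticqrfac{b_s\xover{x}}{k_r}/\ellipticqrfac{q\xover{x}}{k_r}$, and the product $\smallprod n \ellipticqrfac{ex_r/b_1\cdots b_n}{k_r}\ellipticqrfac{ex_r}{\sumk}/(\ellipticqrfac{ex_r}{k_r}\ellipticqrfac{ex_r/b_r}{\sumk})$ appearing in \eqref{e-8p7-7a} are reproduced exactly. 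The only mismatch is the power of $q$: the summand in \eqref{e-8p7-7a} carries $q^{\sum_{r=1}^n rk_r}$ whereas \eqref{e-trivial-lemma1} carries $q^{\sum_{r=1}^n (r-1)k_r}$. Since $q^{\sum_{r=1}^n rk_r}=q^{\sum_{r=1}^n (r-1)k_r}\,q^{\sumk}$, the surplus $q^{\sumk}$ together with the remaining $\sumk$-dependent factors is absorbed into $f_{\sumk}$; explicitly I would take
\begin{gather*}
f_K := \frac{\elliptictheta{aq^{2K}}}{\elliptictheta{a}}
\frac{\ellipticqrfac{a, c, d, a^2q^{1+N}/b_1\cdots b_n cd, q^{-N}}{K}}
{\ellipticqrfac{aq/b_1\cdots b_n, aq/c, aq/d, b_1\cdots b_n cd\,q^{-N}/a, aq^{N+1}}{K}}\, q^{K}.
\end{gather*}

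With this choice, Theorem~\ref{th:e-trivial-lemma1} (where the role of $a_1\cdots a_n$ is played by $b_1\cdots b_n$) turns the left-hand side of \eqref{e-8p7-7a} into $\sum_{K=0}^N \ellipticqrfac{b_1\cdots b_n}{K}/\ellipticqrfac{q}{K}\cdot f_K$. Writing $B:=b_1\cdots b_n$, this single sum reads
\begin{gather*}
\sum_{K=0}^N \frac{\elliptictheta{aq^{2K}}}{\elliptictheta{a}}
\frac{\ellipticqrfac{a, B, c, d, a^2q^{1+N}/Bcd, q^{-N}}{K}}
{\ellipticqrfac{q, aq/B, aq/c, aq/d, Bcd\,q^{-N}/a, aq^{N+1}}{K}}\, q^{K},
\end{gather*}
which, checking the well-poised structure $aq/a_6=aq/B$, \dots, $aq/(a^2q^{1+N}/Bcd)=Bcd\,q^{-N}/a$, $aq/q^{-N}=aq^{N+1}$, is precisely ${}_{10}V_9\big(a; B, c, d, a^2q^{1+N}/Bcd, q^{-N}; q, p\big)$.

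Finally I would invoke \eqref{10V9}. Its balancing condition $a^2q^{N+1}=bcde$ is verified at once: with $b=B$, $e=a^2q^{1+N}/Bcd$ one has $bcde=B\cdot c\cdot d\cdot a^2q^{1+N}/(Bcd)=a^2q^{N+1}$. Hence \eqref{10V9} evaluates the sum to $\ellipticqrfac{aq, aq/Bc, aq/Bd, aq/cd}{N}/\ellipticqrfac{aq/B, aq/c, aq/d, aq/Bcd}{N}$, and restoring $B=b_1\cdots b_n$ yields exactly the right-hand side of \eqref{e-8p7-7a}. There is no genuine obstacle here: the whole argument rests on the observation that the elliptic form of Milne's indefinite-summation trick, namely Theorem~\ref{th:e-trivial-lemma1}, factors the multiple series through the one-dimensional ${}_{10}V_9$. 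The only points demanding care are the bookkeeping of the extra $q^{\sumk}$ (and hence the correct reading-off of $f_K$, which must carry a compensating $q^K$) and the verification of the balancing condition, after which the one-dimensional Frenkel--Turaev summation does all the remaining work.
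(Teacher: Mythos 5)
Your proposal is correct and follows essentially the same route as the paper: the authors likewise set $a_k=b_k$ and $b=e$ in Theorem~\ref{th:e-trivial-lemma1}, choose exactly the $f_K$ you write down (with the compensating $q^K$), and evaluate the resulting single sum by the Frenkel--Turaev summation \eqref{10V9} with $b\mapsto b_1\cdots b_n$. Your explicit verification of the balancing condition and of the matching of the $x$-dependent factors is a welcome amplification of details the paper leaves implicit.
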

\begin{proof}
We take $a_k=b_k$ for $k=1, 2, \dots, n$, $b=e$ in \eqref{e-trivial-lemma1}, and take
\begin{gather*}f_K= \frac {\elliptictheta{aq^{2K}}}{\elliptictheta{a}}\frac{\ellipticqrfac{ a, c, d, a^2q^{1+N}/b_1\cdots b_n c d, q^{-N} }{K}}
{\ellipticqrfac{ aq/b_1\cdots b_n , aq/c, aq/d, b_1\cdots b_n c dq^{-N}/a, aq^{N+1}}{K}} q^{K}.
\end{gather*}
The left-hand side of \eqref{e-trivial-lemma1} then becomes the left-hand side of \eqref{e-8p7-7a}. Now in the right-hand side of~\eqref{e-trivial-lemma1}, we use the Frenkel--Turaev summation~\eqref{10V9}, with $b\mapsto b_1\cdots b_n$. In this manner, we obtain the right-hand side of~\eqref{e-8p7-7a}.
\end{proof}

We can rewrite this identity so that the sum is over an $n$-dimensional rectangle.

\begin{Theorem}[an $A_n$ elliptic Jackson summation]\label{th:e-8p7-7b} We have
\begin{gather}
\multsum{k}{N}{r}
 \Bigg(\ellipticvandermonde{x}{k}{n}
\sqprod n \frac{\ellipticqrfac{q^{-N_s}\xover{x}}{k_r} }{\ellipticqrfac{q\xover{x}}{k_r} } \nonumber\\
\qquad\quad{}\times \frac {\elliptictheta{aq^{2\sumk}}} {\elliptictheta{a}}
\frac{\ellipticqrfac{a, b, c, d, a^2q^{1+\sumN}/b c d}{\sumk}}{\ellipticqrfac{aq^{\sumN+1}, aq/b , aq/c, aq/d, b c dq^{-\sumN}/a}{\sumk}}
\nonumber\\
\qquad\quad{} \times \smallprod n \frac{\ellipticqrfac{ex_rq^{\sumN}}{k_r}
\ellipticqrfac{ex_r}{\sumk}}{\ellipticqrfac{ex_r}{k_r} \ellipticqrfac{ex_rq^{N_r}}{\sumk}}q^{\sum\limits_{r=1}^n rk_r} \Bigg)\nonumber\\
\qquad{} = \frac{\ellipticqrfac{aq, aq/b c, aq/b d, aq/cd}{\sumN}}{\ellipticqrfac{aq/b , aq/c, aq/d, aq/b c d}{\sumN}} .\label{e-8p7-7b}
\end{gather}
\end{Theorem}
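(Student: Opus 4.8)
The plan is to derive \eqref{e-8p7-7b} directly from the ``trick'' of Theorem~\ref{th:e-trivial-lemma1}, exactly in the spirit of the proof of Theorem~\ref{th:e-8p7-7a}, but now choosing the free parameters $a_1,\dots,a_n$ in \eqref{e-trivial-lemma1} to be \emph{negative powers of} $q$, so that the indefinite simplex sum collapses onto the rectangle $0\le k_r\le N_r$. Concretely, in \eqref{e-trivial-lemma1} I would set $a_s=q^{-N_s}$ for $s=1,\dots,n$ (so $a_1\cdots a_n=q^{-\sumN}$), rename the parameter $b$ there to $e$, take the scalar bound in \eqref{e-trivial-lemma1} to be $\sumN$, and choose the one-dimensional sequence
\begin{gather*}
f_K=\frac{\elliptictheta{aq^{2K}}}{\elliptictheta{a}}
\frac{\ellipticqrfac{a,b,c,d,a^2q^{1+\sumN}/bcd}{K}}
{\ellipticqrfac{aq^{\sumN+1},aq/b,aq/c,aq/d,bcdq^{-\sumN}/a}{K}}\,q^{K}.
\end{gather*}
Here $a,b,c,d,e$ stay completely free, so this will prove \eqref{e-8p7-7b} in full generality with no analytic continuation required.

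First I would identify the left-hand sides. With $a_s=q^{-N_s}$ the factor $\ellipticqrfac{q^{-N_s}\xover{x}}{k_r}$ specializes on its diagonal $s=r$ to $\ellipticqrfac{q^{-N_r}}{k_r}$, which vanishes as soon as $k_r>N_r$; hence every surviving term already satisfies $0\le k_r\le N_r$, and in particular $\sumk\le\sumN$, so the indefinite sum $0\le\sumk\le\sumN$ in \eqref{e-trivial-lemma1} is precisely the rectangular sum $\multsum{k}{N}{r}$ of \eqref{e-8p7-7b}. Under this specialization $bx_r/a_1\cdots a_n=ex_rq^{\sumN}$ and $bx_r/a_r=ex_rq^{N_r}$, so the product over $r$ in \eqref{e-trivial-lemma1} becomes exactly the $e$-factor $\smallprod n \ellipticqrfac{ex_rq^{\sumN}}{k_r}\ellipticqrfac{ex_r}{\sumk}/\big(\ellipticqrfac{ex_r}{k_r}\ellipticqrfac{ex_rq^{N_r}}{\sumk}\big)$ in \eqref{e-8p7-7b}. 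Finally, since $\sumk=\sum_r k_r$, the $q^{K}$ in $f_K$ converts the weight $q^{\sum_{r=1}^n(r-1)k_r}$ of \eqref{e-trivial-lemma1} into the weight $q^{\sum_{r=1}^n rk_r}$ of \eqref{e-8p7-7b}, while $f_{\sumk}$ supplies the whole very-well-poised $\sumk$-factor. Thus the left-hand side of \eqref{e-trivial-lemma1} becomes the left-hand side of \eqref{e-8p7-7b}.

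Next I would evaluate the right-hand side. By Theorem~\ref{th:e-trivial-lemma1} it equals $\sum_{K=0}^{\sumN}\ellipticqrfac{q^{-\sumN}}{K}f_K/\ellipticqrfac{q}{K}$, and inserting $f_K$ this is the terminating, balanced, very-well-poised series ${}_{10}V_9\big(a;b,c,d,a^2q^{1+\sumN}/bcd,q^{-\sumN};q,p\big)$, whose balancing condition $a^2q^{\sumN+1}=bcd\cdot\big(a^2q^{1+\sumN}/bcd\big)$ holds identically. Summing it by Frenkel and Turaev's formula \eqref{10V9} (with $N\mapsto\sumN$ and $e\mapsto a^2q^{1+\sumN}/bcd$) gives $\ellipticqrfac{aq,aq/bc,aq/bd,aq/cd}{\sumN}/\ellipticqrfac{aq/b,aq/c,aq/d,aq/bcd}{\sumN}$, which is the right-hand side of \eqref{e-8p7-7b}.

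I expect the only delicate point to be the bookkeeping of the simplex-to-rectangle collapse: one must confirm that \emph{no} multi-index with $\sumk\le\sumN$ but some $k_r>N_r$ is accidentally retained (it is not, precisely because of the vanishing of $\ellipticqrfac{q^{-N_r}}{k_r}$), and conversely that the full rectangle $\{0\le k_r\le N_r\}$ lies inside the simplex $\{\sumk\le\sumN\}$ so that nothing is lost. Everything else is a straightforward term-by-term matching followed by a single application of \eqref{10V9}; in particular, because $b,c,d,e$ are carried as free parameters inside $f_K$, the identity \eqref{e-8p7-7b} is obtained for all admissible parameter values at once.
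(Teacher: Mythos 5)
Your proposal is correct and follows essentially the same route as the paper: the paper's own proof of Theorem~\ref{th:e-8p7-7b} likewise specializes $a_r=q^{-N_r}$ (so that the vanishing of $\ellipticqrfac{q^{-N_r}}{k_r}$ for $k_r>N_r$ collapses the simplex onto the rectangle), takes $N=\sumN$, and chooses $f_K$ to be the very-well-poised factor so that the right-hand side is evaluated by the Frenkel--Turaev sum \eqref{10V9}. Your write-up just supplies the bookkeeping that the paper leaves implicit.
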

\begin{proof} This can be obtained by an analytic continuation argument, or directly from Theorem~\ref{e-fundamental-theorem}, by observing that when $a_r=q^{-N_r}$, then the indices $\k$ satisfy the additional condition $k_r\leq N_r$, for $r=1, \dots, n$. Then taking $N=\sumN$, and choosing $f_K$ appropriately, we obtain~\eqref{e-8p7-7b}.
\end{proof}

Next, we obtain another $A_n$ Bressoud matrix, from the $b\mapsto qa^2/bcd$ case of~\eqref{e-8p7-7b}.
\begin{Definition}[an $A_n$ elliptic Bressoud matrix] We define the matrix $\B^{(6)}$ with entries indexed by $(\k , \j)$ as
\begin{gather}
B^{(6)}_{\k \j}(a,b) := \frac{\ellipticqrfac{b}{\sumk+\sumj} \ellipticqrfac{b/a}{\sumk-\sumj}}
{ \ellipticqrfac{aq}{\sumk+\sumj} \sqprod n \ellipticqrfac{q^{1+j_r-j_s}\xover x }{k_r-j_r} }
 \smallprod n \frac{\ellipticqrfac{ex_r}{\sumk+j_r}} {\ellipticqrfac{ex_r}{k_r+\sumj}}. \label{e-B6}
\end{gather}
\end{Definition}

The inverse of this matrix is as follows.

\begin{Corollary}[inverse of $\B^{(6)}$] Let $\B^{(6)}= \big(B^{(6)}_{\k\j}(a,b)\big)$ be defined by \eqref{e-B6}. Then its inverse is given by
\begin{gather*}
\big(B^{(6)}(a,b)\big)_{\k \j}^{-1} = \frac{\elliptictheta{aq^{2\sumk}}}{\elliptictheta{a}}
\frac{\elliptictheta{bq^{2\sumj}}}{\elliptictheta{b}}
 \smallprod n \frac{\ellipticqrfac{ex_r}{\sumk+j_r}}
 {\ellipticqrfac{ex_r}{k_r+\sumj}}
\cdot \left( \frac{b}{a}\right)^{\sumk-\sumj}\notag\\
\hphantom{\big(B^{(6)}(a,b)\big)_{\k \j}^{-1} =}{} \times \frac{\ellipticqrfac{a}{\sumk+\sumj}
\ellipticqrfac{a/b}{\sumk-\sumj}} {\ellipticqrfac{bq}{\sumk+\sumj} \sqprod n \ellipticqrfac{q^{1+j_r-j_s}\xover x }{k_r-j_r} }.
\end{gather*}
\end{Corollary}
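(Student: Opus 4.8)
The plan is to obtain the inverse exactly as in the proof of the inverse of $\B^{(1)}$ in Corollary~\ref{b1-inverse}, namely by first producing a unit WP Bailey pair with respect to $\B^{(6)}$ out of the $A_n$ elliptic Jackson summation \eqref{e-8p7-7b}, and then running the standard matrix-inversion extraction. First I would derive a WP Bailey pair with respect to $\B^{(6)}$ (in analogy with Theorem~\ref{th:e-Bailey-Pair-B1.2a}) by taking the $b\mapsto qa^2/bcd$ case of \eqref{e-8p7-7b} and rearranging it into the form $\multsum{j}{N}{r} B^{(6)}_{\N\j}(a,b)\,\alpha_\j(a,b)=\beta_\N(a,b)$, reading off $\B^{(6)}$ as in \eqref{e-B6}; here Lemma~\ref{th:e-magiclemma2} converts the elliptic Vandermonde product together with the factors $\ellipticqrfac{q^{-N_s}\xover x}{k_r}$ into the cross term $\sqprod n \ellipticqrfac{q^{1+j_r-j_s}\xover x}{k_r-j_r}$ in the denominator of \eqref{e-B6}, while \eqref{GR11.2.47} recombines the ``global'' factorials indexed by $\sumk$ and $\sumj$. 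Specializing $d=aq/c$ then collapses the right-hand side of \eqref{e-8p7-7b} to $\smallprod n \delta_{N_r,0}$, giving a unit pair whose $\alpha_\j$ carries the single very-well-poised factor $\elliptictheta{aq^{2\sumj}}/\elliptictheta{a}$ inherited from \eqref{e-8p7-7b}.

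Next I would follow the extraction in Corollaries~\ref{b1-inverse} and~\ref{cor:e-B3-inverse}: replace $\N$ by $\N-\K$, shift the summation index $\j\mapsto\j-\K$ so that it runs over $K_r\le j_r\le N_r$, and then substitute $x_r\mapsto x_rq^{K_r}$, $a\mapsto aq^{2\sumK}$, $b\mapsto bq^{\sumK}$, simplifying throughout with \eqref{GR11.2.47}. This rewrites the unit-pair identity as $\sum_{K_r\le j_r\le N_r} B^{(6)}_{\N\j}(a,b)\,\big(B^{(6)}(a,b)\big)^{-1}_{\j\K}=\smallprod n \delta_{N_r,K_r}$, from which the entries of the inverse can be read off directly.

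A useful guide and sanity check for the target form is the structural identity
\[
\big(B^{(6)}(a,b)\big)^{-1}_{\k\j}=\frac{\elliptictheta{aq^{2\sumk}}}{\elliptictheta{a}}\,\frac{\elliptictheta{bq^{2\sumj}}}{\elliptictheta{b}}\left(\frac{b}{a}\right)^{\sumk-\sumj}B^{(6)}_{\k\j}(b,a),
\]
which exhibits $\B^{(6)}$ as self-inverse up to normalization under the swap $a\leftrightarrow b$, exactly as $\B^{(1)}$. Note that the $e$-dependent factor $\smallprod n \ellipticqrfac{ex_r}{\sumk+j_r}/\ellipticqrfac{ex_r}{k_r+\sumj}$ involves neither $a$ nor $b$, hence is invariant under this swap and reappears unchanged in the inverse; one may verify the displayed identity immediately from \eqref{e-B6} and the claimed inverse.

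The main obstacle will be the bookkeeping of the parameter shifts in the extraction step rather than any conceptual difficulty. In particular, the two very-well-poised factors $\elliptictheta{aq^{2\sumk}}/\elliptictheta{a}$ and $\elliptictheta{bq^{2\sumj}}/\elliptictheta{b}$ in the answer must be produced correctly from the single factor $\elliptictheta{aq^{2\sumj}}/\elliptictheta{a}$ of $\alpha_\j$ under $a\mapsto aq^{2\sumK}$, and one must carefully track how the factorials indexed by $\sumk+\sumj$ and $\sumk-\sumj$ split under \eqref{GR11.2.47}. Since the cross term $\sqprod n \ellipticqrfac{q^{1+j_r-j_s}\xover x}{k_r-j_r}$ and the $e$-product are precisely the shift-stable combinations already handled for $\B^{(1)}$ and $\B^{(3)}$, no genuinely new phenomenon arises, and the remaining computation is routine; the details are entirely analogous to Corollary~\ref{cor:e-B3-inverse} and may be left to the reader.
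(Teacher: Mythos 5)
Your proposal follows essentially the same route as the paper: the paper's proof likewise takes the $b\mapsto qa^2/bcd$ case of Theorem~\ref{th:e-8p7-7b}, sets $d=aq/c$ to get the Kronecker delta, and then declares the extraction ``analogous to the proof of Corollary~\ref{cor:e-B3-inverse}''; your structural identity $\big(B^{(6)}(a,b)\big)^{-1}_{\k\j}=\frac{\elliptictheta{aq^{2\sumk}}}{\elliptictheta{a}}\frac{\elliptictheta{bq^{2\sumj}}}{\elliptictheta{b}}\big(\tfrac{b}{a}\big)^{\sumk-\sumj}B^{(6)}_{\k\j}(b,a)$ is also a correct reading of the target. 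One small correction: the substitutions you copied verbatim from Corollary~\ref{cor:e-B3-inverse} must be adapted to the shape of \eqref{e-B6} --- since $b$ enters through $\ellipticqrfac{b}{\sumk+\sumj}$ and $\ellipticqrfac{b/a}{\sumk-\sumj}$ rather than through $bx_r$, the correct shifts are $a\mapsto aq^{2\sumK}$, $b\mapsto bq^{2\sumK}$, $x_r\mapsto x_rq^{K_r}$ together with $e\mapsto eq^{\sumK}$ (which makes the factor $\smallprod n \ellipticqrfac{ex_r}{\sumk+j_r}/\ellipticqrfac{ex_r}{k_r+\sumj}$ reproduce itself exactly), not $b\mapsto bq^{\sumK}$ with $e$ fixed. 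This is a bookkeeping slip rather than a conceptual gap, and the rest of the argument goes through as you describe.
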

\begin{proof} We begin with the equivalent formulation of Theorem~\ref{th:e-8p7-7b} which is obtained by taking $b\mapsto qa^2/bcd$. Now take $d=aq/c$ to obtain a~Kronecker delta function on the product side. The rest of the calculation is analogous to the proof of Corollary~\ref{cor:e-B3-inverse}.
\end{proof}

We have not stated the WP Bailey pair with respect to $\B^{(6)}$ and the corresponding $\B^{(6)} \to\B^{(6)}$ WP Bailey lemma explicitly. While it is not difficult to find a corresponding WP Bailey lemma, it appears that in this case the WP Bailey lemma is not so useful to derive further identities. (We therefore have decided to omit it.) For this purpose it is actually better to apply Theorem~\ref{th:e-trivial-lemma1} instead. We illustrate this by writing down an elliptic Bailey $_{10}\phi_9$ transformation formula, which transforms an $n$ dimensional sum to a multiple of an $m$-dimensional sum. (This result cannot be obtained from the $\B^{(6)} \to\B^{(6)}$ WP Bailey lemma.)

\begin{Theorem}[an $A_n\to A_m$ elliptic Bailey $_{10}\phi_9$ transformation theorem] \label{th:e-10p9-7-n-m-tetra} Let $\lambda = qa^2/bcd$. Then
\begin{gather}
\sum\limits_{\substack{0\leq \sumk\leq N \\
k_1,k_2, \dots, k_n\geq 0} } \Bigg( \ellipticvandermonde{x}{k}{n}
\sqprod n \frac{\ellipticqrfac{e_s\xover{x}}{k_r} }{\ellipticqrfac{q\xover{x}}{k_r} } \notag \\
\qquad\quad{} \times
\frac{\elliptictheta{a q^{2\sumk}}}{\elliptictheta{a }}
\frac{\ellipticqrfac{a, b, c, d}{\sumk}}
{\ellipticqrfac{ a q^{N+1}, a q/b, aq/c, aq/d}{\sumk}}\nonumber\\
\qquad\quad{} \times
\frac{\ellipticqrfac{f_1\cdots f_m, \lambda aq^{1+N}/e_1\cdots e_n f_1\cdots f_m, q^{-N}}{\sumk}}
{\ellipticqrfac{a q/e_1\cdots e_n, aq/f_1\cdots f_m,
e_1\cdots e_n f_1\cdots f_m q^{-N}/\lambda}{\sumk}}\nonumber\\
\qquad\quad{} \times
\smallprod n \frac{\ellipticqrfac{g x_r/e_1\cdots e_n}{k_r}
\ellipticqrfac{g x_r}{\sumk}}{\ellipticqrfac{g x_r}{k_r} \ellipticqrfac{g x_r/e_r}{\sumk} } \cdot q^{\sum\limits_{r=1}^n rk_r} \Bigg)
\notag \\
\qquad{}= \frac{\ellipticqrfac{aq, aq/e_1\cdots e_n f_1\cdots f_m, \lambda q/e_1\cdots e_n, \lambda q/f_1\cdots f_m}{N}}{\ellipticqrfac{ a q/e_1\cdots e_n , aq/f_1\cdots f_m, \lambda q,
\lambda q/e_1\cdots e_n f_1\cdots f_m}{N}}\notag \\
\qquad\quad{} \times \sum\limits_{\substack{0\leq \sumj\leq N \\ j_1,j_2, \dots, j_m\geq 0} } \Bigg( \ellipticvandermonde{y}{j}{m}
\sqprod m \frac{\ellipticqrfac{f_s\xover{y}}{j_r} }{\ellipticqrfac{q\xover{y}}{j_r} } \notag \\
\qquad\quad{} \times \frac{\elliptictheta{\lambda q^{2\sumj}}}{\elliptictheta{\lambda }}
\frac{\ellipticqrfac{\lambda , \lambda b/a, \lambda c/a , \lambda d/a}{\sumj}}
{\ellipticqrfac{ \lambda q^{N+1}, a q/b, aq/c, aq/d}{\sumj}}\nonumber\\
\qquad\quad{} \times \frac{\ellipticqrfac{e_1\cdots e_n,
\lambda aq^{1+N}/e_1\cdots e_n f_1\cdots f_m, q^{-N}}{\sumj}}
{\ellipticqrfac{ \lambda q/e_1\cdots e_n, \lambda q/f_1\cdots f_m,
e_1\cdots e_n f_1\cdots f_m q^{-N}/a}{\sumj}}\nonumber\\
\qquad\quad{} \times \smallprod m \frac{\ellipticqrfac{h y_r/f_1\cdots f_m}{j_r}
\ellipticqrfac{h y_r}{\sumj}} {\ellipticqrfac{h y_r}{j_r} \ellipticqrfac{h y_r/f_r}{\sumj} }\cdot
 q^{\sum\limits_{r=1}^m rj_r} \Bigg). \label{e-10p9-7-n-m-tetra}
\end{gather}
\end{Theorem}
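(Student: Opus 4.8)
The plan is to obtain \eqref{e-10p9-7-n-m-tetra} by applying Theorem~\ref{th:e-trivial-lemma1} twice, once on each side, to the classical one-dimensional elliptic Bailey $_{10}\phi_9$ (equivalently $_{12}V_{11}$) transformation of Frenkel and Turaev \cite{FT1997} (see \cite[equation~(11.5.1)]{GR90}), in exactly the spirit in which Theorem~\ref{th:e-8p7-7a} was extracted from the one-dimensional summation \eqref{10V9}. Throughout, write $E=e_1\cdots e_n$ and $F=f_1\cdots f_m$, and recall $\lambda=qa^2/bcd$. With the parameter dictionary $e\mapsto E$, $f\mapsto F$ (and $a,b,c,d,N$ as given), the one-dimensional transformation reads
\begin{gather*}
\sum_{K=0}^N \frac{\elliptictheta{aq^{2K}}}{\elliptictheta{a}}
\frac{\ellipticqrfac{a,b,c,d,E,F,\lambda aq^{1+N}/EF,q^{-N}}{K}}
{\ellipticqrfac{q,aq/b,aq/c,aq/d,aq/E,aq/F,EFq^{-N}/\lambda,aq^{N+1}}{K}}q^K \\
= \frac{\ellipticqrfac{aq,aq/EF,\lambda q/E,\lambda q/F}{N}}{\ellipticqrfac{aq/E,aq/F,\lambda q,\lambda q/EF}{N}}
\sum_{K=0}^N \frac{\elliptictheta{\lambda q^{2K}}}{\elliptictheta{\lambda}}
\frac{\ellipticqrfac{\lambda,\lambda b/a,\lambda c/a,\lambda d/a,E,F,\lambda aq^{1+N}/EF,q^{-N}}{K}}
{\ellipticqrfac{q,aq/b,aq/c,aq/d,\lambda q/E,\lambda q/F,EFq^{-N}/a,\lambda q^{N+1}}{K}}q^K.
\end{gather*}
Note that the prefactor here is already exactly the prefactor displayed in \eqref{e-10p9-7-n-m-tetra}.

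Next I would multivariatise the left-hand series. Factoring its summand as $\frac{\ellipticqrfac{E}{K}}{\ellipticqrfac{q}{K}}\cdot f_K$ isolates the single pair $\ellipticqrfac{E}{K}/\ellipticqrfac{q}{K}$ arising from the ``$e$''-parameter together with the base; the residual factor $f_K$ is precisely the bracketed $\sumk$-dependent expression on the left of \eqref{e-10p9-7-n-m-tetra}. Applying Theorem~\ref{th:e-trivial-lemma1} with $a_s\mapsto e_s$ $(s=1,\dots,n)$, $b\mapsto g$, and the variables $x_1,\dots,x_n$ then converts $\sum_{K=0}^N \frac{\ellipticqrfac{E}{K}}{\ellipticqrfac{q}{K}}f_K$ into the $n$-fold sum over $\k$, producing the elliptic Vandermonde, the factors $\sqprod n \ellipticqrfac{e_s\xover{x}}{k_r}/\ellipticqrfac{q\xover{x}}{k_r}$, and the $g$-factor $\smallprod n \ellipticqrfac{gx_r/E}{k_r}\ellipticqrfac{gx_r}{\sumk}/(\ellipticqrfac{gx_r}{k_r}\ellipticqrfac{gx_r/e_r}{\sumk})$. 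The $q$-power bookkeeps correctly: the kernel of Theorem~\ref{th:e-trivial-lemma1} supplies $q^{\sum_r(r-1)k_r}$ and $f_{\sumk}$ retains $q^{\sumk}$, combining to $q^{\sum_r rk_r}$ as required.

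In the same way I would multivariatise the right-hand series, now isolating the pair $\ellipticqrfac{F}{K}/\ellipticqrfac{q}{K}$ (the ``$f$''-parameter with the base) and applying Theorem~\ref{th:e-trivial-lemma1} with $a_s\mapsto f_s$ $(s=1,\dots,m)$, $b\mapsto h$, and the variables $y_1,\dots,y_m$. This yields the $m$-fold sum over $\j$ on the right of \eqref{e-10p9-7-n-m-tetra}, with its own Vandermonde, the factors $\sqprod m \ellipticqrfac{f_s\xover{y}}{j_r}/\ellipticqrfac{q\xover{y}}{j_r}$, and the $h$-factor. Since both one-dimensional series terminate owing to the factor $\ellipticqrfac{q^{-N}}{K}$, and since Theorem~\ref{th:e-trivial-lemma1} is an exact identity for any terminating sequence $f_K$, no analytic-continuation argument is needed here: the simplicial ranges $0\le\sumk\le N$ and $0\le\sumj\le N$ emerge automatically and match \eqref{e-10p9-7-n-m-tetra} verbatim.

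The only genuine work is bookkeeping, and I expect the main obstacle to be confirming that the two ``hidden'' parameters $g$ and $h$ — present in the multiple sums but absent from the one-dimensional transformation — enter solely through the telescoping $\sumk$- and $\sumj$-factors furnished by Theorem~\ref{th:e-trivial-lemma1}, and that every remaining very-well-poised pair lines up on each side: in particular the determined seventh parameter $\lambda aq^{1+N}/EF$ with partner $EFq^{-N}/\lambda$ (respectively $EFq^{-N}/a$), and $q^{-N}$ with partner $aq^{N+1}$ (respectively $\lambda q^{N+1}$). Once the dictionary $e\mapsto E=e_1\cdots e_n$, $f\mapsto F=f_1\cdots f_m$ is fixed and the balancing relation $\lambda=qa^2/bcd$ is invoked, these verifications are routine and \eqref{e-10p9-7-n-m-tetra} follows.
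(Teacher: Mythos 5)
Your proposal is correct and is essentially the paper's own proof: both sandwich the one-dimensional elliptic Bailey $_{10}\phi_9$ transformation (the $n=1$ case of \eqref{e-10p9-6}, equivalently the Frenkel--Turaev ${}_{12}V_{11}$ transformation with $e\mapsto e_1\cdots e_n$, $f\mapsto f_1\cdots f_m$) between two applications of Theorem~\ref{th:e-trivial-lemma1}, with $a_s\mapsto e_s$, $b\mapsto g$ on one side and $a_s\mapsto f_s$, $b\mapsto h$, $x\mapsto y$, $n\mapsto m$ on the other. The only cosmetic difference is direction -- the paper collapses the left-hand multiple sum to a single sum, transforms it, and re-expands, whereas you expand both sides of the one-dimensional identity -- and your bookkeeping of the factors $\ellipticqrfac{E}{K}/\ellipticqrfac{q}{K}$ and the $q$-powers checks out.
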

Note that the series on the right-hand side is of the same type as that on the
left-hand-side.
\begin{proof}
We begin with the left-hand side, and write it in the form
\begin{gather*}\sum_{K=0}^N f_K \sum_{\sumk = K} A_\k.\end{gather*}
We now use Theorem~\ref{th:e-trivial-lemma1} (with $a_k\mapsto e_k$ and $b\mapsto g$) to obtain a single sum. We transform this sum using the $n=1$ case of the elliptic Bailey transformation formula given in \eqref{e-10p9-6}. Once again, we use Theorem~\ref{th:e-trivial-lemma1}, this time with $n=m$, $x_k\mapsto y_k$, $b\mapsto h$, and $a_j\mapsto f_j$. In this manner, we obtain the right-hand side of \eqref{e-10p9-7-n-m-tetra}.
\end{proof}

An analytic continuation argument similar to the one used in the proof of Theorem~\ref{th:e-10p9-6-tetra} can be applied to Theorem~\ref{th:e-10p9-7-n-m-tetra} to obtain a transformation for a sum over an $n$-dimensional rectangle into a multiple of a sum over an $n$-simplex.
\begin{Theorem}[an $A_n$ elliptic Bailey $_{10}\phi_9$ transformation theorem] \label{th:e-10p9-7-n-m-rect-tetra}
Let $\lambda = qa^2/bcd$. Then
\begin{gather*}
\sum\limits_{\substack{0\leq k_r\leq N_r \\ r=1,2,\dots,n} }
\Bigg( \ellipticvandermonde{x}{k}{n} \sqprod n \frac{\ellipticqrfac{q^{-N_s}\xover{x}}{k_r} }{\ellipticqrfac{q\xover{x}}{k_r} } \notag \\
 \qquad\quad{} \times \frac{\elliptictheta{a q^{2\sumk}}}{\elliptictheta{a }}
\frac{\ellipticqrfac{a, b, c, d}{\sumk}} {\ellipticqrfac{aq^{\sumN+1}, aq/b, aq/c, aq/d}{\sumk}} \\
 \qquad\quad{} \times
\frac{\ellipticqrfac{e,f_1\cdots f_m, \lambda aq^{1+\sumN}/e f_1\cdots f_m}{\sumk}}
{\ellipticqrfac{a q/e, aq/f_1\cdots f_m,
e f_1\dots f_m q^{-\sumN}/\lambda}{\sumk}} \\
\qquad\quad{}\times \smallprod n \frac{\ellipticqrfac{g x_rq^\sumN}{k_r}
\ellipticqrfac{g x_r}{\sumk}} {\ellipticqrfac{g x_r}{k_r} \ellipticqrfac{g x_rq^{N_r}}{\sumk} }
\cdot q^{\sum\limits_{r=1}^n rk_r} \Bigg)\notag \\
\qquad{} =
\frac{\ellipticqrfac{aq, aq/e f_1\cdots f_m, \lambda q/e, \lambda q/f_1\cdots f_m}{\sumN}}{\ellipticqrfac{ a q/e, aq/f_1\cdots f_m, \lambda q,
\lambda q/e f_1\cdots f_m}{\sumN}}\notag \\
\qquad\quad{}\times
\sum\limits_{\substack{0\leq \sumj\leq \sumN \\
j_1,j_2, \dots, j_m\geq 0} }
\Bigg( \ellipticvandermonde{y}{j}{m}
\sqprod m \frac{\ellipticqrfac{f_s\xover{y}}{j_r} }{\ellipticqrfac{q\xover{y}}{j_r} } \notag \\
\qquad\quad{} \times
\frac{\elliptictheta{\lambda q^{2\sumj}}}{\elliptictheta{\lambda }}
\frac{\ellipticqrfac{\lambda , \lambda b/a, \lambda c/a , \lambda d/a}{\sumj}}
{\ellipticqrfac{ \lambda q^{\sumN+1}, a q/b, aq/c, aq/d}{\sumj}}\\
 \qquad\quad{} \times
\frac{\ellipticqrfac{e,
\lambda aq^{1+\sumN}/e f_1\cdots f_m, q^{-\sumN}}{\sumj}}
{\ellipticqrfac{ \lambda q/e, \lambda q/f_1\cdots f_m,
e f_1\cdots f_m q^{-\sumN}/a}{\sumj}}
\\
\qquad\quad{} \times
\smallprod m \frac{\ellipticqrfac{h y_r/f_1\cdots f_m}{j_r}
\ellipticqrfac{h y_r}{\sumj}}{\ellipticqrfac{h y_r}{j_r}\ellipticqrfac{h y_r/f_r}{\sumj} } \cdot q^{\sum\limits_{r=1}^m rj_r} \Bigg). 
\end{gather*}
\end{Theorem}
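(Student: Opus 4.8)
The plan is to follow the proof of Theorem~\ref{th:e-10p9-7-n-m-tetra} essentially verbatim, modifying only the \emph{first} application of the trick so that it collapses a rectangular rather than a simplicial sum. Concretely, I would begin with the left-hand side, peel off the factor depending only on $\sumk$, and write it as $\sum_{K\ge 0} f_K\sum_{\sumk=K}A_\k$, where $f_K$ is the scalar terminating very-well-poised summand in the single index $K$ (the factor carrying the free parameter $e$, multiplied by the leftover $q^{\sumk}$). The inner sum over the slice $\sumk=K$ is exactly of the form governed by Theorem~\ref{th:e-trivial-lemma1}; applying that lemma with $a_s\mapsto q^{-N_s}$ and $b\mapsto g$ collapses the $n$-fold sum and produces the single series $\sum_{K=0}^{\sumN}\frac{\ellipticqrfac{q^{-\sumN}}{K}}{\ellipticqrfac{q}{K}}\,f_K$. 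This is precisely the mechanism that carries \eqref{e-8p7-7a} into \eqref{e-8p7-7b}: the specialization $a_s=q^{-N_s}$ forces the factor $\ellipticqrfac{q^{-N_s}\xover x}{k_r}$ to vanish whenever $k_r>N_r$, so the sum automatically restricts to the rectangle $0\le k_r\le N_r$, while the choice of simplex bound $\sumN$ supplies the terminating factor $q^{-\sumN}$.

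The key point is that, because Theorem~\ref{th:e-trivial-lemma1} holds for an arbitrary sequence $f_K$, the parameter $e$ sitting inside $f_K$ is left completely free and is decoupled from the $q^{-N_s}$ that enforce the rectangle. The collapsed single series is then a terminating, balanced, very-well-poised $_{12}V_{11}$ with base $a$ and free parameters $b,c,d,e,f_1\cdots f_m$, the remaining numerator parameter $\lambda aq^{1+\sumN}/ef_1\cdots f_m$ being fixed by balancing and the termination coming from $q^{-\sumN}$. I would transform it using the $n=1$ instance of the elliptic Bailey $_{10}\phi_9$ transformation \eqref{e-10p9-6}, obtaining a multiple of another terminating balanced $_{12}V_{11}$ now based at $\lambda=qa^2/bcd$, and then re-expand this single series into an $m$-fold sum over the simplex $0\le\sumj\le\sumN$ by a second application of Theorem~\ref{th:e-trivial-lemma1}, in its $m$-dimensional form with $x_r\mapsto y_r$, $a_r\mapsto f_r$ and $b\mapsto h$. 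Assembling the prefactors produced at each stage reproduces the right-hand side. Equivalently, and this is the route indicated in the remark preceding the statement, one may regard the claim as the $e_s=q^{-N_s}$, $N=\sumN$ specialization of Theorem~\ref{th:e-10p9-7-n-m-tetra}, promoted from that special value of the surviving parameter to a free one by the analytic-continuation argument used in Theorem~\ref{th:e-10p9-6-tetra} (meromorphy together with $p$-periodicity via \eqref{GR11.2.55a}, and accumulation of zeros in an annulus).

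The main obstacle I anticipate is bookkeeping rather than conceptual. One must verify that, after the rectangular collapse, the single series is in exactly the normalization on which \eqref{e-10p9-6} at $n=1$ operates, and that the numerator/denominator parameters surviving the middle transformation are precisely those needed for the second (simplicial) expansion to output the target summand and prefactor. This requires careful tracking of the shifts through \eqref{GR11.2.47} and \eqref{GR11.2.49r} and of the reflections through \eqref{GR11.2.42}, together with a check that the rectangular specialization of the trick introduces no spurious vanishing or poles; but it involves no ingredient beyond those already exploited for Theorem~\ref{th:e-10p9-7-n-m-tetra} and Theorem~\ref{th:e-8p7-7b}.
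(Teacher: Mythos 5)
Your primary route is correct but is not the route the paper takes. The paper proves this theorem in one line, by analytic continuation from Theorem~\ref{th:e-10p9-7-n-m-tetra}: specializing $e_s=q^{-N_s}$ there yields the present identity at $e=q^{-N}$ for every nonnegative integer $N$, and the periodicity-plus-accumulation argument from the proof of Theorem~\ref{th:e-10p9-6-tetra} then frees $e$. You instead rerun the proof of Theorem~\ref{th:e-10p9-7-n-m-tetra} from scratch with the first application of Theorem~\ref{th:e-trivial-lemma1} already specialized to $a_s=q^{-N_s}$, $b=g$ and simplex bound $\sumN$, so that the $n$-fold rectangular sum collapses directly to $\sum_{K=0}^{\sumN}\ellipticqrfac{q^{-\sumN}}{K}\,f_K/\ellipticqrfac{q}{K}$; this is legitimate because the lemma holds for arbitrary $a_s$ and arbitrary $f_K$, and it is exactly the mechanism the paper itself uses to pass from Theorem~\ref{th:e-8p7-7a} to Theorem~\ref{th:e-8p7-7b}. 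The middle step (the $n=1$ case of \eqref{e-10p9-6}) and the $m$-dimensional re-expansion with $a_j\mapsto f_j$, $b\mapsto h$ are then identical to the paper's proof of Theorem~\ref{th:e-10p9-7-n-m-tetra}, and the prefactors assemble as you describe. What your route buys is a proof with no analytic continuation at all; what the paper's buys is brevity, reusing the already-established Theorem~\ref{th:e-10p9-7-n-m-tetra}. One small correction to your alternative route: fixing $N=\sumN$ in the specialization pins $e$ to the single value $q^{-\sumN}$, which is not enough to continue analytically; one must instead let the simplex bound $N$ of Theorem~\ref{th:e-10p9-7-n-m-tetra} range over all nonnegative integers while keeping $e_s=q^{-N_s}$ fixed, so as to obtain the identity at $e=q^{-N}$ for a sequence of values that, after reduction by $p$-periodicity, accumulates in an annulus.
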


As another example, we obtain a result similar to Theorem~\ref{th:e-trivial-lemma1}. For this we start with the following $D_n$ elliptic Jackson sum
from Rosengren~\cite[Theorem~6.1]{HR2004} (rewritten, using elementary manipulations of $q,p$-shifted factorials) which is equivalent to~\eqref{e-8p7-4} upon replacing $n$ by $n+1$ and using an analytic continuation argument. In the $p=0$ case the corresponding $D_n$ Jackson sum was first given in \cite[Theorem~5.17]{MS1997}
\begin{gather*}
\sum\limits_{\substack{\sumk = K \\
k_1,k_2, \dots, k_n\geq 0} } \Bigg(\prod_{1\le r<s\le n}\frac{\theta\big(q^{k_r-k_s}x_r/x_s;p\big)}
{\theta(x_r/x_s;p)(x_rx_s;q,p)_{k_r+k_s}}\\
 \qquad\quad{} \times \smallprod n \frac{(b/x_r;q,p)_{\sumk-k_r} \prod\limits_{s=1}^{n-1}(x_ra_s,x_r/a_s;q,p)_{k_r}}
{(bx_r;q,p)_{k_r} \prod\limits_{s=1}^n(qx_r/x_s;q,p)_{k_r}}\cdot q^{\sum\limits_{r=1}^n(r-1)k_r}\Bigg)\\
 \qquad{} = \frac{\prod\limits_{s=1}^{n-1}(ba_s,b/a_s;q,p)_K} {(q;q,p)_K\prod\limits_{r=1}^n(bx_r;q,p)_K}.
\end{gather*}

We use this identity to obtain a $D_n$ version of Theorem~\ref{th:e-trivial-lemma1}.

\begin{Theorem}\label{th:e-trivial-lemma1-dn} Given the sequence $f_k$, $k=0,1,2, \dots$, and $N\geq 0$, we have
\begin{gather}
\sum\limits_{\substack{0\leq \sumk \leq N \\
k_1,k_2, \dots, k_n\geq 0} }
\Bigg(
\prod_{1\le r<s\le n}\frac{\theta\big(q^{k_r-k_s}x_r/x_s;p\big)}
{\theta(x_r/x_s;p)(x_rx_s;q,p)_{k_r+k_s}}\prod_{s=1}^{n-1}\frac{
\prod\limits_{r=1}^n(x_ra_s,x_r/a_s;q,p)_{k_r}}
{(ba_s,b/a_s;q,p)_{\sumk}}\nonumber\\
\qquad{} \times
\smallprod n\frac{(bx_r;q,p)_{\sumk}(b/x_r;q,p)_{\sumk-k_r}}
{(bx_r;q,p)_{k_r}\prod\limits_{s=1}^n(qx_r/x_s;q,p)_{k_r}}
\cdot q^{\sum\limits_{r=1}^n(r-1)k_r}
f_{\sumk}\Bigg) =
\sum_{K=0}^N
\frac{f_K}{\ellipticqrfac{q}{K}}.\label{e-trivial-lemma1-dn}
\end{gather}
\end{Theorem}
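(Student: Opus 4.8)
The plan is to repeat almost verbatim the one-line argument used to prove Theorem~\ref{th:e-trivial-lemma1}, with the $A_n$ evaluation \eqref{e-fundamental-theorem} replaced by the $D_n$ elliptic Jackson summation of Rosengren~\cite[Theorem~6.1]{HR2004} displayed immediately above. That summation evaluates a sum over the simplex $\sumk=K$ to the closed form $\prod_{s=1}^{n-1}(ba_s,b/a_s;q,p)_K\big/\big((q;q,p)_K\prod_{r=1}^n(bx_r;q,p)_K\big)$, and the whole point is that the $b$-dependence sits entirely in these factorials indexed by $K=\sumk$.

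First I would transport the $b$-products across: multiplying Rosengren's identity by $\prod_{r=1}^n(bx_r;q,p)_K\big/\prod_{s=1}^{n-1}(ba_s,b/a_s;q,p)_K$ clears every $b$-factor from the right, leaving simply $1/(q;q,p)_K$ there. Because $\sumk=K$ is constant on the simplex, this multiplier equals $\prod_{r=1}^n(bx_r;q,p)_{\sumk}\big/\prod_{s=1}^{n-1}(ba_s,b/a_s;q,p)_{\sumk}$ and may be pushed inside the summation. A termwise comparison then shows that the resulting summand is precisely the bracketed expression of \eqref{e-trivial-lemma1-dn} stripped of the factor $f_{\sumk}$; that is, the inner simplex sum weighted by these extra $b$-products equals $1/(q;q,p)_K$, independently of $x_1,\dots,x_n$, $a_1,\dots,a_{n-1}$, and $b$.

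Finally I would multiply this identity by $f_K$ and sum $K$ from $0$ to $N$. The right-hand side becomes $\sum_{K=0}^N f_K/(q;q,p)_K$, as required, and on the left the double sum $\sum_{K=0}^N\sum_{\sumk=K}$ collapses to a single sum over $0\le\sumk\le N$ with $k_1,\dots,k_n\ge0$, where $f_K=f_{\sumk}$ on each slice; this is exactly the left-hand side of \eqref{e-trivial-lemma1-dn}. No analytic input is needed beyond Rosengren's summation, so there is no genuine obstacle: the only care required is the indexing bookkeeping in the termwise comparison, namely checking that the shifted factorials indexed by $\sumk-k_r$, $k_r+k_s$, and $k_r$ line up correctly after the $b$-products (which depend only on $\sumk$) are moved inside the simplex sum.
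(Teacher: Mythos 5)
Your proposal is correct and is precisely the argument the paper intends: it mirrors the proof of Theorem~\ref{th:e-trivial-lemma1}, transporting the $b$-dependent (and $a_s$-dependent) factorials indexed by $K=\sumk$ from the right-hand side of Rosengren's $D_n$ Jackson sum into the simplex sum, then multiplying by $f_K$ and summing over $K$ from $0$ to $N$ so that the double sum collapses to the sum over $0\le\sumk\le N$. The bookkeeping you flag is indeed the only point requiring care, and it works out exactly as you describe.
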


As in Theorem~\ref{th:e-trivial-lemma1} we can choose $f_K$ in Theorem~\ref{th:e-trivial-lemma1-dn} appropriately and use a single series identity to obtain a multiple series extension. In particular, we now obtain a new $D_n$ elliptic Jackson sum in this manner which involves two independent bases~$q$ and $\tilde{q}$, as well as two independent nomes~$p$ and~$\tilde{p}$.
\begin{Theorem}[a $D_n$ Jackson sum]\label{th:e-8p7-7a-dn} We have
\begin{gather}
\sum\limits_{\substack{0\leq \sumk \leq N \\ k_1,k_2, \dots, k_n\geq 0} }
\Bigg(
\prod_{1\le r<s\le n}\frac{\theta\big(q^{k_r-k_s}x_r/x_s;p\big)}
{\theta(x_r/x_s;p)(x_rx_s;q,p)_{k_r+k_s}}\prod_{s=1}^{n-1}\frac{
\prod\limits_{r=1}^n(x_ry_s,x_r/y_s;q,p)_{k_r}}
{(gy_s,g/y_s;q,p)_{\sumk}}\nonumber\\
\qquad\quad{} \times \smallprod n\frac{(gx_r;q,p)_{\sumk}(g/x_r;q,p)_{\sumk-k_r}} {(gx_r;q,p)_{k_r}\prod\limits_{s=1}^n(qx_r/x_s;q,p)_{k_r}}\cdot
q^{\sum\limits_{r=1}^n(r-1)k_r} (q;q,p)_\sumk\nonumber\\
\qquad\quad{} \times \frac{\theta\big(a\tilde{q}^{2\sumk};\tilde{p}\big)}{\theta\big(a;\tilde{p}\big)}
\frac{(a, b, c, d, a^2\tilde{q}^{1+N}/b c d,\tilde{q}^{-N};\tilde{q},\tilde{p})_\sumk}{(\tilde{q},a\tilde{q}/b, a\tilde{q}/c,
a\tilde{q}/d, b c d\tilde{q}^{-N}/a,a\tilde{q}^{N+1};\tilde{q},\tilde{p})_\sumk} \tilde{q}^{\sumk}\Bigg)\nonumber\\
 \qquad{} = \frac{(a\tilde{q}, a\tilde{q}/bc, a\tilde{q}/bd, a\tilde{q}/cd;\tilde{q},\tilde{p})_N}
{(a\tilde{q}/b, a\tilde{q}/c, a\tilde{q}/d, a\tilde{q}/b c d;\tilde{q},\tilde{p})_N}.\label{e-8p7-7a-dn}
\end{gather}
\end{Theorem}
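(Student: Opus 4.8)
The plan is to derive \eqref{e-8p7-7a-dn} from the $D_n$ summation lemma in Theorem~\ref{th:e-trivial-lemma1-dn}, in exact parallel to the way Theorem~\ref{th:e-8p7-7a} was obtained from Theorem~\ref{th:e-trivial-lemma1}. First I would specialize \eqref{e-trivial-lemma1-dn} by setting $a_s\mapsto y_s$ for $s=1,\dots,n-1$ and $b\mapsto g$. With these replacements the whole $(q,p)$-dependent product on the left of \eqref{e-trivial-lemma1-dn}---the theta quotient over $r<s$, the factors $(gy_s,g/y_s;q,p)_{\sumk}^{-1}$, and the $\prod_r$ factor built from $gx_r$, $g/x_r$ and $qx_r/x_s$, together with the power $q^{\sum_{r=1}^n(r-1)k_r}$---matches verbatim the $D_n$ part of the summand of \eqref{e-8p7-7a-dn}.

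The decisive step is the choice of the free sequence $f_K$. The summand of \eqref{e-8p7-7a-dn} carries, beyond the $D_n$ factor, the extra factor $\ellipticqrfac{q}{\sumk}$ multiplied by a terminating, balanced, very-well-poised ${}_{10}V_9$-type term in the \emph{independent} base $\tilde q$ and nome $\tilde p$. I would therefore set
\begin{gather*}
f_K=\ellipticqrfac{q}{K}\,\frac{\theta\big(a\tilde q^{2K};\tilde p\big)}{\theta\big(a;\tilde p\big)}\,
\frac{\big(a,b,c,d,a^2\tilde q^{1+N}/bcd,\tilde q^{-N};\tilde q,\tilde p\big)_K}
{\big(\tilde q,a\tilde q/b,a\tilde q/c,a\tilde q/d,bcd\tilde q^{-N}/a,a\tilde q^{N+1};\tilde q,\tilde p\big)_K}\,\tilde q^{K}.
\end{gather*}
The factor $\ellipticqrfac{q}{K}$ is inserted precisely so that, upon substituting this $f_K$ into \eqref{e-trivial-lemma1-dn}, the left-hand side reproduces the left-hand side of \eqref{e-8p7-7a-dn} exactly (the $\ellipticqrfac{q}{\sumk}$ in the summand matches), while on the right-hand side of the lemma the sum $\sum_{K=0}^N f_K/\ellipticqrfac{q}{K}$ has this factor cancel, leaving a single-variable series in base $\tilde q$ and nome $\tilde p$.

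That remaining single sum is exactly ${}_{10}V_9\big(a;b,c,d,a^2\tilde q^{1+N}/bcd,\tilde q^{-N};\tilde q,\tilde p\big)$. Its five numerator parameters $b,c,d,a^2\tilde q^{1+N}/bcd,\tilde q^{-N}$ obey the elliptic balancing $bcd\cdot(a^2\tilde q^{1+N}/bcd)=a^2\tilde q^{N+1}$ and the series terminates through $\tilde q^{-N}$, so the Frenkel--Turaev summation \eqref{10V9}, read with base $\tilde q$ and nome $\tilde p$, evaluates it to
\begin{gather*}
\frac{\big(a\tilde q,a\tilde q/bc,a\tilde q/bd,a\tilde q/cd;\tilde q,\tilde p\big)_N}
{\big(a\tilde q/b,a\tilde q/c,a\tilde q/d,a\tilde q/bcd;\tilde q,\tilde p\big)_N},
\end{gather*}
which is the right-hand side of \eqref{e-8p7-7a-dn}.

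The apparent difficulty---that \eqref{e-8p7-7a-dn} mixes two genuinely independent bases $q,\tilde q$ and nomes $p,\tilde p$---is not in fact an obstacle, and this is the point I would want to stress. Theorem~\ref{th:e-trivial-lemma1-dn} holds for an \emph{arbitrary} sequence $f_K$, so its application never intertwines the two sets of parameters: the $(q,p)$ machinery stays inside the $D_n$ product, and the $(\tilde q,\tilde p)$ machinery lives entirely inside $f_K$. The substance of the proof thus reduces to two elementary bookkeeping checks: that the extra $\ellipticqrfac{q}{\sumk}$ is positioned to cancel the $1/\ellipticqrfac{q}{K}$ on the right of the lemma, and that $f_K$ is a balanced, terminating, very-well-poised ${}_{10}V_9$ summand so that \eqref{10V9} applies verbatim. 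Both are routine, so no genuinely hard step arises.
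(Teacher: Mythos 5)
Your proposal is correct and coincides with the paper's own proof: the same specialization $a_s\mapsto y_s$, $b\mapsto g$ of Theorem~\ref{th:e-trivial-lemma1-dn}, the same choice of $f_K$ (including the factor $\ellipticqrfac{q}{K}$ inserted to cancel the $1/\ellipticqrfac{q}{K}$ on the right of the lemma), and the same final evaluation of the resulting single sum by the Frenkel--Turaev summation \eqref{10V9} in base $\tilde q$ and nome $\tilde p$. Nothing further is needed.
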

\begin{proof}We replace $b$ by $g$, and $a_s$ by $y_s$, for $s=1,\dots,n-1$ in~\eqref{e-trivial-lemma1-dn}, and take
\begin{gather*}f_K=(q;q,p)_K
\frac{\theta\big(a\tilde{q}^{2K};\tilde{p}\big)}{\theta\big(a;\tilde{p}\big)}
\frac{( a, b, c, d, a^2\tilde{q}^{1+N}/b c d,\tilde{q}^{-N};
\tilde{q},\tilde{p})_K}{(\tilde{q},a\tilde{q}/b, a\tilde{q}/c,
a\tilde{q}/d, b c d\tilde{q}^{-N}/a,a\tilde{q}^{N+1};\tilde{q},\tilde{p})_K} \tilde{q}^{K}.
\end{gather*}
The rest of the proof is similar to the proof of Theorem~\ref{th:e-8p7-7a}.
\end{proof}

The last example shows that to obtain an elliptic extension of a terminating basic hypergeometric series identity is not just a matter of replacing $q$-shifted factorials by $q,p$-shifted factorials. While in the elliptic case, the factors depending on $g$ in~\eqref{e-8p7-7a-dn} are essential, they are not required in the basic case (where one could let $g\to0$).

This brings us to the end of our study.

\subsection*{Acknowledgements} The first author thanks Hjalmar Rosengren and the organizers of OPSF-S6 for the series of lectures~\cite{HR2016-lectures} on this subject. We thank Ole Warnaar for showing his notes~\cite{SOW-notes-2016} and much encouragement, Slava Spiridonov for some comments, and Zhizheng Zhang and Junli Huang for sending us their preprint~\cite{ZH-preprint}. We thank the anonymous referees for many insightful suggestions. We thank the Erwin Schr\"{o}dinger Institute for its workshop on {\em Elliptic hypergeometric functions in combinatorics, integrable systems and physics} held in Vienna in March 2017, where we benefited from discussions with other participants. Finally, research of both authors was supported by a grant of the Austrian Science Fund (FWF): F 50-N15.

\pdfbookmark[1]{References}{ref}
\LastPageEnding


\begin{thebibliography}{99}
\footnotesize\itemsep=0pt

\bibitem{AAB1988}
Agarwal A.K., Andrews G.E., Bressoud D.M., The {B}ailey lattice,
 \textit{J.~Indian Math. Soc. (N.S.)} \textbf{51} (1987), 57--73.

\bibitem{Andrews2001}
Andrews G.E., Bailey's transform, lemma, chains and tree, in Special Functions
 2000: Current Perspective and Future Directions ({T}empe, {AZ}), \href{https://doi.org/10.1007/978-94-010-0818-1_1}{\textit{NATO
 Sci. Ser.~II Math. Phys. Chem.}}, Vol.~30, Kluwer Acad. Publ., Dordrecht,
 2001, 1--22.

\bibitem{AndBer2002}
Andrews G.E., Berkovich A., The {WP}-{B}ailey tree and its implications,
 \href{https://doi.org/10.1112/S0024610702003617}{\textit{J.~London Math. Soc.}} \textbf{66} (2002), 529--549,
 \href{https://arxiv.org/abs/math.CO/0109141}{math.CO/0109141}.

\bibitem{ASW1999}
Andrews G.E., Schilling A., Warnaar S.O.,
 An {$A_2$} {B}ailey lemma and {R}ogers--{R}amanujan-type identities, \href{https://doi.org/10.1090/S0894-0347-99-00297-0}{\textit{J.~Amer. Math. Soc.}} \textbf{12} (1999), 677--702, \href{https://arxiv.org/abs/math.QA/9807125}{math.QA/9807125}.

\bibitem{WNB1929}
Bailey W.N., An identity involving {H}eine's basic hypergeometric series,
 \href{https://doi.org/10.1112/jlms/s1-4.4.254}{\textit{J.~London Math. Soc.}} \textbf{4} (1929), 254--257.

\bibitem{WNB1947}
Bailey W.N., Some identities in combinatory analysis, \href{https://doi.org/10.1112/plms/s2-49.6.421}{\textit{Proc. London
 Math. Soc.}} \textbf{49} (1947), 421--425.

\bibitem{BW2015}
Bartlett N., Warnaar S.O., Hall--{L}ittlewood polynomials and characters of
 affine {L}ie algebras, \href{https://doi.org/10.1016/j.aim.2015.08.011}{\textit{Adv. Math.}} \textbf{285} (2015), 1066--1105,
 \href{https://arxiv.org/abs/1304.1602}{arXiv:1304.1602}.

\bibitem{GB1995}
Bhatnagar G., Inverse relations, generalized bibasic series, and their {${\rm
 U}(n)$} extensions, Ph.D.~Thesis, The Ohio State University, 1995.

\bibitem{GB1999a}
Bhatnagar G., {$D_n$} basic hypergeometric series, \href{https://doi.org/10.1023/A:1006997424561}{\textit{Ramanujan~J.}}
 \textbf{3} (1999), 175--203.

\bibitem{BS1998}
Bhatnagar G., Schlosser M., {$C_n$} and {$D_n$} very-well-poised
 {${}_{10}\phi_9$} transformations, \href{https://doi.org/10.1007/s003659900089}{\textit{Constr. Approx.}} \textbf{14}
 (1998), 531--567.

\bibitem{DB1983}
Bressoud D.M., A matrix inverse, \href{https://doi.org/10.2307/2044991}{\textit{Proc. Amer. Math. Soc.}} \textbf{88}
 (1983), 446--448.

\bibitem{BS2016}
Br{\"u}nner F., Spiridonov V.P., A duality web of linear quivers, \href{https://doi.org/10.1016/j.physletb.2016.08.039}{\textit{Phys.
 Lett.~B}} \textbf{761} (2016), 261--264, \href{https://arxiv.org/abs/1605.06991}{arXiv:1605.06991}.

\bibitem{HC2008}
Coskun H., An elliptic {$BC_n$} {B}ailey lemma, multiple {R}ogers--{R}amanujan
 identities and {E}uler's pentagonal number theorems, \href{https://doi.org/10.1090/S0002-9947-08-04457-7}{\textit{Trans. Amer.
 Math. Soc.}} \textbf{360} (2008), 5397--5433, \href{https://arxiv.org/abs/math.CO/0605653}{math.CO/0605653}.

\bibitem{DG1992}
Denis R.Y., Gustafson R.A., An {${\rm SU}(n)$} {$q$}-beta integral
 transformation and multiple hypergeometric series identities, \href{https://doi.org/10.1137/0523027}{\textit{SIAM~J.
 Math. Anal.}} \textbf{23} (1992), 552--561.

\bibitem{vD1997}
van Diejen J.F., On certain multiple {B}ailey, {R}ogers and {D}ougall type
 summation formulas, \href{https://doi.org/10.2977/prims/1195145326}{\textit{Publ. Res. Inst. Math. Sci.}} \textbf{33} (1997),
 483--508, \href{https://arxiv.org/abs/math.CO/9712265}{math.CO/9712265}.

\bibitem{FT1997}
Frenkel I.B., Turaev V.G., Elliptic solutions of the {Y}ang--{B}axter equation
 and modular hypergeometric functions, in The {A}rnold--{G}elfand Mathematical
 Seminars, \href{https://doi.org/10.1007/978-1-4612-4122-5_9}{Birkh\"auser Boston}, Boston, MA, 1997, 171--204.

\bibitem{GR90}
Gasper G., Rahman M., Basic hypergeometric series, \href{https://doi.org/10.1017/CBO9780511526251}{\textit{Encyclopedia of
 Mathematics and its Applications}}, Vol.~96, 2nd~ed., Cambridge University
 Press, Cambridge, 2004.

\bibitem{GOW2016}
Griffin M.J., Ono K., Warnaar S.O., A framework of {R}ogers--{R}amanujan
 identities and their arithmetic properties, \href{https://doi.org/10.1215/00127094-3449994}{\textit{Duke Math.~J.}}
 \textbf{165} (2016), 1475--1527, \href{https://arxiv.org/abs/1401.7718}{arXiv:1401.7718}.

\bibitem{RG1989}
Gustafson R.A., The {M}acdonald identities for affine root systems of classical
 type and hypergeometric series very-well-poised on semisimple {L}ie algebras,
 in Ramanujan {I}nternational {S}ymposium on {A}nalysis ({P}une, 1987),
 Macmillan of India, New Delhi, 1989, 185--224.

\bibitem{Jackson1921}
Jackson F.H., Summation of {$q$}-hypergeometric series,, \textit{Messenger
 Math.} \textbf{50} (1920), 101--112.

\bibitem{Jouhet2010}
Jouhet F., Shifted versions of the {B}ailey and well-poised {B}ailey lemmas,
 \href{https://doi.org/10.1007/s11139-010-9282-x}{\textit{Ramanujan~J.}} \textbf{23} (2010), 315--333, \href{https://arxiv.org/abs/0906.1870}{arXiv:0906.1870}.

\bibitem{Mac1995}
Macdonald I.G., Symmetric functions and {H}all polynomials, 2nd~ed., \textit{Oxford
 Classic Texts in the Physical Sciences}, The Clarendon Press, Oxford
 University Press, New York, 2015.

\bibitem{MSZ2009}
McLaughlin J., Sills A.V., Zimmer P., Lifting {B}ailey pairs to {WP}-{B}ailey
 pairs, \href{https://doi.org/10.1016/j.disc.2009.03.015}{\textit{Discrete Math.}} \textbf{309} (2009), 5077--5091.

\bibitem{Milne1985}
Milne S.C., An elementary proof of the {M}acdonald identities
 for~{$A^{(1)}_l$}, \href{https://doi.org/10.1016/0001-8708(85)90105-7}{\textit{Adv. Math.}} \textbf{57} (1985), 34--70.

\bibitem{Milne1988}
Milne S.C., Multiple {$q$}-series and {${\rm U}(n)$} generalizations of
 {R}amanujan's {$_1\Psi_1$} sum, in Ramanujan Revisited ({U}rbana-{C}hampaign,
 {I}ll., 1987), Academic Press, Boston, MA, 1988, 473--524.

\bibitem{Milne1994}
Milne S.C., A {$q$}-analog of a {W}hipple's transformation for hypergeometric
 series in~{${\rm U}(n)$}, \href{https://doi.org/10.1006/aima.1994.1065}{\textit{Adv. Math.}} \textbf{108} (1994), 1--76.

\bibitem{Milne1997}
Milne S.C., Balanced {$_3\phi_2$} summation theorems for {${\rm U}(n)$} basic
 hypergeometric series, \href{https://doi.org/10.1006/aima.1997.1658}{\textit{Adv. Math.}} \textbf{131} (1997), 93--187.

\bibitem{Milne2000}
Milne S.C., A new {${\rm U}(n)$} generalization of the {J}acobi triple product
 identity, in {$q$}-Series from a Contemporary Perspective ({S}outh {H}adley,
 {MA}, 1998), \href{https://doi.org/10.1090/conm/254/03961}{\textit{Contemp. Math.}}, Vol.~254, Amer. Math. Soc., Providence,
 RI, 2000, 351--370.

\bibitem{ML1995}
Milne S.C., Lilly G.M., Consequences of the {$A_l$} and {$C_l$} {B}ailey
 transform and {B}ailey lemma, \href{https://doi.org/10.1016/0012-365X(94)00139-A}{\textit{Discrete Math.}} \textbf{139} (1995),
 319--346.

\bibitem{MN1996}
Milne S.C., Newcomb J.W., {${\rm U}(n)$} very-well-poised {$_{10}\phi_9$}
 transformations, \href{https://doi.org/10.1016/0377-0427(95)00248-0}{\textit{J.~Comput. Appl. Math.}} \textbf{68} (1996),
 239--285.

\bibitem{HR2004}
Rosengren H., Elliptic hypergeometric series on root systems, \href{https://doi.org/10.1016/S0001-8708(03)00071-9}{\textit{Adv.
 Math.}} \textbf{181} (2004), 417--447, \href{https://arxiv.org/abs/math.CA/0207046}{math.CA/0207046}.

\bibitem{HR2016-lectures}
Rosengren H., Elliptic hypergeometric functions, \href{https://arxiv.org/abs/1608.06161}{arXiv:1608.06161}.


\bibitem{HR-PC-2016}
Rosengren H., {P}rivate communication, 2016.

\bibitem{HR2017a}
Rosengren H., Gustafson--{R}akha-type elliptic hypergeometric series,
 \href{https://doi.org/10.3842/SIGMA.2017.037}{\textit{SIGMA}} \textbf{13} (2017), 037, 11~pages, \href{https://arxiv.org/abs/1701.08960}{arXiv:1701.08960}.

\bibitem{RS2017}
Rosengren H., Schlosser M., Multidimensional matrix inversions and elliptic
 hypergeometric series on root systems, {i}n preparation.

\bibitem{MS1997}
Schlosser M., Multidimensional matrix inversions and {$A_r$} and {$D_r$} basic
 hypergeometric series, \href{https://doi.org/10.1023/A:1009705129155}{\textit{Ramanujan~J.}} \textbf{1} (1997), 243--274.

\bibitem{MS2007b}
Schlosser M., Elliptic enumeration of nonintersecting lattice paths,
 \href{https://doi.org/10.1016/j.jcta.2006.07.002}{\textit{J.~Combin. Theory Ser.~A}} \textbf{114} (2007), 505--521,
 \href{https://arxiv.org/abs/math.CO/0602260}{math.CO/0602260}.

\bibitem{MS2008}
Schlosser M., A new multivariable {$_6\psi_6$} summation formula,
 \href{https://doi.org/10.1007/s11139-007-9017-9}{\textit{Ramanujan~J.}} \textbf{17} (2008), 305--319, \href{https://arxiv.org/abs/math.CA/0607122}{math.CA/0607122}.

\bibitem{VPS2002}
Spiridonov V.P., An elliptic incarnation of the {B}ailey chain, \href{https://doi.org/10.1155/S1073792802205127}{\textit{Int.
 Math. Res. Not.}} \textbf{2002} (2002), 1945--1977.

\bibitem{VPS2002b}
Spiridonov V.P., Theta hypergeometric series, in Asymptotic Combinatorics with
 Application to Mathematical Physics ({S}t.~{P}etersburg, 2001), \href{https://doi.org/10.1007/978-94-010-0575-3_15}{\textit{NATO
 Sci. Ser.~II Math. Phys. Chem.}}, Vol.~77, Kluwer Acad. Publ., Dordrecht,
 2002, 307--327, \href{https://arxiv.org/abs/math.CA/0303204}{math.CA/0303204}.

\bibitem{VPS2004}
Spiridonov V.P., Bailey's tree for integrals, \href{https://doi.org/10.1023/B:TAMP.0000022745.45082.18}{\textit{Theoret. and Math. Phys.}}
 \textbf{139} (2004), 104--111, \href{https://arxiv.org/abs/math.CA/0312502}{math.CA/0312502}.

\bibitem{VPS2008}
Spiridonov V.P., Essays on the theory of elliptic hypergeometric functions,
 \href{https://doi.org/10.1070/RM2008v063n03ABEH004533}{\textit{Russian Math. Surveys}} \textbf{63} (2008), 405--472,
 \href{https://arxiv.org/abs/0805.3135}{arXiv:0805.3135}.

\bibitem{SW2006}
Spiridonov V.P., Warnaar S.O., Inversions of integral operators and elliptic
 beta integrals on root systems, \href{https://doi.org/10.1016/j.aim.2005.11.007}{\textit{Adv. Math.}} \textbf{207} (2006),
 91--132, \href{https://arxiv.org/abs/math.CA/0411044}{math.CA/0411044}.

\bibitem{SSY2017}
Srivastava H.M., Singh S.N., Singh S.P., Yadav V., Certain derived
 {WP}-{B}ailey pairs and transformation formulas for {$q$}-hypergeometric
 series, \textit{Filomat} \textbf{31} (2017), 4619--4628.

\bibitem{SOW2001}
Warnaar S.O., 50 years of {B}ailey's lemma, in Algebraic Combinatorics and
 Applications ({G}\"o\ss weinstein, 1999), \href{https://doi.org/10.1007/978-3-642-59448-9_23}{Springer}, Berlin, 2001, 333--347,
 \href{https://arxiv.org/abs/0910.2062}{arXiv:0910.2062}.

\bibitem{SOW2002}
Warnaar S.O., Summation and transformation formulas for elliptic hypergeometric
 series, \href{https://doi.org/10.1007/s00365-002-0501-6}{\textit{Constr. Approx.}} \textbf{18} (2002), 479--502,
 \href{https://arxiv.org/abs/math.QA/0001006}{math.QA/0001006}.

\bibitem{SOW2003}
Warnaar S.O., Extensions of the well-poised and elliptic well-poised {B}ailey
 lemma, \href{https://doi.org/10.1016/S0019-3577(03)90061-9}{\textit{Indag. Math. (N.S.)}} \textbf{14} (2003), 571--588,
 \href{https://arxiv.org/abs/math/0309241}{math/0309241}.


\bibitem{SOW-notes-2016}
Warnaar S.O., {P}rivate communication, 2016.

\bibitem{GNW1929}
Watson G.N., A new proof of the {R}ogers--{R}amanujan identities,
 \href{https://doi.org/10.1112/jlms/s1-4.1.4}{\textit{J.~London Math. Soc.}} \textbf{4} (1929), 4--9.

\bibitem{WW1996}
Whittaker E.T., Watson G.N., A course of modern analysis, \href{https://doi.org/10.1017/CBO9780511608759}{\textit{Cambridge
 Mathematical Library}}, Cambridge University Press, Cambridge, 1996.

\bibitem{ZH-preprint}
Zhang Z., Huang J., The ${C}_n$-{B}ailey chain, {P}reprint, 2016.

\bibitem{ZL2016}
Zhang Z., Liu Q., {$\mathrm{U}(n+1)$} {WP}-{B}ailey tree, \href{https://doi.org/10.1007/s11139-016-9790-4}{\textit{Ramanujan~J.}}
 \textbf{40} (2016), 447--462.

\end{thebibliography}
\end{document}